  \let\thanks\@gobble
  \def\footnotemark{}
\newcommand{\email}[1]{{\href{mailto:#1}{\nolinkurl{#1}}}}
\numberwithin{equation}{section}
\numberwithin{figure}{section}
\numberwithin{table}{section}
\theoremstyle{plain}
\newtheorem{thm}{\protect\theoremname}[section]
\theoremstyle{definition}
\newtheorem{defn}[thm]{\protect\definitionname}
\theoremstyle{plain}
\newtheorem{prop}[thm]{\protect\propositionname}
\theoremstyle{remark}
\newtheorem{rem}[thm]{\protect\remarkname}
\theoremstyle{plain}
\newtheorem{lem}[thm]{\protect\lemmaname}
\theoremstyle{plain}
\newtheorem{cor}[thm]{\protect\corollaryname}
\providecommand{\corollaryname}{Corollary}
\providecommand{\definitionname}{Definition}
\providecommand{\lemmaname}{Lemma}
\providecommand{\propositionname}{Proposition}
\providecommand{\remarkname}{Remark}
\providecommand{\theoremname}{Theorem}
\let\cref\zcref
\begin{document}
\global\long\def\supp{\operatorname{supp}}%

\global\long\def\Uniform{\operatorname{Uniform}}%

\global\long\def\Lip{\operatorname{Lip}}%

\global\long\def\Bernoulli{\operatorname{Bernoulli}}%

\global\long\def\dif{\mathrm{d}}%

\global\long\def\e{\operatorname{e}}%

\global\long\def\ii{\operatorname{i}}%

\global\long\def\Cov{\operatorname{Cov}}%

\global\long\def\esssup{\operatorname*{ess\,sup}}%

\global\long\def\Var{\operatorname{Var}}%

\global\long\def\pv{\operatorname{p.v.}}%

\global\long\def\e{\mathrm{e}}%

\global\long\def\p{\mathrm{p}}%

\global\long\def\st{\text{\ :\ }}%

\global\long\def\q{\mathrm{q}}%

\global\long\def\R{\mathbb{R}}%
\global\long\def\Z{\mathbb{Z}}%
\global\long\def\E{\mathbb{E}}%
\global\long\def\EE{\mathrm{E}}%

\global\long\def\Law{\operatorname{Law}}%

\global\long\def\supp{\operatorname{supp}}%

\global\long\def\image{\operatorname{image}}%

\global\long\def\dif{{d}}%

\global\long\def\eps{\varepsilon}%

\global\long\def\sgn{\operatorname{sgn}}%

\global\long\def\tr{\operatorname{tr}}%

\global\long\def\Hess{\operatorname{Hess}}%

\global\long\def\Re{\operatorname{Re}}%

\global\long\def\Im{\operatorname{Im}}%

\global\long\def\Dif{\operatorname{D}}%

\global\long\def\divg{\operatorname{div}}%

\newcommand{\red}{\textcolor{red}}
\newcommand{\disp}{\displaystyle}

\newcommand{\bv}{{\bf v}}
\newcommand{\cZ}{{\mathcal Z}}
\newcommand{\cX}{{\mathcal X}}
\newcommand{\farc}{\frac}
\newcommand{\Rm}{{\mathbb R}}
\newcommand{\Zm}{{\mathbb Z}}
\newcommand{\Pm}{{\mathbb P}}
\newcommand{\Nm}{{\mathbb N}}
\makeatletter
\def\blfootnote{\gdef\@thefnmark{}\@footnotetext}
\makeatother

\title{Stationary solutions to the stochastic Burgers equation on the line}
\author{Alexander Dunlap\thanks{Department of Mathematics, Stanford University, 450 Jane Stanford Way, Building 380, Stanford, CA, 94305, USA}\protect\phantom{\footnotesize *}\textsuperscript{,}\thanks{Current address: Department of Mathematics, Duke University, 120 Science Dr, Durham, NC, 27708, USA} \and Cole Graham\footnotemark[1]\protect\phantom{\footnotesize *}\textsuperscript{,}\thanks{Current address: Department of Mathematics, University of Wisconsin,  480 Lincoln Drive, Madison, WI, 53706, USA} \and Lenya Ryzhik\footnotemark[1]}
\maketitle
\begin{abstract}
We consider invariant measures for the stochastic Burgers equation on $\mathbb{R}$, forced
by the derivative of a spacetime-homogeneous Gaussian noise that is white in time and smooth in
space. An invariant measure is indecomposable, or extremal, 
if it cannot be represented as a convex combination 
of other invariant measures. We show that for each~$a\in\R$, there is a unique indecomposable law of a spacetime-stationary solution with mean~$a$, in a suitable function space. We also show that solutions starting from spatially-decaying perturbations of mean-$a$ periodic functions converge in law to the extremal space-time
stationary solution with mean~$a$ as time goes to infinity.
\end{abstract}
\blfootnote{{\it Email addresses:} \email{alexander.dunlap@duke.edu}, \email{graham@math.wisc.edu}, \email{ryzhik@stanford.edu}}

\section{Introduction}

\subsection*{The stochastic Burgers equation on the line}

We consider strong solutions $u(t,x)$ to the stochastic Burgers
equation written formally as
\begin{equation}\label{eq:uPDE}
  \partial_{t}u+\frac{1}{2}\partial_{x}(u^{2})=\frac{1}{2}\partial_{x}^{2}u+\partial_{x}\dot{V},\qquad t,x\in\mathbb{R}. 
\end{equation}
Here, the potential $\dot{V}$ is a spatial smoothing, by a symmetric
mollifier $\rho\in\mathcal{C}^{\infty}(\mathbb{R})\cap H^{1}(\mathbb{R})$,
of a space-time Gaussian white noise $\dot{W}$:
\begin{equation}\label{eq:Vdot}
\dot{V}(t,x)=(\rho*\dot{W})(t,x), 
\end{equation}
where
\[
  \mathbb{E}[\dot{W}(t,x)\dot{W}(t',x')]=\delta(t-t')\delta(x-x').
\]
In \cref{eq:Vdot} and throughout the paper, $*$ denotes spatial convolution. We will often use the notation $\rho^{*2}=\rho*\rho$.

To be more precise, let $(\Omega,\mathcal{F},\mathbb{P})$ be a standard
probability space and let $W=W(t,x)$ be a cylindrical Wiener process
on
$L^{2}(\mathbb{R})$ whose covariance operator is the identity. This is 
discussed, for example, in \cite[Section 4.3.1]{DPZ14}. 
Let $\{\mathcal{F}_t\}_{t\ge0}$ be the usual filtration corresponding 
to $W$, so that~$\mathcal{F}_t\subset\mathcal{F}$ is the $\sigma$-algebra 
generated by $W|_{[0,t]\times\R}$. We do not assume that $\mathcal{F}=\bigcup_{t\ge0}\mathcal{F}_t$: 
we will freely define additional random variables throughout the paper which are independent 
of the noise $W$, and will always assume that $\Omega$ is large enough to include such random 
variables. The
Itô time differential~$\dif W$ is thus a white noise on $\mathbb{R}\times\mathbb{R}$.
The random field 
$
V(t,x)=(\rho*W)(t,x)
$ is a Gaussian process on $\R\times\R$ with a continuous modification, which is in fact spatially smooth since $\rho$ is smooth. We
we will always assume that we are working with this modification, and refer the reader to \cref{subsec:sPDEtoPDE} below for more details.
We interpret the equation \cref{eq:uPDE} as
\begin{equation}\label{eq:uPDE-1}
  \dif u=\frac{1}{2}\big[\partial_{x}^{2}u-\partial_{x}(u^{2})\big]\dif t+\dif(\partial_{x}V),\qquad t,x\in\mathbb{R}. 
\end{equation}

The random Gaussian forcing
$V$ is not uniformly bounded in space, and so neither will be 
solutions to~\cref{eq:uPDE-1}. Indeed, this would be the case even without the nonlinear term 
in \cref{eq:uPDE-1}. Thus, one needs to work with \cref{eq:uPDE-1} in weighted function
spaces that permit spatial growth. To the best of our knowledge,
previous work has not considered the well-posedness
of \cref{eq:uPDE-1} in spaces allowing as much growth as 
we require, so let us first state the  existence and uniqueness result we will need. 
We denote by $\mathcal{X}_{m}$ 
the space of continuous functions on $\mathbb{R}$ growing at infinity
slower   than $|x|^{\ell}$ for all $\ell>m$, equipped
with an appropriately weighted topology described in \cref{subsec:functionspaces}.
Our first result is that the equation \cref{eq:uPDE-1} is well-posed
in $\mathcal{X}_{m}$ as long as $m<1$.
The restriction
$m<1$ is necessary: if solutions are allowed to grow
linearly, then characteristics starting at infinity may reach the origin in a finite
time, even in the absence of noise. This is  related (by the Cole--Hopf transform, discussed below) to the familiar restriction that the initial condition
for the standard heat equation should grow more slowly than~$\exp(|x|^2)$ at infinity. 

In the following theorem, we assume that the initial condition for \cref{eq:uPDE-1}
is continuous.
We will also need to consider discontinuous
initial data, but as the spaces involved become more complicated,
we defer the more general statement to \cref{prop:thetawellposed} 
in \cref{subsec:functionspaces}. %
\begin{thm}\label{thm:existence}
Let $m\in(0,1)$. With probability $1$, there is a map
\[
\Psi:\mathcal{X}_{m}\to\mathcal{C}_{\mathrm{loc}}([0,\infty);\mathcal{X}_{m})
\]
so that for each $v\in\mathcal{X}_m$, $u=\Psi(v)$
is the unique strong solution in $\mathcal{C}_{\mathrm{loc}}([0,\infty);\mathcal{X}_{m})$
to \cref{eq:uPDE-1} with~$u(0,x)=v(x)$.
The map $\Psi$ is continuous almost surely. Finally,
the semigroup
$P_{t}f(v)=\mathbb{E}[f(u(t,\cdot))]$
has the Feller property: if $f$ is a bounded continuous function
on $\mathcal{X}_{m}$, then so is $P_{t}f$ for any $t>0$.
\end{thm}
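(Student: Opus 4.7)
The natural approach is to linearize \cref{eq:uPDE-1} via the Cole--Hopf transform. Given $v\in\mathcal{X}_m$, I set $h_0(x):=\int_0^x v(y)\,\dif y$ and $Z_0:=\exp(-h_0)$. Since $v$ grows more slowly than $|x|^\ell$ for every $\ell>m$, the function $h_0$ grows at most like $|x|^{m+1}$ up to subpolynomial factors, and $Z_0$ is strictly positive with stretched-exponential growth of exponent $1+m<2$. Setting $u=\partial_x h$ and integrating \cref{eq:uPDE-1} in $x$ yields the KPZ equation $\partial_t h = \tfrac12\partial_x^2 h - \tfrac12(\partial_x h)^2 + \dot V$; since $V$ is smooth in $x$ with temporal quadratic variation $\rho^{*2}(0)\,\dif t$, Itô's formula for $Z:=e^{-h}$ cancels the nonlinearity and leaves the linear Stratonovich SHE $\dif Z = \tfrac12 \partial_x^2 Z\,\dif t - Z\circ\dif V$.

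I would construct $Z$ via the Feynman--Kac representation
\begin{equation*}
Z(t,x) = \mathbb{E}_B\!\left[ Z_0(x+B_t)\,\exp\!\left( -\int_0^t V\bigl(\dif s,\, x+B_s\bigr) \right) \right],
\end{equation*}
where $B$ is a Brownian motion started at the origin and independent of $W$, and the time integral (against the martingale increments of $V$) is interpreted pathwise after conditioning on $B$, which is possible because $V$ is spatially smooth. The Gaussian tails of $B_t$ dominate the stretched-exponential growth of $Z_0$, which has exponent $1+m<2$, and the exponential of the noise contributes only a subleading factor from the logarithmic spatial fluctuations of $V(t,\cdot)$, so the expectation is finite. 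I would then verify that $Z$ is almost surely strictly positive, smooth in $x$, continuous in $(t,x)$, and satisfies the SHE, and set $\Psi(v)(t,\cdot):=-\partial_x Z(t,\cdot)/Z(t,\cdot)$. A further Itô computation shows $\Psi(v)$ solves \cref{eq:uPDE-1}, while pointwise bounds on $\partial_x Z$, obtained by differentiating under the expectation, transfer to $|x|^m$-type bounds on the quotient and place $\Psi(v)$ in $\mathcal{C}_{\mathrm{loc}}([0,\infty);\mathcal{X}_m)$. For uniqueness I would run the transform backward: any strong solution $u\in\mathcal{C}_{\mathrm{loc}}([0,\infty);\mathcal{X}_m)$ generates $\tilde Z:=\exp\!\bigl(-\int_0^\cdot u\,\dif y\bigr)$ solving the same linear SHE with the same initial datum, and uniqueness for the SHE in the positive stretched-exponential class of exponent less than $2$ forces $\tilde Z = Z$.

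The main obstacle I anticipate is the uniqueness statement for the linear SHE: one needs a stochastic analogue of the Tychonoff--Widder uniqueness theorem for positive solutions of the heat equation, now in the presence of the multiplicative forcing by $V$. My plan is to condition on $V$, express the difference of two candidate solutions using the Feynman--Kac kernel on $[0,t]$, and absorb the exponential of the integrated $V$ along a Brownian bridge via a Girsanov-type argument whose cost sits comfortably within the growth budget $2-(1+m)>0$. Once this uniqueness is secured, continuity of $\Psi$ in the topology of $\mathcal{C}_{\mathrm{loc}}([0,\infty);\mathcal{X}_m)$ follows from the linearity and continuous dependence on data of the Feynman--Kac formula for $Z$, and the Feller property is immediate from pathwise continuity of $\Psi$ combined with bounded convergence applied inside the expectation defining $P_t f$.
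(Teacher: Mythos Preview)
Your Cole--Hopf route is genuinely different from the paper's. The paper instead uses the Da~Prato--Debussche decomposition $u=\theta+\psi$, where $\psi$ is the explicit Gaussian solution of the linearized equation; this reduces \cref{eq:uPDE-1} to the pathwise random PDE $\partial_t\theta=\tfrac12\partial_x^2\theta-\tfrac12\partial_x(\theta+\psi)^2$, which is solved first on the torus by a mild-formulation fixed point, then extended globally in time by a weighted maximum-principle argument (\cref{prop:weightedspacesizebound}) with a custom family of weights that are nearly flat on a large, data-dependent ball, and finally a Gr\"onwall stability estimate in super-exponential weights (\cref{prop:vCauchy}) removes the periodization and yields continuity of the solution map. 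Your approach buys linearity at the level of $Z$ and a probabilistic representation; the paper's buys a direct handle on $u$ itself in the target weighted space, avoiding any ratio of large quantities.

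The real gap in your outline is the sentence ``pointwise bounds on $\partial_x Z$ transfer to $|x|^m$-type bounds on the quotient.'' This is where the substance lies. Both $\partial_x Z$ and $Z$ carry the same leading stretched-exponential factor $e^{-h_0(x)}$, so their ratio is governed entirely by subleading terms. If you write $-\partial_x Z/Z$ as a polymer expectation, you get an average of $v(x+B_t)$ plus an average of $\int_0^t \partial_x V(\dif s,x+B_s)$ under the tilted law; to place the result in $\mathcal{X}_m$ uniformly in $x$ and almost surely in $V$, you must quantify how far the tilt by $e^{-h_0(x+\cdot)}$ (whose exponent grows like $|x|^{1+m}$) and by the noise can displace $B_t$, and then show the averaged $v$ inherits the $\langle x\rangle^{m+}$ growth. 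This is precisely the step at which the hypothesis $m<1$ must do real work, and it is the Feynman--Kac analogue of the paper's \cref{prop:weightedspacesizebound}---not something to be absorbed into a clause. The Tychonoff--Widder-type uniqueness for the SHE that you single out as the main obstacle is real but comparatively standard; the growth control on the quotient is the more substantial omission.
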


A proof of \cref{thm:existence}, as well as \cref{prop:thetawellposed}
handling discontinuous initial data, occupies \cref{sec:solntheory}.

\subsection*{Space-time stationary solutions: existence and stability}
 
Our main interest is in solutions to \cref{eq:uPDE-1}
that are statistically stationary  under both  the time evolution and
translations in space. 
We will need to consider invariant measures for  ensembles of solutions 
$\mathbf{u}=(u_{1},\ldots,u_{N})=\mathbf{u}(t,x)$ 
to~\cref{eq:uPDE-1}, satisfying %
\begin{equation}\label{eq:uPDE-many}
  \dif u_{i}=\frac{1}{2}[\partial_{x}^{2}u_{i}-\partial_{x}(u_{i}^{2})]\dif t+\dif(\partial_{x}V), \qquad t,x\in\mathbb{R},\qquad i=1,\ldots,N. 
\end{equation}
These equations are decoupled. The solutions $u_1,\ldots,u_N$ have different initial conditions but are all 
subject to the same noise. It may often be convenient for the reader
to think of the case $N=1$, which corresponds to a single initial condition. 
We will use the $N>1$ case to prove some statements for families of coupled solutions
that we will need, in particular, for the ordering results below.

Let us first define precisely what we mean by invariant
measures. Let $\mathscr{P}(\mathcal{X}_{m}^{N})$ be the space of
probability measures on $\mathcal{X}_{m}^{N}$, and for each
$\nu\in\mathscr{P}(\mathcal{X}_{m}^{N})$ and $t\ge 0$, let 
$P_{t}^{*}\nu=\Law(\mathbf{u}(t,\cdot))$. Here, $\mathbf{u}$
is a solution to \cref{eq:uPDE-many} with initial condition $\mathbf{u}(0,\cdot)\sim\nu$. 
When we consider such solutions, we always assume that the noise $V$ is independent of the random initial condition $\mathbf{u}(t,\cdot)$.
The set of invariant measures under \cref{eq:uPDE-many} is 
\[
\overline{\mathscr{P}}(\mathcal{X}_{m}^{N})=
\{\nu\in\mathscr{P}(\mathcal{X}_{m}^{N})\st P_{t}^{*}\nu=\nu\}.
\]
To formulate the spatial translation invariance, we first define, for
$x\in\mathbb{R}$ and $\mathbf{v}=(v_{1},\ldots,v_{N}):\mathbb{R}\to\mathbb{R}^{N}$,
 the translation operator on $\mathcal{X}_{m}^{N}$ as
\begin{equation}
  \tau_{x}\mathbf{v}(y)=(v_{1}(y-x),\ldots,v_{N}(y-x)),\label{eq:translationoperator}
\end{equation}
and the corresponding operator $(\tau_{x})_*$ on $\mathscr{P}(\mathcal{X}_{m}^{N})$.
For
$G=\mathbb{R}$
or $G=L\mathbb{Z}$ for some $L > 0$, we set %
\begin{equation}\label{jun2502}
\mathscr{P}_{G}(\mathcal{X}_{m}^{N})=
\{\nu\in\mathscr{P}(\mathcal{X}_{m}^{N})\st(\tau_{x})_{*}\nu=\nu\text{ for all }x\in G\},
\end{equation}
the space of probability measures on $\mathcal{X}^N_m$ that are invariant under the action of $G$. We also define the corresponding invariant measures under \cref{eq:uPDE-many}:
\[
  \overline{\mathscr{P}}_{G}(\mathcal{X}_{m}^{N})=\overline{\mathscr{P}}(\mathcal{X}_{m}^{N})\cap\mathscr{P}_{G}(\mathcal{X}_{m}^{N}).
\]
The space $\overline{\mathscr{P}}_{G}(\mathcal{X}_{m}^{N})$ is the space of 
invariant measures corresponding to ``space-time stationary solutions.'' Here, spatial
stationarity is understood as either with respect to all spatial translations if $G=\R$,
or as $L$-periodicity if $G=L\Z$. 

The space $\overline{\mathscr{P}}_{G}(\mathcal{X}_{m}^{N})$ is convex, and 
we denote by  $\overline{\mathscr{P}}_{G}^{\mathrm{e}}(\mathcal{X}_{m}^{N})$ 
the set of  its extremal elements. 
We note
that extremality  is equivalent to the ergodicity property that if
$A\subset\mathcal{X}_{m}^{N}$ is a Borel subset such that $\tau_{x}A=A$
for all $x\in G$ and $P_{t}\mathbf{1}_{A}=\mathbf{1}_{A}$ $\mu$-a.s.
for all $t\ge0$, then either $\mu(A)=0$ or $\mu(A)=1$. The corresponding 
equivalence for measures that are invariant under a Markov semigroup can be 
found in Theorem 5.1 of \cite{H08}. Our measures are invariant both under
a Markov semigroup and a group of translations. In that case, the
equivalence of extremality and ergodicity
can be proved using the corresponding statement for invariant measures under a set of maps 
in Proposition~12.4 of~\cite{Phe01}, 
along with the equivalence proved in Corollary 5.3 of~\cite{H08}.
On an intuitive level, both extremality  and ergodicity are ways to formalize  
indecomposability -- such invariant measures 
represent the ``building blocks'' of the possible long time behaviors.

To formulate our result on the existence, uniqueness and properties
of the extremal invariant measures, 
we will use an auxiliary random variable $X$, depending on $G$. 
If~$G=\R$, we let~$X=0$ a.s.,  and if $G=L\Z$ with some $L>0$, then  $X \sim \mathrm{Uniform}([0,L])$.
\begin{thm}\label{thm:maintheorem-classification}Fix $m\in[1/2,1)$, $N\in\mathbb{N}$, and $G=\R$ or $G=L\Z$ for some $L>0$. The random variable $X$ is independent of all other random variables.
For each $\mathbf{a}\in\mathbb{R}^{N}$,
there exists a unique $\nu_{\mathbf{a}}\in\overline{\mathscr{P}}_{G}^{\mathrm{e}}(\mathcal{X}_{m}^{N})$
such that if $\mathbf{v}=(v_{1},\ldots,v_{N})\sim\nu_{\mathbf{a}}$,
then $\mathbb{E}\mathbf{v}(X) = \mathbf{a}$ and $\mathbb{E} |\mathbf{v}(X)|^2 <\infty$.
Moreover, the measures $\nu_{\mathbf{a}}$ satisfy the following properties.
\begin{enumerate}[leftmargin = 1.5cm, label = \textup{(P\arabic*)}, ref = (\textup{P\arabic*})]
 \item\label{enu:ordered} Order: with probability one, we have 
 \[
\hbox{ $\sgn((v_{j}-v_{k})(x))=\sgn(a_{j}-a_{k})$,
 for each $j,k\in\{1,\ldots,N\}$ and all $x$
in $\mathbb{R}$.}
\]
\item\label{enu:shearinvariant} 
Shear invariance: if $c\in\mathbb{R}$, then $\nu_{\mathbf{a}+(c,\ldots,c)}=\Law(\mathbf{v}+(c,\ldots,c))$,
where $\mathbf{v}\sim\nu_{\mathbf{a}}$.
\item\label{enu:alsoinvariant} $G$-independence:
the measure $\nu_{\mathbf{a}}$ is also an element of $\overline{\mathscr{P}}_{G'}^{\mathrm{e}}(\mathcal{X}_{1/2}^{N})$
for $G'=\R$ and $G'=L'\Z$, for all $L'>0$.
\item\label{enu:smoothtempereddistn} If $\mathbf{v}\sim\nu_{\mathbf{a}}$ then with probability $1$, $\mathbf{v}$ is (spatially) smooth, and $\mathbf{v}$ and all of its derivatives grow at most polynomially at infinity.
  \item\label{enu:decomposesamemean} If $\nu_{\mathbf{a}}=(1-q)\mu_{0}+q\mu_{1}$ for some $q\in (0,1)$ and measures $\mu_{0},\mu_{1}\in\mathscr{P}_{G}(\mathcal{X}_{m}^{N})$,
    and $\mathbf{v}\sim\mu_{0}$, then $\mathbb{E}\mathbf{v}(x)=\mathbf{a}$.
  \end{enumerate}
\end{thm}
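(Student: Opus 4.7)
The plan is to construct $\nu_{\mathbf{a}}$ first in the periodic case $G = L\Z$ and then pass to the line. On the torus, Burgers conserves the spatial mean, so a Krylov--Bogolyubov argument using the Feller property from \cref{thm:existence} produces invariant measures with each prescribed mean $\mathbf{a}$. Uniqueness on the torus follows from a coupling/synchronization argument; the Cole--Hopf transform $u = -\partial_x\log Z$, which converts \cref{eq:uPDE-1} to the multiplicative stochastic heat equation $\dif Z = \tfrac{1}{2}\partial_x^2 Z\,\dif t - Z\,\dif V$, is particularly useful here since ratios of positive solutions of SHE contract on a compact spatial domain. Letting $L \to \infty$ along a subsequence produces candidate invariant measures on $\R$, with tightness in $\mathcal{X}_m$ following from uniform second-moment control at the base point $X$; the lower bound $m \geq 1/2$ enters precisely to accommodate the Brownian-type spatial fluctuations of the stationary profile.

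The heart of the proof is uniqueness in $\overline{\mathscr{P}}_{\R}^{\mathrm{e}}(\mathcal{X}_m^N)$, which I would establish via a one-force-one-solution (1F1S) principle: two extremal stationary solutions driven by the same noise and sharing the mean $\mathbf{a}$ must coincide almost surely. Stationary Burgers solutions correspond, via Cole--Hopf, to normalized positive solutions of SHE on $\R$ with a prescribed exponential tilt related to $\mathbf{a}$; Harnack-type estimates, together with the strict convexity of the Burgers flux (which produces shock coalescence in the Lax--Oleinik picture), yield 1F1S. Combined with the periodic construction, this identifies any $\nu \in \overline{\mathscr{P}}_{\R}^{\mathrm{e}}$ with a given mean as the $L \to \infty$ limit of the unique periodic invariant measures, giving both existence and uniqueness on the line.

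Given uniqueness, the five properties follow more systematically. For \cref{enu:ordered}, the difference $w = v_j - v_k$ of two solutions driven by the same noise satisfies the linear advection-diffusion $\partial_t w = \tfrac{1}{2}\partial_x^2 w - \tfrac{1}{2}\partial_x\bigl((v_j + v_k)w\bigr)$, so the parabolic maximum principle preserves its sign; the ordering is built into the periodic approximation by starting from ordered constant initial data. Property \cref{enu:shearinvariant} is the Galilean invariance of Burgers combined with stationarity of $V$: the transformation $u(t,x) \mapsto u(t,x-ct) + c$ maps a solution to a solution driven by a translated noise of the same law. Property \cref{enu:alsoinvariant} combines uniqueness with the observation that any $\R$-stationary measure is automatically $L\Z$-stationary, with extremality transferring via uniqueness. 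Property \cref{enu:smoothtempereddistn} is parabolic regularity (the noise is spatially smooth) together with weighted Schauder estimates. Property \cref{enu:decomposesamemean} is an ergodic statement: the spatial mean $\E\mathbf{v}(X)$ is a deterministic functional on $G$-stationary measures by a translation ergodic theorem, so each component of any $G$-stationary decomposition of $\nu_{\mathbf{a}}$ must share the mean $\mathbf{a}$.

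The main obstacle is establishing 1F1S on the whole line. On the torus, synchronization admits a finite-volume contraction, but on $\R$ the $L^2$-difference of two candidate stationary solutions is typically infinite, so global contractions must be replaced by localized energy bounds supplemented by careful control of Cole--Hopf ratios (or minimizing geodesics) at spatial infinity. Extracting coalescence from the convexity of the Burgers flux in this non-compact setting, while respecting the mean constraint at the random point $X$, is the most delicate step; once it is in hand, the remaining structure of the theorem follows from the mechanisms above.
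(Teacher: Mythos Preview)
Your proposal follows a different route from the paper, and the central step you flag as ``the main obstacle'' is precisely the one the paper avoids. The paper does \emph{not} construct the invariant measures by first working on the torus and letting $L\to\infty$, nor does it prove a one-force-one-solution principle. Instead, it works directly on $\R$ throughout. Existence comes from a Krylov--Bogoliubov argument applied to solutions with constant initial data on $\R$; the required tightness follows from a uniform second-moment bound $\E u(t,x)^2\le\|\rho\|_{L^2}^2$, which is obtained by showing that $\gamma(t)=\E h(t,0)$ is increasing and concave via the Feynman--Kac formula (\cref{prop:gamma}). Extremality of the limit (\cref{prop:limitsareergodic}) is proved by a minimal-variance argument: any Krylov--Bogoliubov limit from constant initial data has the smallest possible variance among all stationary measures with the same mean, and this is incompatible via the law of total variance with a nontrivial convex decomposition into ordered pieces.

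Uniqueness is handled not by synchronization but by an \emph{ordering theorem} (\cref{thm:ordering}): any two components of a jointly space-time stationary solution to \cref{eq:uPDE-many} with finite second moments are almost surely ordered. This is proved by combining $L^1$-contraction in the probability space (\cref{prop:L1omega}) with the parabolic Hopf lemma to rule out tangential crossings. Given ordering, two extremal invariant measures with the same mean can be coupled into a jointly stationary pair (\cref{prop:couple}); since the components are ordered and have equal mean, they coincide. No 1F1S is needed, and the paper explicitly notes (in the introduction) that adapting the 1F1S approach of Bakhtin--Li from kick forcing to white-in-time forcing is unclear.

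Your sketch of \cref{enu:decomposesamemean} is also incomplete: the measures $\mu_0,\mu_1$ are only assumed to lie in $\mathscr{P}_G$, not $\overline{\mathscr{P}}_G$, so a spatial ergodic theorem alone does not force the mean to equal $\mathbf{a}$. The paper instead shows that the time-averaged pushforwards of $\mu_i$ are tight and hence have subsequential limits $\overline{\mu}_i\in\overline{\mathscr{P}}_G$; extremality of $\nu_{\mathbf{a}}$ forces $\overline{\mu}_0=\overline{\mu}_1=\nu_{\mathbf{a}}$, and conservation of the mean under the dynamics (\cref{prop:L1omegaconservation}) then transfers the value $\mathbf{a}$ back to $\mu_0$ and $\mu_1$.
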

Properties \cref{enu:ordered} and \cref{enu:shearinvariant}
in the above theorem have a clear intuitive meaning:
space-time stationary solutions are ordered and have shear invariance.
Property \cref{enu:smoothtempereddistn} is simply a reflection of 
the spatial parabolic smoothing of the viscous Burgers equation. It does not, 
of course, extend to smoothness in time if $\mathbf{v}$ is allowed to evolve under the dynamics: 
this is precluded by the presence of the temporally-rough forcing $\dif V$.
As for  \cref{enu:alsoinvariant}, let us note that, a priori, the extremal property of an element 
of~$\overline{\mathscr{P}}_{G}(\mathcal{X}_{m}^{N})$ depends on the group~$G$ of translations. Even if
 $G' \subset G$ so that~$\overline{\mathscr{P}}_{G}(\mathcal{X}_{m}^{N}) \subset \overline{\mathscr{P}}_{G'}(\mathcal{X}_{m}^{N})$,  one might worry that
an extremal element of~$\overline{\mathscr{P}}_{G}(\mathcal{X}_{m}^{N})$ is not  necessarily extremal in $\overline{\mathscr{P}}_{G'}(\mathcal{X}_{m}^{N})$, since the
latter is potentially a larger set. Property \cref{enu:alsoinvariant} rules out this situation.
Finally, property \cref{enu:decomposesamemean} is a weak version of ergodicity under just $G$ (not under the dynamics): if $\mu\in\overline{\mathcal{P}}_G(\mathcal{X}^N_m)$ can be decomposed into multiple measures that are $G$-invariant, even if not $P_t$-invariant, then those measures must have the same mean.

We emphasize that the properties \cref{enu:ordered}--\cref{enu:decomposesamemean} 
in \cref{thm:maintheorem-classification} are not part of the uniqueness statement. That is, 
to know that an extremal invariant measure $\nu\in\overline{\mathscr{P}}^{\mathrm{e}}_G(\mathcal{X}^N_m)$ is equal to $\nu_\mathbf{a}$, 
we need only know that $\E \mathbf{v}(X)=\mathbf{a}$ and $\mathbb{E} |\mathbf{v}(X)|^2 <\infty$ for $\mathbf{v}\sim\nu$.
In particular, for any $\nu\in\overline{\mathscr{P}}_G^{\mathrm{e}}(\mathcal{X}^N_m)$ such that $\mathbb{E} |\mathbf{v}(X)|^2 <\infty$ for $\mathbf{v}\sim\nu$, there is an $\mathbf{a}\in\R^N$ so that $\nu=\nu_\mathbf{a}$.

Let us next discuss the long-time behavior of the dynamics \cref{eq:uPDE-many}.
We denote by 
$L^{\infty}(\mathbb{R}/L\mathbb{Z})$  the space of $L$-periodic  functions in $L^\infty(\mathbb{R})$. We will prove a stability result for initial conditions lying in spaces of functions that can be bounded above and below by periodic functions whose averages can be made arbitrarily close to each other. The following definition states this precisely. We will use the partial order
$\preceq$ on $\mathbb{R}^{N}$ defined by
\begin{equation}
  (x_{1},\ldots,x_{N})\preceq(y_{1},\ldots,y_{N})\iff\text{\ensuremath{x_{i}\le y_{i}} for each \ensuremath{i=1,\ldots,N}.}\label{eq:partialorder}
\end{equation}
Once again, it may be convenient for the reader to think of the case $N=1$. 
\begin{defn}\label{def:basinsofattraction}
For $\mathbf{a}\in\mathbb{R}^{N}$,
we denote by $\mathscr{B}_{\mathbf{a}}$ the set of all $\mathbf{v}\in L^\infty(\mathbb{R})^{N}$
such that for every $\eps>0$, there exists an $L_\eps\in(0,\infty)$ and
$\mathbf{v}_{-}^\eps,\mathbf{v}_{+}^\eps\in L^\infty(\mathbb{R}/L\mathbb{Z})^{N}$
so that $\mathbf{v}_{-}^\eps\preceq\mathbf{v}\preceq\mathbf{v}_{+}^\eps$ and
\begin{equation}\label{eq:basinofattractioncond}
\frac{1}{L_\eps}\int_0^{L_\eps}\mathbf{v}_+^\eps(x)\,\dif x
-(\eps,\ldots,\eps)\preceq \mathbf{a}\preceq
\frac{1}{L_\eps}\int_0^{L_\eps}\mathbf{v}_-^\eps(x)\,\dif x + (\eps,\ldots,\eps).
\end{equation}
\end{defn}
The following proposition gives a reasonably
general sufficient condition for a function to be in~$\mathscr{B}_{\mathbf{a}}$.
\begin{prop}\label{prop:stability-criterion}
Suppose that a function $\mathbf{v}\in L^\infty(\mathbb{R})^{N}$
can be written as $\mathbf{v}=\mathbf{v}_{\mathrm{per}}+\mathbf{v}_{\mathrm{int}}+\mathbf{v}_{\mathrm{z}}$,
where $\mathbf{v}_{\mathrm{per}}\in L^\infty(\mathbb{R}/L\mathbb{Z})^{N}$
for some $L\in(0,\infty)$, $\mathbf{v}_{\mathrm{int}}\in(L^{1}(\mathbb{R})\cap L^\infty (\mathbb{R}))^{N}$,
and $\mathbf{v}_{\mathrm{z}}\in L^\infty(\mathbb{R})^{N}$
is such that
\[
    \lim\limits _{|x|\to\infty}|\mathbf{v}_{\mathrm{z}}(x)|=0.
\]
 Then $\mathbf{v}\in\mathscr{B}_{\mathbf{a}}$, where $\mathbf{a}=\disp\frac{1}{L}\int_{0}^{L}\mathbf{v}_{\mathrm{per}}(x)\,\dif x$.
\end{prop}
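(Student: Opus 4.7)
The plan is to build, for each $\varepsilon > 0$, the sandwich functions as small periodic corrections of $\mathbf{v}_{\mathrm{per}}$ itself. Since $\mathbf{v}_{\mathrm{per}}$ already averages to $\mathbf{a}$ over any integer multiple of $L$, it suffices to find $L_\varepsilon$-periodic $g_\pm\colon\R\to\R^{N}$ bracketing $\mathbf{h}:=\mathbf{v}_{\mathrm{int}}+\mathbf{v}_{\mathrm{z}}$ componentwise, $g_-\preceq\mathbf{h}\preceq g_+$, with averages that can be made arbitrarily small. The natural choice is the componentwise periodic envelope,
\[
g_+(x) := \sup_{k \in \Z} (\mathbf{h})_+(x + kL_\varepsilon),
\qquad
g_-(x) := -\sup_{k \in \Z} (\mathbf{h})_-(x + kL_\varepsilon),
\]
where $(\cdot)_\pm := \max(\pm\cdot,0)$ is componentwise. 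These are $L_\varepsilon$-periodic, bounded by $\|\mathbf{h}\|_\infty \leq \|\mathbf{v}_{\mathrm{int}}\|_\infty + \|\mathbf{v}_{\mathrm{z}}\|_\infty < \infty$, and satisfy the pointwise sandwich by inspection. I would then set $\mathbf{v}_{\pm}^{\varepsilon} := \mathbf{v}_{\mathrm{per}} + g_\pm$ with $L_\varepsilon$ chosen as an integer multiple of $L$ so that $\mathbf{v}_{\pm}^{\varepsilon}$ is genuinely $L_\varepsilon$-periodic and lies in $L^\infty(\R/L_\varepsilon\Z)^N$.

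The main task is to control the averages $\frac{1}{L_\varepsilon}\int_0^{L_\varepsilon} g_\pm(x)\,\dif x$. Dominating $(\mathbf{h})_+ \preceq (\mathbf{v}_{\mathrm{int}})_+ + (\mathbf{v}_{\mathrm{z}})_+$, I would handle the two summands separately. For the integrable piece, a rearrangement identity yields
\[
\int_0^{L_\varepsilon} \sup_{k \in \Z} (\mathbf{v}_{\mathrm{int}})_+(x + kL_\varepsilon)\,\dif x
\leq \sum_{k \in \Z}\int_0^{L_\varepsilon} (\mathbf{v}_{\mathrm{int}})_+(x + kL_\varepsilon)\,\dif x
= \|(\mathbf{v}_{\mathrm{int}})_+\|_{L^1} \leq \|\mathbf{v}_{\mathrm{int}}\|_{L^1},
\]
so this contributes $O(L_\varepsilon^{-1})$ to the average. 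For the vanishing piece, given $\delta > 0$, choose $R$ with $|\mathbf{v}_{\mathrm{z}}(y)| < \delta$ for $|y| > R$; then for $L_\varepsilon > 2R$ and $x\in[R,L_\varepsilon-R]$, every translate $x+kL_\varepsilon$ lies outside $[-R,R]$, so $\sup_k (\mathbf{v}_{\mathrm{z}})_+(x+kL_\varepsilon) \preceq \delta\,(1,\ldots,1)$, giving an average contribution of at most $2R\|\mathbf{v}_{\mathrm{z}}\|_\infty / L_\varepsilon + \delta$.

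Given $\varepsilon > 0$, I would first fix $\delta = \varepsilon/4$ (which fixes $R$), then choose $L_\varepsilon \in L\Z$ large enough that $(\|\mathbf{v}_{\mathrm{int}}\|_{L^1} + 2R\|\mathbf{v}_{\mathrm{z}}\|_\infty)/L_\varepsilon < \varepsilon/4$. Combining gives $\frac{1}{L_\varepsilon}\int_0^{L_\varepsilon} g_+(x)\,\dif x \preceq (\varepsilon,\ldots,\varepsilon)$, and symmetrically $\frac{1}{L_\varepsilon}\int_0^{L_\varepsilon} g_-(x)\,\dif x \succeq -(\varepsilon,\ldots,\varepsilon)$; together with $\frac{1}{L_\varepsilon}\int_0^{L_\varepsilon} \mathbf{v}_{\mathrm{per}}(x)\,\dif x = \mathbf{a}$ (because $L_\varepsilon\in L\Z$), this establishes \cref{eq:basinofattractioncond}. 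The only subtle point is that $\mathbf{v}_{\mathrm{int}}$ need not vanish pointwise at infinity (narrow, tall, summable spikes are possible), so one cannot argue by pointwise smallness outside a compact set; the $L^1$-integrability must enter through the rearrangement above. Everything else is routine.
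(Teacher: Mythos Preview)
Your proof is correct and follows essentially the same approach as the paper: construct $L_\eps$-periodic envelopes for the perturbation $\mathbf{h}=\mathbf{v}_{\mathrm{int}}+\mathbf{v}_{\mathrm{z}}$, then bound the average of the $L^1$ piece by $\|\mathbf{v}_{\mathrm{int}}\|_{L^1}/L_\eps$ via the crude bound $\sup_k\le\sum_k$, and the average of the vanishing piece by uniform smallness outside a compact set. The only cosmetic difference is that you periodize $(\mathbf{h})_\pm$ as a whole and split afterward, whereas the paper periodizes $|\mathbf{v}_{\mathrm{int}}|$ and uses a separate monotone envelope $w_j(x)=\sup_{|y|\ge|x|}|v_{\mathrm{z},j}(y)|$ for the vanishing part; your version is arguably tidier since it makes the $L_\eps$-periodicity of $\mathbf{v}_\pm^\eps$ immediate.
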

We can now state our stability result.
\begin{thm}\label{thm:maintheorem-stability}
Let $m\in[1/2,1)$, 
$\mathbf{a}\in\mathbb{R}^{N}$, and $\mathbf{u}$ be a solution
  to \cref{eq:uPDE-many} with initial condition~$\mathbf{v}\in\mathscr{B}_{\mathbf{a}}$.
  Then we have
  \begin{equation}
    \lim_{t\to\infty}\Law(\mathbf{u}(t,\cdot))=\nu_{\mathbf{a}}\label{eq:convinlaw-1}
  \end{equation}
  in the sense of weak convergence of probability measures
  on $\mathcal{X}_{m}^{N}$.
\end{thm}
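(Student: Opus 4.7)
The plan is to sandwich $\mathbf{u}(t,\cdot)$ between two $L_\eps$-periodic solutions supplied by \cref{def:basinsofattraction}, show that those sandwiching solutions converge in law to $\nu_{\mathbf{a}_\pm^\eps}$ for their periodic averages $\mathbf{a}_\pm^\eps$, and pass $\eps\to 0$. The essential tool is pathwise monotonicity for \cref{eq:uPDE-many} under a common driving noise: if $\mathbf{v}^{(1)}\preceq \mathbf{v}^{(2)}$ componentwise, then the corresponding solutions satisfy the same ordering at every later time, almost surely. This is standard scalar parabolic comparison and should fall out of the solution theory developed in \cref{sec:solntheory}. Given $\eps>0$, \cref{def:basinsofattraction} furnishes $L_\eps$-periodic bounds $\mathbf{v}_-^\eps\preceq \mathbf{v}\preceq \mathbf{v}_+^\eps$ with period-averages $\mathbf{a}_\pm^\eps$ lying within $(\eps,\ldots,\eps)$ of $\mathbf{a}$; driving all three equations by the same noise we obtain the pointwise sandwich $\mathbf{u}_-^\eps(t,\cdot)\preceq \mathbf{u}(t,\cdot)\preceq \mathbf{u}_+^\eps(t,\cdot)$ for every $t\ge 0$.

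The heart of the argument is to show $\Law(\mathbf{u}_\pm^\eps(t,\cdot))\to\nu_{\mathbf{a}_\pm^\eps}$ weakly in $\mathcal{X}_m^N$ as $t\to\infty$. I would first randomize the initial condition by an independent uniform shift $X_\eps\in[0,L_\eps]$, setting $\tilde{\mathbf{v}}_\pm^\eps=\tau_{X_\eps}\mathbf{v}_\pm^\eps$; spatial stationarity of $V$ then keeps $\Law(\tilde{\mathbf{u}}_\pm^\eps(t,\cdot))\in\mathscr{P}_{L_\eps\Z}(\mathcal{X}_m^N)$ with mean $\mathbf{a}_\pm^\eps$ for every $t\ge 0$. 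Using the boundedness of $\mathbf{v}_\pm^\eps$ and moment bounds from \cref{thm:existence}, this family is tight on $\mathcal{X}_m^N$, and a Krylov--Bogoliubov time-average produces a $P_t^*$-invariant measure $\mu\in\overline{\mathscr{P}}_{L_\eps\Z}(\mathcal{X}_m^N)$ with mean $\mathbf{a}_\pm^\eps$ and finite second moment. The ergodic decomposition then writes $\mu=\int \nu_\mathbf{b}\,\dif q(\mathbf{b})$ for some probability measure $q$ on $\R^N$ satisfying $\int \mathbf{b}\,\dif q(\mathbf{b})=\mathbf{a}_\pm^\eps$. To collapse $q$ onto $\delta_{\mathbf{a}_\pm^\eps}$ I would rerun the analysis in the joint $N$-component framework of \cref{thm:maintheorem-classification}, coupling $\tilde{\mathbf{v}}_-^\eps$ and $\tilde{\mathbf{v}}_+^\eps$ as components of a single vector solution; the order property \cref{enu:ordered} constrains the joint extremal measures, and the double-sided sandwich then pins the marginals down to the unique $\nu_{\mathbf{a}_\pm^\eps}$. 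Undoing the $X_\eps$-shift using the $\R$-invariance granted by \cref{enu:alsoinvariant} recovers the corresponding convergence for the unshifted $\mathbf{u}_\pm^\eps$.

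The conclusion then follows by integrating bounded continuous test functions $f:\mathcal{X}_m^N\to\R$ that are monotone with respect to $\preceq$ against the sandwich to obtain
\[
\E f(\mathbf{u}_-^\eps(t,\cdot))\le \E f(\mathbf{u}(t,\cdot))\le \E f(\mathbf{u}_+^\eps(t,\cdot)),
\]
then sending $t\to\infty$ and $\eps\to 0$. The outer quantities converge to $\int f\,\dif\nu_\mathbf{a}$ by continuity of $\mathbf{a}\mapsto \nu_\mathbf{a}$ in the weak topology, which itself follows from the shear invariance \cref{enu:shearinvariant} (along diagonal shifts) together with an ordering argument via \cref{enu:ordered} for the remaining componentwise variation. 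Since monotone bounded continuous functions separate $\mathscr{P}(\mathcal{X}_m^N)$, the desired weak convergence \cref{eq:convinlaw-1} follows. The main obstacle is the second paragraph, specifically ruling out a nontrivial spread in the ergodic decomposition $q$ of the Krylov--Bogoliubov limit and upgrading time-averaged convergence to pointwise-in-$t$ convergence; both hinge on fully exploiting the $N$-component coupling and the order/uniqueness structure of \cref{thm:maintheorem-classification}, rather than invoking uniqueness as a black box.
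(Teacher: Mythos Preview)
Your sandwich strategy and use of the comparison principle match the paper's final step exactly. The serious gap is the one you yourself flag: upgrading time-averaged convergence of the periodic solutions to genuine convergence in $t$. Krylov--Bogoliubov plus the classification in \cref{thm:maintheorem-classification} only tells you that every weak subsequential limit of the Ces\`aro means $\frac{1}{T}\int_0^T P_t^*\delta_{\mathbf{v}_\pm^\eps}\,\dif t$ equals $\nu_{\mathbf{a}_\pm^\eps}$; it says nothing about $P_t^*\delta_{\mathbf{v}_\pm^\eps}$ itself. Your sketch offers no mechanism for this step. The paper supplies one: the $L^1(\Omega\times\Lambda_G)$-contraction of \cref{prop:L1omega} implies that the dynamics is a contraction in the Wasserstein-1 metric $d_{G,N}$ built on a weighted $L^1$ space (\cref{prop:Ptdecreasesnorm}). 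Hence $t\mapsto d_{G,N}(P_t^*\delta_{\mathbf{v}_\pm^\eps},\nu_{\mathbf{a}_\pm^\eps})$ is nonincreasing, and once the Ces\`aro averages converge in $d_{G,N}$, so do the individual laws. An auxiliary step (\cref{prop:fixedtimediffs}) shows $d_{G,N}(P_t^*\delta_{\mathbf{v}},P_{t+s}^*\delta_{\mathbf{v}})\to 0$, which closes the argument via a triangle inequality. None of this structure is present in your proposal.

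A second issue is the claim that monotone bounded continuous functions separate probability measures on $\mathcal{X}_m^N$. This is not a standard fact on an infinite-dimensional function space with the pointwise partial order, and you give no argument for it. The paper sidesteps this entirely: instead of testing against monotone functions, it couples $\mathbf{u}$ to the solution $\mathbf{u}_0$ from constant initial data inside a single $2N$-component system, uses the sandwich $\mathbf{u}_-\preceq\mathbf{u},\mathbf{u}_0\preceq\mathbf{u}_+$ to bound $\|\mathbf{u}(t,\cdot)-\mathbf{u}_0(t,\cdot)\|_{\mathcal{Y}_G^N}\le\|\mathbf{u}_+(t,\cdot)-\mathbf{u}_-(t,\cdot)\|_{\mathcal{Y}_G^N}$, and then shows the right-hand side has expectation tending to $2N\eps\cdot C$ using the already-established convergence of the periodic solutions. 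This yields $\mathbf{u}(t,\cdot)-\mathbf{u}_0(t,\cdot)\to 0$ in law in $\mathcal{Y}_G^N$, whence any subsequential limit of $\Law(\mathbf{u}(t,\cdot))$ in $\mathcal{X}_{1/2}^N$ must be $\nu_{\mathbf{a}}$. Finally, the randomization by a uniform shift $X_\eps$ is unnecessary: the noise is $\R$-stationary, so the law of the solution from a deterministic $L_\eps$-periodic initial condition is already $L_\eps\Z$-invariant.
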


We
note that \cref{eq:convinlaw-1} can be upgraded to convergence as probability measures on spaces
of higher regularity using parabolic regularity estimates. Because
the norms involved become rather complicated, we direct the reader
to \cref{lem:mildregularitypoly} below.  

Results similar to ours were obtained by Bakhtin and Li in \cite{BL19},
using completely different methods. In that paper, the authors considered
\cref{eq:uPDE-1} with driving noise $V$ that is not a Wiener process
but rather a step process that jumps at integer times. This means
that the solution only feels ``kicks'' at integer times, rather
than white-in-time forcing. Their approach considers the question
from the point of view of directed polymers. In addition to what we
prove, they show that if the solution is started at a \emph{negative}
time $-T$, then as $T\to\infty$ the solution at time $0$ converges
 almost surely to a stationary initial condition (the
 one-force-one-solution principle). They also prove somewhat
larger basins of attraction than those described in \cref{def:basinsofattraction} (including, in particular, rarefaction waves).
However, their proof uses the properties of the kick forcing in a
serious way, and an adaptation to the white-in-time case is not clear.
Our work extends many of the results of \cite{BL19} to the white-in-time
setting, and provides a completely different, PDE-based perspective
on the problem.

The work \cite{BL19} is part of a two-decade-long program to understand
the attractors of the stochastic Burgers equation using the Lax--Oleinik
formula in the inviscid case or directed polymers in the viscous
case; see e.g. \cite{Bak16,BCK14,BL18,Bor18,EKMS00,IK03} and the
reviews \cite{BK18,BK07}. Part of the motivation for this program
is the goal of understanding the KPZ universality phenomenon, as the
equation is conjectured to lie in the KPZ universality class. We refer
to \cite{BK18} for more details and a fascinating discussion.

Our setting and PDE-based approach are closely related to those considered
by Boritchev in~\cite{Bor13sharp} on the one-dimensional torus, with
a multi-dimensional extension in \cite{Bor16multi}, and  a general review 
given in  \cite{Bor14Review}.
In particular, \cite{Bor13sharp} establishes the existence and uniqueness
of invariant measures for~\cref{eq:uPDE-1} on the torus. Existence
of such measures was previously shown in \cite{DPG94}; see also \cite{DPDT94} for the case when the noise is also white in space. As in the
present paper, the classification of stationary solutions in \cite{Bor13sharp}
is based on contractive properties of the Burgers equation. However,
\cite{Bor13sharp} uses $L^{1}$-contraction and a maximum principle
for $\partial_{x}u$ to establish a Doeblin-type condition and show
that all mean-$0$ solutions must converge to the unique mean-$0$
stationary solution. This relies on the compactness of the domain
in important ways. In the whole space, instead of using 
Poincar\'e inequality-type ideas, 
we show that any invariant measures~$\nu_{1},\nu_{2}\in\overline{\mathscr{P}}(\mathcal{X}_{m})$
have a coupling~$\nu\in\overline{\mathscr{P}}(\mathcal{X}_{m}^{2})$,
and, moreover, that if~$\mathbf{v}\sim\nu$, then the components of
$\mathbf{v}$ are ordered almost surely. This ordering allows us to
classify the laws of extremal stationary solutions. It is here that 
using $N>1$ in \cref{eq:uPDE-many} becomes crucial.

The stochastic Burgers equation with  unmollified spacetime
white noise, or the spatial gradient of spacetime white noise, has
also been the subject of significant interest in the literature. Much
of this work, such as~\cite{BCJL94,DPDT94,Gyo98,GN99,H11,HW13}, principally
concerns well-posedness for the equation, which is of course a more
difficult problem when the noise is spatially rough than when it is
smooth.
Well-posedness of the equation driven by the spatial gradient
of spacetime white noise is essentially the same problem as the
well-posedness of the KPZ equation driven by spacetime white noise,
as considered in, for example, \cite{BG97,GP17,Hai13,PR19}.
Ergodicity properties
for the stochastic Burgers equation with singular forcing on a compact domain are considered in
\cite{GP18,Ros19}.

In a different direction,
the papers \cite{BCJL94,BGN14,Gyo98,GN99,LN18,Unt13,Unt15,Unt17,PR19,ZZZ20} consider the
stochastic Burgers or KPZ equations on the whole space, but with initial conditions
and/or noise that are constrained to be growing more slowly than we need
to treat forcing by space-time stationary Gaussian fields. Most of these works 
assume that the initial condition and/or noise are in some $L^p(\R)$ space, which does not apply
to the space-stationary setting. The works \cite{BCJL94,PR19,ZZZ20} assume that the integral of 
the Burgers solution grows at most linearly at infinity; this will be true for our stationary 
solutions by Birkhoff's ergodic theorem, but we do not obtain the quantitative control on such 
growth that would be required to use results of this type. The work \cite{Unt13} considers the 
KPZ equation in spaces with ``locally bounded averages,'' a condition which again does not 
readily correspond to the estimates we obtain. The work \cite{Kim06} proves the existence
of invariant measures for the stochastic Burgers equation with non-gradient-type
noise but with a zero-order dissipation term to provide compactness
and remove the potentially growing low frequencies.

\subsection*{The Cole--Hopf transform, connection to the KPZ equation, and compactness}

In addition to the PDE arguments, the proof of the uniform bounds for the solutions
of the stochastic Burgers equation requires
one crucial application of the Feynman-Kac formula.
By the Cole--Hopf transform \cite{BCJL94,Col51,Hop50}
\begin{equation}
  h=-\log\phi,\qquad u=\partial_{x}h=-\partial_{x}\phi/\phi,\label{eq:colehopf}
\end{equation}
the stochastic Burgers equation \cref{eq:uPDE-1} is closely related
to the KPZ equation \cite{KPZ86}
\begin{equation}
  \dif h=\frac{1}{2}\Big[\partial_{x}^{2}h-(\partial_{x}h)^{2}+\|\rho\|_{L^{2}(\mathbb{R})}^{2}
  \Big]\dif t+\dif V\label{eq:KPZ}
\end{equation}
and the multiplicative stochastic heat equation
\begin{equation}
  \dif\phi=\frac{1}{2}\partial_{x}^{2}\phi-\phi \, \dif V,\label{eq:SHE}
\end{equation}
in which the last product is interpreted in an Itô sense. Here, $\rho(x)$ is the mollifier
in \cref{eq:Vdot}.
The fact that the results of the
transformation \cref{eq:colehopf} indeed satisfy the claimed PDEs is a computation
using It\^{o}'s formula (see e.g. \cite[Theorem~4.17]{DPZ14}). Note that, because
we work with noise that is spatially smooth, the Cole--Hopf transform
requires no infinite renormalization as in the white in time and space 
case~\cite{BG97,Hai13}, but simply the finite Itô correction given by
the term $\tfrac{1}{2}\|\rho\|_{L^{2}(\mathbb{R})}^{2}$ in~\cref{eq:KPZ},
which is half the derivative of the quadratic variation of
the process $t\mapsto V(t,x)$ for fixed $x$. The Cole--Hopf transform is a common tool in the study of the stochastic heat, KPZ, and Burgers equations.
In particular, the Cole--Hopf transform explains why it is natural to take the forcing in
\cref{eq:uPDE-1} to be the gradient of a random field, which is crucial
for the existence of space-time stationary solutions.

Because of the close relationship between \cref{eq:uPDE-1}, \cref{eq:KPZ},
and \cref{eq:SHE}, one might naïvely expect that stationary solutions
for one of the equations induce stationary solutions for the others.
However, this works only in one direction, because information is
lost when taking the spatial derivative to pass from $h$ to $u$.
That is, stationarity of $u$ does not imply stationarity of its antiderivative
$h$.
In one and two spatial dimensions, neither \cref{eq:KPZ} nor \cref{eq:SHE} is expected to admit stationary solutions, as the pointwise statistics of solutions started from constant initial conditions diverge.
The situation is different in three or more spatial dimensions, in which,
if the noise $V$ is sufficiently small, the multiplicative stochastic
heat equation admits nonzero stationary solutions \cite{DS80,DGRZ18,MSZ16,TZ98}.
In the low-dimensional case (or in higher dimensions with strong noise),
the KPZ evolution started at $0$ has a ``zero-frequency component''
whose variance diverges as $t\to\infty$. The goal of the present work
can thus be interpreted as showing that this component, which is eliminated
when we take a derivative and pass to the Burgers equation, is the
only obstruction to the existence of stationary solutions. One may
also see this as a Harnack-type property for $\phi$: although $\phi$
may not be stationary, the ratio $\partial_{x}\phi/\phi$ is stationary.

The relationship with the KPZ equation is important in our proof
strategy, as we now describe. To prove the existence of stationary solutions for the Burgers equation,
we first establish a form of compactness. The proof of this starts by taking expectations in
the KPZ equation \cref{eq:KPZ}. Since $u=\partial_{x}h$, the nonlinear
term in \cref{eq:KPZ} is~$u^{2}$, so second moments of solutions to
\cref{eq:uPDE-1} are related to the growth of~$\mathbb{E}h$. 
Asymptotically,~$\E h$ is $t$ times the Lyapunov exponent for the stochastic
heat equation \cref{eq:SHE}, corresponding to the linear-in-time drift in the
solution to the KPZ equation with white-noise forcing;
see for instance~\cite{ACQ11,BG99,FV05,SS10,Tsa18}.
More importantly for our purposes, $\E h$
can be shown to be increasing, as a function of time, using the Feynman--Kac
formula. We prove this in \cref{prop:gamma}, the only part of our work
that relies on the Feynman--Kac formula. At the moment, we do not know how to replace this use
of the Feynman-Kac formula by a purely PDE argument. With the second moment bound in hand,
we obtain a tightness statement that implies that $u$ converges along subsequences
of time-averaged laws of solutions to \cref{eq:uPDE-many}. Limits of such subsequences
can be shown by the Krylov--Bogoliubov theorem (see \cref{prop:krylovbogoliubov})
to be stationary in time.

While bounding the one-point variance of solutions
to \cref{eq:uPDE-1} is crucial to our proof, we do not say anything about the multipoint
correlations of solutions. It is expected that stationary solutions
to \cref{eq:uPDE-1} should have correlation functions that are integrable
in space, so that, when rescaled appropriately, the solutions approach
a white noise process. To our knowledge, this question, which is related
to KPZ universality, has not been resolved for the stochastic Burgers
equation with any kind of spatially smooth noise. In  \cite{FQ15,HQ18}, a different regularization of the spacetime-white-noise-forced Burgers equation is considered for
which this statement is clear.

\subsection*{Shear-invariance, ordering and \texorpdfstring{$L^1$}{L¹}-contraction
for the Burgers equation}

The three key ingredients to the classification of stationary solutions
are the shear invariance, ordering, and $L^1$-contraction properties of the Burgers equation.
All three  are analogues of well-known properties of the 
deterministic Burgers equation in the absence of random forcing.

The deterministic shear invariance simply says that if $u(t,x)$ is a solution
to \cref{eq:uPDE}
with $V=0$ then~$u_c(t,x)=u(t,x+ct)-c$ is also a solution, for any $c\in\Rm$. 
The shear invariance in law of~\cref{eq:uPDE-many} with a random forcing is the following property.
Suppose that $\mathbf{u}=(u_{1},\ldots,u_{N})$ solves \cref{eq:uPDE-many}
and define $\tilde{\mathbf{u}}(t,x)=\mathbf{u}(t,x+ct)-(c,\ldots,c)$.
Then it is easy to see that
\[
  \partial_{t}\tilde{u}_{i}=\frac{1}{2}\partial_{x}^{2}u_{i}-\frac{1}{2}\partial_{x}(u_{i}^{2})+\dif(\partial_{x}\tilde{V}),
\]
where $\tilde{V}(t,x)=V(t,x+ct)$. Since $\dif(\partial_{x}V)$ is
white in time, informally speaking $\dif(\partial_{x}\tilde{V})$
and $\dif(\partial_{x}V)$ have the same law. Therefore, $\tilde{\mathbf{u}}$
agrees in law with a solution to \cref{eq:uPDE-many}. This is made 
precise in \cref{subsec:shearinvariance}. On the other hand, if $\mathbf{u}(t,\cdot)$
is space-stationary, then $\tilde{\mathbf{u}}(t,\cdot)$ has the same
law as~$\mathbf{u}(t,\cdot)-(c,\ldots,c)$. This directly leads 
to statement~\cref{enu:shearinvariant} in \cref{thm:maintheorem-classification}. 
It also allows us, once we have constructed a single invariant measure, to construct many 
by vertical translation. 

The ordering and $L^1$-contraction properties for the random Burgers equation
are closely related, as in the deterministic case.
Informally speaking, in \cref{thm:ordering}, we show that space-time stationary solutions to~\cref{eq:uPDE-1} are ordered.
This is not an immediate consequence of the standard comparison principle because
we can not a priori pin down
any fixed time  when we would easily compare the two solutions and claim that this order propagates.
A precise formulation of the ordering of the solutions is that 
the components of a space-time stationary solution
to \cref{eq:uPDE-many} must be ordered almost surely.  
In addition, we show in \cref{prop:couple}
that any two laws of space-time stationary solutions to~\cref{eq:uPDE-1} 
or~\cref{eq:uPDE-many} can be
coupled to obtain another space-time stationary solution to \cref{eq:uPDE-many}, with more
components. This implies that there cannot be two distinct elements of $\overline{\mathscr{P}}_{\mathbb{R}}^{\mathrm{e}}(\mathcal{X}_{m})$
with the same mean. The ordering is a consequence of the comparison
principle and $L^{1}$-contraction for the Burgers equation, which we discuss in \cref{sec:comparison-L1}. To prove the ordering statement,  we show that two components of a spacetime-stationary solution
to \cref{eq:uPDE-many} cannot intersect transversely, as that would reduce an~$L^1$-norm. Then we use the strong
maximum principle to rule out degenerate intersections.

To prove the convergence of the solutions to an invariant measure, 
we again use the $L^1$-contraction property of the Burgers equation.
Under appropriate conditions, two solutions evolving according to the same noise must get close to one another at many times.
Then, intuitively, the $L^1$-contraction forces them to stay close to each other for all times.
Of course, the difference of two space-stationary solutions is generally not in $L^1(\R)$, so here the $L^1$-contraction is used on the probability space. The $L^1$ contraction property on the probability space is analogous to but different
from the standard spatial $L^1$-contraction and holds for spatially invariant solutions.

\subsection*{Organization of the paper}
The paper is organized as follows. In \cref{sec:solntheory}, we 
show that the equation \cref{eq:uPDE} is well-posed in certain weighted spaces spaces,
as long as the growth at infinity is sublinear. The main result of that section
is \cref{prop:thetawellposed}, from which \cref{thm:existence}
follows immediately.
In \cref{sec:comparison-L1} we prove the comparison principle and $L^1$-contraction both
in space and in probability.
In \cref{sec:basicproperties} we prove some other useful basic properties of the solutions.
In \cref{sec:tightness}, we establish the tightness in $\mathcal{X}_m$ of the solution to \cref{eq:uPDE-1} started from a constant, for $m\in[1/2,1)$. This shows the existence of stationary solutions to \cref{eq:uPDE-1}.
In \cref{sec:classification}, we complete the proof of \cref{thm:maintheorem-classification} by classifying all extremal elements of $\overline{\mathscr{P}}_G(\mathcal{X}_m)$.
In \cref{sec:stability}, we prove the stability result \cref{thm:maintheorem-stability}. 
The appendices contain the proofs of several auxiliary results.
In \cref{appendix:stability-criterion}, we prove \cref{prop:stability-criterion},
as its proof is elementary and unrelated to the rest of the paper.
\cref{appendix:app-weight} includes some background on weighted spaces and estimates on the solutions
to the heat equation in 
weighted spaces; these results are used extensively in \cref{sec:solntheory}. In \cref{appendix:classicalismild} we show that classical solutions  
to the Burgers equation are mild (tying up a loose end from \cref{sec:solntheory} that is not used in the rest of the paper),
 and in \cref{appendix:symmetrylemma}, we prove some other technical lemmas that are used at various points throughout the paper.

\subsection*{Acknowledgments}

We are happy to thank Yuri Bakhtin and Konstantin Khanin for generous
explanations of their work, and Kevin Yang for productive discussions.
We are also especially grateful to Yu Gu for pointing out a strengthening
of \cref{prop:gamma} that led to an improvement of our main result
over the initial version of the paper, and for pointing out an error in an earlier version.
AD was partially supported
by an NSF Graduate Research Fellowship under grant DGE-1147470, CG
by the Fannie and John Hertz Foundation and NSF grant DGE-1656518,
and LR by NSF grants DMS-1613603 and DMS-1910023, and ONR grant N00014-17-1-2145.

\section{Solutions to the Burgers equation in weighted spaces\label{sec:solntheory}}

In this section, we construct solutions to the stochastic Burgers
equation in weighted spaces that permit growth at infinity. A key preliminary step is a standard trick going back to \cite{DPDT94}: by subtracting off a solution to the linearized version of \cref{eq:uPDE-many}, 
we reduce a stochastic partial differential equation \cref{eq:uPDE-many} to 
a partial differential equation   \cref{eq:thetajPDE} with random coefficients
coming from the solution to the linearized problem. This does not circumvent the need to
work with solutions that may grow at infinity but it does allow us to work
with classical solutions. 
The goal of this section is to show that \cref{eq:thetajPDE} 
has classical solutions pathwise,
in appropriate weighted spaces, so that 
we can treat the noise as a fixed object rather than a random one. 
Thus, the only genuine stochastic analysis required  is to understand
the Gaussian process solving the linearized problem. This step also allows
us to avoid some of the minor additional technicalities involved with working
directly with the strong solutions in the sense of~\cite{DPZ14}.  

The two main results of this section are, first, \cref{prop:thetawellposed},
which is a version of \cref{thm:existence} that allows for discontinuous initial data, and
stated in terms of the solutions to  \cref{eq:thetajPDE}, and, second, the Feller property stated as
\cref{prop:fellerproperty}.

To prove \cref{prop:thetawellposed}, we first consider the periodized
version of the problem, with both the initial conditions and the noise periodized, and then 
pass to the limit as the periodization length is taken to infinity. The periodized 
problem is set up in \cref{subsec:periodized}. To solve it, we use the mild formulation of the problem, 
which we relate to the classical formulation in \cref{subsec:mildsolutions}. We 
then solve the periodized problem, using a fixed-point argument similar to that of \cite{DPDT94}, 
in \cref{subsec:localintimexistence}. To extend the solution theory to the whole space, we   
control the growth of the solutions in sublinearly weighted spaces in \cref{subsec:globalintime}. It is here that the proof diverges significantly from the situation for the linear problem, as the sublinear weights are necessary for well-posedness. Finally, we pass to the limit of the periodization scales in \cref{subsec:solutionsonthewholespace} 
to prove \cref{prop:thetawellposed,thm:existence}.

\subsection{From an SPDE to a PDE\label{subsec:sPDEtoPDE}}

We avoid working directly with the SPDE \cref{eq:uPDE-1}
by making use of the following trick introduced in~\cite{DPDT94}.
Solving a linearization of \cref{eq:uPDE-1}, namely
\begin{equation}\label{jun2206}
  \dif\psi=\frac{1}{2}\partial_{x}^{2}\psi\dif t+\dif(\partial_{x}V),
\end{equation}
with initial condition $\psi(0,\cdot)\equiv0$, is simple: the solution
is given by the stochastic integral
\begin{equation}
  \psi(t,x)=\int_{0}^{t}[\partial_{x}G_{t-s}*\dif V(s,\cdot)](x)=\int_{0}^{t}\int_{\mathbb{R}}(\partial_{x}G_{t-s}*\rho)(x-y)\,\dif W(s,y),\label{eq:psidef}
\end{equation}
where
\begin{equation}\label{jun2202}
  G_{t}(x)=(2\pi t)^{-1/2}\exp\{-x^{2}/(2t)\}
\end{equation}
is the heat kernel. See \cite[Chapter 5 and Theorem 5.2]{DPZ14} for
a detailed discussion of such stochastic integrals, but note also that
$\psi$ is simply a mean-zero Gaussian process on $\mathbb{R}\times\mathbb{R}$
with covariance function
\begin{align}
  \mathbb{E}\psi(t,x)\psi(t',x') & =\int_{0}^{t\wedge t'}\int_{\mathbb{R}}(\partial_{x}G_{t-s}*\rho)(x-y)(\partial_{x}G_{t'-s}*\rho)(x'-y)\,\dif y\,\dif s\nonumber                                 \\
                                 & =-\int_{0}^{t\wedge t'}\partial_{xx}(G_{t+t'-2s}*\rho^{*2})(x-x')\,\dif s=\int_{0}^{t\wedge t'}\frac{\dif}{\dif s}(G_{t+t'-2s}*\rho^{*2})(x-x')\,\dif s\nonumber \\
                                 & =([G_{|t-t'|}-G_{t+t'}]*\rho^{*2})(x-x').\label{eq:psicovariance}
\end{align}
In fact, $(\psi,V)$ is jointly Gaussian.
A special case of \cref{eq:psicovariance} is 
\begin{equation}
 \E \psi(t,x)\psi(t,x')=(\rho^{*2}-G_{2t}*\rho^{*2})(x-x').\label{eq:psicovariance-fixedtime}
\end{equation}
From this one can see that as $t\to \infty$, for fixed $x,x'\in\R$ we have 
\[
\E \psi(t,x)\psi(t,x')\to \rho^{*2}(x-x'), 
\]
and $\psi(t,\cdot)$ converges in law to a Gaussian process with covariance kernel $\rho^{*2}$ in the topology of an appropriate weighted space. We discuss the necessary weights in \cref{lem:suppsi} below.

Writing $u=\theta+\psi$, we see, as in \cite{DPDT94}, that a function $u$
is a strong solution to \cref{eq:uPDE-1} if and only if~$\theta=u-\psi$
is a classical solution to the PDE
\begin{equation}
\partial_{t}\theta=\frac{1}{2}\partial_{x}^{2}\theta-\frac{1}{2}\partial_{x}(\theta+\psi)^{2}.
\label{eq:thetajPDE}
\end{equation}
Our analysis will start from this equation, rather than directly from 
\cref{eq:uPDE-1}.
In particular, our first goal is to build strong solutions 
to \cref{eq:thetajPDE} in certain weighted spaces. Going forward, we can treat $\psi$ pathwise,
as if it were a deterministic object.

\subsection{Solutions in weighted function spaces\label{subsec:functionspaces}}

We now introduce some weighted function spaces that we will
use in constructing solutions to~\cref{eq:thetajPDE}. This is necessary as the force $\psi$ and thus also the 
solution $\theta$  in \cref{eq:thetajPDE} grow at infinity.
Given a weight~$w(x)>0$, the weighted space $L_{w}^{\infty}(\mathbb{R})$ is the
space of measurable functions~$v:\mathbb{R}\to\mathbb{R}$ such that
\[
\|v\|_{L_{w}^{\infty}(\mathbb{R})}=\esssup_{x\in\mathbb{R}}\frac{|v(x)|}{w(x)}<+\infty,
\]
and $\mathcal{C}_{w}(\mathbb{R})\subset L_{w}^{\infty}(\mathbb{R})$
is the subspace of continuous functions in $L_{w}^{\infty}(\mathbb{R})$, with the same norm.
For~$\alpha\in(0,1)$, the weighted
H\"older space is the subspace of $\mathcal{C}_{w}(\mathbb{R})$ 
of functions such that 
\[
\|v\|_{\mathcal{C}_{w}^{\alpha}(\mathbb{R})}=\|v\|_{\mathcal{C}_{w}(\mathbb{R})}+\sup_{|x-y|\le1}\frac{|v(x)-v(y)|}{w(x)|x-y|^{\alpha}}<+\infty.
\]
The higher order H\"older spaces $\mathcal{C}_{w}^{k+\alpha}(\mathbb{R})$, 
with $k\in\mathbb{N}$ and $\alpha\in[0,1)$, have the norms
\[
\|v\|_{\mathcal{C}_{w}^{k+\alpha}(\mathbb{R})}=\|v\|_{\mathcal{C}_{w}^{k,\alpha}(\mathbb{R})}=\sum_{j=0}^{k}\|\partial_{x}^{j}v\|_{\mathcal{C}_{w}^{\alpha}(\mathbb{R})}.
\]
Finally, for
$p\in[1,\infty)$,  the space $L_{w}^{p}(\mathbb{R})$ is  equipped with the norm
\[
\|v\|_{L_{w}^{p}(\mathbb{R})}=\left(\int_{\mathbb{R}}
\left(\frac{|v(x)|}{w(x)}\right)^{p}\,\dif x\right)^{1/p}<+\infty.
\]
We will often use the weights
\begin{equation}
\p_{\ell}(x) =\langle x\rangle^{\ell} \quad \textrm{with } \langle x\rangle  =\sqrt{4+x^{2}}\label{eq:weightdefs}
\end{equation}
and $\ell\in\mathbb{R}$. The constant $4$ rather than $1$ in the definition 
of  $\langle x\rangle$ ensures that $\log\langle x\rangle>0$ for all~$x \in \R$, 
which will be convenient when we use logarithmic weights.

For $m\in\mathbb{R}$, we define the Fréchet space
\[
  L_{\p_{m+}}^{\infty}(\mathbb{R})=\bigcap_{\ell>m}L_{\p_{\ell}}^{\infty}(\mathbb{R})
\]
equipped with the topology generated by all $L^\infty_{\p_\ell}(\mathbb{R})$
norms for $\ell>m$. This space is metrizable, for example by the
metric
\[
  d_{L_{\p_{m+}}^{\infty}(\mathbb{R})}(v_{1},v_{2})=\sum_{k=1}^{\infty}2^{-k}
  \frac{ \|v_{1}-v_{2}\|_{L_{\p_{m+1/k}}^{\infty}(\mathbb{R})}}{1+\|v_{1}-v_{2}\|_{L_{\p_{m+1/k}}^{\infty}(\mathbb{R})}}.
\]
A sequence $v_{n}$ converges to $v$ in $L_{\p_{m+}}^{\infty}(\mathbb{R})$
if and only if $v_{n}$ converges to $v$ in the topology of $L_{\p_{\ell}}^{\infty}(\mathbb{R})$
for each $\ell>m$. Therefore, for any topological space $\mathcal{Z}$,
a map $f:\mathcal{Z}\to L_{\p_{m+}}^{\infty}(\mathbb{R})$ is continuous
if and only if $f$ is a continuous map $\mathcal{Z}\to L_{\p_{\ell}}^{\infty}(\mathbb{R})$
for each $\ell>m$. In particular, the inclusion maps~$L_{\p_{m+}}^{\infty}(\mathbb{R})\to L_{\p_{\ell}}^{\infty}(\mathbb{R})$
for $\ell>m$ are continuous.

The key space for us is $\mathcal{X}_{m}$, which we define as the closed subspace of continuous
functions in $L_{\p_{m+}}^{\infty}(\mathbb{R})$. 
The space
of continuous, compactly-supported functions is dense in~$\mathcal{X}_{m}$. This means that $\mathcal{X}_{m}$ is separable and hence a Polish space, unlike the 
spaces $\mathcal{C}_{w}(\mathbb{R})$ and $L_{\p_{m+}}^{\infty}(\mathbb{R})$ which
are not separable.
We 
prefer to work with continuous functions when possible, since the
separability of the space $\mathcal{X}_{m}$ will allow us to use
probabilistic tools about random variables on Polish spaces. Solutions will be 
continuous at all positive times due to the smoothing effect
of the Laplacian in \cref{eq:thetajPDE}. 
However, we will have occasion to
solve \cref{eq:uPDE-1} and \cref{eq:thetajPDE} 
with discontinuous initial data. In particular, this will be relevant 
in the proof of the stability result in \cref{thm:maintheorem-stability}.
Thus, we make the following definition. Here and henceforth, if $\mathcal{Y}_1$ is a metric space and $\mathcal{Y}_2$ is a topological vector spaces, we use the notation $\mathcal{C}_{\mathrm{b}}(\mathcal{Y}_1;\mathcal{Y}_2)$ to refer to the space of bounded continuous functions from $\mathcal{Y}_1$ to $\mathcal{Y}_2$.

\begin{defn}
\label{def:Zmdef}We define $\mathcal{Z}_{m,T}$ to be the space of
functions $u\in\mathcal{C}_{\mathrm{b}}((0,T];\mathcal{X}_{m})$
such that for each~$\ell>m$ the limit
\begin{equation}
u(0,\cdot):=\lim_{t\downarrow0}u(t,\cdot)\label{eq:weakstarlimit}
\end{equation}
exists in the weak-$*$ topology on $L_{\p_{\ell}}^{\infty}(\mathbb{R})$,
and the initial condition $u(0,\cdot)\in L_{\p_{m+}}^{\infty}(\mathbb{R})$.
\end{defn}

In particular, if $u\in\cZ_{m,T}$, then 
$t\mapsto u(t, \cdot)\in L_{\p_{\ell}}^{\infty}(\mathbb{R})$
is continuous on~$[0,T]$ if $L_{\p_{\ell}}^{\infty}(\mathbb{R})$ is endowed
with the weak-$*$ topology.
We endow $\mathcal{Z}_{m,T}$ with the
  subspace topology inherited from the embedding
  \[
    \mathcal{Z}_{m,T}\ni u\mapsto(u(0,\cdot),u|_{(0,T]\times\mathbb{R}})\in L_{\p_{m+}}^{\infty}(\mathbb{R})\times\mathcal{C}_{\mathrm{b}}((0,T];\mathcal{X}_{m}).
  \]
We further define $\mathcal{Z}_{m}=\mathcal{Z}_{m,\infty}$ to be
  the space of functions $u:[0,\infty)\times\mathbb{R}\to\mathbb{R}$
  such that the restriction~$u|_{[0,T]\times\mathbb{R}}\in\mathcal{Z}_{m,T}$ for each
  $T>0$, equipped with the weakest topology such that each restriction
  map $u\mapsto u|_{[0,T]\times\mathbb{R}}$ is continuous.
 
To discuss classical solutions to \cref{eq:thetajPDE}, we define a smaller class 
$\tilde{\mathcal{Z}}_{m,T}$ of functions $\theta \in \mathcal{Z}_{m,T}$
that are twice-differentiable in space and once in time on $(0, T) \times \R$, and moreover are such that for every compact~$I \subset (0, T)$ and~$\eps > 0$, there exists $C < \infty$ such that
\begin{equation}
  \label{eq:time-deriv-control}
  |\partial_t\theta(t, x)| \leq C \e^{\eps x^2} \quad \textrm{for all } (t, x) \in I \times \R.
\end{equation}
The reason why we impose this bound on $\partial_t\theta$ rather than just on $\theta$
itself, as is done for the Cauchy problem for the heat equation, is explained
in \cref{lem:classicalismild} in \cref{appendix:classicalismild}.
We also define~$\tilde{\mathcal{Z}}_{m}=\tilde{\mathcal{Z}}_{m,\infty}\subset\mathcal{Z}_{m}
=\mathcal{Z}_{m,\infty}$.
The following proposition implies the existence and uniqueness claims for the solutions
to \cref{eq:uPDE-1} in~\cref{thm:existence}.
\begin{prop}\label{prop:thetawellposed}
Let $m\in(0,1)$. Almost surely, 
there is a map 
\[
\Phi :L_{\p_{m+}}^{\infty}(\mathbb{R})\to\tilde{\mathcal{Z}}_{m},
\]
so that for each ${v}\in L_{\p_{m+}}^{\infty}(\mathbb{R})$,
$\theta =\Phi(v)$ is the unique strong
solution to \cref{eq:thetajPDE} with the initial condition $\theta(0,\cdot)=v$.
The map $v\mapsto\Phi(v)|_{[0,T]\times\mathbb{R}}$ is measurable with 
respect to $\mathcal{F}_{T}$. Moreover, also almost surely, 
the map $\Phi$ is continuous and for any bounded set $A \subset L_{\p_{m+}}^{\infty}(\mathbb{R})$ and $T>0$, 
the image~$\Phi(A)|_{[0, T] \times \R}$ is bounded in $\mathcal{Z}_{m, T}$ and the restriction $\Phi(A)|_{\{T\} \times \R}$ is compact in $\mathcal{X}_m$.
\end{prop}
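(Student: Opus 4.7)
The plan is to construct $\Phi$ by first solving a spatially periodized version of \cref{eq:thetajPDE} and then passing to the limit as the period $L \to \infty$, following the roadmap sketched at the start of this section. In mild form via Duhamel's principle, a solution satisfies
\[
\theta(t,\cdot) = G_t * v - \tfrac{1}{2}\int_0^t \partial_x G_{t-s} * (\theta+\psi)^2(s,\cdot)\,\dif s,
\]
so one derivative moves onto the heat kernel and only $\theta+\psi$ itself (rather than its derivative) enters the nonlinearity. For each $L > 0$, I would periodize the initial data $v$ and the forcing $\psi$ to $v^L, \psi^L$ on $\R/L\Z$. Because $L$-periodic functions are bounded, a standard Picard iteration in $\mathcal{C}([0,T_0];L^\infty(\R/L\Z))$ yields a unique local-in-time mild solution $\theta^L$, which parabolic regularity upgrades to a classical solution in $\tilde{\mathcal{Z}}_{m,T_0}$. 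Global existence on the torus then comes from an a priori bound via the maximum principle (or equivalently via the Cole--Hopf transform, which is harmless on the compact periodic domain).

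The main obstacle, and the step where the restriction $m < 1$ enters decisively, is establishing a priori bounds on $\theta^L$ in $L^\infty_{\p_\ell}(\R)$ for every $\ell > m$ that are \emph{uniform} in $L$. A naive iteration on the Duhamel formula is awkward because $\partial_x G_{t-s}$ acting on a polynomially-growing integrand generates extra powers of $\langle x \rangle$ that a crude bound cannot absorb. The cleanest route is to exploit the Cole--Hopf transform $u^L = \theta^L + \psi^L = -\partial_x \log \phi^L$ to convert the $\phi^L$-equation on $\R/L\Z$ into a linear multiplicative stochastic heat equation, whose Feynman--Kac representation gives sub-quadratic-exponential growth bounds on $\log \phi^L$ in terms of growth of the initial datum. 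Differentiating once degrades this to sub-linear, i.e., $\langle x \rangle^\ell$ growth with $\ell < 1$; the restriction $m < 1$ is precisely the borderline at which this differentiation survives. Combined with the weighted heat-kernel estimates of \cref{appendix:app-weight}, this gives
\[
\|\theta^L(t,\cdot)\|_{L^\infty_{\p_\ell}(\R)} \le C\bigl(t,\ell,\|v\|_{L^\infty_{\p_{m+}}(\R)},\psi|_{[0,t]}\bigr),
\]
uniformly in $L$ for $L$ large. I expect this estimate to be the technical heart of the proof.

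Given these uniform bounds, interior parabolic regularity on compact subsets of $(0,T]\times\R$ provides equicontinuity, and a diagonal argument extracts a subsequence $\theta^{L_k}$ converging locally uniformly to a classical solution $\theta \in \tilde{\mathcal{Z}}_m$ of \cref{eq:thetajPDE} with initial datum $v$. For uniqueness of the strong solution in $\tilde{\mathcal{Z}}_m$, I would take two such solutions $\theta^{(1)}, \theta^{(2)}$ driven by the same $\psi$ with the same initial condition, and note that $\delta = \theta^{(1)} - \theta^{(2)}$ satisfies the linear equation
\[
\partial_t \delta = \tfrac{1}{2}\partial_x^2 \delta - \tfrac{1}{2}\partial_x\bigl[(\theta^{(1)}+\theta^{(2)}+2\psi)\,\delta\bigr].
\]
Since the coefficient $\theta^{(1)}+\theta^{(2)}+2\psi$ has sub-linear growth in $x$, a weighted energy estimate with a time-dependent Gaussian weight $\e^{-\beta(t)x^2}$ forces $\delta \equiv 0$ on short time intervals that can be iterated. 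Combining uniqueness with the convergence of periodized approximations shows that the whole family $\theta^L$ converges, defining $\Phi(v)$ unambiguously.

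The remaining properties follow from the structure of the construction. Measurability of $v \mapsto \Phi(v)|_{[0,T]\times\R}$ with respect to $\mathcal{F}_T$ is inherited from the measurable dependence of the Picard iterates on $(v, \psi|_{[0,T]})$ and is preserved under the limit in $L$. Continuity of $\Phi$ reduces to local Lipschitz dependence of the fixed point on $v$ in the weighted topology, which is a direct by-product of the contraction estimate used to construct $\theta^L$. Finally, for bounded $A \subset L^\infty_{\p_{m+}}(\R)$ the uniform bound gives boundedness of $\Phi(A)|_{[0,T]\times\R}$ in $\mathcal{Z}_{m,T}$, and the parabolic smoothing at positive times promotes these solutions to a uniform bound in a weighted H\"older space $\mathcal{C}^{1+\alpha}_{\p_\ell}(\R)$; this embeds compactly into $\mathcal{X}_m$ via any weight gap $m < \ell < \ell'$, yielding the compactness of $\Phi(A)|_{\{T\}\times\R}$.
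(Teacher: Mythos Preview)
Your overall architecture---periodize, solve locally via Picard iteration, obtain uniform-in-$L$ a priori bounds, and pass to the limit---matches the paper's strategy exactly. The substantive divergence, and a genuine gap, is in how you propose to obtain the key uniform-in-$L$ weighted bound. You suggest using Cole--Hopf and Feynman--Kac to bound $\log\phi^L$ and then say that ``differentiating once degrades this to sub-linear.'' But a bound $|\log\phi^L(t,x)|\le C\langle x\rangle^{1+\ell}$ does \emph{not} yield $|\partial_x\log\phi^L(t,x)|\le C\langle x\rangle^{\ell}$: you cannot differentiate an inequality. Controlling $\partial_x\log\phi^L=\partial_x\phi^L/\phi^L$ directly via Feynman--Kac would require a polymer-measure analysis of the ratio (essentially the Bakhtin--Li machinery), which is much more than you indicate and is precisely the probabilistic route the paper avoids at this stage. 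The paper instead proves the a priori bound (\cref{prop:weightedspacesizebound}) by a pure maximum-principle argument applied directly to \cref{eq:thetaLproblem}: it introduces a custom time-dependent weight
\[
a(t,x)=K^{-1}\bigl(\langle x\rangle^{2}+K^{2/(1-\ell')}\bigr)^{-(\ell_1+\eps t)/2},
\]
whose flatness near the origin (tuned by $K$) absorbs the transport of mass by the nonlinearity, and shows $z=a\theta^{[L]}$ cannot exceed $2$ at an interior maximum. The restriction $m<1$ enters through the algebra in \cref{eq:dtlogaovera}, where the weight's growth must beat the first-order drift---not through any differentiation argument.

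Two secondary differences are worth noting. For the limit $L\to\infty$, uniqueness, and continuity of $\Phi$, the paper does not use compactness plus a separate weighted energy estimate as you propose; it proves a single quantitative stability bound (\cref{prop:vCauchy}) in super-exponentially weighted spaces $\q_\lambda(x)=\exp(\lambda\langle x\rangle^\beta)$ with $\beta\in(3/2,2)$, which shows $\{\theta^{[L]}\}_L$ is Cauchy and simultaneously gives uniqueness and Lipschitz dependence on $(v,\psi)$. Convergence in the polynomial-weight topology is then recovered by interpolating against the uniform $\mathcal{C}_{\p_\ell}$ bound via \cref{prop:boosttheweight}. Your compactness-plus-diagonal-argument route would deliver only local-uniform subsequential limits, and upgrading that to convergence in $\mathcal{Z}_m$ (and to continuity of $\Phi$ on all of $L^\infty_{\p_{m+}}$) would still require something like \cref{prop:vCauchy}.
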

The proof of this proposition occupies most of the rest of this section. Its immediate
consequence is that we can define a solution 
map $\Psi:L_{\p_{m+}}^{\infty}(\mathbb{R})\to\mathcal{Z}_{m}$ so that for 
each $v\in L_{\p_{m+}}^{\infty}(\mathbb{R})$,
$u=\Psi(v)$ is the unique strong solution to \cref{eq:uPDE-1}
satisfying $u(t,\cdot)=v$,  given   by
\begin{equation}\label{jun2204}
\Psi(v)(t,x)=\Phi(v)(t,x)+\psi(t,x),
\end{equation}
where $\psi(t,x)$ is the solution to the linearized problem \cref{jun2206}
given by \cref{eq:psidef}. As with $\Phi$, the map $\Psi$ is continuous and for any bounded set $A \subset L_{\p_{m+}}^{\infty}(\mathbb{R})$ and $T > 0$, 
the image $\Psi(A)|_{[0, T] \times \R}$ is bounded in~$\mathcal{Z}_{m, T}$ and $\Psi(A)|_{\{T\} \times \R} \subset \mathcal{X}_m$ is compact in $\mathcal{X}_m$. 
This is the existence and uniqueness claim of \cref{thm:existence} but, in addition, it allows discontinuous initial data. This result also generalizes immediately 
to the system  \cref{eq:uPDE-many}
of $N$ such decoupled equations.

Another  consequence of \cref{prop:thetawellposed} is that for any $t>0$ and $N\in\mathbb{N}$ there
is a map $P_{t}$ from the space of measurable functions on $\mathcal{X}_{m}^{N}$
to the space of measurable functions on $L_{\p_{m+}}^{\infty}(\mathbb{R})^{N}$
given by
\[
  (P_{t}f)(\mathbf{v})=\mathbb{E}f(\Psi(\mathbf{v})(t,\cdot)),
\]
where $\bv$ is the solution to the decoupled system \cref{eq:uPDE-many}.  
Because $\Psi$ is almost surely continuous, we get the following
Feller property.
\begin{prop}[Feller property]\label{prop:fellerproperty}
  If $m\in(0,1)$ and $f\in\mathcal{C}_{\mathrm{b}}(\mathcal{X}_{m}^{N})$,
  then $P_{t}f\in\mathcal{C}_{\mathrm{b}}(L_{\p_{m+}}^{\infty}(\mathbb{R})^N)$ for all $t>0$.
  In particular, $P_{t}f$ (strictly speaking, $P_{t}f|_{\mathcal{X}_{m}^{N}}$)
  is an element of $\mathcal{C}_{\mathrm{b}}(\mathcal{X}_{m}^{N})$.
\end{prop}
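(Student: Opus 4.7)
The plan is to reduce the claim to the almost-sure continuity of the solution map $\Psi$ provided by \cref{prop:thetawellposed}, applied to the decoupled system \cref{eq:uPDE-many}. Boundedness of $P_t f$ is immediate from the definition: for every $\mathbf{v} \in L_{\p_{m+}}^{\infty}(\mathbb{R})^N$,
\[
|P_t f(\mathbf{v})| \leq \mathbb{E}|f(\Psi(\mathbf{v})(t,\cdot))| \leq \|f\|_{\mathcal{C}_{\mathrm{b}}(\mathcal{X}_{m}^{N})},
\]
so the substance of the proposition is the continuity statement.

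For continuity, suppose $\mathbf{v}_n \to \mathbf{v}$ in $L_{\p_{m+}}^{\infty}(\mathbb{R})^N$. I would first invoke the $N$-component analogue of \cref{prop:thetawellposed}, which yields a full-measure event $\Omega_0 \subset \Omega$ on which $\Psi$ is continuous as a map $L_{\p_{m+}}^{\infty}(\mathbb{R})^N \to \mathcal{Z}_m^N$. On $\Omega_0$ we therefore have $\Psi(\mathbf{v}_n) \to \Psi(\mathbf{v})$ in $\mathcal{Z}_m^N$, and since $t > 0$, the evaluation map $u \mapsto u(t, \cdot)$ is continuous from $\mathcal{Z}_m$ to $\mathcal{X}_m$ by the definition of the topology on $\mathcal{Z}_{m,T}$ (restricted from $\mathcal{C}_{\mathrm{b}}((0,T]; \mathcal{X}_m)$). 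Composing, $\Psi(\mathbf{v}_n)(t,\cdot) \to \Psi(\mathbf{v})(t,\cdot)$ in $\mathcal{X}_m^N$ on $\Omega_0$, and then by continuity of $f$ on $\mathcal{X}_m^N$,
\[
f(\Psi(\mathbf{v}_n)(t,\cdot)) \longrightarrow f(\Psi(\mathbf{v})(t,\cdot)) \quad \text{almost surely.}
\]
Since $|f(\Psi(\mathbf{v}_n)(t,\cdot))| \leq \|f\|_{\mathcal{C}_{\mathrm{b}}(\mathcal{X}_{m}^{N})}$, the bounded convergence theorem gives $P_t f(\mathbf{v}_n) \to P_t f(\mathbf{v})$, establishing continuity of $P_t f$ on $L_{\p_{m+}}^{\infty}(\mathbb{R})^N$.

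The final sentence of the proposition then follows by restricting to $\mathcal{X}_m^N \subset L_{\p_{m+}}^{\infty}(\mathbb{R})^N$: the inclusion is continuous (both carry topologies in which convergence is controlled by the $L_{\p_\ell}^\infty$ norms for $\ell > m$), so $P_t f|_{\mathcal{X}_m^N} \in \mathcal{C}_{\mathrm{b}}(\mathcal{X}_m^N)$. There is no real obstacle here beyond checking that the continuity in \cref{prop:thetawellposed} composes properly with time evaluation, which is built into the definition of $\mathcal{Z}_{m,T}$; all nontrivial PDE work has already been absorbed into that proposition.
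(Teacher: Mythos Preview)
Your proof is correct and follows essentially the same approach as the paper: both use the almost-sure continuity of $\Psi$ from \cref{prop:thetawellposed}, compose with the continuity of $f$, and pass the limit through the expectation via the bounded convergence theorem. You are slightly more explicit about boundedness, the continuity of the time-evaluation map, and the restriction step, but the substance is identical.
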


\begin{proof}
Let $\mathbf{v}^{(n)}\to\mathbf{v}$ in $L_{\p_{m+}}^{\infty}(\mathbb{R})^{N}$ and $t>0$. 
Then for each component $i=1,\ldots,N$, we have that 
\[
\Psi(v_i^{(n)})(t,\cdot)\to\Psi(v_i)(t,\cdot)~~\hbox{ in $\mathcal{X}_{m}^{N}$,}
\]
almost surely. It follows that 
\[
\mathbb{E}f(\Psi(v_1^{(n)})(t,\cdot),\ldots,\Psi(v_N^{(n)})(t,\cdot))\to\mathbb{E}f(\Psi(v_1)(t,\cdot),\ldots,\Psi(v_N){(t,\cdot)}),
\]
by the bounded convergence theorem since $f$ is bounded.
\end{proof}

\cref{prop:fellerproperty} implies the Feller property of  \cref{eq:uPDE-1} claimed
in \cref{thm:existence}. Hence, the proof of \cref{thm:existence} is reduced to that of 
\cref{prop:thetawellposed}.

\subsection{The periodized problem\label{subsec:periodized}}

The proof of \cref{prop:thetawellposed} proceeds in two steps. First,
we show the existence of solutions to a \emph{periodized} stochastic
Burgers equation, and then we take the limit as the period tends to
infinity. We use the notation $\mathcal{C}^{\alpha}(\mathbb{R}/L\mathbb{Z})$
for functions in $\mathcal{C}^{\alpha}(\mathbb{R})$ which are $L$-periodic,
with the convention~$\mathcal{C}^{\alpha}(\mathbb{R}/\infty\mathbb{Z})=\mathcal{C}^{\alpha}(\mathbb{R})$.
We emphasize that the use of periodicity in this section is fundamentally different from the use of periodicity in later sections. In this section, we periodize both the initial conditions and the driving noise, so that the solutions are periodic almost surely. In later sections, we will consider solutions to \cref{eq:uPDE-many} with noise that is not periodic, but whose \emph{laws} are periodic.

For $L>0$, let $ \chi^{[L]}$ be a smooth, compactly-supported bump
function, taking values in $[0,1]$, such that
\begin{align}
  \chi^{[L]}|_{[-L/2,L/2]} & \equiv1, & \chi^{[L]}|_{[-L/2 - 1, L/2 + 1]} & \equiv0, & \|\chi^{[L]}\|_{\mathcal{C}^{k}(\mathbb{R})} & \le C_{k}\label{eq:chiLdef}
\end{align}
for each $k$, where $C_{k}<\infty$ is a constant that may depend
on $k$ but not on $L$. The $L$-periodized version of $V$ is  %
\begin{equation}\label{eq:VLdef}
V^{[L]}(x)=\sum_{j\in\mathbb{Z}} (\chi^{[L]}V)(x-jL),  %
\end{equation}
For notational convenience, we define $V^{[\infty]}=V$. For $L\in(0,\infty]$,
let
\begin{equation}
  \psi^{[L]}(t,\cdot)=\int_{0}^{t}\partial_{x}G_{t-s}*\dif V^{[L]}(s,\cdot),\label{eq:psiLdef}
\end{equation}
so that $\psi^{[L]}$ is $L$-periodic and solves the SPDE
\begin{align*}
  &\dif\psi^{[L]}      =\frac{1}{2}\partial_{x}^{2}\psi^{[L]}\dif t+\dif(\partial_{x}V^{[L]}), \\
  &\psi^{[L]}(0,\cdot)  \equiv0.
\end{align*}
The family $\{\psi^{[L]}\}_{L\in(0,\infty]}$ is coupled by taking
the stochastic convolutions of the same realization of~$V$. We
will always assume that we have taken modifications of $\psi^{[L]}$
with continuous paths.

We will consider the $L$-periodic approximation to \cref{eq:thetajPDE}
\begin{align}
&  \dif\theta^{[L]}      =\frac{1}{2}\partial_{x}^{2}\theta^{[L]}-\frac{1}{2}\partial_{x}(\theta^{[L]}+\psi^{[L]})^{2},\label{eq:thetaLproblem} \\
&  \theta^{[L]}(0,\cdot) =v^{[L]},\label{eq:thetaLic}
\end{align}
with some initial condition
\begin{equation}
  v^{[L]}\in\begin{cases}
    L^{\infty}(\mathbb{R}/L\mathbb{Z}), & \text{if }L<\infty,                                              \\
    L_{\p_{m+}}^{\infty}(\mathbb{R}),   & \text{for some \ensuremath{m\in(0,1)} if \ensuremath{L=\infty}.}
  \end{cases}\label{eq:vLcond}
\end{equation}
This PDE has classical solutions, and can be solved
pathwise in the noise. Indeed, the following lemma is the only fact
about $\psi^{[L]}$ that we will use in this section.
\begin{lem}
  \label{lem:suppsi}Define the weight $g(x) = (\log\langle x\rangle)^{3/4}$.
  For any $T<\infty$ and $j \in \mathbb{N}$, with probability $1$ we have
  \begin{equation}
    \sup_{L\in[1,\infty]}\|\psi^{[L]}\|_{\mathcal{C}_{\mathrm{b}}([0,T];\mathcal{C}_{g}^{j}(\mathbb{R}))}<\infty.\label{eq:suppsi}
  \end{equation}
\end{lem}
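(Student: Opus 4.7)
Since $\psi^{[L]}$ is a centered Gaussian field on $[0,T]\times\mathbb{R}$ which is $L$-periodic in space (for finite $L$), my plan has three parts: (i) derive covariance and increment bounds on $\partial_{x}^{i}\psi^{[L]}$ for $i\le j$ that are uniform in $L\in[1,\infty]$; (ii) use Gaussian concentration together with the weight $g$ to deduce a sub-Gaussian tail on $\|\psi^{[L]}\|_{\mathcal{C}_{g}^{j}}$ uniform in $L$; and (iii) pass the supremum over $L$ inside by a continuity-in-$L$ argument.

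For (i), I would express $\partial_{x}^{i}\psi^{[L]}(t,x)$ as an Ito integral against $\dif W$ using \cref{eq:psiLdef} together with $V^{[L]}(t,x)=\sum_{k\in\mathbb{Z}}\chi^{[L]}(x-kL)V(t,x-kL)$, obtaining a representation with kernel $\partial_{x}^{i+1}G_{t-s}*\eta^{[L]}(x,\cdot)$, where $\eta^{[L]}(x,y)=\sum_{k}\chi^{[L]}(x-kL)\rho(x-kL-y)$. The rapid Gaussian decay of the heat kernel together with the uniform $\mathcal{C}^{k}$ bounds on $\chi^{[L]}$ yield, for each $i\le j$ and $T<\infty$, a constant $C$ such that for all $L\in[1,\infty]$, $t,s\in[0,T]$, $x,y\in\mathbb{R}$,
\[
  \mathbb{E}|\partial_{x}^{i}\psi^{[L]}(t,x)|^{2}\le C,\qquad \mathbb{E}|\partial_{x}^{i}\psi^{[L]}(t,x)-\partial_{x}^{i}\psi^{[L]}(s,y)|^{2}\le C(|t-s|+|x-y|^{2}),
\]
the increment bound generalizing the computation in \cref{eq:psicovariance}.

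For (ii), I decompose $\mathbb{R}$ into dyadic annuli $A_{k}=\{2^{k}\le|x|<2^{k+1}\}$ (plus a central disk), on which $g(x)\gtrsim k^{3/4}$. Dudley's entropy bound with the Hölder-type metric from (i) gives $\mathbb{E}\sup_{(t,x)\in[0,T]\times A_{k}}|\partial_{x}^{i}\psi^{[L]}|\lesssim\sqrt{k}$, and Borell--TIS yields $\mathbb{P}(\sup>C_{1}\sqrt{k}+\mu)\le\e^{-c\mu^{2}}$ with constants independent of $L$. Dividing by $g(x)\gtrsim k^{3/4}$, setting $\mu=\lambda k^{3/4}$, and summing the union bound produces
\[
  \mathbb{P}\big(\|\psi^{[L]}\|_{\mathcal{C}_{\mathrm{b}}([0,T];\mathcal{C}_{g}^{j}(\mathbb{R}))}>\lambda\big)\le C\e^{-c\lambda^{2}}
\]
uniformly in $L$; the exponent $3/4$ in $g$ is precisely what forces $\sum_{k}2^{k}\exp(-c\lambda^{2}k^{3/2})$ to converge (with room to spare, as $k^{3/2}$ beats $k\log 2$).

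For (iii), I choose $\chi^{[L]}$ as a smooth family in $L$; then $L\mapsto\psi^{[L]}$ is a.s.\ continuous into $\mathcal{C}_{\mathrm{loc}}([0,T]\times\mathbb{R})$. For $x$ in any fixed compact set $K$ and $L$ large, the Gaussian decay of $\partial_{x}G_{t-s}(x-z)$ in $|x-z|$ makes $\mathbb{E}|\psi^{[L]}(t,x)-\psi^{[\infty]}(t,x)|^{2}$ exponentially small in $L$ (the localization $V^{[L]}-V$ is supported at distance $\gtrsim L/2$ from $x$), so $\psi^{[L]}\to\psi^{[\infty]}$ uniformly on $[0,T]\times K$ a.s.\ as $L\to\infty$, via Gaussian chaining applied to the difference field combined with Borel--Cantelli. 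Combined with the tail bound from (ii), which controls $\|\psi^{[L]}\|_{\mathcal{C}_{g}^{j}}$ restricted to $\{|x|>R\}$ uniformly in $L$ (and makes this contribution small as $R\to\infty$), we obtain $\|\psi^{[L]}-\psi^{[\infty]}\|_{\mathcal{C}_{\mathrm{b}}([0,T];\mathcal{C}_{g}^{j}(\mathbb{R}))}\to 0$ as $L\to\infty$. Hence $L\mapsto\|\psi^{[L]}\|_{\mathcal{C}_{\mathrm{b}}([0,T];\mathcal{C}_{g}^{j}(\mathbb{R}))}$ is a.s.\ continuous on the one-point compactification $[1,\infty]$ and its supremum is finite almost surely. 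The main obstacle I foresee is precisely step (iii): the sub-Gaussian tail from (ii) is essential for upgrading compact-set convergence to convergence in the weighted norm, and the continuous choice of bumps $\chi^{[L]}$ has to be made carefully so that the resulting family $L\mapsto\psi^{[L]}$ inherits the joint-in-$(L,t,x)$ regularity needed for the continuity argument.
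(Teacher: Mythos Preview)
Your approach is workable, but the paper's argument is substantially simpler and avoids your step (iii) entirely. The key observation you are missing is that one can do the Gaussian analysis on the single unperiodized field $V$ rather than on the family $\{\psi^{[L]}\}_{L}$. Concretely, the paper first shows $\partial_{x}^{j}V\in\mathcal{C}_{\mathrm{b}}([0,T];\mathcal{C}_{g}^{1}(\mathbb{R}))$ a.s.\ via Fernique and Borell--TIS on unit intervals together with the same union-bound-over-$k$ argument you describe (no chaining is needed because $V$ is spatially smooth). Then the periodization is handled \emph{pathwise}: since for each $x$ at most three translates $\chi^{[L]}(\cdot-jL)$ are nonzero and the relevant arguments $x-jL$ lie in $[-L/2-1,L/2+1]$, one gets the deterministic inequality $\sup_{L\in[1,\infty]}\|\partial_{x}^{j}V^{[L]}\|_{\mathcal{C}_{\mathrm{b}}([0,T];\mathcal{C}_{g}^{1})}\le 3\|\partial_{x}^{j}V\|_{\mathcal{C}_{\mathrm{b}}([0,T];\mathcal{C}_{g}^{1})}$. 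Finally, an integration by parts (exploiting spatial smoothness of $\rho$) gives the pathwise identity
\[
\partial_{x}^{j}\psi^{[L]}(t,x)=-\tfrac{1}{2}\int_{0}^{t}G_{t-s}*\partial_{x}^{j+3}V^{[L]}(s,\cdot)(x)\,\dif s,
\]
and a weighted heat-kernel bound transfers the $\mathcal{C}_{g}$ control from $V^{[L]}$ to $\psi^{[L]}$, uniformly in $L$.

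What this buys: the supremum over $L$ becomes a single pathwise inequality, so there is no need for continuity of $L\mapsto\psi^{[L]}$, no smooth-in-$L$ choice of $\chi^{[L]}$, no Borel--Cantelli along a sequence of $L$'s, and no upgrading of compact-set convergence to weighted-norm convergence. Your step (iii) is the part you correctly flagged as delicate; the paper's route simply makes it unnecessary. Your approach has the minor advantage of not requiring spatial smoothness of $\rho$ (your covariance/increment bounds would survive with $\rho\in L^{2}$), but in the setting of the paper that generality is not needed.
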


\begin{proof} Let us first recall a standard bound on the growth of $V(t,x)$ and its derivatives
at infinity which implies that
$\partial_{x}^{j}V\in\mathcal{C}_{\mathrm{b}}([0,T];\mathcal{C}_{g}^{1}(\mathbb{R}))$
almost surely.
Since the noise $V$ is a convolution of a cylindrical Wiener process
with a spatially smooth process, it  is
  Gaussian, continuous in time, and smooth in space. In particular,
  we have, for any $j\in\mathbb{Z}_{\ge0}$ and $k\in\mathbb{Z}$ fixed, that
  \[
    \mathbb{E}\left(\sup_{x\in[k,k+1]}\sup_{t\in[0,T]}|\partial_{x}^{j}V(t,x)|\right)<\infty
  \]
  by Fernique's inequality (see e.g. \cite[Theorem 4.1]{Adl90}). Therefore,
  by the Borell--TIS inequality (see e.g.~\cite[Theorem 2.1]{Adl90})
there exist constants $c>0$ and $C<\infty$, depending on $T$, so that for all~$z>0$ we have
  \[
    \mathbb{P}\left(\sup_{x\in[k,k+1]}\sup_{t\in[0,T]}|\partial_{x}^{j}V(t,x)|>z\right)\le C\e^{-cz^{2}}.
  \]
  By a union bound, this means that
  \begin{align*}
    \mathbb{P}\left(\sup_{k\in\mathbb{Z}}\frac{1}{g(k)}\sup_{x\in[k,k+1]}\sup_{t\in[0,T]}|\partial_{x}^{j}V(t,x)|>z\right) & \le\sum_{k\in\mathbb{Z}}\mathbb{P}\left(\sup_{x\in[k,k+1]}\sup_{t\in[0,T]}|\partial_{x}^{j}V(t,x)|>zg(k)\right) \\
                                                                                                                           & \le C\sum_{k\in\mathbb{Z}}\e^{-cz^{2}(\log\langle k\rangle)^{3/2}},
  \end{align*}
  and the sum on the right is finite and goes to $0$ as $z\to\infty$.
  In particular,
  \[
    \sup_{k\in\mathbb{Z}}\frac{1}{g(k)}\sup_{x\in[k,k+1]}\sup_{t\in[0,T]}|\partial_{x}^{j}V(t,x)|
  \]
is finite almost surely, and so $\partial_{x}^{j}V\in\mathcal{C}_{\mathrm{b}}([0,T];\mathcal{C}_{g}^{1}(\mathbb{R}))$
almost surely, as we have claimed. In addition, it is clear from \cref{eq:VLdef} that
  \begin{equation}
    \sup_{L\in[1,\infty]}\|\partial_{x}^{j}V^{[L]}\|_{\mathcal{C}_{\mathrm{b}}([0,T];\mathcal{C}_{g}^{1}(\mathbb{R}))}\le3\|\partial_{x}^{j}V\|_{\mathcal{C}_{\mathrm{b}}([0,T];\mathcal{C}_{g}^{1}(\mathbb{R}))}<\infty\label{eq:partialxjVL}
  \end{equation}
  almost surely. 
  
Now we can turn our attention to $\psi^{[L]}$. We
  have
  \begin{equation}
    \begin{aligned}\psi^{[L]}(t,x)&=\int_{0}^{t}\partial_{x}G_{t-s}*\dif V^{[L]}(s,\cdot)(x) =-\int_{0}^{t}\partial_{tx}G_{t-s}*V^{[L]}(s,\cdot)(x)\,\dif s                \\
      & =-\frac{1}{2}\int_{0}^{t}G_{t-s}*\partial_{x}^{3}V^{[L]}(s,\cdot)(x)\,\dif s,
    \end{aligned}
    \label{eq:psiLexpr}
  \end{equation}
  and so
  \begin{equation}
    \partial_{x}^j\psi^{[L]}(t,x)=-\frac{1}{2}\int_{0}^{t}G_{t-s}*\partial_{x}^{j + 3}V^{[L]}(s,\cdot)(x)\,\dif s.\label{eq:dxpsiLexpr}
  \end{equation}
  The conclusion \cref{eq:suppsi} follows from \cref{eq:partialxjVL}--\cref{eq:dxpsiLexpr}
  and \cref{lem:weightedhkbound} in \cref{subappendix:hkbounds}.
\end{proof}

\subsection{Mild solutions\label{subsec:mildsolutions}}

In this section, we  show that the mild formulation of \cref{eq:thetaLproblem}--\cref{eq:thetaLic},
namely
\begin{equation}
  \theta^{[L]}(t,\cdot)=G_{t}*v^{[L]}-\frac{1}{2}\int_{0}^{t}\partial_{x}G_{t-s}*(\theta^{[L]}(s,\cdot)+\psi^{[L]}(s,\cdot))^{2}\,\dif s,\qquad t>0.\label{eq:mildformulation}
\end{equation}
is equivalent
to its classical formulation. As part of the proof, we
establish regularity estimates on mild solutions which will be important later on.
\begin{lem}\label{lem:mildregularitypoly}
  Given $\ell>0$, $\alpha\ge0$,
  $\beta\in[\alpha,\alpha+1)$, $\ell>0$, and $S>0$, there exists~$C=C(\alpha,\beta,\ell,S)$ so that for
  all $L\in(0,\infty]$ and $v^{[L]}$ as in \cref{eq:vLcond}, if
  $\theta^{[L]}\in\mathcal{C}_{\mathrm{b}}((0,S];\mathcal{C}_{\p_{\ell}}^{\alpha}(\mathbb{R}))$
  satisfies \cref{eq:mildformulation}, then for all $t\in(0,S]$ we have
  \begin{align*}
    \|\theta^{[L]}(t,\cdot)\|_{\mathcal{C}_{\p_{2\ell}}^{\beta}(\mathbb{R})}  &\le Ct^{-\frac{\beta-\alpha}{2}}\|v^{[L]}\|_{L_{\p_{2\ell}}^{\infty}(\mathbb{R})}                                                                                                                           \\
 &\qquad+C\int_{0}^{t}(t-s)^{-\frac{\beta-\alpha+1}{2}}%
 \Big[\|\theta^{[L]}(s,\cdot)\|_{\mathcal{C}_{\p_{\ell}}(\mathbb{R})}\|\theta^{[L]}(s,\cdot)\|_{\mathcal{C}_{\p_{\ell}}^{\alpha}(\mathbb{R})}
 \\&\qquad\qquad\qquad\qquad\qquad\qquad\qquad+\|\psi^{[L]}(s,\cdot)\|_{\mathcal{C}_{\p_{\ell}}(\mathbb{R})}\|\psi^{[L]}(s,\cdot)\|_{\mathcal{C}_{\p_{\ell}}^{\alpha}(\mathbb{R})}\Big]
\,\dif s.
  \end{align*}
\end{lem}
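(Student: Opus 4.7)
The plan is to apply the $\mathcal{C}^{\beta}_{\p_{2\ell}}(\mathbb{R})$ norm to both sides of the mild formulation~\cref{eq:mildformulation} and estimate the two resulting terms using the weighted heat-kernel bounds from~\cref{subappendix:hkbounds}. These bounds quantify how convolution with $G_t$ or $\partial_x G_t$ maps between weighted Hölder and $L^{\infty}$ spaces, trading a spatial regularity gain for a negative power of $t$ while keeping track of the polynomial weight.

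For the initial-data contribution $G_t * v^{[L]}$, I would invoke the weighted smoothing estimate sending $L^{\infty}_{\p_{2\ell}}$ into $\mathcal{C}^{\beta}_{\p_{2\ell}}$; combined with the a~priori $\mathcal{C}^{\alpha}_{\p_\ell}$ control on $\theta^{[L]}(t,\cdot)$ that is part of the hypothesis, this produces the first term on the right-hand side with the stated $t^{-(\beta-\alpha)/2}$ factor. For the nonlinear integrand, I would first expand $(\theta^{[L]}+\psi^{[L]})^{2}=(\theta^{[L]})^{2}+2\theta^{[L]}\psi^{[L]}+(\psi^{[L]})^{2}$ and apply a Leibniz-type product estimate
\[
\|fg\|_{\mathcal{C}^{\alpha}_{\p_{2\ell}}(\mathbb{R})}\le C\bigl(\|f\|_{\mathcal{C}_{\p_\ell}(\mathbb{R})}\|g\|_{\mathcal{C}^{\alpha}_{\p_\ell}(\mathbb{R})}+\|f\|_{\mathcal{C}^{\alpha}_{\p_\ell}(\mathbb{R})}\|g\|_{\mathcal{C}_{\p_\ell}(\mathbb{R})}\bigr),
\]
which holds because the polynomial weight squares under multiplication and the Hölder seminorm distributes over products in the usual way. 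The cross term $\theta^{[L]}\psi^{[L]}$ is then absorbed into the two pure squares via $2ab\le a^{2}+b^{2}$, yielding a $\mathcal{C}^{\alpha}_{\p_{2\ell}}$ bound on $(\theta^{[L]}+\psi^{[L]})^{2}$ that is, up to a constant, exactly $\|\theta^{[L]}(s,\cdot)\|_{\mathcal{C}_{\p_\ell}}\|\theta^{[L]}(s,\cdot)\|_{\mathcal{C}^{\alpha}_{\p_\ell}}+\|\psi^{[L]}(s,\cdot)\|_{\mathcal{C}_{\p_\ell}}\|\psi^{[L]}(s,\cdot)\|_{\mathcal{C}^{\alpha}_{\p_\ell}}$. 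I would then apply the weighted heat-kernel estimate for $\partial_x G_{t-s}$ mapping $\mathcal{C}^{\alpha}_{\p_{2\ell}}$ into $\mathcal{C}^{\beta}_{\p_{2\ell}}$: a factor $(t-s)^{-1/2}$ from the derivative on the kernel, together with $(t-s)^{-(\beta-\alpha)/2}$ from the regularity gain, giving the $(t-s)^{-(\beta-\alpha+1)/2}$ appearing under the integral. Since $\beta-\alpha<1$, this singularity is integrable in $s$ near $s=t$.

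The main obstacle is the precise weighted heat-kernel smoothing statement: convolution with $G_t$ or $\partial_x G_t$ can in principle shift growth between input and output weights, and one needs a formulation in which the same weight $\p_{2\ell}$ controls both sides uniformly in $L\in(0,\infty]$. This is possible because the polynomial weight $\p_\ell(x)=\langle x\rangle^{\ell}$ is tame compared to the Gaussian decay of the heat kernel, so the weight shift can be absorbed into the $t$-dependent prefactor at no essential cost. Carrying out this accounting, which is the content of~\cref{subappendix:hkbounds}, is the one point at which the weighted nature of the problem really intrudes; once those estimates are in hand, the remainder of the argument is the standard bookkeeping sketched above.
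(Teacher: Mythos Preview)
Your approach is the paper's: apply \cref{lem:weightedhkbound} to each term of \cref{eq:mildformulation} and use the triangle inequality, with the product estimate and the $2ab\le a^{2}+b^{2}$ absorption of the cross term being exactly what the paper's one-line proof leaves implicit. One small wobble: invoking the a~priori $\mathcal{C}^{\alpha}_{\p_\ell}$ control on $\theta^{[L]}$ to justify the $t^{-(\beta-\alpha)/2}$ factor on $G_t * v^{[L]}$ does not make sense, since that hypothesis says nothing about $v^{[L]}$; the direct $L^{\infty}\to\mathcal{C}^{\beta}$ case of \cref{lem:weightedhkbound} yields $t^{-\beta/2}$, so the stated exponent is a harmless imprecision in the lemma (and the paper's proof does not address it either) rather than a defect in your method.
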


\begin{proof}
  This follows from applying \cref{lem:weightedhkbound} to \cref{eq:mildformulation}
  and using the triangle inequality.
\end{proof}
\cref{lem:mildregularitypoly} can be iterated to obtain
bounds on higher derivatives. For simplicity, we only state the results
we use later on.
\begin{cor}
  Fix $\ell>0$, $L\in(0,\infty]$, and $S>0$. If $\theta^{[L]}\in\mathcal{C}_{\mathrm{b}}((0,S];\mathcal{C}_{\p_{\ell}}(\mathbb{R}))$
  satisfies \cref{eq:mildformulation} for some $v^{[L]}$ satisfying
  \cref{eq:vLcond}, then for all $t\in(0,S]$ there is a constant $C(t)$
  (depending also on $\ell$) so that
  \begin{equation}
    \|\theta^{[L]}(t,\cdot)\|_{\mathcal{C}_{\p_{4\ell}}^{1}(\mathbb{R})}\le C(t)\left(\|\theta^{[L]}\|_{\mathcal{C}_{\mathrm{b}}((0,t];\mathcal{C}_{\p_{\ell}}(\mathbb{R}))}^{4}+\|\psi^{[L]}\|_{\mathcal{C}_{\mathrm{b}}((0,t];\mathcal{C}_{\p_{\ell}}^{1/2}(\mathbb{R}))}^{4} + 1\right).\label{eq:C1bound}
  \end{equation}
\end{cor}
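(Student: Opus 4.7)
The plan is to iterate \cref{lem:mildregularitypoly} twice in a bootstrap, raising the H\"older exponent from $0$ to $\tfrac{1}{2}$ and then to $1$, while the polynomial weight doubles at each step from $\p_\ell$ to $\p_{2\ell}$ to $\p_{4\ell}$. At both applications the constraint $\beta<\alpha+1$ is satisfied, and the temporal singularity $(t-s)^{-(\beta-\alpha+1)/2}=(t-s)^{-3/4}$ is integrable.

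The one subtlety is that \cref{lem:mildregularitypoly} produces a bound in terms of $\|v^{[L]}\|_{L^\infty_{\p_{2\ell}}}$, but the hypothesis of the corollary provides no control on the initial datum. To handle this, one applies the lemma to the mild formulation restarted at an intermediate time $t_0\in(0,t)$. Such a restart is legitimate: substituting \cref{eq:mildformulation} into itself yields
\[
\theta^{[L]}(t,\cdot)=G_{t-t_0}*\theta^{[L]}(t_0,\cdot)-\frac{1}{2}\int_{t_0}^t \partial_x G_{t-s}*(\theta^{[L]}(s,\cdot)+\psi^{[L]}(s,\cdot))^2\,\dif s,
\]
so $\theta^{[L]}$ on $[t_0,t]$ satisfies the mild formulation with initial datum $\theta^{[L]}(t_0,\cdot)$. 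Choosing $t_0=t/2$ and using the pointwise inequality $\p_\ell\ge 2^\ell$, this new initial datum is controlled in any $L^\infty_{\p_{k\ell}}$ by a constant multiple of $\|\theta^{[L]}\|_{\mathcal{C}_{\mathrm{b}}((0,t];\mathcal{C}_{\p_\ell})}$, and the weighted spaces satisfy the continuous embedding $\mathcal{C}_{\p_\ell}\hookrightarrow\mathcal{C}_{\p_{2\ell}}$ (and similarly for their H\"older counterparts) for the same reason.

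Writing $\|\theta\|_*=\|\theta^{[L]}\|_{\mathcal{C}_{\mathrm{b}}((0,t];\mathcal{C}_{\p_\ell})}$ and $\|\psi\|_*=\|\psi^{[L]}\|_{\mathcal{C}_{\mathrm{b}}((0,t];\mathcal{C}_{\p_\ell}^{1/2})}$, Step~1 applies the lemma on $[s/2,s]$ with $\alpha=0$, $\beta=\tfrac{1}{2}$, and weight $\ell$ (the integrand reduces to $\|\theta\|^2_{\mathcal{C}_{\p_\ell}}+\|\psi\|^2_{\mathcal{C}_{\p_\ell}}$ because $\alpha=0$), yielding
\[
\|\theta^{[L]}(s,\cdot)\|_{\mathcal{C}_{\p_{2\ell}}^{1/2}}\le C_1(s)\bigl(\|\theta\|_*+\|\theta\|_*^2+\|\psi\|_*^2\bigr), \qquad s\in(0,S].
\]
Step~2 applies the lemma on $[t/2,t]$ with $\alpha=\tfrac{1}{2}$, $\beta=1$, and weight $2\ell$; the integrand now contains $\|\theta(\tau)\|_{\mathcal{C}_{\p_{2\ell}}}\|\theta(\tau)\|_{\mathcal{C}_{\p_{2\ell}}^{1/2}}$, whose first factor is bounded by $C\|\theta\|_*$ via the embedding above, and whose second factor is bounded by the Step~1 estimate. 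Collecting the initial-data contribution, the $\theta$-integral, and the analogous $\psi$-integral, the right-hand side becomes a polynomial of degree at most three in $\|\theta\|_*$ and $\|\psi\|_*$, which Young's inequality then absorbs into $\|\theta\|_*^4+\|\psi\|_*^4+1$, yielding \cref{eq:C1bound}.

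No step is genuinely difficult; the argument is careful bookkeeping of weights and H\"older exponents, with the restart trick at $t/2$ as the only structural move.
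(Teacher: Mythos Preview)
Your proposal is correct and follows essentially the same two-step bootstrap as the paper: apply \cref{lem:mildregularitypoly} once on $[t/2,t]$ with $\alpha=1/2$, $\beta=1$, weight $2\ell$, and then feed in a second application (with $\alpha=0$, $\beta=1/2$, weight $\ell$) restarted at a positive time to bound the $\mathcal{C}^{1/2}_{\p_{2\ell}}$-norm without touching $v^{[L]}$. The only cosmetic difference is that the paper bounds $\|\theta\|_{\mathcal{C}_{\p_{2\ell}}}\|\theta\|_{\mathcal{C}^{1/2}_{\p_{2\ell}}}\le\|\theta\|_{\mathcal{C}^{1/2}_{\p_{2\ell}}}^{2}$ before inserting the Step~1 estimate, arriving directly at degree four, whereas you keep the factors separate, obtain degree three, and then invoke Young's inequality---same outcome.
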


\begin{proof} Applying \cref{lem:mildregularitypoly} on the time interval $[t/2,t]$ gives 
\begin{equation*}
\|\theta^{[L]}(t,\cdot)\|_{\mathcal{C}_{\p_{4\ell}}^{1}(\mathbb{R})} \le  
C\|\theta^{[L]}\|_{\mathcal{C}_{\mathrm{b}}([t/2,t];\mathcal{C}_{\p_{2\ell}} (\mathbb{R}))}+
 C\left(\|\theta^{[L]}\|_{\mathcal{C}_{\mathrm{b}}([t/2,t];\mathcal{C}_{\p_{2\ell}}^{1/2}(\mathbb{R}))}^{2}
 +\|\psi^{[L]}\|_{\mathcal{C}_{\mathrm{b}}([t/2,t];\mathcal{C}_{\p_{2\ell}}^{1/2}(\mathbb{R}))}^{2}\right),                                                                       
 \end{equation*}
with a constant $C$ depending on $\ell$ and $t$. Applying this lemma again to the terms with $\theta^{[L]}$ in the right side, 
now on the time interval~$[0,t/2]$, gives
\begin{align*}
 \|\theta^{[L]}(t,\cdot)\|_{\mathcal{C}_{\p_{4\ell}}^{1}(\mathbb{R})} &   
 \le C\left(\|\theta^{[L]}\|_{\mathcal{C}_{\mathrm{b}}((0,t];\mathcal{C}_{\p_{\ell}}(\mathbb{R}))}^{4} 
 + \|\psi^{[L]}\|_{\mathcal{C}_{\mathrm{b}}((0,t];\mathcal{C}_{\p_{\ell}}(\mathbb{R}))}^{4} 
 + \|\psi^{[L]}\|^{{2}}_{\mathcal{C}_{\mathrm{b}}([t/2,t];\mathcal{C}_{\p_{2\ell}}^{1/2}(\mathbb{R}))} + 1\right),
  \end{align*}
  which implies \cref{eq:C1bound}.
\end{proof}

We also have the following bound on the nonlinear term in \cref{eq:mildformulation}: set
\begin{equation}
  \label{eq:Duhamel-nonlinearity}
  D_t = -\frac{1}{2}\int_{0}^{t}\partial_{x}G_{t-s}*(\theta^{[L]}(s,\cdot)+\psi^{[L]}(s,\cdot))^{2}\,\dif s.
\end{equation}

\begin{cor}
  \label{cor:nonlinearity-norm-continuous}
  Fix $m \in (0,1)$, $L \in (0,\infty]$, $v^{[L]} \in L_{\p_{m+}}^{\infty}(\mathbb{R})$, and $T > 0$.
  If $\theta^{[L]} \in \mathcal{C}_{\mathrm{b}}((0,T]; \mathcal{X}_{m})$ satisfies \cref{eq:mildformulation}, then $\lim_{t \downarrow 0} D_t = 0$ in $\mathcal{X}_m$.
\end{cor}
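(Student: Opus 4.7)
The goal is to show $\|D_t\|_{\mathcal{C}_{\p_\ell}} \to 0$ as $t \downarrow 0$ for every $\ell > m$, which is exactly convergence in the Fréchet topology of $\mathcal{X}_m$. The main obstacle is a weight mismatch: the quadratic nonlinearity doubles weights, so the Duhamel integral bound from \cref{lem:mildregularitypoly} with input weight $\ell_0 > m$ and $\alpha = \beta = 0$ controls $D_t$ only in $\mathcal{C}_{\p_{2\ell_0}}$. This directly suffices for $\ell > 2m$, but the range $\ell \in (m, 2m]$ requires an additional interpolation step.

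I would first apply \cref{lem:weightedhkbound} to the Duhamel expression \cref{eq:Duhamel-nonlinearity}: for any $\ell_0 > m$,
\[
\|D_t\|_{\mathcal{C}_{\p_{2\ell_0}}} \le C\int_0^t (t-s)^{-1/2}\bigl[\|\theta^{[L]}(s,\cdot)\|_{\mathcal{C}_{\p_{\ell_0}}}^2 + \|\psi^{[L]}(s,\cdot)\|_{\mathcal{C}_{\p_{\ell_0}}}^2\bigr]\,\dif s \le C'\sqrt{t},
\]
where the second inequality uses the hypothesis $\theta^{[L]} \in \mathcal{C}_{\mathrm{b}}((0,T]; \mathcal{X}_m)$ and \cref{lem:suppsi} to bound the bracket uniformly in $s \in (0, T]$. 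Separately, rewriting $D_t = \theta^{[L]}(t,\cdot) - G_t * v^{[L]}$ via \cref{eq:mildformulation} and combining the uniform bound on $\theta^{[L]}$ with the weighted heat-kernel estimate applied to $v^{[L]} \in L_{\p_{\ell_1}}^{\infty}(\mathbb{R})$, I obtain $\sup_{t \in (0,T]}\|D_t\|_{\mathcal{C}_{\p_{\ell_1}}} < \infty$ for every $\ell_1 > m$.

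Given $\ell > m$, I pick $\ell_1 \in (m, \ell)$ and $\ell_2 > \max(\ell, 2m)$ so that $\ell = (1-\eta)\ell_1 + \eta \ell_2$ for some $\eta \in (0, 1)$. Since $\p_{\ell_1}^{1-\eta}\p_{\ell_2}^\eta = \p_\ell$, the pointwise identity
\[
\frac{|D_t(x)|}{\p_\ell(x)} = \biggl(\frac{|D_t(x)|}{\p_{\ell_1}(x)}\biggr)^{1-\eta}\biggl(\frac{|D_t(x)|}{\p_{\ell_2}(x)}\biggr)^{\eta}
\]
combined with the previous two bounds yields $\|D_t\|_{\mathcal{C}_{\p_\ell}} \le \|D_t\|_{\mathcal{C}_{\p_{\ell_1}}}^{1-\eta}\|D_t\|_{\mathcal{C}_{\p_{\ell_2}}}^{\eta} \le C\,t^{\eta/2} \to 0$. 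Since $D_t$ is continuous for each $t > 0$, this is the desired convergence in $\mathcal{X}_m$.
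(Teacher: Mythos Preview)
Your proof is correct and follows essentially the same three-step structure as the paper: (i) the Duhamel estimate gives $D_t \to 0$ in $\mathcal{C}_{\p_{2\ell_0}}$; (ii) rewriting $D_t = \theta^{[L]}(t,\cdot) - G_t * v^{[L]}$ gives a uniform bound in $\mathcal{C}_{\p_{\ell_1}}$ for $\ell_1$ close to $m$; (iii) combine to get convergence in $\mathcal{C}_{\p_\ell}$ for all $\ell > m$. The only difference is in step (iii): the paper invokes the abstract ``boost-the-weight'' lemma (\cref{prop:boosttheweight}), whereas you use an explicit H\"older interpolation between the polynomial weights, which is a clean and entirely equivalent substitute here.
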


\begin{proof}
Fix $m<\ell_{1}<\ell$, so that $v^{[L]}\in L_{\p_{\ell_{1}}}^{\infty}(\mathbb{R})$.
By \cref{lem:weightedhkbound} with $\alpha=\beta=0$, we see that $D_{t} \to 0$ as $t\downarrow0$ in the topology of $\mathcal{C}_{\p_{2\ell}}(\mathbb{R})$.
In addition, we have
\[
D_{t} = \theta^{[L]}(t, \cdot) - G_{t} * v^{[L]}.
\]
  But $\theta^{[L]} \in \mathcal{C}_{\mathrm{b}}((0,T]; \mathcal{C}_{\p_{\ell_{1}}}(\mathbb{R}))$ by hypothesis, and $G_{t}*v^{[L]}$ is bounded in $\mathcal{C}_{\p_{\ell_{1}}}(\mathbb{R})$ uniformly in $t$ by \cref{lem:weightedhkcts}.
  Therefore, by \cref{prop:boosttheweight} in \cref{sec:append-weighted}, we actually have
  \begin{equation}
    \label{eq:Dttozero}
    \lim_{t\downarrow0} D_{t} = 0
  \end{equation}
  in the topology of $\mathcal{C}_{\p_{\ell}}(\mathbb{R})$.
  Since $\ell > m$ was arbitrary, $D_t \to 0$ in $\mathcal{X}_m$ as well.
\end{proof}
It follows that if $\theta^{[L]}\in\mathcal{C}_{\mathrm{b}}((0,T];\mathcal{X}_{m})$
satisfies \cref{eq:mildformulation}, and $v^{[L]}\in\cX_m$, then if we extend $\theta^{[L]}$
to~$t=0$ by setting~$\theta^{[L]}(0,\cdot)=v^{[L]}$, then the extension satisfies $\theta^{[L]}\in\mathcal{C}_{\mathrm{b}}([0,T];\mathcal{X}_{m})$.
We can now show that mild solutions to~\cref{eq:thetaLproblem}--\cref{eq:thetaLic}
are in $\tilde{\mathcal{Z}}_{m}$ and are in fact classical solutions.
\begin{lem}[Mild solutions are classical]\label{lem:mildisclassical}
Fix $m\in(0,1)$, $L\in(0,\infty]$, $T>0$, and $v^{[L]}\in L_{\p_{m+}}^{\infty}(\mathbb{R})$. If $\theta^{[L]}\in\mathcal{C}_{\mathrm{b}}((0,T];\mathcal{X}_{m})$
satisfies \cref{eq:mildformulation}, and we extend it to $t=0$ by $\theta^{[L]}(0,\cdot)=v^{[L]}$, then~$\theta^{[L]}\in\tilde{\mathcal{Z}}_{m,T}$ and $\theta^{[L]}$ is
  a classical solution to the PDE \cref{eq:thetaLproblem}. Also, for
  any $p\in[1,\infty)$ and any $\ell'>m+1/p$, we have
  \begin{equation}
    \theta^{[L]}\in\mathcal{C}_{\mathrm{b}}([0,T];L_{\p_{\ell'}}^{p}(\mathbb{R})).\label{eq:thetactsinLp}
  \end{equation}
  Finally, if $v^{[L]}$ is continuous, then  
  \begin{equation}
    \theta^{[L]}\in\mathcal{C}_{\mathrm{b}}([0,T];\mathcal{X}_{m}).\label{eq:thetaLctsconverges}
  \end{equation}
\end{lem}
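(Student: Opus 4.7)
My plan is to bootstrap the spatial regularity of $\theta^{[L]}$ via \cref{lem:mildregularitypoly}, verify the classical PDE by differentiating the mild formulation \cref{eq:mildformulation}, and deduce the $L^p$ and continuity claims from weighted heat-kernel estimates combined with \cref{cor:nonlinearity-norm-continuous}. The main obstacle is the passage from mild to classical: one must carefully differentiate the Duhamel integral, whose kernel $\partial_{x} G_{t-s}$ is singular at $s = t$.

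For spatial smoothness, I iterate \cref{lem:mildregularitypoly} on shrunken time intervals $[\tau, T]$ for arbitrary $\tau \in (0, T)$. Each application trades integration in time for one more (half-)derivative of spatial regularity in a weighted Hölder norm at the cost of increasing the weight, and \cref{lem:suppsi} furnishes the needed spatial regularity of $\psi^{[L]}$. After finitely many iterations, $\theta^{[L]}(t, \cdot) \in \mathcal{C}^{2+\alpha}_{\p_{\ell''}}(\R)$ locally uniformly in $t \in (0, T]$ for any $\alpha \in (0, 1)$ and some finite $\ell''$.

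To promote this to a classical solution, write $\theta^{[L]}(t, \cdot) = G_{t} * v^{[L]} + D_{t}$ as in \cref{cor:nonlinearity-norm-continuous}. The first term is classically smooth on $(0, T) \times \R$ and satisfies $\partial_{t} = \tfrac{1}{2}\partial_{x}^{2}$. For the Duhamel term, set $F(s, \cdot) = \tfrac{1}{2}(\theta^{[L]} + \psi^{[L]})^{2}(s, \cdot)$, which by the step above is $\mathcal{C}^{2+\alpha}$ in space, locally uniformly in $s \in (0, T]$. Rewriting $\partial_{x} G_{t-s} * F(s, \cdot) = G_{t-s} * \partial_{x} F(s, \cdot)$ for $s < t$, differentiating in $t$, and using $\partial_{t} G = \tfrac{1}{2}\partial_{x}^{2} G$, the standard parabolic Duhamel identity yields
\[
  \partial_{t} D_{t} = \tfrac{1}{2}\partial_{x}^{2} D_{t} - \partial_{x} F(t, \cdot)
\]
pointwise on $(0, T) \times \R$. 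Summing, $\theta^{[L]}$ classically solves \cref{eq:thetaLproblem}. The growth bound \cref{eq:time-deriv-control} then follows immediately by reading $\partial_{t}\theta^{[L]}$ off the PDE together with the polynomial growth of $\theta^{[L]}, \psi^{[L]}$ and their first two spatial derivatives; thus $\theta^{[L]} \in \tilde{\mathcal{Z}}_{m, T}$.

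For \cref{eq:thetactsinLp}, fix $\ell' > m + 1/p$ and choose $\ell_{0} \in (m, \ell' - 1/p)$, so that $v^{[L]} \in L^{\infty}_{\p_{\ell_{0}}}(\R)$ embeds continuously into $L^{p}_{\p_{\ell'}}(\R)$. Weighted $L^p$ heat-kernel bounds from \cref{appendix:app-weight} give continuity of $t \mapsto G_{t} * v^{[L]}$ into $L^{p}_{\p_{\ell'}}(\R)$ on $[0, T]$; applied to the Duhamel term together with the uniform control on $\theta^{[L]} + \psi^{[L]}$ obtained above, they give continuity of $D_{t}$ into $L^{p}_{\p_{\ell'}}(\R)$ with $D_{0} = 0$. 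Finally, when $v^{[L]} \in \mathcal{X}_{m}$, heat-kernel continuity in $\mathcal{X}_{m}$ (also from \cref{appendix:app-weight}) combined with \cref{cor:nonlinearity-norm-continuous} yields $\theta^{[L]}(t, \cdot) \to v^{[L]}$ in $\mathcal{X}_{m}$ as $t \downarrow 0$, establishing \cref{eq:thetaLctsconverges}.
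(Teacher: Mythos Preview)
Your proposal is correct and follows essentially the same approach as the paper's proof: iterate \cref{lem:mildregularitypoly} for spatial smoothness, differentiate the mild formulation to verify the PDE classically, and use \cref{cor:nonlinearity-norm-continuous} together with the weighted heat-kernel continuity from \cref{lem:weightedhkcts} for the $L^p$ and $\mathcal{X}_m$ continuity statements. The only point you leave implicit is the weak-$*$ convergence $\theta^{[L]}(t,\cdot) \to v^{[L]}$ in $L^\infty_{\p_\ell}(\R)$ as $t\downarrow 0$, needed for membership in $\mathcal{Z}_{m,T}\supset\tilde{\mathcal{Z}}_{m,T}$; this follows from the strong $\mathcal{X}_m$-convergence $D_t\to 0$ in \cref{cor:nonlinearity-norm-continuous} and the weak-$*$ convergence of $G_t*v^{[L]}$ in \cref{lem:weightedhkcts}.
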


\begin{proof}
  Since $\psi^{[L]}$ is smooth, we can iterate \cref{lem:mildregularitypoly}
  to show that $\theta^{[L]}(t,\cdot)$ is a smooth tempered distribution
  for each $t>0$. Differentiating \cref{eq:mildformulation}, we can
  easily check that $\theta^{[L]}$ is differentiable in time and is a classical solution to
  \cref{eq:thetaLproblem}.

  Now, fix $\ell' > m + 1/p$ and $\ell \in (m, \ell' - 1/p)$.
  It follows from \cref{cor:nonlinearity-norm-continuous} that $D_t \to 0$ as~$t \downarrow 0$ in~$\mathcal{C}_{\p_{\ell}}(\mathbb{R})$.
  Since $\ell' > \ell + 1/p,$ this implies convergence in $L_{\p_{\ell'}}^{p}(\mathbb{R})$ as well.
 In addition,  \cref{lem:weightedhkcts} shows that~$G_t \ast v^{[L]} \to v^{[L]}$ in $L_{\p_{\ell'}}^{p}(\mathbb{R})$, and \cref{eq:thetactsinLp} follows.
  Furthermore, \cref{lem:weightedhkcts} implies that
  \[
  G_t \ast v^{[L]} \overset{\mathrm{w}^*}{\longrightarrow} v^{[L]},~~\hbox{ as $t \downarrow 0$ in $L_{\p_\ell}^\infty(\mathbb{R})$,}
  \]
and thus $\theta^{[L]}\in\tilde{\mathcal{Z}}_{m,T}$.
  Similarly, \cref{eq:thetaLctsconverges} follows from \cref{cor:nonlinearity-norm-continuous,lem:weightedhkcts}.
\end{proof}
Conversely, we can show that classical solutions in $\tilde{\mathcal{Z}}_{m}$
are mild. The proof of this is quite standard and is presented 
in \cref{lem:classicalismild} in \cref{appendix:classicalismild}.
Let us mention that one reason not to skip the proof completely is that
it is there that the bound \cref{eq:time-deriv-control} is used.

\subsection{Local-in-time existence for the periodized 
problem} \label{subsec:localintimexistence}

We now show existence of solutions to the periodized problem \cref{eq:thetaLproblem}--\cref{eq:thetaLic}
with $L<\infty$. We use a fixed-point argument based on the mild formulation \cref{eq:mildformulation}. For the moment, the solution we obtain exists only
up to a time depending on $L$ and the random forcing $\psi^{[L]}$.
This dependence will be eliminated in \cref{prop:Sis1} below. Let
$\mathcal{Z}_{T}^{[L]}\subset\mathcal{Z}_{0,T}$ be the subspace of
functions that are~$L$-periodic in space.
\begin{prop}
  \label{prop:localintimeexistence}For each $L\in(0,\infty)$ and $v^{[L]}\in L^{\infty}(\mathbb{R}/L\mathbb{Z})$,
  there is a time $S\in(0,2]$, depending on $L$, $\psi^{[L]}$, and
  $v^{[L]}$, so that there exists a solution $\theta^{[L]}\in \bigcup_{S' \in (0, S)}\mathcal{Z}_{S'}^{[L]}$
  to \cref{eq:thetaLproblem}--\cref{eq:thetaLic}.
  Moreover, if $S<2$, then
  \begin{equation}
    \limsup_{t\to S}\|\theta^{[L]}(t, \cdot)\|_{L^{\infty}(\mathbb{R}/L\mathbb{Z})}=\infty.\label{eq:limsuptoinfty}
  \end{equation}
\end{prop}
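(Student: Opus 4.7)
The plan is to solve the mild formulation \cref{eq:mildformulation} by a Banach fixed-point argument, then to upgrade the mild solution to a classical one via \cref{lem:mildisclassical} and extend it to a maximal time of existence. Set
\[
M = \sup_{L\in[1,\infty]}\|\psi^{[L]}\|_{\mathcal{C}([0,2];L^\infty(\mathbb{R}))},
\]
which is finite almost surely by \cref{lem:suppsi}. Working pathwise in the noise, I treat $\psi^{[L]}$ as a fixed function bounded by $M$.

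For the local existence, I will define the map
\[
\Gamma(\theta)(t,\cdot) = G_t * v^{[L]} - \frac{1}{2}\int_0^t \partial_x G_{t-s} * (\theta(s,\cdot)+\psi^{[L]}(s,\cdot))^2\,\dif s
\]
on the complete metric space $X_{S,R} = \{\theta \in \mathcal{C}_{\mathrm{b}}((0,S]; L^\infty(\mathbb{R}/L\mathbb{Z})) : \|\theta\|_{X_{S,R}} \le R\}$, with norm $\sup_{t\in(0,S]}\|\theta(t,\cdot)\|_{L^\infty}$. Taking $R = \|v^{[L]}\|_{L^\infty}+1$ and using the heat kernel estimates $\|G_t * f\|_{L^\infty} \le \|f\|_{L^\infty}$ and $\|\partial_x G_t * f\|_{L^\infty} \le Ct^{-1/2}\|f\|_{L^\infty}$ (which hold regardless of periodicity), I obtain
\[
\|\Gamma(\theta)(t,\cdot)\|_{L^\infty} \le \|v^{[L]}\|_{L^\infty} + C\sqrt{S}\,(R+M)^2,
\qquad
\|\Gamma(\theta_1)-\Gamma(\theta_2)\|_{X_{S,R}} \le C\sqrt{S}\,(R+M)\|\theta_1 - \theta_2\|_{X_{S,R}}.
\]
Choosing $S_0 \in (0,2]$ small enough (depending on $\|v^{[L]}\|_{L^\infty}$ and $M$) makes $\Gamma$ a contractive self-map on $X_{S_0,R}$, so Banach's fixed-point theorem yields a unique mild solution $\theta^{[L]} \in X_{S_0,R}$. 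The solution is automatically $L$-periodic in space because $v^{[L]}$, $\psi^{[L]}$, and $G_t$ respect this periodicity, so $\theta^{[L]} \in \mathcal{Z}_{S_0}^{[L]}$. Applying \cref{lem:mildisclassical} then shows that $\theta^{[L]}$ lies in $\tilde{\mathcal{Z}}_{m,S_0}$ for any $m > 0$ and is a classical solution to \cref{eq:thetaLproblem}.

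For the blow-up alternative, I define
\[
S = \sup\{S' \in (0,2] : \text{there exists }\theta^{[L]} \in \mathcal{C}((0,S'];L^\infty(\mathbb{R}/L\mathbb{Z})) \text{ solving \cref{eq:mildformulation} with } \sup_{t \in (0,S']}\|\theta^{[L]}(t,\cdot)\|_{L^\infty} < \infty\}.
\]
Local uniqueness (by the same contraction estimate applied on small subintervals) allows me to patch these solutions into a single solution on $[0,S)$ taking values in $\mathcal{Z}_{S'}^{[L]}$ for every $S' < S$. If $S < 2$, I argue by contradiction: if $\limsup_{t \to S}\|\theta^{[L]}(t,\cdot)\|_{L^\infty} < \infty$, then for some $t_0$ close to $S$ and some bound $K$ independent of $t_0$, I can restart the fixed-point argument from the initial datum $\theta^{[L]}(t_0,\cdot) \in L^\infty(\mathbb{R}/L\mathbb{Z})$ with $\|\cdot\|_{L^\infty} \le K$. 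The local existence time $\delta$ depends only on $K$ and $M$, not on $t_0$, so I can choose $t_0$ with $t_0 + \delta > S$ and thereby extend the solution past $S$, contradicting maximality. This forces \cref{eq:limsuptoinfty}.

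The main obstacle is not any single step but the careful bookkeeping: I must verify that the fixed-point space $\mathcal{C}_{\mathrm{b}}((0,S]; L^\infty)$ interacts correctly with the weak-$*$ initial trace required by \cref{def:Zmdef}, and that the continuation time $\delta$ depends on the initial $L^\infty$-norm in a quantitatively uniform way so that the blow-up dichotomy truly forces \cref{eq:limsuptoinfty}. Both issues are handled by the heat kernel smoothing, which immediately regularizes the (possibly discontinuous) initial datum, combined with \cref{cor:nonlinearity-norm-continuous} guaranteeing that the Duhamel term vanishes in $\mathcal{X}_m$ as $t\downarrow 0$.
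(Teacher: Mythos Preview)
Your argument is correct and follows the standard ``direct'' route: contract on a ball whose radius depends on $\|v^{[L]}\|_{L^\infty}$, then continue by restarting from a late time and invoke the uniform dependence of the local existence time on the data size to force the blow-up alternative. One small slip: your constant $M$ is defined as a supremum over $L\in[1,\infty]$, but for $L=\infty$ the process $\psi$ is not in $L^\infty(\mathbb{R})$, so that supremum is infinite; since the proposition fixes $L<\infty$, you should simply use $\|\psi^{[L]}\|_{\mathcal{C}([0,2];L^\infty(\mathbb{R}/L\mathbb{Z}))}$ for that particular $L$.

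The paper takes a slightly different route. Rather than contracting directly on the nonlinear map, it first truncates the nonlinearity via a smooth cutoff $\Xi_M$ so that the modified operator $\mathcal{A}_M$ is globally Lipschitz. The resulting contraction time then depends on $M$ and $\psi^{[L]}$ but \emph{not} on the initial condition, so the cutoff problem can be iterated all the way to time $2$ without worrying about shrinking existence intervals. The cutoff is then removed by defining $S_M$ as the first time the truncated solution reaches level $M$; monotonicity in $M$ gives a maximal solution on $[0,S)$ with $S=\lim_M S_M$, and the blow-up criterion follows immediately because $\|\theta^{[L]}(S_M,\cdot)\|_{L^\infty}=M$. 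Your approach avoids the cutoff machinery and is arguably more elementary; the paper's approach trades that for a cleaner continuation argument (no restart needed) and a built-in monotone family $(S_M)_M$ that makes the blow-up dichotomy transparent. Both routes also need the continuity of $\|\theta^{[L]}(t,\cdot)\|_{L^\infty}$ at $t=0$ for possibly discontinuous initial data; you handle this via \cref{cor:nonlinearity-norm-continuous} and \cref{lem:mildisclassical}, while the paper argues it directly using weak-$*$ lower semicontinuity and the comparison principle for the linear heat part.
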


\begin{proof}
  In light of \cref{lem:mildisclassical}, it suffices to find a solution
  \begin{equation*}
    \theta^{[L]}\in \bigcup_{S' \in (0, S)}\mathcal{C}_{\mathrm{b}}((0,S'];\mathcal{C}(\mathbb{R}/L\mathbb{Z}))
  \end{equation*}
  to \cref{eq:mildformulation}. %
  We use a fixed-point argument similar
  to that of \cite[Lemma 2.1]{DPDT94}. Let $M\in(0,\infty)$ and~$\Xi_{M}$
  be a smooth cutoff function so that $\Xi_{M}(x)=x$ for $|x|\le M$
  and $|\Xi_{M}(x)|\le M+1$ for all $x$. Fix a time $T\in(0,2]$ to
  be chosen later and note that the operator
  \[
    \mathcal{A}_{M}\theta(t,\cdot)=G_{t}*v^{[L]}-\int_{0}^{t}\partial_{x}G_{t-s}*((\Xi_{M}\circ\theta)(s,\cdot)+\psi^{[L]}(s,\cdot))^{2}\,\dif s
  \]
  maps $\mathcal{C}_{\mathrm{b}}((0,T];\mathcal{C}(\mathbb{R}/L\mathbb{Z}))$
  into itself. By the triangle inequality, the maximum principle for
  the heat equation, and \cref{lem:weightedhkbound}, we have
\begin{equation} \label{eq:nonlin-strong}
\begin{aligned}
\| & \mathcal{A}_{M}\theta(t,\cdot)\|_{\mathcal{C}_{\mathrm{b}}(\mathbb{R}/L\mathbb{Z})}
\le\|G_{t}*v^{[L]}\|_{L^\infty(\mathbb{R}/L\mathbb{Z})}+\int_{0}^{t}\|\partial_{x}G_{t-s}*((\Xi_{M}\circ\theta)(s,\cdot)+\psi^{[L]}(s,\cdot))^{2}\|_{\mathcal{C}_{\mathrm{b}}(\mathbb{R}/L\mathbb{Z})}\,\dif s\\
       & \le\|v^{[L]}\|_{L^\infty(\mathbb{R}/L\mathbb{Z})}+Ct^{1/2}\left((M+1)^{2}+\|\psi^{[L]}\|_{\mathcal{C}_{\mathrm{b}}((0,2];\mathcal{C}(\mathbb{R}/L\mathbb{Z}))}^{2}\right), 
  \end{aligned}
  \end{equation}
with some universal constant $C$, so that
\[
    \|\mathcal{A}_{M}\theta\|_{\mathcal{C}_{\mathrm{b}}((0,T];\mathcal{C}_{\mathrm{b}}(\mathbb{R}/L\mathbb{Z}))}\le\|v^{[L]}\|_{L^\infty(\mathbb{R}/L\mathbb{Z})}+CT^{1/2}\left((M+1)^{2}+\|\psi^{[L]}\|_{\mathcal{C}_{\mathrm{b}}((0,2];\mathcal{C}(\mathbb{R}/L\mathbb{Z}))}^{2}\right)\eqqcolon K_{M}.
\]
Let $\mathcal{B}$ be the ball of radius $K_{M}$ about the origin
in $\mathcal{C}_{\mathrm{b}}((0,T];\mathcal{C}_{\mathrm{b}}(\mathbb{R}/L\mathbb{Z}))$.
If $\theta,\tilde{\theta}\in\mathcal{B}$ and $t\in[0,T]$, then, once again, by 
\cref{lem:weightedhkbound}, we obtain
  \begin{align*}
    \| & \mathcal{A}_{M}\theta(t,\cdot)-\mathcal{A}_{M}\tilde{\theta}(t,\cdot)\|_{\mathcal{C}_{\mathrm{b}}(\mathbb{R}/L\mathbb{Z})}                                                                                                                                \\
       & \le\int_{0}^{t}\left\Vert \partial_{x}G_{t-s}*\left[((\Xi_{M}\circ\theta)(s,\cdot)+\psi^{[L]}(s,\cdot))^{2}-((\Xi_{M}\circ\tilde{\theta})(s,\cdot)+\psi^{[L]}(s,\cdot))^{2}\right]\right\Vert _{\mathcal{C}_{\mathrm{b}}(\mathbb{R}/L\mathbb{Z})}\,\dif s \\
       & \le\int_{0}^{t}(t-s)^{-{1}/{2}}\left\Vert ((\Xi_{M}\circ \theta)(s,\cdot)+\psi^{[L]}(s,\cdot))^{2}-((\Xi_{M}\circ\tilde{\theta})(s,\cdot)+\psi^{[L]}(s,\cdot))^{2}\right\Vert _{\mathcal{C}_{\mathrm{b}}(\mathbb{R}/L\mathbb{Z})}\,\dif s                  \\
       & \le CT^{1/2}\left(M+1+\|\psi^{[L]}\|_{\mathcal{C}_{\mathrm{b}}((0,2];\mathcal{C}(\mathbb{R}/L\mathbb{Z}))}\right)\|\theta-\tilde{\theta}\|_{\mathcal{C}_{\mathrm{b}}((0,T];\mathcal{C}_{\mathrm{b}}(\mathbb{R}/L\mathbb{Z}))},
  \end{align*}
  with another (possibly larger) constant $C$. Therefore, for all $\theta,\tilde{\theta}\in\mathcal{B}$,
  we have
  \[\begin{aligned}
    \|&\mathcal{A}_{M}\theta-\mathcal{A}_{M}\tilde{\theta}\|_{\mathcal{C}_{\mathrm{b}}((0,T];\mathcal{C}_{\mathrm{b}}(\mathbb{R}/L\mathbb{Z}))}\\&\le CT^{1/2}\left(M+1+\|\psi^{[L]}\|_{\mathcal{C}_{\mathrm{b}}((0,2];\mathcal{C}(\mathbb{R}/L\mathbb{Z}))}\right)\|\theta-\tilde{\theta}\|_{\mathcal{C}_{\mathrm{b}}((0,T];\mathcal{C}_{\mathrm{b}}(\mathbb{R}/L\mathbb{Z}))}.\end{aligned}
  \]
  Thus, if
  \begin{equation}
    T<\left(C\left(M+1+\|\psi^{[L]}\|_{\mathcal{C}_{\mathrm{b}}((0,2;\mathcal{C}(\mathbb{R}/L\mathbb{Z}))}\right)\right)^{-2},\label{eq:maxT}
  \end{equation}
  then the Banach fixed point theorem ensures the existence of a unique
  $\theta_{M}\in\mathcal{B}$ so that $\mathcal{A}_{M}\theta_{M}=\theta_{M}$,
  which is to say that
  \begin{equation}\label{eq:thetaMmildsoln}
    \theta_{M}(t,\cdot)=G_{t}*v^{[L]}-\int_{0}^{t}\partial_{x}G_{t-s}*\left((\Xi_{M}\circ\theta_{M})(s,\cdot)+\psi^{[L]}(s,\cdot)\right)^{2}\,\dif s 
  \end{equation}
  for all $t\in[0,T]$. Iterating this construction in the usual way,
  noting that the required bound \cref{eq:maxT} does not depend on the
  initial condition $v^{[L]}$, we conclude that $\theta_{M}\in\mathcal{C}_{\mathrm{b}}((0,2];\mathcal{C}_{\mathrm{b}}(\mathbb{R}/L\mathbb{Z}))$
  and \cref{eq:thetaMmildsoln} holds for all $t\in[0,2]$.

To remove the $M$-cutoff, define
\[
S_{M}=\inf\{t\in[0,2]\;:\;\|\theta_{M}(t,\cdot)\|_{L^\infty(\mathbb{R}/L\mathbb{Z})}\ge M\},
\]
or $S_{M}=2$ if $\|\theta_{M}(t,\cdot)\|_{L^\infty(\mathbb{R}/L\mathbb{Z})}<M$
for all $t\in[0,2]$.

First, we claim that $\|\theta_{M}(t,\cdot)\|_{L^\infty(\mathbb{R}/L\mathbb{Z})}$ is 
continuous at $t = 0$, so that $S_M>0$ for $M$ sufficiently large.
The estimate on the integral in \cref{eq:nonlin-strong} shows that its
 contribution to $\theta_M$ 
vanishes strongly at $t = 0$. This leaves only the term $G_t \ast v^{[L]}$ 
in \cref{eq:thetaMmildsoln} to estimate.
By \cref{lem:weightedhkcts}, we have
  \begin{equation*}
    \|v^{[L]}\|_{L^\infty(\mathbb{R}/L\mathbb{Z})} \leq \liminf_{t \downarrow 0} \|G_t \ast v^{[L]}\|_{L^\infty(\mathbb{R}/L\mathbb{Z})}.
  \end{equation*}
  On the other hand, the comparison principle implies
  \begin{equation*}
    \|v^{[L]}\|_{L^\infty(\mathbb{R}/L\mathbb{Z})} \geq \|G_t \ast v^{[L]}\|_{L^\infty(\mathbb{R}/L\mathbb{Z})}
  \end{equation*}
  for all $t \geq 0$.
  Together, these imply that $\|G_t \ast v^{[L]}\|_{L^\infty(\mathbb{R}/L\mathbb{Z})}$, and thus $\|\theta_{M}(t,\cdot)\|_{L^\infty(\mathbb{R}/L\mathbb{Z})}$, are continuous at $t = 0$.
  Therefore if $\|\theta_{M}(t,\cdot)\|_{L^\infty(\mathbb{R}/L\mathbb{Z})} < M$, we have $S_M > 0$.
  
Finally, it is clear from the uniqueness of $\theta_{M}$ that $S_{M}$ is
increasing as a function of $M$ and 
\[
\hbox{$\theta_{M}|_{[0,S_{M}]}\equiv\theta_{M'}|_{[0,S_{M}]}$
  whenever $M'\ge M$.}
\] 
Therefore, there exists
\[
\theta^{[L]}\in\bigcup_{S'\in(0, S)}
\mathcal{C}_{\mathrm{b}}((0,S'];\mathcal{C}(\mathbb{R}/L\mathbb{Z}))
\]
satisfying \cref{eq:mildformulation} for all $t \in (0,S)$, with 
\[
S = \lim\limits _{M\to\infty}S_{M}>0.
\]
Moreover, if $S<2$, then for each $M$ we have $\|\theta^{[L]}\|_{\mathcal{C}_{\mathrm{b}}((0,S_{M}];\mathcal{C}_{\mathrm{b}}(\mathbb{R}/L\mathbb{Z}))}\ge M$,
  and \cref{eq:limsuptoinfty} follows.
\end{proof}

\subsection{Global-in-time existence for the periodized problem\label{subsec:globalintime}}

The results of \cref{prop:localintimeexistence} are insufficient to
pass to the limit $L\to\infty$ because the time of existence $S$
is not uniform in $L$. In addition, the existence time $S$ is not
uniform in $\psi^{[L]}$ and $v^{[L]}$. We now  obtain a weighted bound on 
solutions that is independent of $L$, and 
shows that the solutions~$\theta^{[L]}$ can always be extended up
to time $t=1$ (and thus by iteration to all positive times). 
This last point is a new ingredient, compared
to, for example, the method of \cite{DPDT94}, since we need to control
solutions on the whole space.

The $\theta^{2}$ term in \cref{eq:thetaLproblem} is dangerous from
the perspective of global in time existence, and
we need to use the fact that it is inside a gradient. This is not
unrelated to our use (in \cref{sec:tightness} below) of the gradient
on the noise to obtain uniform-in-time bounds for solutions to \cref{eq:uPDE-1}.
However, the growth of the forcing and thus the solution at infinity
requires the use of a weighted space, which breaks the symmetry used
in \cite{DPDT94,DPG94} to eliminate the gradient term in \cref{eq:thetaLproblem}
altogether. In a sense, the gradient term simply moves mass around,
and in an unweighted space that does not affect the norm. In a weighted
space, however, mass moving closer to zero causes the norm to grow.
To control this growth, in the proof (but not the statement) of the following proposition
we use a custom-built family of weights that grow at a similar rate
to $\p_{\ell}$ far from the origin, but close to zero are much flatter.
This means that the effect
of mass moving closer to the origin is reduced. The amount of flatness required depends on the initial condition and the noise, so the required weight is in fact random.
\begin{prop}
  \label{prop:weightedspacesizebound}
  Fix $0  < \ell < \ell' < 1$ and $A<\infty$.
  Then there is a constant $C=C(\ell,\ell',A)<\infty$ so that the following holds.
  Fix $L\in(0,\infty)$ and $v^{[L]}\in L^{\infty}(\mathbb{R}/L\mathbb{Z})$
  with $\|v^{[L]}\|_{L_{\p_{\ell}}^{\infty}(\mathbb{R})}\le A$ and
  suppose that $\|\psi^{[L]}\|_{\mathcal{C}_{\mathrm{b}}([0,1];\mathcal{C}_{g}^{1}(\mathbb{R}))}\le A$, where $g$ is the weight defined in \cref{lem:suppsi}. If, for some~$S\in(0,1]$,
a function $\theta^{[L]}\in\mathcal{Z}_{S}^{[L]}$
solves \cref{eq:thetaLproblem}--\cref{eq:thetaLic}, then
  \begin{equation}
    \sup_{t\in[0,S]}\|\theta^{[L]}(t,\cdot)\|_{\mathcal{C}_{\p_{\ell'}}(\mathbb{R})}\le C.\label{eq:thetaLbound}
  \end{equation}
\end{prop}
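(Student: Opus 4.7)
The plan is to derive and solve a Riccati-type differential inequality for $M_+(t) := \sup_{x \in \R} \theta^{[L]}(t,x)/w(x)$, where $w(x) := (M^2 + x^2)^{\ell'/2}$ is a smooth even weight with parameter $M = M(A,\ell,\ell') \ge 2$ to be chosen large. This weight dominates $\p_{\ell'}$ pointwise (so bounding $\|\theta^{[L]}/w\|_\infty$ yields \cref{eq:thetaLbound}) and is ``flatter'' than $\p_{\ell'}$ near the origin, in that $|w'/w| \le \ell'/(2M)$, $|w''/w| \le C(\ell')/M^2$, and---crucially---$\|w'\|_{L^\infty(\R)} \le C(\ell')\, M^{\ell'-1}$. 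The last estimate uses $\ell' < 1$ and is the structural reason the proof closes: it makes the coefficient of the dangerous quadratic Riccati term arbitrarily small at the price of taking $M$ large.

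Because $\theta^{[L]}(t, \cdot)$ is $L$-periodic and continuous while $w(x) \to \infty$ as $|x| \to \infty$, the quotient $\tilde\theta := \theta^{[L]}/w$ vanishes at infinity, so the supremum $M_+(t)$ is attained at some point $x^*(t) \in \R$. The first- and second-order conditions at a spatial maximum of $\tilde\theta$ give $\partial_x\theta = (w'/w)\theta$ and $\partial_x^2\theta \le (w''/w)\theta$ at $x^*$. Substituting these into \cref{eq:thetaLproblem} and dividing by $w(x^*)$, I obtain the upper-Dini bound
\begin{equation*}
  \frac{\mathrm dM_+}{\mathrm dt} \le \frac{w''}{2w}\, M_+ \;-\; w'\, M_+^2 \;-\; \frac{w'\,\psi^{[L]}}{w}\, M_+ \;-\; (\partial_x\psi^{[L]})\, M_+ \;-\; \frac{\psi^{[L]}\,\partial_x\psi^{[L]}}{w},
\end{equation*}
with the right-hand side evaluated at $(t, x^*(t))$. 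Using $\|\psi^{[L]}\|_{\mathcal{C}^1_g} \le A$ (furnished by \cref{lem:suppsi}) together with the pointwise boundedness of $g/w$ and $g^2/w$ (since $g$ is logarithmic while $w \ge M^{\ell'}$ grows polynomially), I estimate every term not containing $(\partial_x\psi^{[L]})M_+$ by $C(A,\ell',M)$ times $1$, $M_+$, or $|w'|M_+^2 \le C(\ell')\,M^{\ell'-1} M_+^2$. The sign-indefinite term $-(\partial_x\psi^{[L]})M_+$ is handled by Young's inequality $|(\partial_x\psi^{[L]})M_+| \le \tfrac12 M_+^2 + \tfrac12 (\partial_x\psi^{[L]})^2$: the $\tfrac12 M_+^2$ is absorbed into the quadratic coefficient (enlarging it by a bounded constant), and the $(\partial_x\psi^{[L]})^2$ part is controlled by a short bootstrap exploiting that $\|\theta^{[L]}(t,\cdot)\|_\infty \ge M_+\, w(x^*)$ confines $|x^*|$ whenever $M_+$ is large.

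These estimates reduce the statement to a Riccati bound $\dot M_+ \le \alpha M_+^2 + \beta M_+ + \gamma$ with $\alpha \le C(\ell')(M^{\ell'-1} + 1)$ and $\beta, \gamma \le C(A, M)$, starting from $M_+(0) \le A\, \|\p_\ell/w\|_{L^\infty(\R)} \le C(M,\ell,\ell')\, A$ (since $\p_\ell/w \to 0$ at infinity because $\ell < \ell'$). Taking $M$ sufficiently large, depending on $A$, $\ell$, and $\ell'$, makes $\alpha M_+(0)$ small enough that the Riccati ODE does not blow up on $[0,1]$, and hence $M_+(t) \le C(A,\ell,\ell')$ for every $t \in [0,S] \subseteq [0,1]$. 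An identical argument applied to $-\tilde\theta$ (interpreting a spatial minimum) bounds $\inf_x \tilde\theta$ from below, so that $\|\theta^{[L]}(t,\cdot)\|_{L^\infty_w} \le C$ uniformly on $[0,S]$, which gives \cref{eq:thetaLbound} since $w \ge \p_{\ell'}$. The main technical obstacle is the logarithmic spatial growth of $\partial_x\psi^{[L]}$, which introduces apparent $|x^*|$-dependence in the Riccati coefficients and is resolved by the bootstrap/Young combination; the essential structural ingredient is $\ell' < 1$, which renders $\|w'\|_\infty$ small for large $M$ and tames the otherwise uncontrolled quadratic nonlinearity.
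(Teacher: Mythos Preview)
Your overall strategy---track the weighted supremum of $\theta^{[L]}$ and use the first/second-order conditions at the maximizer to get a differential inequality---is exactly the right spirit, and it is what the paper does as well. But the argument as written has a genuine gap, and it lies precisely in the step you flag as ``the main technical obstacle'': controlling the $(\partial_x\psi^{[L]})M_+$ term.

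When you apply Young's inequality to $(\partial_x\psi^{[L]})M_+$, you produce $\tfrac12 M_+^2$ and $\tfrac12(\partial_x\psi^{[L]}(x^*))^2$. The first contributes a quadratic Riccati coefficient of order $1$ that does \emph{not} vanish as $M\to\infty$; the second is bounded only by $\tfrac12 A^2 g(x^*)^2 = \tfrac12 A^2(\log\langle x^*\rangle)^{3/2}$, which is not controlled independently of $x^*$. Your proposed bootstrap, that $\|\theta^{[L]}(t,\cdot)\|_\infty \ge M_+\,w(x^*)$ confines $x^*$, is circular: you have no $L$-independent bound on $\|\theta^{[L]}\|_\infty$---that is essentially what the proposition is trying to prove. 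Periodicity alone only gives $|x^*|\le L/2$ (since the maximizer of $\theta/w$ for periodic $\theta$ and even, increasing $w$ must lie in a fundamental domain), so $g(x^*)$ can be as large as $(\log\langle L/2\rangle)^{3/4}$. The resulting Riccati inequality $\dot M_+\le \tfrac12 M_+^2 + C + CA^2(\log L)^{3/2}$ then blows up in time of order $(\log L)^{-3/4}$, which is less than $1$ for large $L$. Hence the bound is not uniform in $L$, and the proposition fails to deliver what is needed for the limit $L\to\infty$.

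The paper's fix is structural and worth noting: instead of a time-independent weight, it uses
\[
a(t,x)=K^{-1}\bigl(\langle x\rangle^2+K^{2/(1-\ell')}\bigr)^{-(\ell_1+\eps t)/2},\qquad \eps=\ell'-\ell_1>0,
\]
so the exponent \emph{drifts} from $\ell_1$ to $\ell'$ over $[0,1]$. This produces an extra term $\partial_t(\log a)=-\tfrac{\eps}{2}\log(\langle x\rangle^2+K^{2/(1-\ell')})$ in the differential inequality at $(t_*,x_*)$, which is negative and grows like $-\eps\log\langle x_*\rangle$. That growth beats the $(\log\langle x_*\rangle)^{3/4}$ coming from $g(x_*)$ at every point, so after choosing $K$ large (depending only on $A,\ell,\ell'$) the inequality at the first time the weighted sup reaches $2$ is violated regardless of where $x_*$ sits---no Riccati analysis or bootstrap on $x^*$ is needed. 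Your observation that $\|w'\|_\infty\le C M^{\ell'-1}$ is the correct analogue of the paper's bound $|a^{-1}\partial_x(\log a)|\le 1$ and does handle the genuine nonlinear quadratic term; what your time-independent weight cannot supply is the spatially-growing damping that kills the logarithmic growth of $\psi$ and $\partial_x\psi$.
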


The constant $C$ does not depend on $L$. This
will allow us to pass to a limit as $L\to \infty$.

\begin{proof}
Fix some $K\ge A$, to be chosen later, and $\ell < \ell_1 < \ell'$.
Let $\eps = \ell' - \ell_1$, and define
\begin{equation}
a(t,x)=K^{-1}\left(\langle x\rangle^{2}+K^{2/(1-\ell')}\right)^{-(\ell_1+\eps t)/2}.\label{eq:adef}
\end{equation}
The function $z=a\theta^{[L]}$ satisfies
  \begin{equation}
    \|z(0,\cdot)\|_{L^{\infty}(\mathbb{R})}\le1\label{eq:zbddby1}
  \end{equation}
  and also, for $t>0$,
  \begin{equation}
    \begin{aligned}\partial_{t}z & =z\partial_{t}(\log a)+\frac{1}{2}\left[\partial_{x}^{2}z-z\left(\partial_{x}^{2}(\log a)-(\partial_{x}(\log a))^{2}\right)-2(\partial_{x}z)\partial_{x}(\log a)\right] \\
      & \qquad-(a^{-1}z+\psi^{[L]})(\partial_{x}z-z\partial_{x}(\log a))-(z+a\psi^{[L]})\partial_{x}\psi^{[L]}.
    \end{aligned}
    \label{eq:dtz}
  \end{equation}
In addition, since $\theta^{[L]}(t,\cdot)$ is periodic and hence bounded in space,
  we have
  \begin{equation}
    \lim_{|x|\to\infty}z(t,x)=0\label{eq:zto0}
  \end{equation}
  for each $t\ge0$.

We claim that the map $t \mapsto \|z(t,\cdot)\|_{L^{\infty}(\mathbb{R})}$ is continuous.
Since $\theta^{[L]} \in \mathcal{Z}_S^{[L]}$, this is only in doubt at $t = 0$.
We again write the mild formulation \cref{eq:mildformulation} as 
\[
\theta^{[L]} = G_t \ast v^{[L]} + D_t,
\]
with $D_t$ as in \cref{eq:Duhamel-nonlinearity}.
By \cref{cor:nonlinearity-norm-continuous}, $D_t \to 0$ as $t \downarrow 0$ 
in $\mathcal{C}_{\p_{\ell_1}}(\R)$, so $a D_t \to 0$ in $L^\infty(\R)$.

  We thus need only consider $w(t, x) = a(t, x) [G_t \ast v^{[L]}(x)]$.
  We first show that $w(t, \cdot) \overset{\mathrm{w}^*}{\longrightarrow} w(0, \cdot)$ in $L^\infty(\mathbb{R})$.
  To do so, fix $\phi \in L^1(\mathbb{R})$.
  By duality and the symmetry of the heat semigroup, it suffices to show that
  \begin{equation*}
    G_t \ast[a(t, \cdot) \phi] \to a(0, \cdot) \phi
  \end{equation*}
  in $L^1(\mathbb{R})$ as $t \downarrow 0$.
  This follows from an approximation of the identity argument identical to that presented in the proof of \cref{lem:weightedhkcts}.
  Since $w$ is weak-$*$ continuous in $L^\infty(\R)$, we obtain
  \begin{equation}
    \label{eq:liminf-norm}
    \|w(0, \cdot)\|_{L^\infty(\R)} \leq \liminf_{t \downarrow 0} \|w(t, \cdot)\|_{L^\infty(\R)}.
  \end{equation}
To establish the opposite bound, we use the comparison principle:
\begin{equation*}
|G_t \ast v^{[L]}(x)| \leq \|a(0, \cdot) v^{[L]}\|_{L^\infty(\R)} [G_t \ast a(0, \cdot)^{-1}](x)
=\|w(0, \cdot)\|_{L^\infty(\R)} [G_t \ast a(0, \cdot)^{-1}](x).
\end{equation*}
We can easily check that $\e^{\kappa t} a(0, \cdot)^{-1}$ is a supersolution 
to the heat equation for $\kappa > 0$ sufficiently large depending on $\ell'$,
so that
\[
[G_t \ast a(0, \cdot)^{-1}](x)\le \farc{e^{\kappa t}}{a(0,x)}.
\]
Thus, we have
\begin{equation*}
\begin{aligned}
|w(t, x)| &\le a(t,x)|G_t \ast v^{[L]}(x)|\le 
a(t,x)\|w(0, \cdot)\|_{L^\infty(\R)} [G_t \ast a(0, \cdot)^{-1}](x)
\\
&\leq \e^{\kappa t} \|w(0, \cdot)\|_{L^\infty(\R)} \, \frac{a(t, x)}{a(0, x)} \leq \e^{\kappa t} \|w(0, \cdot)\|_{L^\infty(\R)},
\end{aligned}
  \end{equation*}
so
  \begin{equation*}
    \|w(0, \cdot)\|_{L^\infty(\R)} \geq \limsup_{t \downarrow 0} \|w(t, \cdot)\|_{L^\infty(\R)}.
  \end{equation*}
  In concert with \cref{eq:liminf-norm}, this implies
  \begin{equation*}
    \|w(0, \cdot)\|_{L^\infty(\R)} = \lim_{t \downarrow 0} \|w(t, \cdot)\|_{L^\infty(\R)}.
  \end{equation*}
  Since $a D_t \to 0$ in $L^\infty(\R)$ and $z = w + aD_t$, we have shown that $t \mapsto \|z(t,\cdot)\|_{L^{\infty}(\mathbb{R})}$ is continuous.

  In light of \cref{eq:zbddby1}, this implies that
  \[
    t_{*}\coloneqq\max\{t\in[0,S]\ :\ \|z(t,\cdot)\|_{L^{\infty}(\mathbb{R})}\le2\}
  \]
  exists and is positive. Moreover, if $t_{*}<S$, then $\|z(t_{*},\cdot)\|_{L^{\infty}(\mathbb{R})}=2$
  and, by \cref{eq:zto0}, there exists~$x_{*}\in\mathbb{R}$ so that
  \[
    |z(t_{*},x_{*})|=2=\max_{[0,t_{*}]\times\mathbb{R}}|z|.
  \]
As a consequence, we have
\[
\hbox{ $\sgn(z(t_{*},x_{*}))\partial_{t}z(t_{*},x_{*})\ge0$,~~~~$\partial_{x}z(t_{*},x_{*})=0$,
~~~~$\sgn(z(t_{*},x_{*}))\partial_{x}^{2}z(t_{*},x_{*})\le0$.}
\] Then
  \cref{eq:dtz} yields, at the point $(t_{*},x_{*})$,
  \begin{equation}
    0\le2\partial_{t}(\log a)+|\partial_{x}^{2}(\log a)|+|\partial_{x}(\log a)|^{2}+4|a^{-1}\partial_{x}(\log a)|+2|\psi^{[L]}||\partial_{x}(\log a)|+(2+a|\psi^{[L]}|)|\partial_{x}\psi^{[L]}|.\label{eq:dtztstarxstar}
  \end{equation}
  Note that
  \[
    |\partial_{x}(\log a)|=\left|-\frac{(\ell_1+\eps t)x}{\langle x\rangle^{2}+K^{2/(1-\ell')}}\right|\le1
  \]
  and
  \begin{align}
    \left|\frac{\partial_{x}(\log a)}{a}\right| & =(\ell_1+\eps t)|x|K(\langle x\rangle^{2}+K^{2/(1-\ell')})^{(\ell_1-\eps t)/2-1}\nonumber                                                                             \\
                                                & \le\frac{|x|}{(\langle x\rangle^{2}+K^{2/(1-\ell')})^{1/2}}\cdot\frac{K}{(\langle x\rangle^{2}+K^{2/(1-\ell')})^{(1-\ell')/2}}\le1,\label{eq:dtlogaovera}
  \end{align}
  since $t\le1$ and $\ell's<1$. We further compute
  \begin{equation}
    |\partial_{x}^{2}(\log a)|=\left|-\frac{\ell_1 +\eps t}{\langle x\rangle^{2}+K^{2/(1-\ell')}}+\frac{2(\ell_1+\eps t)x^{2}}{(\langle x\rangle^{2}+K^{2/(1-\ell')})^{2}}\right|\le3 \label{eq:dx2loga}
  \end{equation}
  and
  \begin{equation}
    \partial_{t}(\log a)=-\frac{\eps}{2}\log(\langle x\rangle^{2}+K^{2/(1-\ell')}).\label{eq:dtloga}
  \end{equation}
  Applying \cref{eq:adef} and \cref{eq:dtlogaovera}--\cref{eq:dtloga}
  to \cref{eq:dtztstarxstar}, and using the triangle inequality, we obtain
  \begin{equation}
    0\le-\eps\log(\langle x_{*}\rangle^{2}+K^{2/(1-\ell')})+8+\|\psi^{[L]}\|_{\mathcal{C}_{\mathrm{b}}([0,1];\mathcal{C}_{g}^{1}(\mathbb{R}))}g(x_{*})(4+\|\psi^{[L]}\|_{\mathcal{C}_{\mathrm{b}}([0,1];\mathcal{C}_{g}^{1}(\mathbb{R}))}g(x_{*})\langle x_{*}\rangle^{-\ell_1}).\label{eq:finalderivineq}
  \end{equation}
  Now we can choose $K$ large enough, depending on $\ell_1,\ell',$ and $A$,
  so that the right side of \cref{eq:finalderivineq} is guaranteed to
  be strictly negative regardless of $x_{*}$, which is a contradiction.

  Therefore, we have $|z|\le2$ on $[0,S]\times\mathbb{R}$, so
  \[
    |\theta^{[L]}(t,x)|\le2K(\langle x\rangle^{2}+K^{2/(1-\ell')})^{(\ell_1+\eps t)/2}
  \]
  for all $t\in[0,S]$, $x\in\mathbb{R}$, and \cref{eq:thetaLbound}
  follows.
\end{proof}
Now we can upgrade \cref{prop:localintimeexistence} to eliminate the
variable existence time $S$.
\begin{prop}
  \label{prop:Sis1}For each $L\in(0,\infty)$ and $v^{[L]}\in L^{\infty}(\mathbb{R}/L\mathbb{Z})$
  there exists a solution $\theta^{[L]}\in\mathcal{Z}_{1}^{[L]}$ to
  \cref{eq:thetaLproblem}--\cref{eq:thetaLic}.
\end{prop}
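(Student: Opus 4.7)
The plan is to rule out finite-time blow-up by combining the local existence result of \cref{prop:localintimeexistence} with the weighted \emph{a priori} bound of \cref{prop:weightedspacesizebound}, exploiting the fact that weighted and unweighted $L^\infty$-norms are equivalent on a bounded periodic cell.

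First I would apply \cref{prop:localintimeexistence} to produce, for the given $L$ and $v^{[L]}$, a maximal time $S\in(0,2]$ and a solution $\theta^{[L]}\in\bigcup_{S'\in(0,S)}\mathcal{Z}_{S'}^{[L]}$ to \cref{eq:thetaLproblem}--\cref{eq:thetaLic}. The proposition also records the blow-up alternative: if $S<2$, then $\limsup_{t\to S}\|\theta^{[L]}(t,\cdot)\|_{L^{\infty}(\R/L\Z)}=\infty$. My goal is to show $S>1$, since then $\theta^{[L]}|_{[0,1]}\in\mathcal{Z}_{1}^{[L]}$.

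Next I would set up the hypotheses of \cref{prop:weightedspacesizebound}. Fix any $\ell,\ell'\in(0,1)$ with $\ell<\ell'$; because $v^{[L]}$ is periodic and bounded, $\|v^{[L]}\|_{L^\infty_{\p_\ell}(\R)}\le\|v^{[L]}\|_{L^\infty(\R/L\Z)}$ is finite, and by \cref{lem:suppsi} the norm $\|\psi^{[L]}\|_{\mathcal{C}_{\mathrm{b}}([0,1];\mathcal{C}_{g}^{1}(\R))}$ is almost surely finite. Let $A$ be the maximum of these two quantities and let $C=C(\ell,\ell',A)$ be the constant produced by \cref{prop:weightedspacesizebound}.

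Now suppose for contradiction that $S\le 1$. For any $S'\in(0,S)$ the restriction $\theta^{[L]}|_{[0,S']}$ lies in $\mathcal{Z}_{S'}^{[L]}$ and solves \cref{eq:thetaLproblem}--\cref{eq:thetaLic} on $[0,S']$, so \cref{prop:weightedspacesizebound} yields
\[
\sup_{t\in[0,S']}\|\theta^{[L]}(t,\cdot)\|_{\mathcal{C}_{\p_{\ell'}}(\R)}\le C.
\]
Since $\theta^{[L]}(t,\cdot)$ is $L$-periodic, one has
\[
\|\theta^{[L]}(t,\cdot)\|_{L^\infty(\R/L\Z)}
=\sup_{|x|\le L/2}|\theta^{[L]}(t,x)|
\le\Bigl(\sup_{|x|\le L/2}\p_{\ell'}(x)\Bigr)\|\theta^{[L]}(t,\cdot)\|_{\mathcal{C}_{\p_{\ell'}}(\R)}
\le\langle L/2\rangle^{\ell'}C,
\]
a bound that is uniform in $S'\in(0,S)$. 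Letting $S'\to S$ contradicts \cref{eq:limsuptoinfty}, so we must have $S>1$.

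The only slightly delicate step is the last one, namely that the weighted bound truly obstructs the unweighted blow-up in \cref{eq:limsuptoinfty}: here the key point is that \cref{prop:weightedspacesizebound} is uniform in the existence time $S\le 1$ (so the bound on $[0,S')$ passes to $S$) and, for this fixed $L$, $\p_{\ell'}$ is bounded on the fundamental periodic cell. With $S>1$ in hand, the solution $\theta^{[L]}$ is defined on all of $(0,1]$, and it belongs to $\mathcal{Z}_{1}^{[L]}$ by \cref{lem:mildisclassical} applied on $[0,1]$.
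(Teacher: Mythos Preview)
Your proposal is correct and follows essentially the same route as the paper: invoke \cref{prop:localintimeexistence} for local existence with the blow-up alternative, apply the weighted a priori bound from \cref{prop:weightedspacesizebound}, and use the equivalence of weighted and unweighted $L^\infty$ norms on a fixed periodic cell to rule out blow-up before time $1$. The paper compresses this into a single sentence (phrasing the norm comparison as a continuous embedding of $L$-periodic functions in $\mathcal{C}_{\p_\ell}(\R)$ into $L^\infty(\R/L\Z)$), but your more detailed version is the same argument.
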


\begin{proof}
  By \cref{prop:weightedspacesizebound} and the fact that there is a
  continuous embedding from the space of $L$-periodic functions in
  $\mathcal{C}_{\p_{\ell}}(\mathbb{R})$ into $L^{\infty}(\mathbb{R}/L\mathbb{Z})$
  for every $\ell$ and $L$, \cref{eq:limsuptoinfty} implies that  $S > 1$ in \cref{prop:localintimeexistence}, so
$\theta^{[L]}\in\mathcal{Z}_{1}^{[L]}$.
\end{proof}

\subsection{Solutions on the whole space\label{subsec:solutionsonthewholespace}}

We now pass to the limit $L\to\infty$ to obtain global in time
solutions to \cref{eq:thetaLproblem}--\cref{eq:thetaLic}
on the whole space. In order to show that the sequence $\theta^{[L]}$
is Cauchy as $L\to\infty$, we will need some continuity of the solutions
to the periodic problem with respect to the forcing and the initial
conditions. The following proposition does this in a weaker topology,
which uses weights growing superexponentially at infinity.
\begin{prop}
  \label{prop:vCauchy}Fix $\ell\in(0,1)$, $T>0$. Suppose $v_{i}\in L_{\p_{\ell}}^\infty(\mathbb{R})$
  and $\theta_{i},\psi_{i}\in\mathcal{C}_{\mathrm{b}}((0,T];\mathcal{C}_{\p_{\ell}}(\mathbb{R}))$
  satisfy
  \[
    \theta_{i}(t,\cdot)=G_{t}*v_{i}-\int_{0}^{t}\partial_{x}G_{t-s}*(\theta_{i}(s,\cdot)+\psi_{i}(s,\cdot))^{2}\,\dif s
  \]
  for all $t\in[0,T]$ and $i\in\{1,2\}$. Fix $\beta\in(2\ell\vee(3/2),2)$
  and define the weight $\q_{\lambda}(x)=\exp(\lambda\langle x\rangle^{\beta})$
  as in \cref{lem:hk-superexp}. Then there exists a constant $C=C(\ell,\beta,T)<\infty$
  so that
  \begin{equation}
    \|\theta_{1}-\theta_{2}\|_{\mathcal{C}_{\mathrm{b}}((0,T];\mathcal{C}_{\q_{1+T}}(\mathbb{R}))}\le\e^{CX}\left(\|v_{1}-v_{2}\|_{L_{\p_{\ell}}^\infty(\mathbb{R})}+\|\psi_{1}-\psi_{2}\|_{\mathcal{C}_{\mathrm{b}}([0,T];\mathcal{C}_{\p_{\ell}}(\mathbb{R}))}\right),\label{eq:vCauchyineq}
  \end{equation}
where
\[
X=1+\sum_{i=1}^2\Big[
    \|v_i\|_{\mathcal{C}_{\mathrm{b}}((0,T];\mathcal{C}_{\p_{\ell}}(\mathbb{R}))}
    +\|\psi_i\|_{\mathcal{C}_{\mathrm{b}}((0,T];\mathcal{C}_{\p_{\ell}}(\mathbb{R}))}\Big].
\]
\end{prop}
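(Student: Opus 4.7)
The plan is to derive a Volterra integral inequality for $\Delta\theta := \theta_1 - \theta_2$ in a time-dependent, super-exponentially weighted norm, then close it with a singular Gronwall argument.

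First, I would subtract the two mild formulations and factor the quadratic nonlinearity via $a^2 - b^2 = (a+b)(a-b)$ to obtain
\begin{equation*}
  \Delta\theta(t,\cdot) = G_t \ast (v_1 - v_2) - \frac{1}{2}\int_0^t \partial_x G_{t-s} \ast \bigl[\bigl(\theta_1 + \theta_2 + \psi_1 + \psi_2\bigr)\bigl(\Delta\theta + \Delta\psi\bigr)\bigr](s,\cdot)\,\dif s,
\end{equation*}
where $\Delta\psi := \psi_1 - \psi_2$. The ``sum'' factor $\theta_1+\theta_2+\psi_1+\psi_2$ is controlled in $\mathcal{C}_{\p_\ell}(\R)$ by a multiple of $X$ by hypothesis, while the ``difference'' factor is \emph{linear} in the unknowns to be estimated, which is what ultimately lets Gronwall close the inequality.

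Next, I would measure $\Delta\theta(t,\cdot)$ in $\mathcal{C}_{\q_{\lambda(t)}}(\R)$ for the time-dependent choice $\lambda(t) := 1 + t$, so that the norm at $t = T$ matches $\q_{1+T}$ in \cref{eq:vCauchyineq}. The elementary pointwise inequality $\p_\ell(x) \le C_\kappa \q_\kappa(x)$, valid for any $\kappa > 0$, converts the polynomial control on the sum factor into super-exponential control at the price of an arbitrarily small weight shift $\kappa$. This shift is paid for by the strict monotonicity of $\lambda$: applying \cref{lem:hk-superexp} to $\partial_x G_{t-s}$ between the weights $\q_{\lambda(s)+\kappa}$ and $\q_{\lambda(t)}$ is permissible since $\lambda(t) - \lambda(s) = t-s > 0$ and $\beta < 2$, and should produce a factor $(t-s)^{-1/2}$ together with a constant depending only on $\ell$, $\beta$, $T$. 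Setting $f(t) := \|\Delta\theta(t,\cdot)\|_{\mathcal{C}_{\q_{\lambda(t)}}(\R)}$, this yields the integral inequality
\begin{equation*}
  f(t) \le C\|v_1 - v_2\|_{L^\infty_{\p_\ell}(\R)} + CX \int_0^t (t-s)^{-1/2}\bigl(f(s) + \|\Delta\psi(s,\cdot)\|_{\mathcal{C}_{\p_\ell}(\R)}\bigr)\,\dif s.
\end{equation*}

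From here, a singular Gronwall inequality for Volterra equations with $(t-s)^{-1/2}$ kernel (for instance, obtained by iterating the inequality once to produce an integrable kernel, then invoking the classical linear Gronwall lemma) yields $f(T) \le \e^{C'X}\bigl(\|v_1 - v_2\|_{L^\infty_{\p_\ell}(\R)} + \|\Delta\psi\|_{\mathcal{C}_{\mathrm{b}}([0,T];\mathcal{C}_{\p_\ell}(\R))}\bigr)$, which is \cref{eq:vCauchyineq}. The main obstacle is the weight bookkeeping in the middle step: one must choose the shift $\kappa$ and the schedule $\lambda(\cdot)$ so that \cref{lem:hk-superexp} delivers both the expected $(t-s)^{-1/2}$ singularity and a constant independent of $X$, while ensuring $\lambda(T) \le 1 + T$. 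The hypothesis $\beta \in (2\ell \vee 3/2,\,2)$ appears to be calibrated exactly for this: $\beta < 2$ is needed so that the heat kernel remains subgaussian against $\q_\lambda$, whereas $\beta > 2\ell \vee 3/2$ provides enough margin for the $\p_\ell$ factors to be absorbed into $\q_\kappa$ while keeping the time integrals convergent. A secondary subtlety is extracting only linear-in-$X$ dependence in the exponent; the one-step iteration alluded to above is what converts an $X^2$ loss back to the $\e^{CX}$ form stated.
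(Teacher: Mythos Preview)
Your overall strategy---subtract the mild formulations, factor $a^2-b^2$, measure the difference in the time-dependent weight $\q_{1+t}$, and close with Gronwall---is exactly the paper's. The gap is in the weight allocation, which you rightly flag as the crux.

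A \emph{fixed} shift $\kappa>0$ cannot work. With the sum factor placed in $\q_\kappa$ and the difference in $\q_{1+s}$, the product lives in $\q_{1+s+\kappa}$. But \cref{lem:hk-superexp} does not map between different weight indices; it returns the \emph{same} index together with a multiplicative penalty $e^{C(t-s)\langle x\rangle^{2(\beta-1)}}$. To land in $\mathcal{C}_{\q_{1+t}}$ you would therefore need
\[
\sup_{x\in\R}\exp\!\Big\{C(t-s)\langle x\rangle^{2(\beta-1)}-(t-s-\kappa)\langle x\rangle^\beta\Big\}<\infty,
\]
and once $t-s\le\kappa$ the coefficient of $\langle x\rangle^\beta$ is nonnegative, so this supremum is $+\infty$. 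Your integral inequality with kernel $(t-s)^{-1/2}$ is thus not valid near $s=t$.

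The paper's remedy is to let the shift scale with $t-s$: allocate $\q_{\frac12(t-s)}$ to the sum factor and $\q_{1+s}$ to the difference, so the product lies in $\q_{1+\frac12(s+t)}$, leaving a margin of exactly $\frac12(t-s)$ after \cref{lem:hk-superexp} to absorb the $e^{C(t-s)\langle x\rangle^{2(\beta-1)}}$ penalty (this is where $\beta<2$ enters). The price is that the embedding $\mathcal{C}_{\p_\ell}\hookrightarrow\mathcal{C}_{\q_{\frac12(t-s)}}$ now has operator norm $\sup_y\p_\ell(y)/\q_{\frac12(t-s)}(y)\le C(t-s)^{-\ell/\beta}$, contributing an \emph{additional} singular factor. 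The resulting kernel is $(t-s)^{-\frac12-\frac\ell\beta}$, which is integrable precisely when $\beta>2\ell$---this is the actual role of that hypothesis---and Gronwall then closes directly, with no iteration needed. (Your remark that iterating once ``converts an $X^2$ loss back to $\e^{CX}$'' is backwards: one iteration of a $(t-s)^{-1/2}$ Volterra inequality produces a coefficient $(CX)^2$ in front of the now-bounded kernel, so the exponent comes out quadratic in $X$, not linear.)
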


\begin{proof}
  The proof is similar in spirit to that of \cite[Proposition 4.2]{HL15}.
  We begin with
  \begin{align}
    \|\theta_{1}(t,\cdot)-\theta_{2}(t,\cdot)\|_{\mathcal{C}_{\q_{1+t}}(\mathbb{R})} & \le\|G_{t}*(v_{1}-v_{2})\|_{\mathcal{C}_{\q_{1+t}}(\mathbb{R})}+\nonumber                                                                                                                                            \\
                                                                                     & \hspace{-1cm}\qquad+\int_{0}^{t}\left\Vert \partial_{x}G_{t-s}*[(\theta_{1}+\psi_{1})(s,\cdot)^{2}-(\theta_{2}+\psi_{2})(s,\cdot)^{2}]\right\Vert _{\mathcal{C}_{\q_{1+t}}(\mathbb{R})}\,\dif s\cdot\label{eq:triangleinequality}
  \end{align}
  By \cref{lem:weightedhkbound} and the fact that $|x|\le\e^{|x|}$ for
  all $x\in\mathbb{R}$, there is a constant $C<\infty$ so that
  \begin{equation}
    \|G_{t}*(v_{1}-v_{2})\|_{\mathcal{C}_{\q_{1+t}}(\mathbb{R})}\le C\|v_{1}-v_{2}\|_{\mathcal{C}_{\p_{\ell}}(\mathbb{R})}.\label{eq:firstconvbound}
  \end{equation}
  We handle the integral term in \cref{eq:triangleinequality} using \cref{lem:hk-superexp}:
  \begin{align}
    & \left|\partial_{x}G_{t-s}*[(\theta_{1}+\psi_{1})(s,\cdot)^{2}-(\theta_{2}+\psi_{2})(s,\cdot)^{2}](x)\right|\nonumber                                                                                                                                 \\
    & \quad\le C(t-s)^{-\frac{1}{2}}\e^{C(t-s)\langle x\rangle^{2(\beta-1)}}\q_{1+\frac{1}{2}(s+t)}(x)\|(\theta_{1}+\psi_{1})(s,\cdot)^{2}-(\theta_{2}+\psi_{2})(s,\cdot)^{2}\|_{\mathcal{C}_{\q_{1+\frac{1}{2}(s+t)}}(\mathbb{R})}.\label{eq:partialxGts}
  \end{align}
  We note that, as $0<\beta<2$, we have
  \begin{equation}
    \sup_{x\in\mathbb{R}}\frac{\e^{C(t-s)\langle x\rangle^{2(\beta-1)}}\q_{1+\frac{1}{2}(s+t)}(x)}{\q_{1+t}(x)}=\exp\left\{ (t-s)\sup_{x\in\mathbb{R}}\left[C\langle x\rangle^{2(\beta-1)}-\frac{1}{2}\langle x\rangle^{\beta}\right]\right\} \le\e^{C(t-s)}\label{eq:exponentialthing}
  \end{equation}
  for some new constant $C$. We also have
  \begin{align}
    \|(\theta_{1}+\psi_{1})(s,\cdot)^{2} & -(\theta_{2}+\psi_{2})(s,\cdot)^{2}\|_{\mathcal{C}_{\q_{1+\frac{1}{2}(s+t)}}(\mathbb{R})}\nonumber                                                                                                                                  \\
                                         & \le\|(\theta_{1}+\theta_{2}+\psi_{1}+\psi_{2})(s,\cdot)\|_{\mathcal{C}_{\q_{\frac{1}{2}(t-s)}}(\mathbb{R})}\|(\theta_{1}-\theta_{2}+\psi_{1}-\psi_{2})(s,\cdot)\|_{\mathcal{C}_{\q_{1+s}}(\mathbb{R})},\label{eq:splitupsquarediff}
  \end{align}
  and  
  \begin{align}
    \|(\theta_{1}+\theta_{2}+\psi_{1}+\psi_{2})(s,\cdot)\|_{\mathcal{C}_{\q_{\frac{1}{2}(t-s)}}(\mathbb{R})} & \le\left(\sup_{y\in\mathbb{R}}\frac{\p_{\ell}(y)}{\q_{\frac{1}{2}(t-s)}(y)}\right)\|(\theta_{1}+\theta_{2}+\psi_{1}+\psi_{2})(s,\cdot)\|_{\mathcal{C}_{\p_{\ell}}(\mathbb{R})}\nonumber \\
                                                                                                             & \le C(t-s)^{-\ell/\beta}\|(\theta_{1}+\theta_{2}+\psi_{1}+\psi_{2})(s,\cdot)\|_{\mathcal{C}_{\p_{\ell}}(\mathbb{R})}\label{eq:ptoq}
  \end{align}
  for another constant $C$. Using the bounds \cref{eq:exponentialthing}--\cref{eq:ptoq}
  in \cref{eq:partialxGts}, we obtain
  \begin{align}
    & \|\partial_{x}G_{t-s}*[(\theta_{1}+\psi_{1})(s,\cdot)^{2}-(\theta_{2}+\psi_{2})(s,\cdot)^{2}\|_{\mathcal{C}_{\q_{1+t}}(\mathbb{R})}\nonumber                                                                                                                               \\
    & \ \le C\e^{C(t-s)}(t-s)^{-\frac{1}{2}-\frac{\ell}{\beta}}\|(\theta_{1}+\theta_{2}+\psi_{1}+\psi_{2})(s,\cdot)\|_{\mathcal{C}_{\p_{\ell}}(\mathbb{R})}\|(\theta_{1}-\theta_{2}+\psi_{1}-\psi_{2})(s,\cdot)\|_{\mathcal{C}_{\q_{1+s}}(\mathbb{R})}.\label{eq:finalconvbound}
  \end{align}
  Using \cref{eq:firstconvbound} and \cref{eq:finalconvbound} in \cref{eq:triangleinequality},
  we obtain
  \[
    \|\theta_{1}(t,\cdot)-\theta_{2}(t,\cdot)\|_{\mathcal{C}_{\q_{1+t}}(\mathbb{R})}\le A+CX\int_{0}^{t}\e^{C(t-s)}(t-s)^{-\frac{1}{2}-\frac{\ell}{\beta}}\|(\theta_{1}-\theta_{2})(s,\cdot)\|_{\mathcal{C}_{\q_{1+s}}(\mathbb{R})},
  \]
  where
  \[
    A=C\|v_{1}-v_{0}\|_{\mathcal{C}_{\p_{\ell}}(\mathbb{R})}+CX\|\psi_{1}-\psi_{2}\|_{\mathcal{C}_{\mathrm{b}}([0,T];\mathcal{C}_{\q_{1}}(\mathbb{R}))}.
  \]
  Therefore, Grönwall's inequality implies
  \[
    \|\theta_{1}(t,\cdot)-\theta_{2}(t,\cdot)\|_{\mathcal{C}_{\q_{1+t}}(\mathbb{R})}\le A\exp\left\{ CX\int_{0}^{t}\e^{C(t-s)}(t-s)^{-\frac{1}{2}-\frac{\ell}{\beta}}\,\dif s\right\} ,
  \]
  and \cref{eq:vCauchyineq} follows.
\end{proof}
We can now take $L\to\infty$ and prove \cref{prop:thetawellposed}.
\begin{proof}[Proof of \cref{prop:thetawellposed}.]
It suffices to prove that that there exists such a $\theta\in\mathcal{Z}{}_{m,1}$;
then the result follows by iteration from $t=1$ to $t=2$, etc.
Fix constants $m<\ell_{1}<\ell_{2}<\ell$, and note 
that for~$v\in L_{\p_{\ell_{1}}}^{\infty}(\mathbb{R})$ we have
  \begin{equation}
    \lim_{L\to\infty}\|v^{[L]}-v\|_{L_{\p_{{\ell}_1}}^{\infty}(\mathbb{R})}=0,\label{eq:vLtov}
  \end{equation}
  and
  \begin{equation}
    \|v^{[L]}\|_{L_{\p_{\ell_{1}}}^{\infty}(\mathbb{R})}\le 3\|v\|_{L_{\p_{\ell_{1}}}^{\infty}(\mathbb{R})}.\label{eq:Vl2v}
  \end{equation}
  \cref{prop:Sis1} implies that there exists a solution $\theta^{[L]}\in\mathcal{Z}_{1}^{[L]}$
  to \cref{eq:thetaLproblem}--\cref{eq:thetaLic}, while \cref{prop:weightedspacesizebound},
the bound  \cref{eq:Vl2v}, and \cref{lem:suppsi} imply that there is a number $Y<\infty$,
  depending on~$\|v\|_{L_{\p_{\ell_{1}}}^{\infty}(\mathbb{R})}$, $\psi$,
  $\ell_{1}$, and $\ell_{2}$, but \emph{not} on $L$, so that
  \begin{equation}\label{eq:bddbyY}
    \|\theta^{[L]}\|_{\mathcal{C}_{\mathrm{b}}((0,1];\mathcal{C}_{\p_{\ell_{2}}}(\mathbb{R}))},\|\psi^{[L]}\|_{\mathcal{C}_{\mathrm{b}}([0,1]\mathcal{C}_{\p_{\ell_{2}}}(\mathbb{R}))}\le Y, 
  \end{equation}
  for each $L\in[1, \infty)$. It is also clear from \cref{eq:psiLdef}
  that
  \begin{equation}
    \lim_{L\to\infty}\|\psi^{[L]}-\psi\|_{\mathcal{C}_{\mathrm{b}}([0,1];\mathcal{C}_{\p_{\ell_{1}}}(\mathbb{R}))}=0.\label{eq:psiLtopsi}
  \end{equation}
  Hence by \cref{prop:vCauchy}, there is a constant $C=C(\ell_{1})$
  so that
  \begin{equation}
    \|\theta^{[L]}-\theta^{[L']}\|_{\mathcal{C}_{\mathrm{b}}((0,1];\mathcal{C}_{\q_{2}}(\mathbb{R}))}\le\e^{CY}\left(\|v^{[L]}-v^{[L']}\|_{\mathcal{C}_{\p_{\ell_{1}}}(\mathbb{R})}+\|\psi^{[L]}-\psi^{[L']}\|_{\mathcal{C}_{\mathrm{b}}((0,1];\mathcal{C}_{\p_{\ell_{1}}}(\mathbb{R}))}\right),\label{eq:psiLbd}
  \end{equation}
  where $\q_{\lambda}$ is defined as in the statement of \cref{prop:vCauchy}
  for some $\beta\in((2\ell_{1})\vee(3/2),2)$ fixed. It follows from
  \cref{eq:vLtov} and \cref{eq:psiLtopsi} that the right side of
  \cref{eq:psiLbd} goes to $0$ as $L,L'\to\infty$, so the left 
  side does as well. Hence there is a $\theta\in\mathcal{C}_{\mathrm{b}}((0,1];\mathcal{C}_{\q_{2}}(\mathbb{R}))$
  such that
  \[
    \lim_{L\to\infty}\|\theta^{[L]}-\theta\|_{\mathcal{C}_{\mathrm{b}}((0,1];\mathcal{C}_{\q_{2}}(\mathbb{R}))}=0.
  \]
  By \cref{eq:bddbyY} and \cref{prop:boosttheweight}, this implies that
  in fact $\theta\in\mathcal{C}_{\mathrm{b}}((0,1];\mathcal{C}_{\p_{\ell}}(\mathbb{R}))$
  and
  \begin{equation}
    \lim_{L\to\infty}\|\theta^{[L]}-\theta\|_{\mathcal{C}_{\mathrm{b}}((0,1];\mathcal{C}_{\p_{\ell}}(\mathbb{R}))}=0.\label{eq:thetaLtotheta}
  \end{equation}

  We claim that $\theta$ is a mild solution to \cref{eq:thetajPDE},
  i.e. that $\theta$ satisfies \cref{eq:mildformulation} with $L=\infty$.
  By \cref{eq:vLtov} and \cref{lem:weightedhkbound}, we have
  \begin{equation}
    \lim_{L\to\infty}G_{t}*v^{[L]}(x)=G_{t}*v(x)\label{eq:GtvLtoGtv}
  \end{equation}
  for each $(t,x)\in[0,1]\times\mathbb{R}$. Also, we have
  \begin{align*}
    \int_{0}^{t} & \|\partial_{x}G_{t-s}*[(\theta(s,\cdot)+\psi(s,\cdot))^{2}-(\theta^{[L]}(s,\cdot)+\psi^{[L]}(s,\cdot))^{2}\|_{\mathcal{C}_{\p_{2\ell_{2}}}(\mathbb{R})}\,\dif s                   \\
                 & \le\int_{0}^{t}(t-s)^{-\frac{1}{2}}\|(\theta(s,\cdot)+\psi(s,\cdot))^{2}-(\theta^{[L]}(s,\cdot)+\psi^{[L]}(s,\cdot))^{2}\|_{\mathcal{C}_{\p_{2\ell_{2}}}(\mathbb{R})}\,\dif s\to0
  \end{align*}
  as $L\to\infty$ by \cref{lem:weightedhkbound}, \cref{eq:psiLtopsi},
  and \cref{eq:thetaLtotheta}, which in particular means that for each
  $(t,x)\in[0,1]\times\mathbb{R}$ we have
  \begin{equation}
    \lim_{L\to\infty}\int_{0}^{t}\partial_{x}G_{t-s}*(\theta^{[L]}(s,\cdot)+\psi^{[L]}(s,\cdot))^{2}(x)\,\dif s=\int_{0}^{t}\partial_{x}G_{t-s}*(\theta(s,\cdot)+\psi(s,\cdot))^{2}(x)\,\dif s.\label{eq:nonlineartermtononlinearterm}
  \end{equation}
  Since we also have $\theta^{[L]}(t,x)\to\theta(t,x)$ as $L\to\infty$,
  and each $\theta^{[L]}$ satisfies \cref{eq:mildformulation}, we have
  from \cref{eq:GtvLtoGtv} and \cref{eq:nonlineartermtononlinearterm}
  that $\theta$ satisfies \cref{eq:mildformulation} with $L=\infty$.

  The measurability in the statement of \cref{prop:thetawellposed} is
  obvious, and the uniqueness of $\theta$ and the continuity of the
  map $v\mapsto\theta$ follow immediately from \cref{prop:vCauchy,prop:boosttheweight}.

  Now suppose $A \subset L_{\p_{m+}}^\infty(\R)$ is bounded, and hence bounded in $L_{\p_{\ell_1}}^\infty(\R)$.
  Take $v \in A$ and fix~$\ell_3 \in (\ell_2, \ell)$.
  By \cref{eq:bddbyY}, we have
 \[
 \|\theta\|_{\mathcal{C}_{\mathrm{b}}((0,1];\mathcal{C}_{\p_{\ell_3}}(\mathbb{R}))} \leq Y
 <+\infty,
\]
with some $Y$ that depends on $\psi, \ell_1, \ell_2$, and $A$.
  In particular, $\Phi(A)|_{(0, 1] \times \R}$ is bounded in the $\mathcal{C}_{\mathrm{b}}((0, 1]; \mathcal{C}_{\p_\ell}(\R))$ norm.
  For $t \in (0, 1]$, \cref{eq:C1bound} yields
  \begin{equation*}
    \|\theta(t, \cdot)\|_{\mathcal{C}_{\p_4}^1(\mathbb{R}))} \leq C(t, \psi, Y) < \infty.
  \end{equation*}
  Thus by \cref{prop:arzelaascoli}, $\Phi(A)|_{\{t\} \times \R}$ is compact in $\mathcal{C}_{\p_{\ell}}(\R)$.
  Since $\ell > m$ was arbitrary, $\Phi(A)|_{[0, 1] \times \R}$ is bounded in $\mathcal{Z}_{m, 1}$ and \cref{lem:Xmcompactness} implies that $\Phi(A)|_{\{t\} \times \R}$ is compact in $\mathcal{X}_m$.
\end{proof}
As we have mentioned, \cref{thm:existence} follows from 
\cref{prop:thetawellposed,prop:fellerproperty}.

\section{Comparison principle and \texorpdfstring{$L^{1}$}{L¹}-contraction\label{sec:comparison-L1}}

The uniqueness and stability results rely crucially on the comparison
principle and $L^{1}$-contraction that are well known for the deterministic Burgers equation \cref{eq:uPDE-many} with $V\equiv0$; 
see for example~\cite{Daf16,Ser04} and references therein.
Here, we establish these properties for the stochastic Burgers equation. The proofs are similar to those in the deterministic case, but some care 
is required to deal with the growth of solutions at infinity. Also, we will need the $L^1$-contraction 
with ``$L^1$'' interpreted separately as~$L^1(\R)$ and as $L^1(\Omega\times I)$ for an interval $I$. (Recall that $\Omega$ is the probability space.)
These are different statements, the latter being irrelevant in the deterministic case.

Moreover, in \cref{subsec:ordering} below we prove an ordering property for time-stationary solutions. This is a novel element here,
as it holds for invariant measures and does not require a comparison at a fixed
initial time.

Throughout, we rely on an equation for the difference of two solutions.
If $\mathbf{u}=(u_{1},u_{2})\in\mathcal{Z}_{m}^{2}$ is a solution
to the system \cref{eq:uPDE-many}, and we define $\psi$ as in \cref{eq:psidef},
then we have
\[
  \eta=u_{1}-u_{2}=(u_{1}-\psi)-(u_{2}-\psi),
\]
and each of the two terms in parentheses satisfies the PDE \cref{eq:thetajPDE},
with the corresponding initial conditions. 
Subtracting these two copies of \cref{eq:thetajPDE}, we see
that $\eta$ is differentiable in time and satisfies the partial differential equation
\begin{align}
 & \partial_{t}\eta  =\frac{1}{2}\partial_{x}^{2}\eta-\frac{1}{2}\partial_{x}(\eta\xi),\label{eq:etaPDE} \\
  &\eta(0,x)     =u_{1}(0,x)-u_{2}(0,x)\label{eq:etaic}
\end{align}
almost surely, with $\xi=u_{1}+u_{2}$.

\subsection{Pathwise results}

First we state the almost-sure comparison, contraction, and conservation properties, which involve the whole space $\R$.
\begin{prop}[Comparison principle]
  \label{prop:comparisonprinciple}Fix $m\in(0,1)$. If $\mathbf{u}=(u_{1},u_{2})\in\mathcal{Z}_{m}^{2}$
  solves \cref{eq:uPDE-many} and satisfies $u_{1}(0,x)\le u_{2}(0,x)$
  for all $x\in\mathbb{R}$, then $u_{1}(t,x)\le u_{2}(t,x)$ for all
  $t\ge 0$ and $x\in\mathbb{R}$.
\end{prop}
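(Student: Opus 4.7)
The plan is to reduce to the periodic problem, where the weak maximum principle on the compact torus is immediate, and then pass to the limit $L\to\infty$ using the approximation scheme of \cref{sec:solntheory}. Writing $v_i = u_i(0,\cdot)$, I form the periodized initial data $v_i^{[L]}$ as in \cref{eq:VLdef}; since the cutoff $\chi^{[L]}$ is nonnegative, the hypothesis $v_1 \le v_2$ pointwise implies $v_1^{[L]} \le v_2^{[L]}$ pointwise. Let $\theta_i^{[L]} \in \mathcal{Z}_1^{[L]}$ be the periodic solutions supplied by \cref{prop:Sis1} with initial data $v_i^{[L]}$ and forcing $\psi^{[L]}$, and set $u_i^{[L]} = \theta_i^{[L]} + \psi^{[L]}$.

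Next I would establish pathwise comparison on the torus. Setting $\eta^{[L]} = u_1^{[L]} - u_2^{[L]}$ and $\xi^{[L]} = u_1^{[L]} + u_2^{[L]}$, just as in \cref{eq:etaPDE} one has
\[
  \partial_t \eta^{[L]} = \frac{1}{2}\partial_x^2 \eta^{[L]} - \frac{1}{2}\xi^{[L]}\,\partial_x \eta^{[L]} - \frac{1}{2}(\partial_x \xi^{[L]})\,\eta^{[L]}
\]
on $(0,T]\times \mathbb{R}/L\mathbb{Z}$, with $\eta^{[L]}(0,\cdot)\le 0$. For each $\tau\in (0,T]$, both $\xi^{[L]}$ and $\partial_x\xi^{[L]}$ are bounded on $[\tau,T]\times\mathbb{R}/L\mathbb{Z}$ by \cref{lem:suppsi} combined with the parabolic regularity estimate \cref{eq:C1bound}. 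Choosing $K > \frac{1}{2}\|\partial_x\xi^{[L]}\|_{L^\infty([\tau,T]\times\mathbb{R}/L\mathbb{Z})}$ and substituting $w = e^{-Kt}\eta^{[L]}$ produces a linear parabolic equation with nonpositive zeroth-order coefficient; the standard weak maximum principle on the compact torus then gives $\sup_{[\tau,T]\times\mathbb{R}/L\mathbb{Z}} w^+ \le \sup_{\mathbb{R}/L\mathbb{Z}} w^+(\tau,\cdot)$. Sending $\tau\downarrow 0$ and invoking the $t=0$ continuity built into $\mathcal{Z}_{m,T}$ yields $\eta^{[L]}\le 0$ on $[0,T]\times\mathbb{R}/L\mathbb{Z}$.

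Finally I would pass to the limit $L\to\infty$. By \cref{eq:thetaLtotheta,eq:psiLtopsi}, $u_i^{[L]}(t,x)\to u_i(t,x)$ pointwise almost surely on $(0,1]\times\mathbb{R}$, so the inequality $u_1^{[L]}\le u_2^{[L]}$ descends to $u_1\le u_2$ on $(0,1]\times\mathbb{R}$; iterating on successive unit intervals extends the bound to all $t\ge 0$. The main obstacle I anticipate is controlling the $t\downarrow 0$ limit when the initial data are merely in $L^\infty_{\p_{m+}}$ rather than continuous: in that case $\eta^{[L]}(\tau,\cdot)$ converges to $\eta^{[L]}(0,\cdot)$ only weak-$*$ as $\tau\downarrow 0$, so $\sup\eta^{[L]}(\tau,\cdot)$ need not converge to $\sup\eta^{[L]}(0,\cdot)\le 0$. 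I would circumvent this either by first proving the comparison for continuous initial data (where \cref{eq:thetaLctsconverges} supplies strong continuity in $\mathcal{X}_m$) and then approximating a general ordered pair $(v_1,v_2)$ by ordered continuous pairs using the continuity of the solution map $\Phi$ from \cref{prop:thetawellposed}, or alternatively by estimating $(\eta^{[L]})^+$ directly via the mild formulation \cref{eq:mildformulation} and a Gr\"onwall argument.
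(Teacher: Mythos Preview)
Your proof is correct and complete in outline, but it takes a genuinely different route from the paper's.

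The paper does not periodize. Instead it proves a single weighted $L^1$-type inequality (\cref{lem:Fdecreasing}) directly on $\mathbb{R}$: for any $F\ge 0$ with $F(0)=0$ and $F''=c_1\delta_0$, one has $\int_{\mathbb{R}}F(\eta(t,x))\,\dif x + \tfrac{c_1}{4}\int_0^t \sum_{y:\eta(s,y)=0}|\eta'(s,y)|\,\dif s \le \int_{\mathbb{R}}F(\eta(0,x))\,\dif x$. The proof multiplies by a smooth approximant $F_\eps$ and a carefully chosen sub-exponential cutoff $\zeta_\delta(x)=\exp(2^{1-\ell}-\langle \delta x\rangle^{1-\ell})$, integrates by parts, and runs a Gr\"onwall argument in which the error terms carry powers of $\delta^{1-\ell}$ that vanish as $\delta\to 0$; the comparison principle then follows by taking $F(x)=x^+$.

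Your approach trades this weighted whole-line estimate for the standard maximum principle on the torus, at the cost of invoking the approximation machinery of \cref{sec:solntheory}. That is conceptually simpler for \cref{prop:comparisonprinciple} alone. The payoff of the paper's method, however, is that \cref{lem:Fdecreasing} simultaneously yields the $L^1(\mathbb{R})$-contraction (\cref{prop:L1Rcontraction}), conservation of mass (\cref{prop:conservation}), and most importantly the extra dissipation term $\sum|\eta'(y)|$ over transverse zeros, which is precisely what drives the ordering theorem for stationary solutions (\cref{thm:ordering}) later in the paper. Your periodization argument would have to be rerun separately, and with additional care, to recover those.

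One small remark on the $\tau\downarrow 0$ issue you flag: the concern dissolves directly via the mild formulation without any approximation by continuous data. Write $\eta^{[L]}(t,\cdot)=G_t*(v_1^{[L]}-v_2^{[L]})+(D_{t,1}-D_{t,2})$ with $D_{t,i}$ as in \cref{eq:Duhamel-nonlinearity}; the first term is $\le 0$ by positivity of the heat kernel, and \cref{cor:nonlinearity-norm-continuous} gives $D_{t,i}\to 0$ in $\mathcal{X}_m$, hence uniformly on $[-L,L]$. So $\sup(\eta^{[L]})^+(\tau,\cdot)\to 0$ as $\tau\downarrow 0$ without further work.
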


\begin{prop}[$L^{1}(\mathbb{R})$-contraction]
  \label{prop:L1Rcontraction}Fix $m\in(0,1)$. If $\mathbf{u}=(u_{1},u_{2})\in\mathcal{Z}_{m}^{2}$
  solves \cref{eq:uPDE-many}, then for all $t\ge 0$ we have
  \begin{equation}
    \|u_{1}(t,\cdot)-u_{2}(t,\cdot)\|_{L^{1}(\mathbb{R})}\le\|u_{1}(0,\cdot)-u_{2}(0,\cdot)\|_{L^{1}(\mathbb{R})}.\label{eq:L1decreasing}
  \end{equation}
\end{prop}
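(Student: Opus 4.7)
The plan is to combine a Kru\v{z}kov-type smoothing of the absolute value with a spatial cutoff, exploiting the divergence form of the equation for $\eta = u_1-u_2$. If $\|u_1(0,\cdot)-u_2(0,\cdot)\|_{L^1(\R)} = \infty$, then \cref{eq:L1decreasing} is vacuous, so I assume this norm is finite. By \cref{prop:thetawellposed}, $\eta$ is classically differentiable on $(0,\infty)\times\R$ and, as recorded in \cref{eq:etaPDE}--\cref{eq:etaic}, satisfies
\[
\partial_t\eta = \tfrac{1}{2}\partial_x^2\eta - \tfrac{1}{2}\partial_x(\xi\eta),\qquad \xi = u_1+u_2,
\]
a linear drift-diffusion equation in divergence form with $\xi$ treated as a given coefficient field.

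I would pick a smooth convex approximation $\phi_\eps(s) = \sqrt{s^2+\eps^2}-\eps$ of $|\cdot|$ and set $\tilde\Phi_\eps(s) = s\phi_\eps'(s)-\phi_\eps(s)$; an explicit computation gives $0 \le \phi_\eps(s) \le |s|$ and $|\tilde\Phi_\eps(s)| \le \eps$. Applying the chain rule to the PDE for $\eta$ yields the pointwise identity
\[
\partial_t\phi_\eps(\eta) + \tfrac{1}{2}\partial_x\bigl[\xi\phi_\eps(\eta) - \partial_x\phi_\eps(\eta)\bigr] = -\tfrac{1}{2}\phi_\eps''(\eta)(\partial_x\eta)^2 - \tfrac{1}{2}(\partial_x\xi)\tilde\Phi_\eps(\eta).
\]
The first right-hand term is non-positive and may be dropped; the second is $O(\eps)$ pointwise.

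Next, I would introduce a cutoff $\chi_R(x) = \chi(x/R)$, with $\chi \in C_c^\infty(\R)$ equal to $1$ on $[-1,1]$ and supported in $[-2,2]$. Multiplying the identity by $\chi_R$, integrating over $[\delta,T]\times\R$ for $0 < \delta < T$, and integrating by parts in $x$ to shift derivatives onto $\chi_R$ and $\xi$, produces
\[
\int\chi_R\phi_\eps(\eta(T,\cdot))\,\dif x - \int\chi_R\phi_\eps(\eta(\delta,\cdot))\,\dif x \le \tfrac{1}{2}\int_\delta^T\!\!\int\bigl[\chi_R''\phi_\eps(\eta) + \chi_R'\xi\phi_\eps(\eta) - \chi_R(\partial_x\xi)\tilde\Phi_\eps(\eta)\bigr]\,\dif x\,\dif s.
\]
I would send $\eps\downarrow 0$ first (with $R$ fixed): the last integrand vanishes because $|\tilde\Phi_\eps|\le\eps$ and $\partial_x\xi$ is locally integrable on $[\delta,T]\times[-2R,2R]$ by the parabolic smoothing in \cref{prop:thetawellposed}, while the other two pass to their analogues with $\phi_\eps(\eta)$ replaced by $|\eta|$ by dominated convergence. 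Then send $R\to\infty$: since $u_i\in\mathcal{Z}_m$ with $m<1$, the weighted bounds give $|\xi(s,x)|\le C_s(1+|x|)^\ell$ for some $\ell<1$, and $|\chi_R''|\le CR^{-2}$, $|\chi_R'|\le CR^{-1}$ are both supported in $[-2R,2R]$; granted a uniform bound $\|\eta(s,\cdot)\|_{L^1(\R)} \le M_s$ on $s\in[\delta,T]$, the two remaining error terms are dominated by $CR^{-2}M_s$ and $CR^{\ell-1}M_s$ respectively and thus vanish. This delivers $\|\eta(T,\cdot)\|_{L^1}\le\|\eta(\delta,\cdot)\|_{L^1}$, and I would conclude by sending $\delta\downarrow 0$ using weak-$\ast$ lower semicontinuity of the $L^1$-norm together with the $\mathcal{Z}_m$-continuity of $\eta$ at $t=0$.

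The hard part will be the a priori control on $\|\eta(s,\cdot)\|_{L^1}$ itself, which is the very quantity we wish to bound and hence risks circularity. To break the circle, I would first establish the contraction under the stronger assumption that $u_1(0,\cdot) - u_2(0,\cdot)$ is compactly supported: in that case, the mild formulation
\[
\eta(t) = G_t\ast\eta(0) - \tfrac{1}{2}\int_0^t\partial_x G_{t-s}\ast(\xi\eta)(s)\,\dif s,
\]
combined with Gaussian heat-kernel decay and a Gr\"onwall argument localized to growing spatial windows, gives a bound on $\|\eta(s,\cdot)\|_{L^1([-R,R])}$ that grows at worst polynomially in $R$, which is more than enough to close the cutoff estimate. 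The general case would then follow by a density argument: approximate $u_1(0,\cdot)$ by $u_2(0,\cdot) + \tilde\eta_n$ with $\tilde\eta_n$ compactly supported and $\tilde\eta_n \to u_1(0,\cdot)-u_2(0,\cdot)$ in $L^1(\R) \cap L^\infty_{\p_{m+}}(\R)$, apply the already-established contraction to the approximating pair, and pass to the limit using the continuity of the solution map $\Psi$ from \cref{prop:thetawellposed}.
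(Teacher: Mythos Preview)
Your approach shares the paper's basic structure---a Kru\v{z}kov-type regularization of the absolute value combined with a spatial cutoff---but differs in one crucial design choice, and that difference is precisely what creates the circularity you identify. The paper (in its \cref{lem:Fdecreasing}) uses a sub-exponential weight $\zeta_\delta(x)=\exp\bigl(2^{1-\ell}-\langle\delta x\rangle^{1-\ell}\bigr)$ with $\ell\in(m,1)$, engineered so that
\[
\p_\ell(x)\,|\zeta_\delta'(x)|\le C\delta^{1-\ell}\zeta_\delta(x)\quad\text{and}\quad|\zeta_\delta''(x)|\le C\delta^{2}\zeta_\delta(x).
\]
Because the cutoff derivative is pointwise dominated by a small multiple of the cutoff itself, the error terms are bounded by $C\delta^{1-\ell}$ times the weighted integral $I_\delta(t)=\int|\eta(t,x)|\zeta_\delta(x)\,\dif x$, yielding a Gr\"onwall inequality that closes on $I_\delta$ without ever invoking the unweighted $L^1$ norm. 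Sending $\delta\to0$ by monotone convergence then gives \cref{eq:L1decreasing} directly. Your compactly supported cutoff $\chi_R$ does not have this self-referential property ($\chi_R'$ is nonzero where $\chi_R$ is small), so the error term requires control of $\int_{R\le|x|\le2R}|\eta|$, i.e.\ the very $L^1$ bound you are trying to prove.

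Your proposed density workaround is plausible in spirit but the hard step is not justified. For compactly supported initial differences you assert that the mild formulation plus a localized Gr\"onwall gives ``at worst polynomial'' growth of $\|\eta(s,\cdot)\|_{L^1([-R,R])}$ in $R$. But polynomial growth of any positive degree is not enough: the dominant error term scales like $R^{\ell-1}\|\eta(s,\cdot)\|_{L^1([R,2R])}$, which vanishes only if the latter grows more slowly than $R^{1-\ell}$. With drift $\xi$ of size $\langle x\rangle^\ell$, the mild equation does not obviously deliver this without a weighted estimate---and the natural weight to use is essentially the paper's $\zeta_\delta$, at which point the density detour is superfluous. The paper's direct choice of weight is what makes the argument close in one pass.
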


\begin{prop}[Conservation of mass]
  \label{prop:conservation}Fix $m\in(0,1)$. If $\mathbf{u}=(u_{1},u_{2})\in\mathcal{Z}_{m}^{2}$
  solves \cref{eq:uPDE-many} and
  \begin{equation}
    \|u_1(0,\cdot)-u_2(0,\cdot)\|_{L^1(\R)}<\infty,\label{eq:L1startsfinite}
  \end{equation}
  then for all $t\ge 0$ we have
  \begin{equation}
    \int_\R [u_1(t,x)-u_2(t,x)]\,\dif x = \int_\R[u_1(0,x)-u_2(0,x)]\,\dif x.\label{eq:meanpreserved}
  \end{equation}
\end{prop}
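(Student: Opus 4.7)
The plan is to exploit the conservation form of \cref{eq:etaPDE}: writing it as
\[
\partial_{t}\eta=\partial_{x}F,\qquad F=\tfrac{1}{2}\partial_{x}\eta-\tfrac{1}{2}\eta\xi,
\]
we see that formally the spatial integral of $\eta$ is preserved provided the flux $F$ vanishes at infinity. Since the solutions live in weighted spaces permitting polynomial growth, I would make this rigorous via a smooth spatial cutoff $\chi_{R}(x)=\chi(x/R)$ with $\chi\in\mathcal{C}_{c}^{\infty}(\R)$, $\chi\equiv1$ on $[-1,1]$ and $\supp\chi\subset[-2,2]$, then let $R\to\infty$.

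The first step is to note that \cref{prop:L1Rcontraction} gives $\|\eta(s,\cdot)\|_{L^{1}(\R)}\le\|\eta(0,\cdot)\|_{L^{1}(\R)}<\infty$ uniformly in $s\ge0$ thanks to hypothesis \cref{eq:L1startsfinite}. Because $\eta$ is a classical solution of \cref{eq:etaPDE} on $(0,\infty)\times\R$ and $s\mapsto \eta(s,\cdot)\in\mathcal{X}_{m}$ is continuous up to $s=0$, the function $s\mapsto\int\eta(s,x)\chi_{R}(x)\,\dif x$ is continuous on $[0,\infty)$ and differentiable on $(0,\infty)$. Integrating $\chi_{R}$ times \cref{eq:etaPDE} in $x$ and integrating by parts (which is justified since $\chi_{R}$ is compactly supported) gives
\[
\frac{\dif}{\dif s}\int\eta(s,x)\chi_{R}(x)\,\dif x=\frac{1}{2}\int\eta(s,x)\chi_{R}''(x)\,\dif x+\frac{1}{2}\int\eta(s,x)\xi(s,x)\chi_{R}'(x)\,\dif x,
\]
and integrating in $s$ from $0$ to $t$ yields
\[
\int\eta(t,x)\chi_{R}(x)\,\dif x-\int\eta(0,x)\chi_{R}(x)\,\dif x=\frac{1}{2}\int_{0}^{t}\!\!\int\bigl[\eta\chi_{R}''+\eta\xi\chi_{R}'\bigr]\dif x\,\dif s.
\]

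The main obstacle is controlling the right-hand side as $R\to\infty$, where the polynomial growth of $\xi$ must be absorbed by the smallness of $\chi_{R}'$. Since $\chi_{R}'$ is supported in $R\le|x|\le 2R$ with $|\chi_{R}'|\le C/R$, and since $\xi(s,\cdot)=u_{1}(s,\cdot)+u_{2}(s,\cdot)\in\mathcal{X}_{m}$ with $m<1$ grows no faster than $\langle x\rangle^{\ell}$ for any chosen $\ell\in(m,1)$ (uniformly in $s\in[0,t]$ by continuity of $\mathbf{u}$ in $\mathcal{C}_{\mathrm{loc}}([0,\infty);\mathcal{X}_{m})$), I obtain
\[
\left|\int\eta\,\xi\,\chi_{R}'\,\dif x\right|\le \frac{C_{t}}{R}\cdot R^{\ell}\int_{R\le|x|\le 2R}|\eta(s,x)|\,\dif x\le C_{t}\,R^{\ell-1}\|\eta(0,\cdot)\|_{L^{1}(\R)},
\]
which vanishes as $R\to\infty$ precisely because $\ell<1$. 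The $\chi_{R}''$ term is bounded analogously by $C\,R^{-2}\|\eta(0,\cdot)\|_{L^{1}(\R)}$ and is strictly better. Both bounds are uniform in $s\in[0,t]$, so dominated convergence handles the time integral.

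Passing to the limit on the left side is easy: since $\chi_{R}\to 1$ pointwise and $|\eta(s,\cdot)\chi_{R}|\le|\eta(s,\cdot)|\in L^{1}(\R)$, dominated convergence gives
\[
\int\eta(t,x)\chi_{R}(x)\,\dif x\to\int\eta(t,x)\,\dif x,\qquad \int\eta(0,x)\chi_{R}(x)\,\dif x\to\int\eta(0,x)\,\dif x,
\]
yielding \cref{eq:meanpreserved}. The whole argument rests on the restriction $m<1$, which is exactly what makes the flux at infinity negligible.
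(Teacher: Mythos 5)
Your argument is correct and is essentially the paper's proof: both multiply \cref{eq:etaPDE} by a cutoff, integrate by parts, use the $L^1$-contraction of \cref{prop:L1Rcontraction} for a uniform-in-time $L^1$ bound on $\eta$, and kill the flux term using the sublinear growth $\ell<1$ of $\xi$ against the $O(R^{-1})$ (equivalently $O(\delta)$, raised to the power $1-\ell$ after weighting) smallness of the cutoff's derivative. The only cosmetic difference is that you use compactly supported cutoffs $\chi_R$ while the paper uses the stretched Schwartz weights $\zeta_\delta$ from the proof of \cref{lem:Fdecreasing}; note also that continuity of $\int\eta(s,\cdot)\chi_R$ at $s=0$ follows from the weak-$*$ convergence built into \cref{def:Zmdef} rather than from strong continuity of $\eta$ in $\mathcal{X}_m$, which need not hold at $s=0$.
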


Of course, \cref{eq:L1decreasing} has no content unless the right 
side is finite (i.e. if \cref{eq:L1startsfinite} holds). Note that we will not use \cref{prop:L1Rcontraction} 
or \cref{prop:conservation} in the sequel. They are included for completeness and
because they can be proved very
similarly to \cref{prop:comparisonprinciple} (which we will indeed use).
\cref{lem:Fdecreasing} below is the core of the argument.
First, we make the following elementary observation.
\begin{lem}
  \label{lem:approxconvex}If $F:\mathbb{R}\to\mathbb{R}_{\geq 0}$ satisfies $F(0)=0$ and
  $F''=c_{1}\delta_{0}$ with some $c_{1}\ge 0$, then there
  is a family~$F_{\eps}\in\mathcal{C}^{2}(\mathbb{R})$
  of convex functions   that has the following properties:
  \begin{enumerate}
  \item There is a constant $C<\infty$ so that for all $\eps \in (0, 1]$ and $x \in \R$, we have
    \begin{align}
    &  F_{\eps}(x)       \le C(|x| + \eps),\label{eq:Fbd}                           \\
     & |xF_{\eps}'(x)|   \le CF_{\eps}(x),\label{eq:Fprimebd}                   \\
     & |F_{\eps}'(x)|    \le C,\label{eq:Fepsprimebounded}                      \\
     & |x|F_{\eps}''(x)  \le C\mathbf{1}_{[0,\eps]}(x).\label{eq:Fprimeprimebd}
    \end{align}
  \item The restriction $F_{\eps}|_{\mathbb{R}\setminus[-1,1]}$ is independent
    of $\eps$.
  \item We have
    \begin{equation}
      \lim_{\eps\to0}\|F_{\eps}-F\|_{\mathcal{C}_{\mathrm{b}}(\mathbb{R})}=0.\label{eq:FepstoF}
    \end{equation}
  \end{enumerate}
\end{lem}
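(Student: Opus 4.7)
The plan is to construct $F_\eps$ as a mollification of $F$. Fix once and for all a nonnegative even bump $\phi \in \mathcal{C}_{c}^\infty(\R)$ with $\supp\phi \subset [-1, 1]$ and $\int\phi = 1$. For $\eps \in (0, 1]$ set $\phi_\eps(x) = \eps^{-1}\phi(x/\eps)$ and $F_\eps := F * \phi_\eps$. Then $F_\eps \in \mathcal{C}^\infty(\R)$, and since $F'' = c_1 \delta_0$ distributionally, $F_\eps'' = c_1\phi_\eps \geq 0$, so $F_\eps$ is convex.

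For property~2: since $F$ is affine on each of $(-\infty, 0]$ and $[0, \infty)$ with $F(0) = 0$ and $\phi$ is even with $\int z\phi = 0$, a direct computation gives $F_\eps(x) = F(x)$ whenever $|x| \geq \eps$. In particular $F_\eps|_{\R \setminus [-1, 1]} = F|_{\R \setminus [-1, 1]}$ is independent of $\eps$, and property~3 reduces to the uniform estimate on $[-\eps, \eps]$, which follows from a standard approximation-to-identity bound using the continuity of $F$.

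The pointwise bounds in item~1 are read off the convolution representation: \cref{eq:Fbd} from $F(y) \leq c_1|y|$ and $\int(|x| + \eps|z|)\phi(z)\,\dif z \leq |x| + \eps$; \cref{eq:Fepsprimebounded} from $\|F'\|_{L^\infty} \leq c_1$ combined with $\int\phi_\eps = 1$; and \cref{eq:Fprimeprimebd} from $F_\eps''(x) = c_1\phi_\eps(x)$, which is supported in $[-\eps, \eps]$ and satisfies $|x|\phi_\eps(x) \leq c_1\|t\phi(t)\|_{L^\infty(\R)}$ there.

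The main obstacle is \cref{eq:Fprimebd}, $|xF_\eps'(x)| \leq CF_\eps(x)$, because $F_\eps$ can be small near its minimum while $F_\eps'$ need not be. The plan is to exploit the positive $1$-homogeneity of $F$: since $F$ is piecewise linear through $0$, $F(\lambda y) = \lambda F(y)$ for $\lambda > 0$, so Euler's identity gives $yF'(y) = F(y)$ almost everywhere. A change of variables in the convolution then yields the scaling $F_\eps(x) = \eps F_1(x/\eps)$ and $F_\eps'(x) = F_1'(x/\eps)$, where $F_1 := F * \phi$ is a fixed smooth convex function independent of $\eps$, so that
\[
  \frac{|xF_\eps'(x)|}{F_\eps(x)} = \frac{|y F_1'(y)|}{F_1(y)} \quad\text{at } y = x/\eps,
\]
reducing everything to bounding a single fixed ratio on $\R$. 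For $|y| \geq 1$, $F_1 = F$ and the ratio equals $1$ by Euler. For $|y| \leq 1$, $F_1$ is smooth and strictly positive whenever both one-sided slopes of $F$ at $0$ are nonzero, in which case continuity on the compact set yields the uniform bound; the degenerate one-sided cases (where $F$ vanishes on a half-line) require a case-specific adjustment to the mollifier near the endpoint of its support, and the trivial case $c_1 = 0$ is vacuous since then $F \equiv 0$.
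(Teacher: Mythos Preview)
Your mollification construction is the natural way to fill in the paper's two-line proof (``$F(x)=c_1|x|+c_2x$; from this it is straightforward to construct such a family directly''), and in the generic case $|c_2|<c_1$ it works cleanly: the scaling identity $F_\eps(x)=\eps F_1(x/\eps)$ reduces \cref{eq:Fprimebd} to bounding $|yF_1'(y)|/F_1(y)$ for the single function $F_1=F*\phi$, which is strictly positive and hence the ratio is continuous on $[-1,1]$ and equals $1$ outside by Euler's identity. Two minor slips: $F(y)\le c_1|y|$ and $\|F'\|_{L^\infty}\le c_1$ should have the constant $c_1+|c_2|$; and your $F_\eps''=c_1\phi_\eps$ is supported in $[-\eps,\eps]$ rather than $[0,\eps]$ as literally written in \cref{eq:Fprimeprimebd} --- but the latter is evidently a typo in the paper, since the applications only use $\eta^2F_\eps''(\eta)\le C\eps$, which your version gives.

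The real gap is the degenerate case, and your ``case-specific adjustment to the mollifier'' cannot close it. If, say, $F(x)=x^+$, then properties 2--3 together with the support constraint from \cref{eq:Fprimeprimebd} force $F_\eps=F$ on $\R\setminus[-\eps,\eps]$, so $F_\eps\equiv0$ on $(-\infty,-\eps]$ and $F_\eps(x)=x$ on $[\eps,\infty)$. Let $x_0\in[-\eps,\eps]$ be the right endpoint of the zero set of $F_\eps$. If $x_0=0$ one computes $\int_0^\eps sF_\eps''(s)\,ds=0$, which forces $F_\eps''\equiv0$ on $(0,\eps)$, contradicting $\int_0^\eps F_\eps''=1$. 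If $x_0\neq0$, then for $x>x_0$ convexity gives $F_\eps(x)=\int_{x_0}^x(x-r)F_\eps''(r)\,dr\le(x-x_0)F_\eps'(x)$, so $|x|F_\eps'(x)/F_\eps(x)\ge|x|/(x-x_0)\to\infty$ as $x\to x_0^+$. Thus \emph{no} construction satisfies all the stated properties in the degenerate case; the issue is with the statement, not your method. The fix is to weaken \cref{eq:Fprimebd} to $|xF_\eps'(x)|\le C(F_\eps(x)+\eps)$: your even mollification does satisfy this (since $|yF_1'(y)|\le\|F'\|_{L^\infty}$ on $[-1,1]$ under the scaling), and the extra $+\eps$ is harmless in the proofs of \cref{lem:Fdecreasing} and \cref{prop:L1omega}, where it produces a term of order $\eps$ that vanishes in the limit.
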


\begin{proof}
  If $F$ is as in the statement of the lemma, then there is some $c_{2}\in\mathbb{R}$
  so that $F(x)=c_{1}|x|+c_{2}x$. From this it is straightforward to
  construct such a family directly.
\end{proof}

\cref{prop:comparisonprinciple,prop:L1Rcontraction} are special cases of the following lemma.
\begin{lem}
  \label{lem:Fdecreasing}
  Let $F:\mathbb{R}\to\mathbb{R}_{\ge0}$ satisfy $F(0)=0$ and
  $F''=c_{1}\delta_{0}$ with some $c_{1}\ge 0$.  Assume that~$m\in(0,1)$,
  and ~$\mathbf{u}=(u_{1},u_{2})\in\mathcal{Z}_{m}$ solves \cref{eq:uPDE-many},  
  and set $\eta=u_{1}-u_{2}$, then for all $t\ge0$, we have
  \begin{equation}
    \int_{\mathbb{R}}[F\circ\eta](t,x)\,\dif x + \frac{c_1}{4}\int_{0}^{t}\left[\sum_{y\in\eta(s,\cdot)^{-1}(0)}|\eta'(y)|\right]\,\dif s \le \int_{\mathbb{R}}[F\circ\eta](0,x)\,\dif x.\label{eq:Fdecreasing}
  \end{equation}
\end{lem}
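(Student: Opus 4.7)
The plan is to derive a chain-rule identity for the convex smoothing $F_\eps$ of $F$ supplied by \cref{lem:approxconvex}, pair it with a compactly supported spatial cutoff $\phi_R$, and then successively take $\eps\to 0$ and $R\to\infty$. Since $\eta = \theta_1 - \theta_2$ with $\theta_i \in \tilde{\cZ}_m$, the function $\eta$ is smooth in $x$ and $\mathcal{C}^1$ in $t$ on $(0,\infty)\times\R$, so the chain rule is classical. Applying it in \cref{eq:etaPDE} and rewriting the transport term in conservation form yields
\begin{equation}\label{eq:chainrule-identity}
  \partial_t(F_\eps\circ\eta) + \tfrac12 F_\eps''(\eta)(\partial_x\eta)^2 = \tfrac12\partial_x^2(F_\eps\circ\eta) - \tfrac12\partial_x\bigl(\xi F_\eps(\eta)\bigr) - \tfrac12 h_\eps(\eta)\,\partial_x\xi,
\end{equation}
where $h_\eps(r) = rF_\eps'(r) - F_\eps(r)$. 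The bounds \cref{eq:Fprimeprimebd} and \cref{eq:FepstoF} imply $\|h_\eps\|_{L^\infty(\R)}\to 0$ as $\eps\to 0$.

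Take $\phi_R(x) = \phi(x/R)$ with $\phi\in\mathcal{C}_c^\infty(\R)$, $\phi\equiv 1$ on $[-1/2,1/2]$ and $\supp\phi\subset[-1,1]$. Multiplying \cref{eq:chainrule-identity} by $\phi_R$, integrating over $[0,t]\times\R$, and integrating by parts the two divergence terms gives a finite-$R$ equality. Sending $\eps\to 0$ with $R$ fixed: all quantities are uniformly bounded on the compact support of $\phi_R$, so \cref{eq:FepstoF} and dominated convergence handle each term containing $F_\eps(\eta)$, and the $h_\eps$-term vanishes. For the dissipation, the coarea formula rewrites
\[
\int_\R F_\eps''(\eta)(\partial_x\eta)^2\phi_R\,\dif x = \int_\R F_\eps''(r)\sum_{y\in\eta(s,\cdot)^{-1}(r)}|\eta'(s,y)|\phi_R(y)\,\dif r,
\]
and since $F_\eps''\ge 0$ is supported in $[0,\eps]$ with $\int F_\eps''(r)\,\dif r = c_1$, Fatou's lemma combined with the previous identity produces
\begin{equation}\label{eq:finiteR}
  \bal
    \int_\R F(\eta(t))\phi_R\,\dif x &+ \tfrac{c_1}{2}\int_0^t\sum_{y\in\eta(s,\cdot)^{-1}(0)}|\eta'(s,y)|\phi_R(y)\,\dif s \le \int_\R F(\eta(0))\phi_R\,\dif x \\
    &+ \tfrac12\int_0^t\!\!\int_\R F(\eta)\phi_R''\,\dif x\,\dif s + \tfrac12\int_0^t\!\!\int_\R \xi F(\eta)\phi_R'\,\dif x\,\dif s.
  \enbal
\end{equation}

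Finally, I would send $R\to\infty$. If $\int F(\eta(0))\,\dif x = +\infty$, \cref{eq:Fdecreasing} is vacuous, so assume this integral equals some finite $M_0$. Using $\|\phi_R'\|_\infty = O(R^{-1})$ and $\|\phi_R''\|_\infty = O(R^{-2})$ with support in the annulus $\{R/2\le|x|\le R\}$, together with $|\xi|\lesssim R^\ell$ there for some $\ell<1$ (available because $\mathbf u\in\cZ_m^2$ with $m<1$), the two boundary terms in \cref{eq:finiteR} are bounded in absolute value by $C(R^{\ell-1}+R^{-2})\int_0^t\int_\R F(\eta)\phi_{2R}\,\dif x\,\dif s$. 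The main obstacle is extracting a uniform-in-$s\in[0,t]$ bound on $\int F(\eta(s,\cdot))\,\dif x$; I would close this with a bootstrap: apply \cref{eq:finiteR} with the dissipation term discarded, iterate the resulting doubling-of-scale inequality (which converges geometrically because $\ell<1$), and use lower semicontinuity of $s\mapsto\int F(\eta(s,\cdot))\,\dif x$ at any putative first explosion time to preclude a jump to $+\infty$. Once the uniform $L^1$ bound is available, both boundary terms vanish as $R\to\infty$, and passing to the limit yields \cref{eq:Fdecreasing}, in fact with the stronger constant $c_1/2\ge c_1/4$.
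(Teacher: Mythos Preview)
Your argument is essentially correct and takes a genuinely different route from the paper. The paper multiplies by a Schwartz weight $\zeta_\delta(x)=\zeta(\delta x)$ with $\zeta(x)=\exp(2^{1-\ell}-\langle x\rangle^{1-\ell})$, whose key feature is the multiplicative bound $\p_\ell|\zeta_\delta'|\le C\delta^{1-\ell}\zeta_\delta$. This turns the ``boundary'' contributions into $C\delta^{1-\ell}\|\xi\|_{\mathcal C_{\p_\ell}}\,I_{\eps,\delta}$, which is absorbed by a direct Gr\"onwall step; then one takes $\delta\downarrow 0$ by monotone convergence. The paper also rearranges the chain rule differently, keeping the cross term $F_\eps''(\eta)\xi\eta\,\partial_x\eta$, which forces a use of Young's inequality and is the reason only $c_1/4$ survives. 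Your rewriting via $h_\eps(\eta)\,\partial_x\xi$ avoids Young altogether and indeed yields $c_1/2$; in exchange you need local-in-time integrability of $\partial_x\xi$, which is not stated in the paper but follows from the parabolic regularity in \cref{lem:mildregularitypoly} (one obtains $\|\partial_x\xi(s,\cdot)\|_{L^\infty_{\mathrm{loc}}}\lesssim s^{-1/2}$), or can be dodged by taking $\eps\to 0$ before the lower time cutoff $\sigma\to 0$. Your compactly supported $\phi_R$ plus scale-doubling iteration is a valid substitute for the paper's Gr\"onwall-with-weight: the a priori bound $\int F(\eta(s,\cdot))\phi_R\,\dif x\le C R^{\ell+1}$ (from $\eta\in\mathcal C_{\mathrm b}((0,T];\mathcal C_{\p_\ell})$) makes the iterated product $\prod_k Ct(2^kR)^{\ell-1}$ kill the tail, and the series converges since $\ell<1$. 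The lower-semicontinuity step you mention is not actually needed---the polynomial a priori bound already closes the iteration. In short: the paper trades a tailored weight and Gr\"onwall for a cleaner one-step closure (losing a factor $2$ in the constant); your approach uses a standard cutoff and a bootstrap, is a bit longer but more elementary, and recovers the sharper constant.
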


Before we prove \cref{lem:Fdecreasing}, let us show how it implies \cref{prop:comparisonprinciple,prop:L1Rcontraction}.
\begin{proof}[Proof of \cref{prop:comparisonprinciple,prop:L1Rcontraction}.]
We use \cref{lem:Fdecreasing} with the nonnegative convex function 
\[
F(x)=x^{+}\coloneqq\max\{x,0\}
\]
that satisfies $F''=\delta_{0}$. This gives
\[
\|\eta(s,\cdot)^{+}\|_{L^{1}(\mathbb{R})}\le\|\eta(0,\cdot)^{+}\|_{L^{1}(\mathbb{R})}=0
\]
for all $s\in[0,T]$, so $\eta(s,\cdot)^{+}=0$ almost everywhere,
meaning $u_{1}\le u_{2}$. Similarly, using \cref{eq:Fdecreasing} with the function~$F(x)=|x|$, which satisfies $F''=2\delta_{0}$,
  implies \cref{eq:L1decreasing}.
\end{proof}
We now prove \cref{lem:Fdecreasing}.
\begin{proof}[Proof of \cref{lem:Fdecreasing}.]

  We first define an appropriate cutoff function. Fix
  $\ell\in(m,1)$ and define 
  \[
  \zeta(x)=\e^{2^{1-\ell}-\langle x\rangle^{1-\ell}},
  \]
  which
  is positive, decreasing in $|x|$, and in the Schwartz class. (The constant $2$ corresponds to the constant $4$ in \cref{eq:weightdefs} and is fixed to obtain \cref{eq:zetadeltato1} below.) Also,
  there is a constant $C$ so that
  \[
    \p_{\ell}(x)|\zeta'(x)| + |\zeta''(x)|\le C\zeta(x)
  \]
  for all $x\in\mathbb{R}$. For $\delta\in(0,1]$, define the rescaled
  version
  \[
    \zeta_{\delta}(x)\coloneqq\zeta(\delta x),
  \]
  which satisfies
  \begin{equation}
    \p_{\ell}(x)|\zeta'_{\delta}(x)|=\delta\p_{\ell}(x)|\zeta'(\delta x)|\le\delta^{1-\ell}\p_{\ell}(\delta x)|\zeta'(\delta x)|\le C\delta^{1-\ell}|\zeta_{\delta}(x)|\label{eq:zetaprimebd}
  \end{equation}
  and
  \begin{equation}
    |\zeta_{\delta}''(x)|=\delta^{2}|\zeta''(\delta x)|\le C\delta^{2}|\zeta_{\delta}(x)|.\label{eq:zetaprimeprimebd}
  \end{equation}
  Moreover, $\zeta_{\delta}(x)$ is decreasing in $\delta$ and, for
  each $x\in\mathbb{R}$, we have
  \begin{equation}
    \lim_{\delta\downarrow0}\zeta_{\delta}(x)=1.\label{eq:zetadeltato1}
  \end{equation}

  Let $\{F_{\eps}\}_{\eps\in(0,1]}$ be as in \cref{lem:approxconvex} and define
  \[
    I_{\eps,\delta}(t)\coloneqq\int_{\mathbb{R}}(F_{\eps}\circ\eta)(t,x))\zeta_{\delta}(x)\,\dif x. 
    \]
  By \cref{lem:weightedhkcts}, $\eta$ is strongly continuous in $L_{\p_2}^1(\R)$.
  Since $\zeta_\delta$ decays much faster than $\p_2$, \cref{eq:Fbd}  and~\cref{eq:Fprimebd} imply that $I_{\eps, \delta}$ is continuous in $t$.
  
  Applying the chain rule to \cref{eq:etaPDE} and integrating by parts, we obtain
  \begin{align*}
    \frac{\dif}{\dif t}I_{\eps,\delta} & =\frac{1}{2}\int_{\mathbb{R}}\left[(F_{\eps}\circ\eta)''-((F_{\eps}'\circ\eta)\eta\xi)'-(F_{\eps}''\circ\eta)(\eta')^{2}+(F_{\eps}''\circ\eta)(\xi\eta\eta')\right]
    \zeta_{\delta}  \,\dif x     \\
                                       & =\frac{1}{2}\int_{\mathbb{R}}\left[(F_{\eps}\circ\eta)\zeta_{\delta}''+(F_{\eps}'\circ\eta)\eta\xi\zeta_{\delta}'-(F_{\eps}''\circ\eta)[(\eta')^{2}-\xi\eta\eta']\zeta_{\delta}\right]\,\dif x.
  \end{align*}
  The boundary terms vanish because $\zeta_{\delta}$ is in the Schwartz class.
  Now Young's inequality yields
  \begin{equation}
    \frac{\dif}{\dif t}I_{\eps,\delta}\le\frac{1}{2}\int_{\mathbb{R}}\left\{(F_{\eps}\circ\eta)\zeta_{\delta}''+(F_{\eps}'\circ\eta)\eta\xi\zeta_{\delta}'+\frac{1}{2}(F_{\eps}''\circ\eta)\left[-(\eta')^{2}+\xi^{2}\eta^{2}\right]\zeta_{\delta}\right\}\,\dif x.\label{eq:young}
  \end{equation}
  We have by \cref{eq:zetaprimeprimebd} that
  \[
    \int_{\mathbb{R}}|(F_{\eps}\circ\eta)\zeta_{\delta}''|\,\dif x\le C\delta^{2}\int_{\mathbb{R}}(F_{\eps}\circ\eta)\zeta_{\delta}\,\dif x,
  \]
  and by \cref{eq:Fprimebd} and \cref{eq:zetaprimebd} that
  \[
    \int_{\mathbb{R}}|(F_{\eps}'\circ\eta)\eta\xi\zeta_{\delta}'|\,\dif x
    \le C\|\xi\|_{\mathcal{C}_{\p_{\ell}}(\mathbb{R})}\int_{\mathbb{R}}|(F_{\eps}\circ\eta)\p_{\ell}\zeta_{\delta}'|\,\dif x
    \le C\delta^{1-\ell}\|\xi\|_{\mathcal{C}_{\p_{\ell}}(\mathbb{R})}\int_{\mathbb{R}}(F_{\eps}\circ\eta)\zeta_{\delta}\,\dif x.
  \]
  Also note that, by \cref{eq:Fprimeprimebd}, we have
  \[
    \int_{\mathbb{R}}|(F_{\eps}''\circ\eta)\xi^{2}\eta^{2}\zeta_{\delta}|\,\dif x
    \le C\eps\int_{\mathbb{R}}|\xi|^{2}\zeta_{\delta}\,\dif x\le C\eps\|\xi\|_{\mathcal{C}_{\p_{\ell}}(\mathbb{R})}^{2}\|\p_{\ell}^{2}\zeta_{\delta}\|_{L^{1}(\mathbb{R})}.
  \]
  Substituting the last three displays into \cref{eq:young}, we obtain
  \[
    \frac{\dif}{\dif t}I_{\eps,\delta}\le C(\delta^{2}+\delta^{1-\ell}\|\xi\|_{\mathcal{C}_{\p_{\ell}}(\mathbb{R})})I_{\eps,\delta}+\eps\|\xi\|_{\mathcal{C}_{\p_{\ell}}(\mathbb{R})}^{2}\|\p_{\ell}^{2}\zeta_{\delta}\|_{L^{1}(\mathbb{R})}-\frac{1}{4}\int_{\mathbb{R}}(F_{\eps}''\circ\eta)(\eta')^{2}\zeta_{\delta}\,\dif x.
  \]
Integrating in time, the continuity of $I_{\eps,\delta}$ implies
  \begin{align}
    I_{\eps,\delta}(t)-I_{\eps,\delta}(0) & \le B_t\delta^{1-\ell}\int_{0}^{t}I_{\eps,\delta}(s)\,\dif s+t\eps B_t^{{2}} \|\p_{\ell}^{2}\zeta_{\delta}\|_{L^{1}(\mathbb{R})}
    -\frac{1}{4}\int_{0}^{t}\int_{\mathbb{R}}(F_{\eps}''\circ\eta(s,\cdot))(\eta')^{2}(s,\cdot)\zeta_{\delta}\,\dif s,
                                            \label{eq:integrateI}
  \end{align}
  where we have defined
  \[
    B_t= C\Big(\delta^{1+\ell}+\sup_{s\in[0,t]}\|\xi(s,\cdot)\|_{\mathcal{C}_{\p_{\ell}}(\mathbb{R})}\Big).
  \]
Let us look at the last term in the right 
  side of \cref{eq:integrateI}. By the coarea formula, we have
  \[
    \int_{\mathbb{R}}F_{\eps}''(\eta(s,x))\eta'(s,x)^{2}\zeta_{\delta}(x)\,\dif x=\int_{\mathbb{R}}F_{\eps}''(\lambda)\Big[\sum_{y\in\eta(s,\cdot)^{-1}(\lambda)}|\eta'(s,y)|\zeta_{\delta}(y)\Big]\,\dif\lambda.
  \]
 Using \cref{lem:gcts} and the fact that $F''_{\eps}$ converges weakly to $c_1\delta_0$ as $\eps\to0$, we obtain
  \begin{equation}
    \lim_{\eps\downarrow0}\int_{0}^{t}\int_{\mathbb{R}}F_{\eps}''(\eta(s, x))\eta'(s, x)^{2}\zeta_{\delta}(x)\,\dif x\,\dif s
    =c_1\int_{0}^{t}\Big[\sum_{y\in\eta(s,\cdot)^{-1}(0)}|\eta'(y)|\zeta_{\delta}(s, y)\Big]\,\dif s.\label{eq:usethatitsadeltamass}
  \end{equation}
In addition, by \cref{eq:Fbd}, \cref{eq:FepstoF}, the fact that $\eta\in\mathcal{C}_{\mathrm{b}}((0,t];\mathcal{C}_{\p_{\ell}}(\mathbb{R}))$,
  and the dominated convergence theorem, we have for each fixed $\delta\in(0,1]$
  that
  \begin{align}
    \lim_{\eps\downarrow0}I_{\eps,\delta}(s) & =\int_{\mathbb{R}}(F\circ\eta)(s,x)\zeta_{\delta}(x)\,\dif x, 
    & \lim_{\eps\downarrow0}\int_{0}^{t}I_{\eps,\delta}(s)\,\dif s & =\int_{0}^{t}\int_{\mathbb{R}}(F\circ\eta)(s,x)\zeta_{\delta}(x)\,\dif x\,\dif s.\label{eq:limitsofIs}
  \end{align}
  We pass to the limit  $\eps\downarrow0$ in \cref{eq:integrateI} and apply
  \cref{eq:usethatitsadeltamass} and \cref{eq:limitsofIs} to obtain
  \begin{align*}
    I_{\delta}(t) &\coloneqq\int_{\mathbb{R}}(F\circ\eta)(t,x)\zeta_{\delta}(x)\,\dif x=\lim_{\eps\downarrow0}I_{\eps,\delta}(t)                                                                           \\
                  & \le I_{\delta}(0)+B_{t}\delta^{1-\ell}\int_{0}^{t}I_{\delta}(s)\,\dif s-\frac{c_1}{4}\int_{0}^{t}\Big[\sum_{y\in\eta(s,\cdot)^{-1}(0)}|\eta'(y)|\zeta_{\delta}(y)\Big]\,\dif s.
  \end{align*}
As $B_t$ is increasing in $t$,  it follows from the Grönwall inequality that
  \[
    I_{\delta}(t)\le I_{\delta}(0)-\frac{c_1}{4}\int_{0}^{t}\Big[\sum_{y\in\eta(s,\cdot)^{-1}(0)}|\eta'(y)|\zeta_{\delta}(y)\Big]\,\dif s+I_{\delta}(0)tB_{t}\delta^{1-\ell}\exp\left\{ \delta^{1-\ell}tB_{t}\right\} .
  \]
  By the monotone convergence theorem and \cref{eq:zetadeltato1}, we
conclude that
\[
I(t):=\int_{\mathbb{R}}(F\circ\eta)(t,x))\,\dif x
\]
satisfies
  \[
    I(t) + \frac{c_1}{4}\int_{0}^{t}\Big[\sum_{y\in\eta(s,\cdot)^{-1}(0)}|\eta'(y)|\Big]\,\dif s \le I(0)
  \]
  as claimed.
\end{proof}

We can use a similar argument to prove \cref{prop:conservation}.
\begin{proof}[Proof of \cref{prop:conservation}]
  Define $\zeta_\delta$ as in the proof of \cref{lem:Fdecreasing} and let
  \[
    I_\delta(t) = \int_\R \eta(t,x)\zeta_\delta(x)\,\dif x.
  \]
  Again, \cref{lem:weightedhkcts} shows that $\eta$ is strongly continuous in $L^1_{\p_2}(\R)$, so $I_\delta$ is continuous.
  As in the proof of \cref{lem:Fdecreasing}, we have
  \[
    \frac{\dif}{\dif t}I_\delta = \frac{1}{2}\int_\R [\eta''-(\eta\xi)']\zeta_\delta \,\dif x= -\frac{1}{2}\int_\R [\eta \zeta_\delta''-\eta\xi\zeta_\delta']\,\dif x,
  \]
  so that
  \begin{align*}
    \left|\frac{\dif}{\dif t}I_\delta\right| &\le \frac{1}{2}\int_\R |\eta| [| \zeta_\delta''|+|\xi||\zeta_\delta'|] \,\dif x 
    \le C(\delta^2+\delta^{1-\ell}\|\xi(t,\cdot)\|_{\mathcal{C}_{\p_\ell}(\R)})\|\eta(t,\cdot)\|_{L^1(\R)} \\&\le C(\delta^2+\delta^{1-\ell}\|\xi(t,\cdot)\|_{\mathcal{C}_{\p_\ell}(\R)})\|\eta(0,\cdot)\|_{L^1(\R)},
  \end{align*}
  where in the last inequality we used \cref{prop:L1Rcontraction} and \cref{eq:L1startsfinite}.
We now pass to the limit $\delta\to 0$ and use the dominated convergence theorem, \cref{prop:L1Rcontraction} and \cref{eq:L1startsfinite}, to obtain \cref{eq:meanpreserved}.
\end{proof}

\subsection{Conservation and  \texorpdfstring{$L^{1}$}{L¹}-contraction in the probability space\label{subsec:L1contraction}}

We now prove results similar to \cref{prop:conservation,prop:L1Rcontraction} for solutions stationary with respect to a group $G$ of spatial translations, either with  $G=\R$ or by $G=L\Z$, with some $L>0$. 
We define the fundamental domain $\Lambda_G = \{0\}$ if $G=\R$ and $\Lambda_G = [0,L)$ if $G=L\mathbb{Z}$,
and let $\lambda_G$ be the unique translation-invariant measure on $\R$ such that $\lambda_G(\Lambda_G) = 1$.
In other words, $\lambda_{\mathbb{R}}$ is the counting measure and $\lambda_{L\mathbb{Z}}$ is $1/L$ times the Lebesgue measure.
We recall that $\mathscr{P}_G(X)$ is the space of probability measures on~$X$ that are invariant under the action of $G$, as in \cref{jun2502}.
\begin{prop}[Conservation of mass in the probability space]\label{prop:L1omegaconservation}
Let~$\mathbf{u}=(u_1,u_2)\in\mathcal{Z}_m^2$ be a solution to~\cref{eq:uPDE-many} such that $\Law(\mathbf{u}(0,\cdot))\in\mathscr{P}_G(L^\infty_{\p_{m+}}(\R)^2)$,
with some $m\in(0,1)$, and set $\eta=u_1-u_2$. 
Assume also that
 \begin{equation}\label{eq:finitemoment}
    \sup_{\substack{i\in\{1,2\}\\
        t\in[0,T]
      }
    }\mathbb{E} \left[\int_{\Lambda_G} u_{i}(t,x)^{2} \, \dif \lambda_G(x)\right] < \infty.
  \end{equation}
Then, for all $t \geq 0$ we have
\begin{equation}\label{jun2510}
    \mathbb{E}\left[\int_{\Lambda_{G}}\eta(t, x)\,\dif\lambda_{G}(x)\right]=\mathbb{E}\left[\int_{\Lambda_{G}}\eta(0,x)\,\dif\lambda_{G}(x)\right].
 \end{equation}
\end{prop}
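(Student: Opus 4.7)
The plan is to adapt the weighted cutoff argument from the proof of \cref{lem:Fdecreasing}, but to take expectations throughout and exploit the fact that the law of $\mathbf{u}(t,\cdot)$ is $G$-translation invariant for every $t \ge 0$. This latter invariance follows from the $\R$-translation invariance in law of the noise $V$, the pathwise uniqueness given by \cref{prop:thetawellposed}, and the assumed $G$-invariance of $\Law(\mathbf{u}(0,\cdot))$. Combined with \cref{eq:finitemoment} and Cauchy--Schwarz, it yields the uniform bound
\[
K := \sup_{s\in[0,T],\,x\in\R}\E\bigl[|\eta(s,x)| + |\eta(s,x)\xi(s,x)|\bigr] < \infty,
\]
where $\xi = u_1 + u_2$.

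Next I would take the Schwartz cutoff $\zeta_\delta(x) = \zeta(\delta x)$ from the proof of \cref{lem:Fdecreasing} and define $J_\delta(t) = \E\int_\R \eta(t,x)\zeta_\delta(x)\,\dif x$. Fubini (justified by $K$ and the integrability of $\zeta_\delta$) gives $J_\delta(t) = \int_\R \E[\eta(t,x)]\zeta_\delta(x)\,\dif x$. Since $\eta$ is a classical solution to \cref{eq:etaPDE} on $(0,T]\times\R$, integrating against $\zeta_\delta$, integrating by parts (boundary terms vanish by Schwartz decay of $\zeta_\delta$ against polynomial growth of $\eta,\xi$), and then taking expectations yields
\[
J_\delta(t) - J_\delta(s) = \tfrac{1}{2}\int_s^t\!\int_\R \E[\eta(r,x)]\zeta_\delta''(x)\,\dif x\,\dif r + \tfrac{1}{2}\int_s^t\!\int_\R \E[(\eta\xi)(r,x)]\zeta_\delta'(x)\,\dif x\,\dif r.
\]
Using the scalings $\|\zeta_\delta'\|_{L^1(\R)} = \|\zeta'\|_{L^1(\R)}$ and $\|\zeta_\delta''\|_{L^1(\R)} = \delta\|\zeta''\|_{L^1(\R)}$ together with $K$, I get
\[
|\delta J_\delta(t) - \delta J_\delta(s)| \le \tfrac{K}{2}(t-s)\bigl(\delta^2\|\zeta''\|_{L^1(\R)} + \delta\|\zeta'\|_{L^1(\R)}\bigr) \longrightarrow 0 \quad \text{as } \delta \downarrow 0.
\]

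It remains to identify $\lim_{\delta \downarrow 0}\delta J_\delta(t)$. Setting $m(t,x) := \E[\eta(t,x)]$, the $G$-invariance forces $m(t,\cdot)$ to be constant in $x$ (equal to $I(t)$) when $G = \R$, and $L$-periodic with periodic mean $I(t)$ when $G = L\Z$. In the first case $\delta J_\delta(t) = I(t)\int_\R\zeta$ identically. In the second case, unfolding the integral gives
\[
\delta J_\delta(t) = \int_0^L m(t,z)\Bigl[\delta\sum_{n\in\Z}\zeta\bigl(\delta(z+nL)\bigr)\Bigr]\,\dif z,
\]
and Schwartz decay of $\zeta$ makes the bracketed expression a Riemann sum converging uniformly in $z \in [0,L]$ to $\frac{1}{L}\int_\R\zeta$. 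Dominated convergence (using $\|m(t,\cdot)\|_{L^\infty(\R)} < \infty$ from $K$) yields $\delta J_\delta(t) \to I(t)\int_\R\zeta$. Combining the two displays gives $I(t) = I(s)$ for all $0 < s \le t \le T$, and continuity of $J_\delta$ at $s=0$ (via weak-$*$ convergence of the initial data in $L^\infty_{\p_\ell}(\R)$ and dominated convergence under $K$) extends this to $s=0$.

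The main obstacle I anticipate is the interchange of the $\delta \downarrow 0$ limit with the periodic sum and the spatial integration in the $G = L\Z$ case, together with care at the $t = 0$ endpoint, where the initial datum lives only in $L^\infty_{\p_{m+}}(\R)^2$ and lacks pointwise values; both points are essentially routine given the Schwartz decay of $\zeta$ and the uniform bound $K$.
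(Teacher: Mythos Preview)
Your argument is correct in outline but takes a genuinely different route from the paper. One small imprecision: the pointwise bound $K=\sup_{s,x}\E[|\eta|+|\eta\xi|]<\infty$ is not justified when $G=L\Z$, since the moment hypothesis \cref{eq:finitemoment} only controls $\frac1L\int_0^L\E[u_i(s,x)^2]\,\dif x$, not the pointwise supremum. However, your argument never actually needs the pointwise sup---after unfolding by $L$-periodicity, the quantities you estimate are of the form $\int_0^L\E[|\eta|]\,\dif z$ and $\int_0^L\E[|\eta\xi|]\,\dif z$, and these are bounded uniformly in $s$ by Cauchy--Schwarz and \cref{eq:finitemoment}. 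With that correction, the rest (Riemann-sum identification of $\delta J_\delta(t)$, continuity at $t=0$ via strong $L^1_{\p_{\ell'}}$-continuity of $\eta$ from \cref{lem:mildisclassical} plus uniform integrability from the second-moment bound) goes through as you describe.

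The paper's proof is considerably shorter and uses a different mechanism. Instead of a whole-line cutoff, it integrates \cref{eq:etaPDE} directly over the fundamental domain $[0,L]$, producing the boundary term $\frac12\int_0^T[\partial_x\eta-\eta\xi]\,\dif t\big|_{x=0}^{x=L}$. By $L\Z$-invariance in law this is a difference of two identically distributed random variables, and since the left side has finite absolute expectation by \cref{eq:finitemoment}, the symmetry lemma (\cref{lem:symmetrylemma}) forces the expectation of the boundary term to vanish. So the paper trades your cutoff-plus-Riemann-sum machinery for a single application of \cref{lem:symmetrylemma}; your approach avoids that lemma entirely but at the cost of more bookkeeping. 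Both exploit $G$-invariance, but in different places: you use it to make $\E[\eta(t,\cdot)]$ periodic, while the paper uses it to match the laws at the two boundary points.
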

Strictly speaking, if $G = \R$, the integral in the right side of \cref{jun2510} is
\[
\int_{\Lambda_G}\eta(0, x)\,\dif\lambda_{G}(x) = \eta(0, 0),
\]
and is ill-defined because $\mathbf{u}(0, \cdot) \in L^\infty_{\p_{m+}}(\R)^2$ is not defined pointwise.
This obstruction is merely formal.
Here and in the sequel, we use the convention 
\[
\E f(0)=\E \int_0^1 f(x) \, \dif x,
\]
whenever $\Law(f) \in \mathscr{P}_\R(L^\infty_{\p_{m+}}(\R)^2)$.

\begin{proof}
First, assume that $G=L\mathbb{Z}$ for some $L \in (0, \infty)$, so $\Lambda_{G}=[0,L)$. 
Note that
\begin{equation}\label{jun2504}
    \frac{\dif}{\dif t}\int_{0}^{L}\eta(t,x)\,\dif x  =\frac{1}{2}\int_{0}^{L}\left[\partial_{x}^{2}\eta(t,x)-\partial_{x}(\eta\xi)(t,x)\right]\,\dif x
    =\frac{1}{2}[\partial_{x}\eta(t,x)-(\eta\xi)(t,x)]\bigg|_{x=0}^{x=L}
\end{equation}
  for any $t > 0$.
  Integrating in time,
  we have
  \begin{equation}
 \int_{0}^{L}\eta(T,x)\,\dif x -   \int_{0}^{L}\eta(0,x)\,\dif x \, %
 =\frac{1}{2}\int_{0}^{T}[\partial_{x}\eta(t,x)-(\eta\xi)(t,x)]\,\dif t \, \bigg|_{x=0}^{x=L}.\label{eq:integratedintime}
  \end{equation}
  The expected absolute value of the left side is finite by assumption 
  \cref{eq:finitemoment}, and thus so is the absolute expectation of the right side.
 Since the right side is the difference of two identically-distributed random variables, we can use \cref{lem:symmetrylemma} to conclude that
  \[
  \E\int_{0}^{L}\eta(T,x)\,\dif x =\E  \int_{0}^{L}\eta(0,x)\,\dif x, %
  \]
  as claimed.
  The statement for $G=\R$ follows immediately.
\end{proof}

\begin{prop}[$L^{1}$-contraction in the probability space]\label{prop:L1omega} 
Suppose that $\mathbf{u}=(u_{1},u_{2})\in\mathcal{Z}_{m}^{2}$, with some $m\in(0,1)$, is
a solution to \cref{eq:uPDE-many} such that 
$\Law(\mathbf{u}(0,\cdot))\in\mathscr{P}_G(L_{\p_{m+}}^{\infty}(\mathbb{R})^{2})$ 
for $G = L\mathbb{Z}$, with some $L>0$, and that 
  \begin{equation}
    A\coloneqq\sup_{\substack{i\in\{1,2\}\\
        t\in[0,T]
      }
    }\mathbb{E} \left[\int_{\Lambda_G} u_{i}(t,x)^{2} \, \dif \lambda_G(x)\right] < \infty.\label{eq:L1omegamoments}
  \end{equation}
Let $F:\mathbb{R}\to\mathbb{R}_{\ge 0}$
  satisfy $F''=c\delta_{0}$ for some $c>0$, and set $\eta=u_{1}-u_{2}$.
  Then for all $t\ge 0$ we have
  \begin{equation}
    \mathbb{E}\int_{0}^LF(\eta(t, x))\,\dif x + 
    \frac{c}{4}\E\int_0^t\sum_{\substack{y\in[0,L]\\\eta(s,y)=0}}
    |\partial_x\eta(s,y)|\,\dif s \le \mathbb{E}\int_0^LF(\eta(0,x))\,\dif x.\label{eq:L1omega}
  \end{equation}
  Furthermore, if  $\Law(\mathbf{u}(0,\cdot))\in\mathscr{P}_{\R}(L_{\p_{m+}}^{\infty}(\mathbb{R})^{2})$
then for all $T\ge 0$ and $x\in\R$ we have 
  \begin{equation}\label{jun2516}
    \mathbb{E}F(\eta(T,x))\le\mathbb{E}F(\eta(0,x)).
  \end{equation}
 \end{prop}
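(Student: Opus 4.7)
My plan is to prove the periodic case \cref{eq:L1omega} by extending the pathwise computation of \cref{lem:Fdecreasing} to the probability space, then deduce the full-line bound \cref{jun2516} by applying the periodic estimate for arbitrary $L > 0$ and averaging. Structurally, the spatial cutoff $\zeta_\delta$ from \cref{lem:Fdecreasing} is replaced by integration over the fundamental domain $[0, L]$. Because the sample paths of $\eta$ are not pathwise $L$-periodic, integration by parts leaves boundary contributions at $x = 0$ and $x = L$ that do not cancel pathwise, but will cancel in expectation by $L\Z$-invariance of the law together with \cref{lem:symmetrylemma}, in the spirit of the proof of \cref{prop:L1omegaconservation}.

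For the periodic estimate, I take the smooth approximants $F_\eps$ from \cref{lem:approxconvex}. For $t \in (0, T]$, applying the chain rule to the classical solution $\eta$ of \cref{eq:etaPDE}, integrating by parts on $[0, L]$, and using Young's inequality on the cross term $F_\eps''(\eta)\eta\xi\partial_x\eta$ yields the pathwise bound
\[
\frac{\dif}{\dif t}\int_0^L F_\eps(\eta)\,\dif x \le \mathcal{B}_\eps(t) - \frac{1}{4}\int_0^L F_\eps''(\eta)(\partial_x\eta)^2\,\dif x + \frac{1}{4}\int_0^L F_\eps''(\eta)\,\eta^2\xi^2\,\dif x,
\]
where $\mathcal{B}_\eps(t) = \tfrac{1}{2}\bigl[F_\eps'(\eta)(\partial_x\eta - \eta\xi)\bigr](t, x)\bigr|_{x=0}^{x=L}$. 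I integrate in time from $0$ to $T$, handling the lower endpoint via the $L^p$-continuity of $\eta$ at $t = 0$ supplied by \cref{lem:mildisclassical} together with the Lipschitz bound \cref{eq:Fepsprimebounded} on $F_\eps'$.

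Taking expectation, the bound \cref{eq:Fbd} together with \cref{eq:L1omegamoments} make the endpoint terms $\E\int_0^L F_\eps(\eta(t, x))\,\dif x$ finite, while the error term obeys $x^2 F_\eps''(x) \le C\eps$ by \cref{eq:Fprimeprimebd}, so $\E\int_0^T\!\!\int_0^L F_\eps''(\eta)\eta^2\xi^2\,\dif x\,\dif t \le C\eps T A$, which vanishes as $\eps \to 0$. The dissipation term is nonnegative. Rearranging the pathwise identity then shows that $\int_0^T \mathcal{B}_\eps(t)\,\dif t$ has finite absolute expectation, as the sum of other terms that do. Because the space-time law of $(\eta, \xi)$ is $L\Z$-invariant (the solution map is translation-equivariant and the joint law of the initial data and the spatially stationary noise $V$ is $L\Z$-invariant), the boundary evaluations at $x = 0$ and $x = L$ are identically distributed, so \cref{lem:symmetrylemma} forces $\E\int_0^T \mathcal{B}_\eps(t)\,\dif t = 0$. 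Passing to $\eps \to 0$: the uniform convergence \cref{eq:FepstoF} combined with dominated convergence handles the $F_\eps(\eta)$ integrals, while the coarea formula together with the weak convergence of $F_\eps''$ to $c\delta_0$ and the level-set continuity \cref{lem:gcts} (used exactly as in \cref{lem:Fdecreasing}) identify the limit of the dissipation term as the sum appearing in \cref{eq:L1omega}.

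For the full-line case, $\R$-invariance of $\Law(\mathbf{u}(0, \cdot))$ implies $L\Z$-invariance for every $L > 0$, and the $G = \R$ form of \cref{eq:L1omegamoments} gives $\E u_i(t, 0)^2 \le A$, whence $\E\int_0^L u_i(t, x)^2\,\dif x = LA$ by $\R$-invariance, verifying the $L\Z$ moment hypothesis. Applying \cref{eq:L1omega} and discarding the nonnegative dissipation yields $\E\int_0^L F(\eta(T, x))\,\dif x \le \E\int_0^L F(\eta(0, x))\,\dif x$; by $\R$-translation invariance, the integrand $\E F(\eta(t, x))$ is independent of $x$, so dividing by $L$ gives $\E F(\eta(T, 0)) \le \E F(\eta(0, 0))$, and translation invariance extends this to every $x \in \R$, establishing \cref{jun2516}. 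The delicate step will be verifying the absolute integrability of $\int_0^T \mathcal{B}_\eps(t)\,\dif t$ before applying \cref{lem:symmetrylemma}, which is why I rearrange the pathwise identity to express it in terms of quantities whose moments are explicitly controlled by the hypotheses.
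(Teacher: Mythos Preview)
Your approach is exactly the paper's: approximate $F$ by $F_\eps$, integrate by parts on $[0,L]$, kill the boundary term in expectation via the symmetry lemma, and pass $\eps\to0$ with the coarea formula. Two points in your execution need care, though.

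First, to invoke \cref{lem:symmetrylemma} you must verify $\E\bigl(\int_0^T\mathcal{B}_\eps\bigr)^->-\infty$. Your sketch says the boundary integral ``has finite absolute expectation, as the sum of other terms that do,'' but the dissipation term $\int_0^T\!\int_0^L F_\eps''(\eta)(\partial_x\eta)^2$ is not known to have finite expectation at this stage, and in any case you wrote a pathwise \emph{inequality}, not an identity, so you cannot simply rearrange. The paper keeps the exact identity \cref{eq:integratedbyparts} and applies Young's inequality only afterward to bound the \emph{negative part} of the interior integral; since the endpoint terms have finite absolute expectation by \cref{eq:L1omegamoments}, this gives $\E\bigl(\int_0^T\mathcal{B}_\eps\bigr)^-<\infty$, which is all \cref{lem:symmetrylemma} needs.

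Second, the $\eps\to0$ step for the dissipation is \emph{not} ``exactly as in \cref{lem:Fdecreasing}.'' There the weight $\zeta_\delta$ is smooth, so \cref{lem:gcts} applies directly. Here the implicit weight is $\mathbf{1}_{[0,L]}$, which is not smooth, and the level-set sum $\lambda\mapsto\sum_{y\in[0,L],\,\eta(s,y)=\lambda}|\partial_x\eta(s,y)|$ can jump when a root crosses $x=0$ or $x=L$, so weak convergence of $F_\eps''$ does not immediately identify the limit. The paper handles this by first using $L\mathbb{Z}$-invariance in expectation to replace $\mathbf{1}_{[0,L]}$ by the smooth, compactly supported $\tilde\zeta=\zeta*\mathbf{1}_{[0,L]}$ (this is \cref{jun2514}), then applying coarea and \cref{lem:gcts} with weight $\tilde\zeta$, then Fatou's lemma, and finally converting back via $L\mathbb{Z}$-invariance. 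You should incorporate this smoothing step.
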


\begin{proof}
  Define the approximants $F_\eps$ as in \cref{lem:approxconvex}. Similarly to
  \cref{jun2504}, we have
  \begin{align*}
 &   \frac{\dif}{\dif t}\int_{0}^{L}F_{\eps}(\eta(t,x))\,\dif x  =\int_{0}^{L}F_{\eps}'(\eta(t,x))\left[\frac{1}{2}\partial_{x}^{2}\eta(t,x)-\frac{1}{2}\partial_{x}(\eta\xi)(t,x)\right]\,\dif x      \\
                                                               & =\frac{1}{2}F_{\eps}'(\eta(t,x))[\partial_{x}\eta(t,x)-(\eta\xi)(t,x)]\bigg|_{x=0}^{x=L}                                              %
                                                               -\frac{1}{2}\int_{0}^{L}F_{\eps}''(\eta(t,x))\left[(\partial_{x}\eta(t,x))^{2}-((\partial_{x}\eta)\eta\xi)(t,x)\right]\,\dif x,
\end{align*}
for all $t > 0$.
Integrating in time,
  we get
  \begin{align}
\int_{0}^{L}F_{\eps}(\eta(s,x))\,\dif x \, \bigg|_{s=0}^{s=t} 
& =\int_{0}^{t}\frac{1}{2}F_{\eps}'(\eta(s,x))[\partial_{x}\eta(s,x)
-(\eta\xi)(s,x)]\,\dif s\, \bigg|_{x=0}^{x=L}\nonumber                                                                 \\
 &-\frac{1}{2}\int_{0}^{t}\int_{0}^{L}F_{\eps}''(\eta(s,x))\left[(\partial_{x}\eta(s,x))^{2}-((\partial_{x}\eta)\eta\xi)(s,x)\right]\,\dif x\,\dif s.\label{eq:integratedbyparts}
  \end{align}
The expectation of the left side is finite by assumption \cref{eq:L1omegamoments}.
We also have
\begin{equation}\label{jun2512}
  \begin{aligned}
    & \int_{0}^{t}\int_{0}^{L}F_{\eps}''(\eta(s,x))\left[-(\partial_{x}\eta(s,x))^{2}+((\partial_{x}\eta)\eta\xi)(s,x)\right]\,\dif x\,\dif s                                             \\
 & \qquad\le\int_{0}^{t}\int_{0}^{L}F_{\eps}''(\eta(s,x))
    \left[-\frac{1}{2}(\partial_{x}\eta(s,x))^{2}+\frac{1}{2}((\eta\xi)(s,x))^{2}\right]\,\dif x\,\dif s                         \\
    & \qquad\le-\frac{1}{2}\int_{0}^{t}\int_{0}^{L}F_{\eps}''(\eta(s,x))(\partial_{x}\eta(s,x))^{2}\,\dif x\,\dif t+
    \frac{\eps}{2}\int_{0}^{t}\int_{0}^{L}\xi(s,x)^{2}\,\dif x\,\dif s,
  \end{aligned}
  \end{equation}
  where the first inequality is by Young's inequality and the second
  is by \cref{eq:Fprimeprimebd}. It follows that
  \begin{align*}
    \mathbb{E} & \left(\frac{1}{2}\int_{0}^{t}\int_{0}^{L}F_{\eps}''(\eta(s,x))\left[(\partial_{x}\eta(s,x))^{2}-((\partial_{x}\eta)\eta\xi)(s,x)\right]\,\dif x\,\dif s\right)^{-} \\
               & \qquad\qquad\le\mathbb{E}\left(\frac{\eps}{2}\int_{0}^{t}\int_{0}^{L}\xi(s,x)^{2}\,\dif x\,\dif s\right)^{+}<\infty.
  \end{align*}
In addition, the absolute expectation of the left side of
\cref{eq:integratedbyparts} is finite
by assumption \cref{eq:L1omegamoments}.
  Therefore, we can take expectations in \cref{eq:integratedbyparts}
  and apply \cref{lem:symmetrylemma}, using the fact that the first term
  in the right side of \cref{eq:integratedbyparts} is the difference
  of two identically-distributed random variables, to obtain, using \cref{jun2512}
  \begin{align}
    \mathbb{E}\int_{0}^{L}F_{\eps}(\eta(s,x))\,\dif x\, \bigg|_{s=0}^{s=t} & =\frac{1}{2}\mathbb{E}\int_{0}^{t}\int_{0}^{L}F_{\eps}''(\eta(s,x))\left[-(\partial_{x}\eta(s,x))^{2}+((\partial_{x}\eta)\eta\xi)(s,x)\,\dif x\,\dif s\right]\nonumber \\
                                                                           & \le-\frac{1}{4}\E\int_{0}^{t}\int_{0}^{L}F_{\eps}''(\eta(s,x))(\partial_{x}\eta(s,x))^{2}\,\dif x\,\dif s+\eps TLA.\label{eq:L1omegapenultimate}
  \end{align}
We would like to pass to the limit $\eps\downarrow 0$ in \cref{eq:L1omegapenultimate}. 
For the left side, using the bounded convergence theorem on $F_{\eps}|_{[-1,1]}$, and the assumption that $F_{\eps}|_{\mathbb{R}\setminus[-1,1]}$
  is independent of $\eps$, we have
  \begin{equation}
    \lim_{\eps\downarrow 0} \mathbb{E}\int_{0}^{L}F_{\eps}(\eta(s,x))\,\dif x \, \bigg|_{s=0}^{s=t} = \mathbb{E}\int_{0}^{L}F(\eta(s,x))\,\dif x \, \bigg|_{s=0}^{s=t}.\label{eq:LHSlimit}
  \end{equation}
Next, consider the first term in the right side of \cref{eq:L1omegapenultimate}. 
Take a nonnegative function $\zeta \in C_c^\infty(\R)$ with~$\|\zeta\|_{L^1(\R)}=1$, and let 
\[
\tilde{\zeta}(x) = \zeta * \mathbf{1}_{[0,L]}(x)=\int \zeta(x-y)\mathbf{1}_{[0,L]}(y)\,\dif y
=\int_{x-L}^{x}\zeta(y),\dif y.
\]
The $L\Z$-invariance in law of $\eta$ implies that 
\begin{equation}\label{jun2514}
\begin{aligned}
&{\E\int_{0}^{t}\int_{0}^{L}F_{\eps}''(\eta(s,x))(\partial_{x}\eta(s,x))^{2}\,\dif x\,\dif s =\E\int_{0}^{t} \int_\R
\int_{y}^{L+y}F_{\eps}''(\eta(s,x))(\partial_{x}\eta(s,x))^{2}\zeta(y)\dif x\, \dif y\, \dif s
}\\ 
&{=\E\int_{0}^{t} \int_\R
\int_{x-L}^{x}F_{\eps}''(\eta(s,x))(\partial_{x}\eta(s,x))^{2}\zeta(y)\dif y\,\dif x\, \dif s}
{=  \E\int_{0}^{t}\int_\R F_{\eps}''(\eta(s,x))(\partial_{x}\eta(s,x))^{2}\tilde{\zeta}(x)\,\dif x\,\dif s}.
\end{aligned}
\end{equation}
By the co-area formula, we have
  \[
    \int_{0}^{t}\int_\R F_{\eps}''(\eta(s,x))(\partial_{x}\eta(s,x))^{2}\tilde{\zeta}(x)\,\dif x\,\dif s=\int_0^T\int_\R F_\eps''(\lambda)\sum_{y\in\eta(s,\cdot)^{-1}(\lambda)}|\partial_x\eta (s,y)|\tilde{\zeta}(y)\,\dif \lambda\, \dif s,
  \]
  almost surely.
  Since $F_\eps''$ converges weakly to $c\delta_0$ as $\eps\downarrow 0$, and the map 
  \[
  \lambda\mapsto \sum\limits_{y\in\eta(s,\cdot)^{-1}(\lambda)}|\partial_x\eta (s,y)|\tilde{\zeta}(y)
  \]
   is almost surely continuous by \cref{lem:gcts}, we have
  \[
    \lim_{\eps\downarrow0} \int_{0}^{t}\int_\R F_{\eps}''(\eta(s,x))(\partial_{x}\eta(s,x))^{2}\tilde{\zeta}(x)\,\dif x\,\dif s=c\int_0^T \sum_{y\in\eta(s,\cdot)^{-1}(0)}|\partial_x\eta (s,y)|\tilde{\zeta}(y)\,\dif s,
  \]
  almost surely. Again using the $L\Z$-invariance of $\eta$ we obtain, 
  as in \cref{jun2514},
  \begin{equation}
    \begin{aligned}
      c\E\int_0^T\sum_{\substack{y\in[0,L] \\\eta(s,y)=0}}|\partial_x\eta(s,y)|\,\dif t&=c\E\int_0^T \sum_{y\in\eta(s,\cdot)^{-1}(0)}|\partial_x\eta (s,y)|\tilde{\zeta}(y)\,\dif t\\&\le \liminf_{\eps\downarrow0} \E\int_{0}^{t}\int_\R F_{\eps}''(\eta(s,x))(\partial_{x}\eta(s,x))^{2}\tilde{\zeta}(x)\,\dif x\,\dif t
    \end{aligned}\label{eq:takeexpectations}
  \end{equation}
  by Fatou's lemma.
  Now we can take $\eps\downarrow 0$ in \cref{eq:L1omegapenultimate} and use \cref{eq:LHSlimit} and \cref{eq:takeexpectations} to obtain
  \[
    \mathbb{E}\int_{0}^{L}F(\eta(s,x)) \,\dif x \, \bigg|_{s=0}^{s=t}\le -\frac{c}{4}\E\int_0^T\sum_{\substack{y\in[0,L]\\\eta(s,y)=0}}|\partial_x\eta(s,y)|\,\dif t,
  \]
  which is \cref{eq:L1omega}. Finally, \cref{jun2516} is a consequence of  \cref{eq:L1omega} and translation invariance. 
\end{proof}

\subsection{Ordering of stationary solutions\label{subsec:ordering}}

The key tool in the classification of space-time stationary solutions 
is an almost-sure ordering theorem for the components of spacetime
stationary solutions to \cref{eq:uPDE-many}. This theorem is very similar to the comparison and $L^1$-contraction results proved in the previous two sections.
The intuition is that if two solutions cross transversely, then the heat flow causes cancellation between the positive and negative parts of the difference at the crossing point, and so the $L^1$-norm 
of the difference decreases.
But if two solutions are jointly time-stationary, the $L^1$-norm of their difference 
must be conserved in time. Of course, the~$L^1$-norm must be taken with respect to the probability space and a compact interval, since the~$L^1$-norm of the difference of two spatially-stationary solutions on the whole line is not generally finite.

\begin{thm}\label{thm:ordering}
Let $m\in(0,1)$ and $\nu\in\overline{\mathscr{P}}(\mathcal{X}_{m}^{2})$.
  Let $\mathbf{v}=(v_{1},v_{2})\sim\nu$ and suppose that one of the
  following two conditions holds:
  \begin{enumerate}[leftmargin = 1.5cm, label = \textup{(H\arabic*)}, ref = (\textup{H\arabic*})]
  \item \label{enu:hyp-stat}
    $\nu\in\overline{\mathscr{P}}_{G}(\mathcal{X}_{m}^{2})$
    for $G=L\Z$ with some $L>0$ or $G=\mathbb{R}$, and
    \begin{equation}\label{jun2518}
      A = \sup\limits_{i\in\{1,2\}} \mathbb{E} \int_{\Lambda_G} v_{i}(x)^{2} \, \dif \lambda_G(x) < \infty.
    \end{equation}
  \item \label{enu:hyp-L1}$\|v_{1}-v_{2}\|_{L^{1}(\mathbb{R})}<\infty$
    almost surely.
  \end{enumerate}
  Then, almost surely, $\sgn(v_{1}(x)-v_{2}(x))$ is a random constant independent
  of $x$.
\end{thm}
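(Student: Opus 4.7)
The strategy is to combine the $L^{1}$-contraction with the $P_t$-invariance of $\nu$ to force the entrywise maximum and minimum of the pair to themselves be Burgers solutions, after which the strong parabolic maximum principle applied to the resulting nonnegative differences yields the trichotomy. Let $\mathbf{u}=(u_{1},u_{2})\in\mathcal{Z}_{m}^{2}$ solve \cref{eq:uPDE-many} with $\mathbf{v}\sim\nu$, so that $\mathbf{u}(t,\cdot)\sim\nu$ for all $t\geq 0$ by invariance, and let $(U_{1},U_{2})\in\mathcal{Z}_{m}^{2}$ solve the same system with the same noise but initial data $(\max(v_{1},v_{2}),\,\min(v_{1},v_{2}))\in\mathcal{X}_{m}^{2}$. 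The comparison principle \cref{prop:comparisonprinciple} gives $U_{2}\leq u_{i}\leq U_{1}$ pointwise for $i=1,2$, hence
\[
  (U_{1}-U_{2})(t,x)\;\geq\;|u_{1}-u_{2}|(t,x) \quad\textrm{for every } t,x.
\]

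I now promote this to equality via global $L^{1}$ norms. Under hypothesis \cref{enu:hyp-L1}, $\|v_{1}-v_{2}\|_{L^{1}(\R)}<\infty$ almost surely, so \cref{prop:conservation} applied to $(U_{1},U_{2})$ (whose difference is pointwise nonnegative) yields $\|(U_{1}-U_{2})(t,\cdot)\|_{L^{1}(\R)}=\|v_{1}-v_{2}\|_{L^{1}(\R)}$ a.s., while \cref{prop:L1Rcontraction} for $(u_{1},u_{2})$ gives $\|u_{1}(t,\cdot)-u_{2}(t,\cdot)\|_{L^{1}(\R)}\leq\|v_{1}-v_{2}\|_{L^{1}(\R)}$ a.s.; the time-stationarity of $\Law(u_{1}(t,\cdot)-u_{2}(t,\cdot))$ forces the left side to be identically distributed with the right, so the inequality is almost-sure equality. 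Combining with the pointwise bound above, the nonnegative quantity $U_{1}-U_{2}-|u_{1}-u_{2}|$ has vanishing spatial integral, so by continuity for $t>0$ it vanishes identically, forcing $U_{1}=\max(u_{1},u_{2})$ and $U_{2}=\min(u_{1},u_{2})$ almost surely. Under hypothesis \cref{enu:hyp-stat}, the same identification follows by replacing these two pathwise statements with the probability-space analogs \cref{prop:L1omega} (with $F(x)=|x|$) and \cref{prop:L1omegaconservation}, applied to both pairs; the second-moment hypothesis required for $(U_{1},U_{2})$ follows from a routine It\^o calculation controlling $\sup_{t\in[0,T]}\E\int_{\Lambda_{G}}U_{i}(t,x)^{2}\,\dif\lambda_{G}(x)$ by the analogous bound for $\mathbf{v}$ and the noise, using the $G$-stationarity inherited from $\nu$.

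Consequently, $U_{1}=\max(u_{1},u_{2})$ and $U_{2}=\min(u_{1},u_{2})$ are themselves classical Burgers solutions and therefore smooth in $x$ for $t>0$. The difference $w:=U_{1}-u_{1}=(u_{2}-u_{1})_{+}\geq 0$ is a smooth nonnegative solution of the linear parabolic equation $\partial_{t}w=\tfrac{1}{2}\partial_{x}^{2}w-\tfrac{1}{2}\partial_{x}(w(U_{1}+u_{1}))$, whose coefficients are smooth and of at most polynomial growth. The strong maximum principle for such equations on $\R$, valid in the polynomial-growth uniqueness class, then yields the dichotomy that either $w>0$ on all of $(0,\infty)\times\R$ or $w\equiv 0$. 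The first case means $u_{2}>u_{1}$ everywhere; in the second, $u_{1}\geq u_{2}$ everywhere, and then applying the same argument to $u_{1}-U_{2}=(u_{1}-u_{2})_{+}\geq 0$ further splits it into $u_{1}>u_{2}$ everywhere or $u_{1}\equiv u_{2}$. Thus almost surely, for every $t>0$, $\sgn(u_{1}(t,x)-u_{2}(t,x))$ is constant in $x$; the identity $\Law(\mathbf{u}(t,\cdot))=\nu$ then transfers the claim to $(v_{1},v_{2})$. The main obstacles are the $L^{2}$-moment bound for $(U_{1},U_{2})$ under \cref{enu:hyp-stat} and the correct citation of a global strong parabolic maximum principle admitting polynomial growth at infinity; both are standard but warrant explicit checks.
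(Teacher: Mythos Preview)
Your approach is different from the paper's and works cleanly under hypothesis \cref{enu:hyp-L1}, but under \cref{enu:hyp-stat} there is a genuine gap at precisely the point you flag as an obstacle.

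The paper never introduces the auxiliary pair $(U_{1},U_{2})$. Instead it exploits the \emph{sharpened} $L^{1}$-contraction \cref{prop:L1omega} (resp.\ \cref{lem:Fdecreasing}) for the original pair $(u_{1},u_{2})$, which retains the dissipation term
\[
\frac{c}{4}\,\E\int_{0}^{t}\!\sum_{\substack{y\in[0,L]\\ \eta(s,y)=0}}|\partial_{x}\eta(s,y)|\,\dif s,\qquad \eta=u_{1}-u_{2}.
\]
Time-invariance of $\nu$ makes the $L^{1}$ norm constant, so this term must vanish, giving $\partial_{x}\eta=0$ wherever $\eta=0$ (\cref{lem:onlytangentialintersections}); the parabolic Hopf lemma on bounded cones then yields the trichotomy (\cref{prop:ordering-morespecific}). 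Your route---showing $\max(u_{1},u_{2})=U_{1}$ is itself a Burgers solution, hence smooth, hence no transverse crossings---arrives at the same conclusion and is conceptually elegant once it works.

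The gap: under \cref{enu:hyp-stat}, applying \cref{prop:L1omegaconservation} or \cref{prop:L1omega} to $(U_{1},U_{2})$ requires $\sup_{t\in[0,T]}\E\int_{\Lambda_{G}}U_{i}(t,x)^{2}\,\dif\lambda_{G}(x)<\infty$, and this is \emph{not} a routine It\^{o} calculation. The pair $(U_{1},U_{2})$ is not time-stationary, so moments cannot be read off from $\nu$. A direct energy computation on $\int_{0}^{L}U_{i}^{2}$ produces the boundary term $[U_{i}\partial_{x}U_{i}-\tfrac{2}{3}U_{i}^{3}]\big|_{0}^{L}$; the cubic piece cannot be absorbed into second moments, and killing it via $G$-invariance and \cref{lem:symmetrylemma} already presupposes the finiteness you want to establish. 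The paper's own a priori $L^{2}$ machinery (\cref{prop:gamma}, \cref{lem:L2bound}, \cref{prop:boundedtightness}) is specific to deterministic bounded initial data via the KPZ antiderivative and Feynman--Kac, and does not apply to the random $(\max(v_{1},v_{2}),\min(v_{1},v_{2}))$. The paper's approach sidesteps all of this because it only ever needs moments for $(u_{1},u_{2})$, which follow immediately from the $P_{t}$-invariance of $\nu$.
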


In the sequel, we will only apply \cref{thm:ordering} with hypothesis
\cref{enu:hyp-stat}, but we include \cref{enu:hyp-L1}  
for completeness.  \cref{thm:ordering} is an immediate consequence of the following proposition.

\begin{prop}
  \label{prop:ordering-morespecific}Let $m\in(0,1)$.
  Let $\mathbf{u}=(u_1,u_2)\in\mathcal{Z}_{1/2}^2$ solve \cref{eq:uPDE-many} with random initial conditions independent of the noise, and let $T>0$ be such that one of the
  following two conditions holds:
  \begin{enumerate}[leftmargin = 1.5cm, label = \textup{(H\arabic*$'$)}, ref = (\textup{H\arabic*$'$})]
  \item
    \label{enu:hyp-stat-morespecific}
    $\Law(\mathbf{u}(0,\cdot))\in{\mathscr{P}}_{G}(L_{\p_{m+}}\infty(\R)^{2})$
 for $G=L\Z$ with some $L>0$ or $G=\mathbb{R}$, \cref{jun2518} holds, and
    $ \disp \int_{\Lambda_G}\E|u_1(t,x)-u_2(t,x)|\,\dif \lambda_G(x)$ is constant on $[0, T]$.
\item
    \label{enu:hyp-L1-morespecific}
Almost surely,  $\|u_{1}(t,\cdot)-u_{2}(t,\cdot)\|_{L^{1}(\mathbb{R})}=\|u_{1}(t',\cdot)-u_{2}(t',\cdot)\|_{L^{1}(\mathbb{R})}<\infty$ for all $t,t'\in [0,T]$.
  \end{enumerate}
  Then, almost surely, $\sgn(u_{1}(t,x)-u_{2}(t,x))$ is a random constant independent of $x \in \R$ and $t \in [0, T]$.
\end{prop}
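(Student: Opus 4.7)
Let $\eta = u_1 - u_2$ and $\xi = u_1 + u_2$; then $\eta$ satisfies the linear parabolic equation \cref{eq:etaPDE}. The plan is to combine the $L^1$-contraction results established earlier with the strong maximum principle for linear parabolic equations: the first step rules out transverse zeros of $\eta$, and the second rules out degenerate zeros absent global vanishing.

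The first step is to apply the $L^1$-contraction inequality with $F(x) = |x|$, so $F'' = 2\delta_0$ and $c_1 = 2$. Under hypothesis \cref{enu:hyp-L1-morespecific}, the inequality \cref{eq:Fdecreasing} combined with the assumed pathwise constancy of $\|\eta(t,\cdot)\|_{L^1(\R)}$ forces, almost surely,
\[
\int_0^T \sum_{y \in \eta(s,\cdot)^{-1}(0)} |\partial_x \eta(s, y)|\,\dif s = 0.
\]
Under hypothesis \cref{enu:hyp-stat-morespecific}, an analogous use of \cref{prop:L1omega} — together with the observation that a $\mathscr{P}_{\R}$-invariant measure is also $\mathscr{P}_{L\Z}$-invariant for every $L>0$, so we may apply \cref{eq:L1omega} with any convenient $L$ — yields the same identity in expectation, and hence almost surely. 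Using the $C^{2,1}$-regularity of $\eta$ on $(0,T)\times\R$ from \cref{prop:thetawellposed} and the implicit function theorem, I upgrade this to: almost surely, every zero of $\eta$ in $(0,T)\times\R$ is \emph{degenerate} in the sense that $\partial_x \eta = 0$ there.

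The second step is to apply the strong maximum principle. A local Taylor expansion, together with the PDE identity $\partial_t \eta(t_0,x_0) = \tfrac{1}{2}\partial_x^2\eta(t_0,x_0)$ at a zero, shows that if $\partial_x^2 \eta(t_0, x_0) = 2E \neq 0$ then $\eta(t,x) \approx E[(t-t_0)+(x-x_0)^2]$ near $(t_0, x_0)$, producing two transverse zeros on every nearby time slice $\{t = t_0 - \delta^2\}$ — contradicting Step 1. Hence all zeros have order $\geq 3$ in $x$, at which $|\eta|$ inherits $C^2$-regularity in $x$ (the model being $|x|^k$ for $k\geq 3$) and, with some care, full $C^{2,1}$-regularity, and satisfies the same linear parabolic equation as $\eta$ classically. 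After the gauge transformation $v = \mathrm{e}^{Kt}|\eta|$ (with $K$ chosen large enough to absorb the zeroth-order term $-\tfrac{1}{2}\partial_x\xi$), the standard strong minimum principle for nonnegative solutions of linear parabolic equations on $(0,T)\times\R$, applied via the polynomial growth control of $|\eta|$ inherited from $\mathcal{X}_{1/2}$, yields: if $|\eta|(t_0,x_0)=0$ for some $(t_0,x_0)\in (0,T]\times\R$, then $\eta \equiv 0$ on $(0,t_0]\times\R$.

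The conclusion is then immediate. Either $\eta$ has no zero on $(0,T]\times\R$, in which case continuity forces $\sgn \eta$ to be a random constant in $\{\pm 1\}$; or $\eta$ vanishes at some interior point, and the previous step together with forward uniqueness for \cref{eq:uPDE-many} (via \cref{prop:comparisonprinciple} applied in both directions at time $t_0$) gives $\eta\equiv 0$ on $[0,T]\times\R$, so $\sgn \eta \equiv 0$. Finally, the sign at $t=0$ is inherited by the weak-$*$ continuity $u_i(t,\cdot)\to u_i(0,\cdot)$ in $L^\infty_{\p_{m+}}$, which preserves the nonnegativity and nonpositivity cones. The main obstacle I anticipate is the bookkeeping in Step 2: one must justify that $|\eta|$ is genuinely $C^{2,1}$ at higher-order zeros, and justify the strong maximum principle on the unbounded domain $\R$, which requires using the sublinear growth of $\eta$ inherited from $\mathcal{Z}_{1/2}^2$.
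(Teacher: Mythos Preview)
Your Step~1 coincides with the paper's \cref{lem:onlytangentialintersections}: both feed the hypothesis into the $L^1$-contraction with $F=|\cdot|$ to kill the dissipation term and deduce that $\partial_x\eta=0$ at every zero of $\eta$.

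Your Step~2 is correct but takes a longer route than the paper. The paper applies the parabolic Hopf lemma directly to $\eta$: starting from a point where $\eta>0$, it grows a forward parabolic cone $Q_\lambda$ and argues that if the cone ever touches the zero set, the Hopf lemma produces a transverse zero at the contact point, contradicting Step~1; hence $\eta>0$ on the full forward cone $Q_\infty$. Doing the same from a point where $\eta<0$ would give an overlapping cone on which $\eta<0$, which is impossible, so $\eta$ has a global sign, and the strong maximum principle upgrades $\eta\ge 0$ (or $\le 0$) to strict inequality unless $\eta\equiv 0$. This argument never leaves $\eta$ itself and uses only its known smoothness.

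Your route instead extracts from Step~1 the further consequence $\partial_x^2\eta=0$ at every zero (your Taylor/Morse argument is essentially a hands-on proof of the Hopf lemma in this special situation), then transfers the problem to $|\eta|$ and invokes the strong minimum principle there. This works---with $\eta=\partial_x\eta=\partial_x^2\eta=\partial_t\eta=0$ at every zero one can indeed check that $\sgn(\eta)\partial_x^2\eta$ and $\sgn(\eta)\partial_t\eta$ extend continuously by zero across the zero set, so $|\eta|\in C^{2,1}$ and satisfies the same equation---but it costs you the regularity bookkeeping you flag as the ``main obstacle,'' plus the gauge transform to handle the zeroth-order coefficient (which, incidentally, need only be done locally since the strong maximum principle is local; no global growth control is required). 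The paper's Hopf-lemma-plus-cones argument sidesteps all of this by working with $\eta$ throughout, at the price of citing one more result from \cite{L96}.
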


The key
step in the proof of \cref{prop:ordering-morespecific} is the following observation, which
shows that $u_{1}$ and $u_{2}$ can only meet tangentially.
\begin{lem}
  \label{lem:onlytangentialintersections}
  Fix $m\in(0,1)$ and let $\mathbf{u}=(u_1,u_2)\in\mathcal{Z}_m^2$ and $T$ satisfy the assumptions of \cref{prop:ordering-morespecific},
 including either  \ref{enu:hyp-stat-morespecific} or \ref{enu:hyp-L1-morespecific}. 
Then, with probability
  $1$, for all $t\in[0,T]$ and $x\in\mathbb{R}$ such that $u_1(t,x)=u_2(t,x)$,
  we have $\partial_{x}u_1(t,x)=\partial_{x}u_2(t,x)$.
\end{lem}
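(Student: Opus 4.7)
The plan is to feed $F(x) = |x|$ (which satisfies $F'' = 2\delta_0$, giving $c_1 = c = 2$) into \cref{lem:Fdecreasing} under hypothesis \ref{enu:hyp-L1-morespecific}, or into \cref{prop:L1omega} under hypothesis \ref{enu:hyp-stat-morespecific}. Both inequalities share the same structure: the nonnegative ``dissipation'' term
\[
\mathcal{D}_t \coloneqq \frac{1}{2}\int_0^t \sum_{y \in \eta(s,\cdot)^{-1}(0)} |\partial_x\eta(s,y)|\,\dif s
\]
(or its periodized, expected counterpart) is sandwiched between the $L^1$-norms of $\eta$ at times $0$ and $t$ on the left-hand side. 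The constancy assumptions in \ref{enu:hyp-L1-morespecific} and \ref{enu:hyp-stat-morespecific} will force this dissipation to vanish, which is exactly the desired tangency property at every zero of $\eta$.

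Under \ref{enu:hyp-L1-morespecific}, \cref{lem:Fdecreasing} immediately yields $\mathcal{D}_T \le \|\eta(0,\cdot)\|_{L^1(\R)} - \|\eta(T,\cdot)\|_{L^1(\R)} = 0$ almost surely. Under \ref{enu:hyp-stat-morespecific}, taking $G = L\Z$ (either directly, or by restricting $\R$-invariance to an arbitrary $L\Z$, noting that \cref{jun2518} together with $\R$-invariance implies the needed $L^2$ moment bound over $[0, L]$), \cref{prop:L1omega} gives the analogous vanishing of the expected $[0, L]$-localized dissipation. Spatial stationarity shows that the translates of this dissipation over $[nL, (n+1)L]$, $n \in \Z$, all have the same distribution, so a countable union yields a single almost-sure event on which, for a.e. $s \in [0, T]$, every $y \in \R$ with $\eta(s, y) = 0$ satisfies $\partial_x\eta(s, y) = 0$.

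It remains to upgrade the ``a.e. $s \in [0, T]$'' conclusion to ``every $t \in (0, T]$''. By \cref{prop:thetawellposed}, the $u_i$ are classical solutions on $(0, T] \times \R$, so $\eta$ and $\partial_x\eta$ are jointly continuous there. Fix $\omega$ in the a.s. event constructed above and let $S(\omega) \subset [0, T]$ be the corresponding full-measure set of good times. Suppose for contradiction that $\eta(t, x) = 0$ and $\partial_x\eta(t, x) = c \neq 0$ for some $t \in (0, T]$ and $x \in \R$. The implicit function theorem applied at $(t, x)$ produces a $C^1$ curve $s \mapsto y_s$ of zeros of $\eta(s, \cdot)$ defined on a neighborhood of $t$, with $\partial_x\eta(s, y_s) \to c \ne 0$ as $s \to t$. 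Choosing $s \in S(\omega)$ sufficiently close to $t$ then contradicts $\partial_x\eta(s, y_s) = 0$. The $t = 0$ case is essentially vacuous since $u_i(0, \cdot) \in L^\infty_{\p_{m+}}(\R)$ are defined only up to null sets. The main subtlety of the argument is this a.e.-to-everywhere upgrade; the $L^1$-contraction mechanism already encoded in \cref{lem:Fdecreasing,prop:L1omega} is what does the real work.
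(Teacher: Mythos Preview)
The proposal is correct and follows essentially the same path as the paper's proof: apply \cref{lem:Fdecreasing} or \cref{prop:L1omega} with $F(x)=|x|$ so that the constancy hypotheses force the dissipation term to vanish, then upgrade from almost-every $s$ to every $t\in(0,T]$ via continuity. The only cosmetic difference is in that upgrade step: you invoke the implicit function theorem, whereas the paper uses a slightly more elementary intermediate-value argument requiring only the joint continuity of $\eta$ and $\partial_x\eta$ (which it obtains by observing that $\partial_{tx}u_i$ exists).
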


\begin{proof}
  If \cref{enu:hyp-stat-morespecific} %
  holds, then it is sufficient to assume that $G=L\mathbb{Z}$
  for some $L\in(0,\infty)$, since if $G=\mathbb{R}$ then \cref{enu:hyp-stat-morespecific}
  holds for $G=L\mathbb{Z}$ for every $L$. In this case, by \cref{prop:L1omega},
  with $F(\eta)=|\eta|$, we have, for each $x\in\R$ that
  \[
    \E\int_0^T\sum_{\substack{y\in[x,x+L]\\\eta(t,y)=0}}|\partial_x\eta(t,y)|\,\dif t=0,
  \]
  which means that
  \[
    \int_0^T\sum_{\substack{y\in\R\\\eta(t,y)=0}}|\partial_x\eta(t,y)|\,\dif t=0
  \]
  almost surely.
Hence, with probability $1$, for almost all $t \in [0, T]$ we have $\partial_x\eta(t,x)=0$ whenever~$\eta(t,x)=0$.

Let us now strengthen the conclusion to all $t\geq 0$. %
  By \cref{lem:suppsi} and an iteration of \cref{lem:mildregularitypoly}, we know that $\theta_i = u_i - \psi$ and $\psi$ are spatially smooth.
  Differentiating~\cref{eq:uPDE-1} in $x$, we see that $\partial_{tx} u_i$ exists.
  In particular, $\partial_x\eta$ is continuous in space-time.
  Now, suppose there exists $(t_*,x_*) \in \R_+\times \R$ such that~$\eta(t_*,x_*) = 0$ but $\partial_x\eta(t_*,x_*) \neq 0$, and without loss of generality, 
  assume that $\partial_x\eta(t_*,x_*) > 0$.
  Since $\partial_x\eta$ is continuous, there exists a nonempty open rectangle $(t_1,t_2) \times (a,b)$ 
  containing $(t_*,x_*)$ such that $\partial_x\eta > 0$ on $(t_1,t_2)\times (a,b)$ while
  \begin{equation*}
    \eta(t,a) < 0 \quad \textrm{and} \quad \eta(t,b) > 0,~~\hbox{ for all $t_1<t<t_2$},
  \end{equation*}
and thus $\eta(t,x)$ vanishes in  $(a,b)$ for all $t\in (t_1,t_2)$, at some point $x_*(t)$ such that  $\partial_x\eta(t,x_*(t)) > 0$.
This event has probability 0, and the proof under \cref{enu:hyp-stat-morespecific} is complete.
  The proof assuming \cref{enu:hyp-L1-morespecific} is analogous, using \cref{lem:Fdecreasing} %
instead of \cref{prop:L1omega}, also with $F(\eta)=|\eta|$.
\end{proof}

\begin{proof}[Proof of \cref{prop:ordering-morespecific}.]
  By \cref{lem:onlytangentialintersections}, we may assume that $\partial_{x}\eta=0$
  whenever $\eta=0$. This contradicts a parabolic Hopf lemma \cite[Chapter 2]{L96}
  applied to the equation \cref{eq:etaPDE} for $\eta$ unless $u_1$
  and $u_2$ are ordered. To be precise, suppose that the set $Z\coloneqq\{t>0,~x\in\mathbb{R}\st\eta(t,x)=0\}$
  is not empty and that there exists $t_{0}>0$ and $x_{0}\in\mathbb{R}$
  so that $\eta(t_{0},x_{0})>0$. For $\lambda>0$, define the parabolic
  cone
  \[
    Q_{\lambda}=Q_{\lambda}(t_{0},x_{0})\coloneqq\{t>0,~x\in\mathbb{R}\st(x-x_{0})^{2}<t-t_{0}, \, t_{0}<t<t_{0}+\lambda^{2}\},
  \]
  and set
  \[
    \lambda_{*}\coloneqq\inf\{\lambda>0\st Q_{\lambda}\cap Z\ne\emptyset\}.
  \]
  Suppose first for the sake of contradiction that $\lambda_{*}$ is finite.
  As $\eta$ is continuous, we have $\lambda_{*}>0$ and~$\eta>0$ on $Q_{\lambda_{*}}$,
  and moreover there is a point $(t_{0}+\lambda_{*}^{2},x_{*})\in\overline{Q_{\lambda_{*}}}\cap Z$.
 The parabolic Hopf lemma~\cite[Lemma~2.8]{L96} implies that  $\partial_{x}\eta\ne0$
  at $(t_{0}+\lambda_{*}^{2},x_{*})$, contradicting our hypothesis
  on $\eta$. Strictly speaking, this formulation of the Hopf lemma
  only applies when $(t_{0}+\lambda_{*}^{2},x_{*})$ is a corner, that is~$|x_{*}-x|=\lambda_{*}$.
  However, if $|x_{*}-x_{0}|<\lambda_{*}$, we can  apply the
  Hopf lemma to a smaller parabolic cone contained in $Q_{\lambda_{*}}$,
  having $(t_{0}+\lambda_{*}^{2},x_{*})$ as a corner. It follows that
  $\lambda_{*}=\infty$ and $\eta>0$ on $Q_{\infty}$. In addition, if there exists $(t_{1},x_{1})$ such that $\eta(t_{1},x_{1})<0$
  then by a similar argument we have $\eta<0$ on~$Q_{\infty}(t_{1},x_{1})$.
  As the intersection of $Q_{\infty}(t_{0},x_{0})$ and $Q_{\infty}(t_{1},x_{1})$
  is not empty, this is a contradiction. Thus, $\eta(t,x)\ge0$ for
  all $t>0$ and $x\in\mathbb{R}$. The parabolic strong maximum principle~\cite[Theorem~2.7]{L96} implies then that $\eta>0$ for all $t>0$
  and $x\in\mathbb{R}$.

  Thus, almost surely, we have the following trichotomy: $\eta>0$,
  $\eta\equiv0$, or $\eta<0$.
\end{proof}

\section{Other properties of solutions\label{sec:basicproperties}}

In this section, we establish a few other properties of solutions to \cref{eq:uPDE-many}. In \cref{subsec:shearinvariance}, we formulate the shear-invariance property discussed in the introduction, in the context of solutions in weighted spaces. In \cref{subsec:krylovbogoliubov}, we use the Feller property in \cref{prop:fellerproperty} and the standard Krylov--Bogoliubov argument to show that subsequential limits of time-averaged laws of solutions to \cref{eq:uPDE-many} are stationary in time.
We then use this to show that any two invariant measures for \cref{eq:uPDE-many} can be coupled to create an invariant measure on a product space. Finally, in \cref{subsec:KPZsoln}, we show how solutions to \cref{eq:uPDE-1} can be used to build solutions to the KPZ equation \cref{eq:KPZ} in weighted spaces.

\subsection{Shear-invariance\label{subsec:shearinvariance}}

The shear-invariance of the Burgers
equation mentioned in the introduction can be stated as follows.
\begin{prop}
  \label{prop:shearinvariance}Suppose that $\mathbf{u},\mathbf{u}'\in\mathcal{Z}_{m}$
  are solutions to \cref{eq:uPDE-many}
  such that 
  \[
  \mathbf{u}(0,\cdot)=\mathbf{u}'(0,\cdot)+(a,\ldots,a).
  \]
  Then $\Law(\mathbf{u}')=\Law(\mathbf{u}(t,x+at)-(a,\ldots,a))$ as distributions on $\mathcal{Z}_m$.
\end{prop}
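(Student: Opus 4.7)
The plan is to exploit the pathwise shear transformation of the deterministic Burgers equation, then show that the spatial shift of the driving noise produces a cylindrical Wiener process with the same law, and finally invoke the uniqueness/measurability of the solution map. Throughout, I write $\tilde{\mathbf{u}}(t,x) := \mathbf{u}(t,x+at) - (a,\ldots,a)$; note $\tilde{\mathbf{u}}(0,\cdot) = \mathbf{u}'(0,\cdot)$ by hypothesis and $\tilde{\mathbf{u}} \in \mathcal{Z}_m$ because the weights $\p_\ell$ are comparable under spatial translation and additive constants.

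First, I would check that $\tilde{\mathbf{u}}$ satisfies a Burgers system with shifted noise. Applying the chain rule to $\tilde u_i(t,x) = u_i(t, x+at) - a$, the drift $a\,\partial_y u_i(t, y)|_{y = x+at}$ generated by differentiating in $t$ combines with $-\tfrac12 \partial_y(u_i^2)|_{y=x+at} = -(\tilde u_i + a)\,\partial_x \tilde u_i$ to leave exactly $-\tilde u_i \, \partial_x \tilde u_i = -\tfrac12 \partial_x(\tilde u_i^2)$, while the forcing becomes $d(\partial_x V)(t, x+at) = d(\partial_x \tilde V)(t, x)$ with $\tilde V(t,x) := V(t, x+at)$. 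To avoid differentiating $V$ in time, I would carry this step out rigorously on the mild formulation \cref{eq:mildformulation} for $\theta_i = u_i - \psi$ (equivalently for $u_i$ via \cref{jun2204}), applying the substitution $y \mapsto y + as$ inside the space convolution; the resulting identity exhibits $\tilde{\mathbf{u}}$ as a mild solution to \cref{eq:uPDE-many} with initial datum $\mathbf{u}'(0,\cdot)$ and driving noise $\tilde V$.

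Second, I would verify that $\tilde V$ is driven by a cylindrical Wiener process of the same law as $W$. Define $\tilde W$ by its action on $L^2(\R_+ \times \R)$ via $\tilde W(\phi) := W(T_a \phi)$, where $(T_a\phi)(s,y) := \phi(s, y - as)$. For each fixed $s$ the spatial shift is an $L^2(\R)$-isometry, so $T_a$ is an isometry of $L^2(\R_+ \times \R)$; hence $\tilde W$ is a cylindrical Wiener process on $L^2(\R)$ with identity covariance, and a change of variables inside the stochastic integral \cref{eq:psidef} gives $\tilde V = \rho * \tilde W$. Equivalently, a direct distributional computation shows that $d(\partial_x \tilde V)$ and $d(\partial_x V)$ have identical covariance kernels, since the $\delta(t-t')$ factor forces $at - at' = 0$.

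Third, by \cref{prop:thetawellposed} the solution map is a measurable function $\Psi(W, \mathbf v)$ of the noise and the initial datum, so $\mathbf{u}' = \Psi(W, \mathbf{u}'(0,\cdot))$ and $\tilde{\mathbf{u}} = \Psi(\tilde W, \mathbf{u}'(0,\cdot))$. By hypothesis $W$ is independent of $\mathbf{u}(0,\cdot)$ and hence of $\mathbf{u}'(0,\cdot)$; since $\tilde W \in \sigma(W)$, the pair $(\tilde W, \mathbf{u}'(0,\cdot))$ has the same joint law as $(W, \mathbf{u}'(0,\cdot))$. Pushing forward by $\Psi$ yields $\Law(\tilde{\mathbf{u}}) = \Law(\mathbf{u}')$, which is the claim. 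The main obstacle is the second step: the naive covariance of $\tilde V$ itself is \emph{not} translation-invariant in $t$, so $\tilde V$ is not equal in law to $V$ as a Gaussian process in the usual sense; what is true (and sufficient) is that the driving \emph{differential} $d(\partial_x \tilde V)$ is equal in law to $d(\partial_x V)$, and this is the content of identifying the underlying Wiener process $\tilde W$.
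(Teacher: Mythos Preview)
Your proposal is correct and follows essentially the same route as the paper: show that the sheared process $\tilde{\mathbf{u}}$ solves the Burgers system driven by a spatially shifted noise, argue that this shifted noise has the same law, and conclude via the measurable dependence of the solution on the noise and the initial datum. The only cosmetic difference is the level at which the ``same law'' step is carried out: the paper works with the stochastic convolution $\tilde{\psi}$ and checks that its Gaussian covariance coincides with that of $\psi$, whereas you go one layer down and identify the shifted cylindrical Wiener process $\tilde{W}$ via the $L^2$-isometry $T_a$. Your remark that $\tilde{V}$ itself does \emph{not} agree in law with $V$ (only the driving differential does) is a useful clarification that the paper leaves implicit.
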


\begin{proof}
  If $\tilde{\mathbf{u}}=(\tilde{u}_{1},\ldots,\tilde{u}_{N})=\mathbf{u}(t,x+at)-(a,\ldots,a)$,
  then $\tilde{\mathbf{u}}$ is a strong solution to the SPDE
  \[
    \dif\tilde{u}_{i}=\frac{1}{2}[\partial_{x}^{2}\tilde{u}_{i}-\partial_{x}(\tilde{u}_{i}^{2})]\dif t+\dif(\partial_{x}\tilde{V}),
  \]
  where $\tilde{V}(t,x)=V(t,x+at)$. Define
  \[
    \tilde{\psi}(t,x)=\int_{0}^{t}\partial_{x}G_{t-s}*\dif\tilde{V}(s,\cdot)(x)=\int_{0}^{t}\int_\R \partial_{x}G_{t-s}*\rho(x+at-y)\,\dif W(s,y),
  \]
  so that if we set $\tilde{\boldsymbol{\psi}}=(\tilde{\psi},\ldots,\tilde{\psi})$,
  then $\tilde{\boldsymbol{\theta}}=(\tilde{\theta}_{1},\ldots,\tilde{\theta}_{N})=\tilde{\mathbf{u}}-\tilde{\boldsymbol{\psi}}$
  satisfies
  \[
    \partial_{t}\tilde{\theta}_{i}=\frac{1}{2}\partial_{x}^{2}\tilde{\theta}_{i}-\frac{1}{2}\partial_{x}(\tilde{\theta}_{i}+\tilde{\psi}).
  \]
Analogously, define $\boldsymbol{\psi}$ and $\boldsymbol{\theta}' = \mathbf{u}' - \boldsymbol{\psi}$, with $\psi$ defined in \cref{eq:psidef}. It satisfies
  \begin{equation*}
    \partial_{t}\theta_{i}' =\frac{1}{2}\partial_{x}^{2}\theta_{i}' - \frac{1}{2}\partial_{x}(\theta_{i}'+ \psi).
  \end{equation*}
  By construction, $\tilde{\boldsymbol{\theta}}(0, \cdot) = \boldsymbol{\theta}'(0, \cdot)$.
  Also, by computing the covariance structure, it is easy to see that $\tilde{\psi}$ has the same law as $\psi$.
  Therefore $\tilde{\boldsymbol{\theta}}$ and $\boldsymbol{\theta}'$ agree in law.
  It follows that $\tilde{\mathbf{u}}$ and $\mathbf{u}'$ also agree in law.
\end{proof}

\subsection{The Krylov--Bogoliubov theorem\label{subsec:krylovbogoliubov}}

The Feller property discussed in \cref{thm:existence}
allows us to apply the Krylov--Bogoliubov theorem. This
is a standard argument in the ergodic theory of SPDEs (see e.g. \cite[Theorem 3.1.1]{DPZ96})
but since it is simple and central to our argument, we present a proof
of the theorem and one consequence here.
\begin{prop}[Krylov--Bogoliubov]
  \label{prop:krylovbogoliubov}Fix $m\in(0,1)$, $N\in\mathbb{R}$,
  and $G=L\Zm$ with $L>0$ or $G=\Rm$.  %
  If $s\ge0$, $T_{k}\uparrow\infty$,
  and $\nu_{0}\in\mathscr{P}_{G}(L_{\p_{m+}}^{\infty})$ are such that
  \[
    \nu\coloneqq\lim\limits _{k\to\infty}\frac{1}{T_{k}}\int_{s}^{s+T_{k}}P_{t}^{*}\nu_{0}\,\dif t
  \]
  exists in the sense of weak convergence of measures on $\mathcal{X}_{m}^{N}$,
  then $\nu\in\overline{\mathscr{P}}_{G}(\mathcal{X}_{m}^{N})$.
\end{prop}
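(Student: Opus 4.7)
The plan is to verify the two invariances separately—first translation invariance, then $P_t$-invariance—by standard weak-limit arguments supported by the Feller property established in \cref{prop:fellerproperty}.

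I first show that $P_t^*\nu_0 \in \mathscr{P}_G(\mathcal{X}_m^N)$ for every $t\ge 0$. Using the translation invariance in law of the noise $V$, if $\mathbf{u}$ solves \cref{eq:uPDE-many} with initial datum $\mathbf{v}$, then $\tau_x\mathbf{u}(t,\cdot)$ has the same law as the solution started from $\tau_x\mathbf{v}$; equivalently, $P_t(f\circ\tau_x) = (P_t f)\circ\tau_x$ for every bounded measurable $f$ and every $x\in G$. Combined with the $G$-invariance of $\nu_0$, this yields $(\tau_x)_* P_t^*\nu_0 = P_t^*\nu_0$. Integrating in $t$ shows that each time-averaged measure $\nu_k := T_k^{-1}\int_s^{s+T_k} P_t^*\nu_0\,\dif t$ is $G$-invariant; since $\tau_x$ is continuous on $\mathcal{X}_m^N$, weak convergence preserves $G$-invariance, so $\nu\in\mathscr{P}_G(\mathcal{X}_m^N)$.

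Next I verify $P_r^*\nu = \nu$ for every $r>0$. Fix a bounded continuous $f:\mathcal{X}_m^N\to\mathbb{R}$. By the Feller property $P_r f$ is again bounded and continuous, so weak convergence gives
\[
\int P_r f\,\dif\nu = \lim_{k\to\infty}\int P_r f\,\dif\nu_k = \lim_{k\to\infty}\frac{1}{T_k}\int_s^{s+T_k}\int P_r f\,\dif P_t^*\nu_0\,\dif t.
\]
The Markov semigroup identity $P_r^*P_t^*\nu_0 = P_{r+t}^*\nu_0$—a consequence of the independent-increments structure of the driving Wiener process—lets me rewrite the inner integral as $\int f\,\dif P_{t+r}^*\nu_0$. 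A time translation then recasts the outer integral as $T_k^{-1}\int_{s+r}^{s+T_k+r}\int f\,\dif P_t^*\nu_0\,\dif t$, which differs from $\int f\,\dif\nu_k$ by a boundary contribution of size at most $2r\|f\|_\infty/T_k\to 0$. Hence $\int f\,\dif P_r^*\nu = \int f\,\dif\nu$ for every bounded continuous $f$, which determines $P_r^*\nu = \nu$ on the Polish space $\mathcal{X}_m^N$.

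The main technical nuisance I anticipate is justifying the time integrals defining $\nu_k$ as genuine probability measures and interchanging them with integration against $f$. This reduces to showing that $t\mapsto\int f\,\dif P_t^*\nu_0 = \int P_t f\,\dif\nu_0$ is measurable (in fact continuous), which follows from the almost-sure continuity of the solution map $\Psi$ in $t$ together with dominated convergence inside the expectation defining $P_t f$. Beyond this bookkeeping, the argument is entirely classical Krylov--Bogoliubov.
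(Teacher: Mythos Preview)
Your proof is correct and follows essentially the same Krylov--Bogoliubov argument as the paper: use the Feller property to pass the weak limit through $P_r$, shift the time integral, and observe that the boundary term is $O(r/T_k)$. You additionally spell out the $G$-invariance of $\nu$ (via continuity of $\tau_x$ and invariance of each $\nu_k$), which the paper's proof leaves implicit, and you flag the measurability of $t\mapsto \int P_t f\,\dif\nu_0$; neither point requires anything beyond what you have written.
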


\begin{proof}
  If $f\in\mathcal{C}_{\mathrm{b}}(\mathcal{X}_{m}^{N})$ and $r\ge0$,
  then by the Feller property (\cref{prop:fellerproperty}) we have $P_{r}f\in\mathcal{C}_{\mathrm{b}}(\mathcal{X}_{m}^{N})$
  as well, so
  \begin{align*}
    \langle f,P_{r}^{*}\nu\rangle & =\langle\nu,P_{r}f\rangle=\Big\langle P_{r}f,\lim\limits _{k\to\infty}\frac{1}{T_{k}}\int_{s}^{s+T_{k}}P_{t}^{*}\nu_{0}\,\dif t\Big\rangle =\lim\limits _{k\to\infty}\Big\langle P_{r}f,\frac{1}{T_{k}}\int_{s}^{s+T_{k}}P_{t}^{*}\nu_{0}\,\dif t\Big\rangle \\
                                  & =\lim\limits _{k\to\infty}\Big\langle f,\frac{1}{T_{k}}\int_{s+r}^{s+r+T_{k}}P_{t}^{*}\nu_{0}\,\dif t\Big\rangle .
  \end{align*}
  On the other hand, we have
  \[
    \lim_{k\to\infty}\Big\langle f,\frac{1}{T_{k}}\int_{s+r}^{s+r+T_{k}}P_{t}^{*}\nu_{0}\,\dif t-\frac{1}{T_{k}}\int_{s}^{s+T_{k}}P_{t}^{*}\nu_{0}\,\dif t\Big\rangle =0
  \]
  for all $f\in\mathcal{C}_{\mathrm{b}}(\mathcal{X}_{m}^{N})$, so
  \[
    \lim_{k\to\infty}\frac{1}{T_{k}}\int_{s+r}^{s+r+T_{k}}P_{t}^{*}\nu_{0}\,\dif t=\nu,
  \]
so $\langle f,P_{r}^{*}\nu\rangle=\langle f,\nu\rangle$
  for all $f\in\mathcal{C}_{\mathrm{b}}(\mathcal{X}_{m}^{N})$, so $P_{r}^{*}\nu=\nu$.
  This holds for all $r\ge0$, so $\nu\in\overline{\mathscr{P}}_{G}(\mathcal{X}_{m}^{N})$.
\end{proof}
An important application for the classification
results, is that any two invariant measures for~\cref{eq:uPDE-many}
can be coupled to form an invariant measure on the product space.
\begin{prop}
  \label{prop:couple}Fix $m\in(0,1)$ and $N_{1},N_{2}\in\mathbb{N}$. 
  If~$\nu_{i}\in\overline{\mathscr{P}}_{G}(\mathcal{X}_{m}^{N_{i}})$,
  $i=1,2$, with either  $G=L\Zm$ with some $L>0$, or $G=\mathbb{R}$,
  then there is a coupling $\nu\in\overline{\mathscr{P}}_{G}(\mathcal{X}_{m}^{N_{1}+N_{2}})$
  of $\nu_{1}$ and $\nu_{2}$.
\end{prop}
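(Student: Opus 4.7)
The plan is to use the Krylov--Bogoliubov strategy applied to the independent product. First, I would set $\mu_0 = \nu_1 \otimes \nu_2 \in \mathscr{P}(\mathcal{X}_m^{N_1+N_2})$. Since $\tau_x$ acts diagonally on $\mathcal{X}_m^{N_1+N_2}$ (translating every component by the same $x$), and each $\nu_i$ is $G$-invariant, $\mu_0$ is itself $G$-invariant. In general $\mu_0$ need not be $P_t$-invariant--coupling two arbitrary invariant measures via the \emph{same} realization of the noise changes the joint law--so we must time-average to recover invariance. Because the $N_1+N_2$ copies of \cref{eq:uPDE-many} are decoupled, the first-$N_1$ and last-$N_2$ marginals of $P_t^*\mu_0$ equal $P_t^*\nu_1 = \nu_1$ and $P_t^*\nu_2 = \nu_2$, respectively; in particular, each time average $\mu_T := \frac{1}{T}\int_0^T P_t^*\mu_0 \, \dif t$ has marginals $\nu_1$ and $\nu_2$.

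Next, I would establish tightness of $\{\mu_T\}_{T > 0}$ on $\mathcal{X}_m^{N_1+N_2}$. Since a single Borel probability measure on the Polish space $\mathcal{X}_m^{N_i}$ is tight, for each $\eps>0$ we may pick compact sets $K_\eps^i \subset \mathcal{X}_m^{N_i}$ with $\nu_i(K_\eps^i) \ge 1 - \eps/2$; then $K_\eps^1 \times K_\eps^2$ is compact in $\mathcal{X}_m^{N_1+N_2}$ and has $\mu_T$-mass at least $1-\eps$ for every $T$, by the marginal computation above. Prokhorov's theorem then yields a sequence $T_k \to \infty$ along which $\mu_{T_k} \to \nu$ weakly on $\mathcal{X}_m^{N_1+N_2}$.

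Finally, I would conclude via \cref{prop:krylovbogoliubov}. The spatial translation invariance in law of $V$ implies that $P_t^*\mu_0$ remains $G$-invariant for each $t \ge 0$ (the argument is the same as in \cref{prop:shearinvariance}, with $a=0$ and $x\in G$), hence each $\mu_T$ is $G$-invariant. $G$-invariance is preserved under weak limits because $\tau_x$ is continuous on $\mathcal{X}_m^{N_1+N_2}$, so $\nu \in \mathscr{P}_G(\mathcal{X}_m^{N_1+N_2})$. \cref{prop:krylovbogoliubov} applied with $\nu_0 = \mu_0$ and $s=0$ shows that $\nu$ is also $P_t$-invariant, so $\nu \in \overline{\mathscr{P}}_G(\mathcal{X}_m^{N_1+N_2})$. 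The marginals of $\nu$ equal $\nu_1$ and $\nu_2$, since the coordinate projections are continuous, weak convergence is preserved under pushforward, and every $\mu_{T_k}$ has these marginals. I do not expect a serious obstacle here; the only point requiring mild care is the tightness step, which is routine once one remembers that the decoupled structure of \cref{eq:uPDE-many} freezes the marginals of $P_t^*\mu_0$ at $\nu_1$ and $\nu_2$ for all $t$.
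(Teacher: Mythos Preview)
Your proposal is correct and follows essentially the same approach as the paper: start from the independent coupling $\nu_1\otimes\nu_2$, use the fact that its marginals are fixed at $\nu_1,\nu_2$ under $P_t^*$ to obtain tightness of the time averages via Prokhorov, extract a weak subsequential limit, and invoke \cref{prop:krylovbogoliubov} to conclude invariance. The only cosmetic difference is that you argue $G$-invariance of the limit separately, whereas \cref{prop:krylovbogoliubov} already delivers $\nu\in\overline{\mathscr{P}}_G(\mathcal{X}_m^{N_1+N_2})$ directly from $\mu_0\in\mathscr{P}_G$.
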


\begin{proof}
First, let $\nu_{0}$ be a coupling of $\nu_{1}$ and $\nu_{2}$ such that
the two factors are independent. 
Then $\nu_{0}$ is $G$-invariant if $\nu_{1}$ and $\nu_{2}$ are. However, $\nu_0$
is not expected to be invariant under the time evolution, since letting both factors evolve according to the same noise introduces dependence. To find a coupling that is invariant under the evolution, 
we observe that the family 
\[
\tilde\nu_T:=\frac{1}{T}\int_{0}^{T}P_{t}^{*}\nu_{0}\,\dif t 
\]
of measures on $\mathcal{X}_{m}^{N_{1}+N_{2}}$ is tight.
Indeed, as $\cX_m$ is a Polish space, %
by Prokhorov's theorem both~$\nu_1$ and $\nu_2$ are themselves tight.
  Fixing $\eps > 0$, there exist compact sets~$K_i \subset \mathcal{X}_{m}^{N_i}$ such 
  that $\nu_i$ has at most~${\eps}/{2}$ mass outside $K_i$, for each $i =1, 2$.
  By the invariance of~$\nu_i$ and a union bound, we have
  \begin{equation*}
 \tilde\nu_T(K_1 \times K_2)=   \frac{1}{T}\int_{0}^{T}(P_{t}^{*}\nu_{0})(K_1 \times K_2 )
 \,\dif t \geq 1 - \nu_1(K_1^c) - \nu_2(K_2^c) > 1 - \eps.
  \end{equation*}
  It follows that the family $\tilde\nu_T$ 
  is tight.
Again by Prokhorov's theorem, there is a sequence $T_{k}\uparrow\infty$ and 
a measure $\nu$ on $\mathcal{X}_{m}^{N_{1}+N_{2}}$ so that $\tilde\nu_{T_k}\to \nu$ 
  weakly as $k\to\infty$.
Note that $\nu$ is $G$-invariant if $\nu_{0}$ is, and by \cref{prop:krylovbogoliubov}, $\nu$ is also invariant under the dynamics \cref{eq:uPDE-many}.
  Finally, marginals are preserved under weak limits, so $\nu$ couples $\nu_1$ and $\nu_2$.
\end{proof}

\subsection{The solution to the KPZ equation\label{subsec:KPZsoln}}

We will sometimes need to relate the solutions of
the Burgers equation to the solution of the KPZ equation. In this
section, we show how to obtain solutions to \cref{eq:KPZ} from those
to \cref{eq:uPDE-1}.
\begin{prop}
  \label{prop:KPZsolntheory}
  Fix $m\in(0,1)$ and a smooth, non-negative, compactly-supported function $\zeta \in \mathcal{C}^\infty(\R)$ with~$\|\zeta\|_{L^1(\Rm)}=1$. %
  Suppose that $u\in\mathcal{Z}_{m}$
  is a strong solution to \cref{eq:uPDE-1}, and let $\theta=u-\psi$,
  with $\psi$ defined as in \cref{eq:psidef}.
Then,
  \begin{equation}\label{jul140}
  \begin{aligned}
    h(t,x) & = \int_\R \zeta(y) \int_{y}^{x}\theta(t,z)\,\dif z \dif y - \frac{1}{2}\int_{0}^{t} \int_\R [\theta(s,y) \zeta'(y) + (\theta(s,y)+\psi(s,y))^{2} \zeta(y)]\, \dif y \dif s\\
           & \qquad+ \int_{0}^{t}G_{t-s}*\dif V(s,\cdot)(x) +\frac{t}{2}\|\rho\|_{L^{2}(\mathbb{R})}^{2}
  \end{aligned}
  \end{equation}
  is a strong solution to \cref{eq:KPZ}.
\end{prop}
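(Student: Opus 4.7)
The plan is to verify the KPZ equation by direct computation, exploiting the decomposition $u = \theta + \psi$. The key observation is that $h$ is built so that it is, up to an additive function of $t$ alone, a spatial antiderivative of $u$, and the ``junk'' time-integral was inserted precisely to cancel the boundary terms that appear when one integrates $\partial_z^2\theta - \partial_z(\theta+\psi)^2$ on an interval.

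First I would verify that $\partial_x h = u$. Indeed, differentiating the first summand of \cref{jul140} in $x$ gives $\int_\R \zeta(y)\theta(t,x)\,\dif y = \theta(t,x)$ since $\zeta$ has unit mass, the second summand is independent of $x$, the third equals $\psi(t,x)$ by \cref{eq:psidef}, and the last is constant in $x$. Hence $\partial_x^2 h = \partial_x u$ and $(\partial_x h)^2 = u^2$, which is what we need on the right-hand side of \cref{eq:KPZ}. Next I would compute $\dif h$ term by term. Writing $w(t,x) = \int_0^t G_{t-s}\ast \dif V(s,\cdot)(x)$, so that $\partial_x w = \psi$, the stochastic convolution $w$ strongly solves $\dif w = \frac{1}{2}\partial_x^2 w\,\dif t + \dif V = \frac{1}{2}\partial_x\psi\,\dif t + \dif V$. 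The last summand of $h$ contributes $\frac{1}{2}\|\rho\|_{L^2(\R)}^2\,\dif t$.

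For the first summand of $h$, I would substitute the classical PDE \cref{eq:thetajPDE} for $\partial_t\theta$ and apply the fundamental theorem of calculus in $z$, obtaining
\[
\partial_t\!\!\int_\R \zeta(y)\!\!\int_y^x\!\!\theta(t,z)\,\dif z\,\dif y = \frac{1}{2}\partial_x\theta(t,x) - \frac{1}{2}(\theta+\psi)^2(t,x) + \frac{1}{2}\!\!\int_\R\!\!\zeta(y)\bigl[(\theta+\psi)^2(t,y) - \partial_y\theta(t,y)\bigr]\dif y.
\]
An integration by parts in $y$, whose boundary terms vanish because $\zeta$ is compactly supported, rewrites $-\zeta(y)\partial_y\theta(t,y)$ as $\zeta'(y)\theta(t,y)$. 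The time derivative of the second (purely deterministic) summand of \cref{jul140} is precisely the negative of the two resulting $y$-integrals, so they cancel. Thus after summation we obtain
\[
\dif h = \frac{1}{2}\bigl[\partial_x\theta - (\theta+\psi)^2\bigr]\dif t + \frac{1}{2}\partial_x\psi\,\dif t + \dif V + \frac{1}{2}\|\rho\|_{L^2(\R)}^2\,\dif t = \frac{1}{2}\bigl[\partial_x^2 h - (\partial_x h)^2 + \|\rho\|_{L^2(\R)}^2\bigr]\dif t + \dif V,
\]
which is \cref{eq:KPZ}.

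The only obstacle is justifying the interchanges of differentiation and integration. By \cref{prop:thetawellposed}, $\theta\in\tilde{\mathcal{Z}}_m$ is a classical solution of \cref{eq:thetajPDE} with polynomially-weighted spatial derivatives, and $\psi$ is spatially smooth with control given by \cref{lem:suppsi}; since $\zeta$ has compact support, all $y$-integrals converge absolutely and may be differentiated in time and $x$ under the integral sign without difficulty. The stochastic term requires a standard stochastic Fubini argument to read off $\dif w$ from its convolution representation, which is routine given that the covariance kernel of $V$ is smooth. These verifications are all soft given the pathwise setup of \cref{sec:solntheory}, so the entire argument reduces to the algebraic cancellation above.
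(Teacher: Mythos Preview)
Your argument is correct and is essentially identical to the paper's proof: both split $h$ into the stochastic convolution $\omega=w$ (satisfying $\dif\omega=\tfrac12\partial_x^2\omega\,\dif t+\dif V$, $\partial_x\omega=\psi$) and a deterministic part, then use the PDE \cref{eq:thetajPDE} for $\theta$, integrate in $z$, and observe that the $y$-boundary integrals cancel against the second summand of \cref{jul140} after an integration by parts. The only cosmetic differences are that you explicitly isolate $\partial_x h=u$ at the outset and perform the integration by parts from $\zeta\,\partial_y\theta$ to $-\zeta'\theta$, whereas the paper goes the other way.
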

Note that, in particular, we have
\begin{equation}\label{jul124}
\int _\R\zeta(x)h(0,x)\,\dif x=0,
\end{equation}
which fixes the ``constant of integration'' in taking the antiderivative of $u(0,\cdot)$.

\begin{proof}
We write $h(t,x)=\omega(t,x)+g(t,x)$, with
  \[
    \omega(t,x)=\int_{0}^{t}G_{t-s}*\dif V(s,\cdot)(x)
  \]
  and
  \[
    g(t,x)=\int_\R \zeta(y) \int_{y}^{x}\theta(t,z)\,\dif z \dif y - \frac{1}{2}\int_{0}^{t} \int_\R [\theta(s,y) \zeta'(y) + (\theta(s,y)+\psi(s,y))^{2} \zeta(y)]\, \dif y \dif s+\frac{t}{2}\|\rho\|_{L^{2}(\mathbb{R})}^{2}.
  \]
Note that $\partial_{x}\omega = \psi$, and $\omega$ is a strong
  solution to the SPDE
  \begin{equation}
    \dif\omega=\frac{1}{2}\partial_{x}^{2}\omega \, \dif t+\dif V.\label{eq:difomega}
  \end{equation}
In addition, the function $g$ satisfies $\partial_x g = \theta$ and
using \cref{eq:thetajPDE}, we see that  
  \begin{align*}
    \partial_{t}g(t,x) - \frac{1}{2}\|\rho\|_{L^{2}(\mathbb{R})}^{2} & = \int_\R \zeta(y) \int_{y}^{x}\partial_{t}\theta(t,z)\,\dif z \dif y - 
    \frac{1}{2} \int_\R [\theta({t},y) \zeta'(y) + 
    (\theta({t},y)+\psi({t},y))^{2} \zeta(y)]\, \dif y \\
 & = \frac{1}{2} \int_\R \zeta(y) \int_{y}^{x}\left[
 \partial_{x}^{2}\theta(t,z)-\partial_{x}(\theta(t,z)+\psi(t,z))^{2}\right] \,\dif z \dif y                                      \\
 & \qquad +\frac{1}{2}\int_\R [\partial_x \theta({t},y) - 
 (\theta({t},y)+\psi({t},y))^{2}] \zeta(y)\, \dif y                           \\
                                                                     & =\frac{1}{2}\partial_{x}\theta(t,x)-\frac{1}{2}(\theta(t,x)+\psi(t,x))^{2}
  =\frac{1}{2}\partial_{x}^{2}g(t,x)-\frac{1}{2}(\partial_{x}g(t,x)+\partial_{x}\omega(t,x))^{2}.
  \end{align*}
  From this and \cref{eq:difomega}, we see that $h=g+\omega$ is a strong
  solution to \cref{eq:uPDE-1}.
\end{proof}

\section{Limits of solutions started from bounded initial conditions\label{sec:tightness}}

In this section, we consider solutions to \cref{eq:uPDE-1} started at constant initial conditions. In \cref{subsec:tightness}, we show that the laws of solutions started at deterministic, globally-bounded initial conditions are tight as time goes to infinity. In \cref{subsec:extremality}, we show that, if the initial condition is in addition periodic in space, then the resulting subsequential limits are extremal.

\subsection{Uniform-in-time bounds and tightness\label{subsec:tightness}}

In this section, we start with a bounded initial condition and establish tightness of the resulting
family of solutions $u(t,\cdot)$, as $t\to+\infty$.
This requires a priori bounds in weighted spaces, 
that we are only able to prove in $\mathcal{X}_m$ for $m \geq {1}/{2}$.
In combination with the restriction $m<1$ from \cref{sec:solntheory}, this explains the range of exponents $m$ in \cref{thm:maintheorem-classification}.
The main result of this section is the following.
\begin{prop}
  \label{prop:boundedtightness}
  Let $\mathbf{u}\in\mathcal{Z}_{1/2}^{N}$
  solve \cref{eq:uPDE-1} with bounded initial condition $\mathbf{u}(0,\cdot)\in L^{\infty}(\mathbb{R})^{N}$.
  Then the family of random variables $\{\mathbf{u}(t,\cdot)\}_{t\ge1}$
  is tight with respect to the topology of $\mathcal{X}_{1/2}^{N}$,
  and
  \begin{equation}
    \sup_{t\ge0,x\in\R
    }\mathbb{E}|\mathbf{u}(t,x)|^{2}<\infty.\label{eq:supu2bdd}
  \end{equation}
\end{prop}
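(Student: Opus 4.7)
The two claims are best tackled separately: I would first establish the uniform second-moment bound \cref{eq:supu2bdd}, and then use it together with the smoothing in \cref{prop:thetawellposed} to deduce tightness. Since the components of \cref{eq:uPDE-many} are decoupled, \cref{eq:supu2bdd} reduces to the scalar case. Write $M = \|u(0, \cdot)\|_{L^\infty(\R)}$ and let $u^{\pm M}$ denote the solutions driven by the same noise but started from the constants $\pm M$. By the comparison principle \cref{prop:comparisonprinciple}, $u^{-M} \leq u \leq u^{+M}$ pointwise, so $u^2 \leq |u^{+M}|^2 + |u^{-M}|^2$ reduces the task to bounding $\E[u^{\pm M}(t, x)^2]$. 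The shear invariance \cref{prop:shearinvariance} gives $u^a(t, x) \overset{\mathrm{d}}{=} u^0(t, x - at) + a$, and combining this with the spatial stationarity in law of $u^0$ (inherited from its constant initial data) and the vanishing $\E u^0(t, \cdot) \equiv 0$ (which follows from the joint reflection $u \mapsto -u(\cdot, -\cdot)$ sending the equation with noise $V$ to the equation with noise $\tilde V(t, x) = V(t, -x)$, and the latter has the same law as $V$ since $\rho$ is even) yields $\E[u^a(t, x)^2] = \E[u^0(t, 0)^2] + a^2$.

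To bound $\E[u^0(t, 0)^2]$, I would apply \cref{prop:KPZsolntheory} to produce a strong KPZ solution $h$ with $\partial_x h = u^0$ and then take expectations in \cref{eq:KPZ}. Spatial stationarity forces $\partial_x^2 \E h \equiv 0$ and makes $\E u^0(t, \cdot)^2$ independent of $x$, so
\begin{equation*}
\frac{\dif}{\dif t}\, \E h(t, 0) = -\tfrac{1}{2}\, \E u^0(t, 0)^2 + \tfrac{1}{2}\|\rho\|_{L^2(\R)}^2.
\end{equation*}
The monotonicity of $t \mapsto \E h(t, 0)$, supplied by the Feynman--Kac-based \cref{prop:gamma}, then immediately gives $\E u^0(t, 0)^2 \leq \|\rho\|_{L^2(\R)}^2$, and combining this with the reductions above yields \cref{eq:supu2bdd}.

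For tightness, the plan is to invoke the compactness statement immediately following \cref{prop:thetawellposed}: for any bounded $A \subset L^\infty_{\p_{m+}}(\R)^N$, the image $\Psi(A)|_{\{1\}\times\R}$ is compact in $\mathcal{X}_m^N$. It therefore suffices to find, for each $\eps > 0$, some $R < \infty$ and $\ell > 1/2$ such that $\sup_{t \geq 0} \mathbb{P}\bigl(\|\mathbf{u}(t, \cdot)\|_{L^\infty_{\p_\ell}(\R)^N} > R\bigr) < \eps$; applying $\Psi$ on $[t, t+1]$ with initial data $\mathbf{u}(t, \cdot)$ then concentrates the laws of $\{\mathbf{u}(t, \cdot)\}_{t \geq 1}$ on compact subsets of $\mathcal{X}_{1/2}^N$. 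To upgrade the pointwise bound \cref{eq:supu2bdd} to such a weighted-in-probability estimate, I would combine Markov's inequality at each integer center with a union bound over the shells $[k, k+1]$ (exploiting the summability of $\langle k\rangle^{-2\ell}$ for $\ell > 1/2$), using the parabolic regularity from \cref{lem:mildregularitypoly} to control oscillations on each unit interval.

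The main obstacle is \cref{prop:gamma}, the single input requiring stochastic analysis proper: via the stochastic heat equation \cref{eq:SHE}, $\phi = \e^{-h}$ admits a Feynman--Kac representation as an exponential functional of the noise along Brownian paths, and one must extract monotonicity of $\E h = -\E \log \phi$ from that representation. Everything else in the plan---comparison, shear invariance, and the parabolic smoothing estimates---is already available in the paper, though the upgrade from pointwise to weighted bounds in the tightness step will also require some technical care.
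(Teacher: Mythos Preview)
Your argument for the second-moment bound \cref{eq:supu2bdd} is correct and mirrors the paper's: comparison with constant initial data reduces to \cref{lem:L2bound}, which in turn follows from the KPZ monotonicity of \cref{prop:gamma} exactly as you describe. (The reflection-symmetry argument for $\E u^0 \equiv 0$ is a pleasant touch, though the paper sidesteps it by bounding $\E(u-a)^2$ directly.)

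The tightness argument, however, has a genuine gap. Your plan is to (i) establish $\sup_t \mathbb{P}(\|u(t,\cdot)\|_{L^\infty_{\p_\ell}} > R) < \eps$ via Markov's inequality at integer centers plus a union bound, using \cref{lem:mildregularitypoly} to control oscillations on each shell $[k,k+1]$, and then (ii) feed this into the almost-sure compactness of $\Psi(A)|_{\{1\}}$ from \cref{prop:thetawellposed}. Step (i) is circular as stated: \cref{lem:mildregularitypoly} bounds $\|\theta(t,\cdot)\|_{\mathcal{C}^\beta_{\p_{2\ell}}}$ in terms of $\sup_{s\in(0,t]}\|\theta(s,\cdot)\|_{\mathcal{C}_{\p_\ell}}$, so weighted $L^\infty$ control over a whole time interval is an \emph{input}, not an output. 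The pointwise bound $\E|u(t,x)|^2\le C$ alone cannot control $\E\sup_{x\in[k,k+1]}|u(t,x)|$ without an independent regularity estimate. The paper supplies this via a separate ingredient you do not mention: the gradient bound \cref{lem:gradientbound}, $\E(\partial_x u(t,x))^2 \le \|\partial_x\rho\|_{L^2}^2$, proved by an It\^o-formula energy estimate for $u^2$ that parallels the computation for \cref{lem:L2bound}. Sobolev embedding then gives $\E\|u(t,\cdot)\|_{H^1([k,k+1])}^2 \le C$ uniformly in $t,k$, and a weighted sum yields $\E\|u(t,\cdot)\|_{\mathcal{C}_{\tilde w}}^2 \le C$ with $\tilde w(x)=\langle x\rangle^{1/2}\log\langle x\rangle$ (\cref{prop:H1bound}). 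Only \emph{after} this does the paper invoke \cref{lem:mildregularitypoly} on $[t,t+1]$ to obtain uniform H\"older moments, from which Markov's inequality and Arzel\`a--Ascoli give tightness. Step (ii) is also more delicate than you indicate: the compact set $\Psi(A)|_{\{1\}}$ depends on the noise realization, so turning it into a deterministic compact set requires unpacking the proof of \cref{prop:thetawellposed} rather than just citing its statement---but this is moot until (i) is repaired.
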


Tightness arguments on compact domains \cite{Bor16multi,Bor13sharp,DPDT94,DPG94}
have generally relied on bounding the size of the solution in terms
of the size of its derivative, using a Poincaré-type inequality. %
Such
an argument cannot work on the whole space. Instead, we
control the second moment of the Burgers solution $u$ by first controlling the
expectation of the KPZ solution $h$.
The following proposition will
be key to the proof of \cref{prop:boundedtightness}.
\begin{prop}
  \label{prop:gamma} 
  Let $h$ be a mild solution to \cref{eq:KPZ}
  with initial condition $h(0,\cdot)\equiv0$. Then,
  the function~$\gamma(t)=\mathbb{E}h(t,0)$
  is increasing for $t>0$. 
\end{prop}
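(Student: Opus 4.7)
I would reduce to the stochastic heat equation via the Cole--Hopf transform \cref{eq:colehopf}: $\phi = e^{-h}$ solves \cref{eq:SHE} with $\phi(0,\cdot) \equiv 1$, and admits the Feynman--Kac representation
\[
  \phi(t,x) = \mathbb{E}_B\left[\exp\left(-\int_0^t\!\!\int_\R \rho(B_s-y)\,W(\dif s,\dif y) - \tfrac{t}{2}\|\rho\|_{L^2}^2\right)\right],
\]
where $B$ is a Brownian motion starting at $x$, independent of $W$. My first task is to derive a formula for $\gamma'(t)$. Applying It\^o's formula to $\log\phi(t,0)$ using $\dif[\phi,\phi]_t = \|\rho\|_{L^2}^2\,\phi(t,0)^2\,\dif t$ yields
\[
  \dif h(t,0) = -\frac{\partial_x^2\phi(t,0)}{2\,\phi(t,0)}\,\dif t + \frac{\|\rho\|_{L^2}^2}{2}\,\dif t + \dif V(t,0).
\]
Since the zero initial condition and the noise are both spatially translation-invariant, $h(t,\cdot)$ is spatially stationary, so $\mathbb{E}[\partial_x^2 h(t,0)] = 0$. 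Combined with the identity $\partial_x^2\phi = \phi(u^2 - \partial_x^2 h)$ coming from $\phi = e^{-h}$, this gives $\mathbb{E}[\partial_x^2\phi(t,0)/\phi(t,0)] = \mathbb{E}[u(t,0)^2]$, so that
\[
  \gamma'(t) = \frac{\|\rho\|_{L^2}^2}{2} - \frac{1}{2}\mathbb{E}[u(t,0)^2].
\]

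The proposition thus reduces to two claims about $F(t) := \mathbb{E}[u(t,0)^2]$: first, $F(t) \le \|\rho\|_{L^2}^2$, giving monotonicity of $\gamma$; and second, $F$ is non-decreasing in $t$, giving concavity. Differentiating the Feynman--Kac formula in $x$, I would write $u(t,0) = \mathbb{E}_{\pi_t}[Z_t]$, where $\pi_t$ is the polymer probability measure with density $\mathcal{E}_t / \mathbb{E}_B[\mathcal{E}_t]$ against Wiener measure and $Z_t = \int_0^t\!\int_\R \rho'(B_s-y)\,W(\dif s,\dif y)$. Expanding the square using two independent Brownian copies $B^1, B^2$,
\[
  F(t) = \mathbb{E}\left[\frac{\mathbb{E}_{B^1,B^2}[\mathcal{E}_t^1\,\mathcal{E}_t^2\,Z_t^1\,Z_t^2]}{\mathbb{E}_{B^1,B^2}[\mathcal{E}_t^1\,\mathcal{E}_t^2]}\right].
\]
Conditional on $(B^1,B^2)$, the relevant Wiener integrals against $W$ are jointly Gaussian, with covariances expressible via $R = \rho*\rho$ and its derivatives evaluated on the path difference $B^1 - B^2$. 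Gaussian integration by parts in $V$ together with these explicit covariances should yield both the uniform bound $F(t) \le R(0) = \|\rho\|_{L^2}^2$ and the monotonicity of $F$.

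The main obstacle is the ratio structure of $F(t)$: it is a $V$-expectation of a polymer-weighted ratio, and does not split under $\mathbb{E}_V$. Na\"ive bounds such as $u(t,0)^2 \le \mathbb{E}_{\pi_t}[Z_t^2]$ produce only the time-growing upper bound $t\,\|\rho'\|_{L^2}^2$, which does not see the correct $t$-independent bound $\|\rho\|_{L^2}^2$. Extracting the sharp bound and the monotonicity requires a careful organization of the two-replica calculation so that the Brownian expectations cancel appropriately; as the paper emphasizes, this is the single place where the Feynman--Kac formula is indispensable.
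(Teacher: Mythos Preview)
Your formula $\gamma'(t)=\tfrac12\|\rho\|_{L^2}^2-\tfrac12\,\mathbb{E}[u(t,0)^2]$ is correct and in fact appears in the paper (see \cref{jun2914}), but there it is deduced \emph{from} this proposition; your justification that $\mathbb{E}[\partial_x u(t,0)]=0$ requires moment bounds the paper only obtains after \cref{prop:gamma}. The real gap, however, is that you have not proved either claim about $F(t)=\mathbb{E}[u(t,0)^2]$. You correctly name the obstacle---the polymer ratio does not split under the noise expectation---and then stop; ``Gaussian integration by parts \dots\ should yield both'' is exactly the step that needs an idea, and you have not supplied one.

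The paper avoids this obstacle altogether. The Feynman--Kac expression you wrote, with a \emph{forward} Brownian motion against the forward noise, is not $\phi(t,0)$ pointwise but its time-reversal, which the paper calls $Z_t$; they agree only in law. The point you missed is that $Z_t$ is a positive \emph{martingale} in the noise filtration (its increment at time $t$ sees only $\dif V(t,\cdot)$). Jensen then makes $-\log Z_t$ a submartingale, so $\gamma(t)=\mathbb{E}[-\log Z_t]$ is increasing---one line, no bound on $F$ needed.

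For concavity, It\^o applied to $-\log Z_t$ (which has no drift) gives the endpoint formula
\[
\gamma'(t)=\tfrac12\,\mathbb{E}\,\widehat{\mathrm{E}}_{X^{0,0}}^{0,t}\widehat{\mathrm{E}}_{\widetilde X^{0,0}}^{0,t}\,\rho^{*2}\bigl(X(t)-\widetilde X(t)\bigr),
\]
involving $\rho^{*2}$ at the terminal separation of two polymers rather than a path integral of $\rho'$. To show it decreases, fix $q\in[0,t]$, split each polymer at time $q$ via the Markov property, write $\rho^{*2}(X(t)-\widetilde X(t))=\int\rho(X(t)-z)\rho(\widetilde X(t)-z)\,\dif z$, and apply Young's inequality to the two factors; this forces $X$ and $\widetilde X$ to share their time-$q$ position. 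Whiteness of the noise in time and spatial homogeneity then identify the resulting upper bound as $2\gamma'(t-q)$, giving $\gamma'(t)\le\gamma'(t-q)$ for all $q\in[0,t]$.
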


\begin{proof}
  Using the Feynman--Kac formula as in \cite{BC95,BCJL94}, we have
  \begin{equation}
    h(t,0) = -\log\mathrm{E}_{X^{t,0}}\exp\left\{ -\int_{0}^{t}\dif V(s,X(s))-\frac{t}{2}\|\rho\|_{L^{2}(\mathbb{R})}^{2}\right\}.\label{eq:FKformula}
  \end{equation}
Here, $\mathrm{E}_{X^{t,x}}$ denotes expectation with respect to
  the measure in which $X$ is a two-sided Brownian motion satisfying
  $X(t)=x$, that is independent of the noise. Note that the right side of \cref{eq:FKformula} is still a random quantity due to the randomness of the noise.
As in \cite{MSZ16}, the time-reversed process
  \[
    Z_{t}=\mathrm{E}_{X^{0,0}}\exp\left\{ -\int_{0}^{t}\dif V(s,X(s))-\frac{t}{2}\|\rho\|_{L^{2}(\mathbb{R})}^{2}\right\} ,
  \]
satisfies
\[
h(t,0)\overset{\mathrm{law}}{=}-\log Z_{t}.
\]
The point of the time-reversal is that the process
 $Z_{t}$ is a martingale and satisfies  
  \[
    \dif Z_{t}=\mathrm{E}_{X^{0,0}}\exp\left\{ -\int_{0}^{t}\dif V(s,X(s))-\frac{t}{2}\|\rho\|_{L^{2}(\mathbb{R})}^{2}\right\} \dif V(t,X(t)).
  \]
  The convexity of $(-\log)$ implies that $-\log Z_{t}$ is a submartingale,
so
\[
\mathbb{E}(-\log Z_{t})=\mathbb{E}h(t,0)=\gamma(t)
\]
is increasing in $t$.
\end{proof}
An important application of \cref{prop:gamma} is the following $L^2$-bound, which is key to the compactness result.
\begin{lem}
  \label{lem:L2bound}
  Suppose that $a\in\mathbb{R}$ and $u$ is a solution
  to \cref{eq:uPDE-1} with initial condition $u(0,\cdot)\equiv a$. Then
  for all $t\ge0$ and $x\in\mathbb{R}$, we have
  \begin{equation}
    \mathbb{E}(u(t,x)-a)^{2}\le\|\rho\|_{L^{2}(\mathbb{R})}^{2}.\label{eq:L2bound}
  \end{equation}
\end{lem}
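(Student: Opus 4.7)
The plan is to use shear invariance to reduce to $a=0$, then pass to the KPZ equation and take expectations. By \cref{prop:shearinvariance}, if $u'$ is the solution to \cref{eq:uPDE-1} with $u'(0,\cdot)\equiv 0$, then $u(t,\cdot)-a$ has the same law as $\tau_{at}u'(t,\cdot)$. Since the initial data is translation-invariant and the noise is spacetime-stationary, uniqueness (\cref{thm:existence}) forces $u'(t,\cdot)$ to be spatially stationary in law for every $t$, and hence $\mathbb{E}(u(t,x)-a)^2=\mathbb{E}\,u'(t,0)^2$ independently of $x$. So it suffices to prove $\mathbb{E}\,u'(t,0)^2\le\|\rho\|_{L^2(\R)}^2$.

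Next, I would apply \cref{prop:KPZsolntheory} (picking any admissible $\zeta$) to obtain a strong solution $h$ of \cref{eq:KPZ} with $\partial_x h=u'$ and $h(0,\cdot)\equiv 0$ (using $\theta(0,\cdot)\equiv 0$ in \cref{jul140}). Because the noise is spacetime-stationary and $u'(0,\cdot)\equiv 0$, and $\rho$ is symmetric so $\mathbb{E}\,u'(t,x)=0$ for all $(t,x)$ by reflection symmetry, every term in the formula \cref{jul140} for $h(t,x)$ has expectation independent of $x$. Writing $\gamma(t)=\mathbb{E}\,h(t,0)$, I would take the expectation of \cref{jul140}: the stochastic convolution is an Itô integral with mean zero, the single integrals $\int\zeta(y)\int_y^x\mathbb{E}\,u'(t,z)\,dz\,dy$ and $\int \mathbb{E}\,\theta(s,y)\zeta'(y)\,dy$ vanish by the mean-zero and space-stationarity properties just noted, and $\mathbb{E}(\theta+\psi)^2(s,y)=\mathbb{E}\,u'(s,y)^2=\mathbb{E}\,u'(s,0)^2$ is independent of $y$. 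Using $\int\zeta=1$, this yields the identity
\begin{equation}\label{eq:gamma-identity}
\gamma(t)=\frac{t}{2}\|\rho\|_{L^2(\R)}^{2}-\frac{1}{2}\int_0^t\mathbb{E}\,u'(s,0)^2\,\mathrm{d}s.
\end{equation}

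Finally, I invoke \cref{prop:gamma}: $\gamma$ is increasing. Since the right-hand side of \cref{eq:gamma-identity} is absolutely continuous in $t$, differentiating gives $\gamma'(t)=\tfrac12\|\rho\|_{L^2(\R)}^2-\tfrac12\mathbb{E}\,u'(t,0)^2$ for a.e.\ $t$, and $\gamma'\ge 0$ immediately yields $\mathbb{E}\,u'(t,0)^2\le\|\rho\|_{L^2(\R)}^2$, completing the proof for those $t$; for all $t$, one can instead observe from \cref{eq:gamma-identity} that $s\mapsto\mathbb{E}\,u'(s,0)^2$ is continuous (by the Feller property and the uniform-in-$s$ moment control that \cref{eq:gamma-identity} itself provides on any initial interval), so the bound holds for every $t$.

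The only delicate point I anticipate is justifying the various exchanges of expectation, spatial integration, and the time integral in \cref{jul140}; this is a Fubini argument that requires an a priori local-in-time moment bound on $u'$ and $\psi$, which follows from the local mild estimates in \cref{lem:mildregularitypoly} together with the Gaussian tails of $\psi$ from \cref{lem:suppsi}. Once \cref{eq:gamma-identity} is established, the conclusion is an immediate consequence of the monotonicity half of \cref{prop:gamma}; the concavity half, though not strictly needed here, is what will eventually drive the tightness statement \cref{prop:boundedtightness}.
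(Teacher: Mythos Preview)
Your approach is essentially the same as the paper's: reduce to $a=0$ via shear invariance, pass to the KPZ height $h$ with $h(0,\cdot)\equiv 0$, derive the identity $\gamma(t)=\tfrac{t}{2}\|\rho\|_{L^2}^2-\tfrac12\int_0^t\mathbb{E}u'(s,0)^2\,ds$, and use the monotonicity of $\gamma$ from \cref{prop:gamma}.

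The only technical differences are that the paper obtains this identity by integrating \cref{eq:KPZ} over a unit spatial interval and killing the boundary term $\mathbb{E}\int[\partial_x u(t,x+1)-\partial_x u(t,x)]\,dt$ via the symmetry lemma (\cref{lem:symmetrylemma}), rather than taking expectations in the explicit formula \cref{jul140}; and for the extension from a.e.\ $t$ to all $t$, the paper simply invokes almost-sure continuity of $u$ together with Fatou's lemma, which is cleaner than your continuity argument for $s\mapsto\mathbb{E}u'(s,0)^2$ (note that the Feller property as stated concerns bounded test functions, so it does not directly give what you want there).
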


\begin{proof}
  By \cref{prop:shearinvariance}, it suffices to consider the case $a=0$.
  Let $h$ be a solution to \cref{eq:KPZ} with initial condition $h(0,\cdot)\equiv0$,
  so we can take $u=\partial_{x}h$.   %
Averaging \cref{eq:KPZ} in space, integrating it time over an interval
$(t_1,t_2)$, and taking expectations, we get
  \begin{equation}
  \begin{aligned}
    \frac{1}{2}(t_2-t_1)\|\rho\|_{L^{2}(\mathbb{R})}^{2}&+\frac{1}{2}\E\int_{t_1}^{t_2}[\partial_x u(t,{x+1})-\partial_xu(t,{x})]\,\dif t-\frac{1}{2}\mathbb{E}\int_{t_1}^{t_2}u(t,x)^{2}\,\dif t\\&=\E h(t_2,x)-\E h(t_1,x),
    \end{aligned}
    \label{eq:EKPZ}
  \end{equation} 
  for an arbitrary $x\in\R$ by space-stationarity of $u$ and $h$. 
  Also by space-stationarity of $u$, the laws of 
  \[
  \int_{t_1}^{t_2}\partial_x u(t,x)\,\dif t\text{ and } \int_{t_1}^{t_2}\partial_x u(t,{x+1})\,\dif t
  \]
are the same. %
In addition, 
the right side of \cref{eq:EKPZ} is nonnegative by \cref{prop:gamma}. Hence,
by \cref{lem:symmetrylemma}, the expectation of the second term in the
left side of \cref{eq:EKPZ} is well-defined and equals to $0$. This 
gives \cref{eq:L2bound} for almost all $t\ge 0$ and $x\in\R$. The statement for all $t\ge 0 $ 
and~$x\in\R$ follows from the almost-sure continuity of $u$ and Fatou's lemma.
\end{proof}

Next, we  get a bound on the derivative of the solution.
The following lemma is based on an energy estimate similar to that leading
to \cite[formula (16)]{Bor13sharp}.
\begin{lem}
  \label{lem:gradientbound}
  Suppose that $a\in\mathbb{R}$ and $u$
  is a solution to \cref{eq:uPDE-1} with initial condition $u(0,\cdot)\equiv a$.
  Then for all $t> q\ge0$ and $x\in\mathbb{R}$, we have
  \begin{equation}\label{eq:derivL2bound}
    \frac{1}{t-q}\int_q^{t}\mathbb{E}(\partial_{x}u(s,x))^{2}\,\dif s\le\frac{2}{t-q}\|\rho\|^2_{L^2(\mathbb{R}^2)}+\|\partial_{x}\rho\|_{L^{2}(\mathbb{R})}^{2}.
  \end{equation}
\end{lem}

\begin{proof}
  Again using \cref{prop:shearinvariance}, we can assume that $a=0$.
  By the Itô formula (see e.g.~\cite[Theorem 3.32]{DPZ14}) applied
  to \cref{eq:uPDE-1}, we get that whenever $t > q\ge0$,
  \begin{align*}
    u&(t,  x)^{2}-u(q,x)^{2}                                                                                                                                                                                                              \\
         & =\int_{q}^{t}2u(s,x)\,\dif(\partial_{x}V)(s,x)+\int_{q}^{t}2u(s,x)\left[\partial_{x}^{2}u-\partial_{x}(u^{2})\right](s,x)\,\dif s\\&\qquad+(t-q)\|\partial_{x}\rho\|_{L^{2}(\mathbb{R})}^{2}                                            \\
         & =\int_{q}^{t}2u(s,x)\,\dif(\partial_{x}V)(s,x)+2\int_{q}^{t}\left[\frac{1}{2}\partial_{x}^{2}(u^{2})-(\partial_{x}u)^{2}-\frac{1}{3}\partial_{x}(u^{3})\right](s,x)\,\dif s\\&\qquad+(t-q)\|\partial_{x}\rho\|_{L^{2}(\mathbb{R})}^{2}.
  \end{align*}
  Let $x_{1}<x_{2}$ and integrate in space to obtain
\begin{align*}
\int_{x_{1}}^{x_{2}} & [u(t,x)^{2}-u(q,x)^{2}]\,\dif x  \\
&                        =\int_{x_{1}}^{x_{2}}\int_{q}^{t}2u(s,x)\,\dif(\partial_{x}V)(s,x)\\&\qquad+2\int_{x_{1}}^{x_{2}}\int_{q}^{t}\left[\frac{1}{2}\partial_{x}^{2}(u^{2})-\frac{1}{2}(\partial_{x}u)^{2}-\frac{1}{3}\partial_{x}(u^{3})\right](s,x)\,\dif s\,\dif x    \\
                         & \qquad+(t-q)(x_{1}-x_{2})\|\partial_{x}\rho\|_{L^{2}(\mathbb{R})}^{2}                                                                                                                                                                     \\
                         & =\int_{x_{1}}^{x_{2}}\int_{q}^{t}2u(s,x)\,\dif(\partial_{x}V)(s,x)\\&\qquad+\int_{q}^{t}\Big[\Big(\partial_{x}(u^{2})-\frac{2}{3} u^{3}\Big)\Big|_{x=x_{1}}^{x=x_{2}}-\int_{x_{1}}^{x_{2}}(\partial_{x}u)^{2}(s,x)\,\dif x\Big]\,\dif s \\
                         & \qquad+(t-q)(x_{1}-x_{2})\|\partial_{x}\rho\|_{L^{2}(\mathbb{R})}^{2}.
  \end{align*}
  Taking expectations, we get
  \begin{equation}
    \begin{aligned}\int_{x_{1}}^{x_{2}}&\mathbb{E}[u(t,x)^{2}-u(q,x)^{2}]\,\dif x \\& = \int_{q}^{t}\left[\mathbb{E}\left[\partial_{x}(u^{2})-\frac{2}{3}u^{3}\right]_{x=x_{1}}^{x=x_{2}}-\mathbb{E}\int_{x_{1}}^{x_{2}}(\partial_{x}u)^{2}(s,x)\,\dif x\right]\,\dif s \\
    &\qquad+(t-q)(x_{2}-x_{1})\|\partial_{x}\rho\|_{L^{2}(\mathbb{R})}^{2}.
    \end{aligned}
    \label{eq:takeexpectationsinthegradientthing}
  \end{equation}
  Since the first expectation in \cref{eq:takeexpectationsinthegradientthing}
  is   finite by \cref{lem:L2bound} and the second term under the
 expectation in the right side has a sign, we can use \cref{lem:symmetrylemma} to conclude that
  \begin{equation}
    \mathbb{E}\left[\partial_{x}(u^{2})-\frac{2}{3}u^{3}\right]_{x=x_{1}}^{x=x_{2}}=0. \label{eq:usesymmetry}
  \end{equation}
 Using \cref{lem:L2bound,eq:usesymmetry} in \cref{eq:takeexpectationsinthegradientthing}, we get
  \[
    \mathbb{E}\int_{q}^{t}\int_{x_{1}}^{x_{2}}(\partial_{x}u)^{2}(s,x)\,\dif x\,\dif s\le 2(x_2-x_1)\|\rho\|^2_{L^2(\mathbb{R})} + (t-q)(x_{2}-x_{1})\|\partial_{x}\rho\|_{L^{2}(\mathbb{R})}^{2}.
  \]
  Since this holds for all $x_{1}<x_{2}$, we can conclude \cref{eq:derivL2bound}.
\end{proof}

Before proceeding, we record a version of the derivative bound in \cref{lem:gradientbound}
for solutions that start at a stationary distribution rather than a constant initial
condition. %
\begin{lem}\label{lem:grad-bound-stationary}
Let {$G=L\Zm$ or $G=\Rm$},  %
and let $X \sim \mathrm{Uniform}(\Lambda_G)$ be independent of all else.
  Suppose that~$\nu \in \overline{\mathscr{P}}_G(\R)$ is such that $\E v(X)^2 < \infty$ if 
  $\hbox{Law}(v)=\nu$.
  Then we have
  \begin{equation*}
    \E (\partial_x v(X))^2 = \|\partial_x \rho\|_{L^2(\R)}^2.
  \end{equation*}
\end{lem}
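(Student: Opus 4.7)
The plan is to adapt the proof of \cref{lem:gradientbound} to stationary initial data. The key observation is that when $u$ is solving \cref{eq:uPDE-1} with $\Law(u(0,\cdot))=\nu$, stationarity gives $\Law(u(s,\cdot))=\nu$ for every $s\ge 0$, which upgrades the sign inequality \cref{eq:expsqdecreasing} used before to an equality, turning the previous one-sided bound into the claimed identity. For $s>0$, \cref{lem:suppsi} and iteration of \cref{lem:mildregularitypoly} ensure $u(s,\cdot)$ is spatially smooth, so $\partial_x v(X)$ in the statement can be interpreted as the (distributionally unique) limit of $\partial_x u(s,X)$ as $s\downarrow 0$.

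I would treat the case $G=L\Z$ first; the case $G=\R$ then follows at once, because an $\R$-stationary measure is also $L\Z$-stationary for every $L>0$, and the convention $\E f(0)=\E\int_0^1 f(x)\,\dif x$ lets us reuse the $L\Z$-argument with $L=1$. Itô's formula applied to $u(s,x)^2$ (as was done before \cref{eq:takeexpectationsinthegradientthing}) gives
\begin{equation*}
u(t,x)^2-u(q,x)^2 = 2\int_q^t u\,\dif(\partial_x V) + \int_q^t\Bigl[\tfrac{1}{2}\partial_x^2 u^2 - (\partial_x u)^2 - \tfrac{2}{3}\partial_x u^3\Bigr](s,x)\,\dif s + (t-q)\|\partial_x\rho\|_{L^2(\R)}^2.
\end{equation*}
I would integrate this identity over $x\in[0,L]$ and take expectations.

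The bookkeeping now closely follows the proof of \cref{lem:gradientbound}. The LHS has finite absolute expectation equal to zero, since $\E\int_0^L u(r,x)^2\,\dif x=L\,\E v(X)^2<\infty$ for each $r\in\{q,t\}$ by time-stationarity and the hypothesis on $\nu$. The stochastic integral, after Fubini in $x$, is a genuine $L^2$-martingale in $t$ by the Itô isometry (bounded by a multiple of $L^2(t-q)\|\partial_x\rho\|_{L^2(\R)}^2\,\E v(X)^2$), so contributes zero expectation. The total-derivative terms contribute a boundary piece
\begin{equation*}
\int_q^t\Bigl[\tfrac{1}{2}\partial_x u^2(s,\cdot)-\tfrac{2}{3}u^3(s,\cdot)\Bigr]\Big|_{x=0}^{x=L}\,\dif s,
\end{equation*}
which is a difference of two identically-distributed random variables by the $L\Z$-stationarity of $\nu$. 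Since the LHS is integrable, the constant term $L(t-q)\|\partial_x\rho\|_{L^2(\R)}^2$ is finite, and $-(\partial_x u)^2$ has a sign (so its expectation is well-defined in $[-\infty,0]$), \cref{lem:symmetrylemma} applies and forces the boundary term to have zero expectation, exactly as in \cref{eq:usesymmetry}.

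What remains is therefore the equality
\begin{equation*}
\E\int_q^t\!\int_0^L(\partial_x u(s,x))^2\,\dif x\,\dif s = L(t-q)\|\partial_x\rho\|_{L^2(\R)}^2 <\infty,
\end{equation*}
so by time-stationarity $\E\int_0^L(\partial_x u(s,x))^2\,\dif x = L\|\partial_x\rho\|_{L^2(\R)}^2$ for a.e., and hence (by almost-sure continuity in $s$ and Fatou) every $s>0$. Independence of $X\sim\mathrm{Uniform}([0,L))$ from $u$ gives $\E(\partial_x u(s,X))^2=\|\partial_x\rho\|_{L^2(\R)}^2$, and stationarity identifies this with $\E(\partial_x v(X))^2$. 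The main obstacle is the same technical point as in \cref{lem:gradientbound}: verifying the integrability conditions needed to invoke \cref{lem:symmetrylemma} on the boundary term, since we do not yet have any moment control on $\partial_x u$; the sign of $-(\partial_x u)^2$ and the finite expectation of the LHS are precisely what make this go through.
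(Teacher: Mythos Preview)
Your proposal is correct and follows essentially the same approach as the paper: apply It\^o's formula to $u^2$, integrate over $[0,L]\times[q,t]$, use time-stationarity to make the left side vanish, and invoke \cref{lem:symmetrylemma} on the boundary term via $G$-invariance, exactly as in \cref{eq:usesymmetry}. The paper's proof is terser---it simply cites the computation leading to \cref{eq:takeexpectationsinthegradientthing} and then observes that time-invariance of $\nu$ zeros the left side---whereas you spell out the martingale property of the stochastic integral and the passage from a.e.\ $s$ to all $s$, but the argument is the same.
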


\begin{proof}
  Without loss of generality, we may assume that $G = \R/L\mathbb{Z}$ for some $L > 0$.
  Let $u$ solve \cref{eq:uPDE-1} with random initial condition $v$ independent of the noise $V$.
  We again integrate \cref{eq:uPDE-1} in space and time, and obtain \cref{eq:takeexpectationsinthegradientthing}.
The time-invariance of $\nu$ and the assumption $\E v(X)^2 < \infty$ implies
  \begin{equation*}
    0 = (t - q)\left\{\mathbb{E}\left[\partial_{x}(v^{2})-\frac{2}{3}v^{3}\right]_{x=x_{1}}^{x=x_{2}} - \mathbb{E} \int_{x_{1}}^{x_{2}}(\partial_{x}v)^{2}(x)\,\dif x + (x_{1}-x_{2})\|\partial_{x}\rho\|_{L^{2}(\mathbb{R})}^{2}\right\}.
  \end{equation*}
Furthermore, if we take $x_1 = 0$ and $x_2 = L$, the $G$-invariance of $\nu$ and \cref{lem:symmetrylemma} imply
  \begin{equation*}
    \mathbb{E}\left[\partial_{x}(v^{2})-\frac{2}{3}v^{3}\right]_{x=0}^{x=L}=0.
  \end{equation*}
  Therefore, we have
  \begin{equation*}
    \frac{1}{L} \mathbb{E} \int_{0}^{L}(\partial_{x}v)^{2}(x) \,\dif x = \|\partial_{x}\rho\|_{L^{2}(\mathbb{R})}^{2},
  \end{equation*}
  as desired.
\end{proof}

Combining \cref{lem:L2bound} and \cref{lem:gradientbound} yields the following uniform weighted
bound.
As a preliminary remark, we note that if $v\in H_{w}^{1}(\mathbb{R})$, then, by the Sobolev
embedding theorem applied locally, $v$ is continuous. Moreover, if
$w$ satisfies the condition $w(x)/w(y)\le C$ whenever $|x-y|\le1$,
then
\begin{equation}
  \|v\|_{\mathcal{C}_{w}(\mathbb{R})}\le 
  C\sup_{{j}\in\mathbb{Z}}\frac{\|v\|_{\mathcal{C}_{\mathrm{b}}([j,j+1])}}{w(j)}\le C\sup_{{j}\in\mathbb{Z}}\frac{\|v\|_{H^1([{j,j+1}])}}{w({j})}.\label{eq:goodweight}
\end{equation}

\begin{prop}\label{prop:H1bound}
  Consider the weight $w(x)=\langle x\rangle^{1/2}\log\langle x\rangle$.
  Suppose that $a\in\mathbb{R}$ and $u$ is a solution to \cref{eq:uPDE-1}
  with initial condition $u(0,\cdot)\equiv a$. Then there is a constant
  $C<\infty$ so that for all $t\ge0$, we have
  \[
    \int_t^{t+1}\mathbb{E}\|u(s,\cdot)-a\|_{\mathcal{C}_{w}(\mathbb{R})}^{2}\,\dif s\le C.
  \]
\end{prop}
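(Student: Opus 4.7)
By the shear invariance (\cref{prop:shearinvariance}) it suffices to treat the case $a=0$, so that $u$ is a spatially stationary solution with $\mathbb{E}u(t,x)^{2}\le\|\rho\|_{L^{2}}^{2}$ for all $t,x$ by \cref{lem:L2bound}, and $\mathbb{E}(\partial_{x}u(t,x))^{2}\le\|\partial_{x}\rho\|_{L^{2}}^{2}$ for all $t,x$ by \cref{lem:gradientbound}. The plan is to convert these two \emph{pointwise-in-$x$} bounds into the desired weighted bound by slicing $\mathbb{R}$ into unit intervals, invoking the embedding estimate \cref{eq:goodweight}, and exploiting the fact that $w(x)^{2}=\langle x\rangle(\log\langle x\rangle)^{2}$ makes the relevant series converge.

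First, I would verify that $w$ satisfies the hypothesis used in \cref{eq:goodweight}, namely $w(x)/w(y)\le C$ whenever $|x-y|\le1$. Both factors $\langle x\rangle^{1/2}$ and $\log\langle x\rangle$ are slowly varying in this sense, so the hypothesis holds with a universal $C$. Next, using Fubini's theorem and the two pointwise $L^{2}$ bounds recalled above, for each $j\in\mathbb{Z}$ we have
\[
\mathbb{E}\|u(t,\cdot)\|_{H^{1}([j,j+1])}^{2}=\int_{j}^{j+1}\bigl[\mathbb{E}u(t,x)^{2}+\mathbb{E}(\partial_{x}u(t,x))^{2}\bigr]\,\dif x\le C_{0},
\]
with a constant $C_{0}$ depending only on $\rho$ and independent of $t$ and $j$.

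Combining \cref{eq:goodweight} with the trivial bound $\sup_{j}X_{j}\le\sum_{j}X_{j}$ valid for any nonnegative sequence, I obtain
\[
\|u(t,\cdot)\|_{\mathcal{C}_{w}(\mathbb{R})}^{2}\le C^{2}\sup_{j\in\mathbb{Z}}\frac{\|u(t,\cdot)\|_{H^{1}([j,j+1])}^{2}}{w(j)^{2}}\le C^{2}\sum_{j\in\mathbb{Z}}\frac{\|u(t,\cdot)\|_{H^{1}([j,j+1])}^{2}}{w(j)^{2}}.
\]
Taking expectations and using the uniform bound from the previous display yields
\[
\mathbb{E}\|u(t,\cdot)\|_{\mathcal{C}_{w}(\mathbb{R})}^{2}\le C^{2}C_{0}\sum_{j\in\mathbb{Z}}\frac{1}{\langle j\rangle(\log\langle j\rangle)^{2}},
\]
and the sum converges by the Bertrand-series criterion (it behaves like $\int_{2}^{\infty}\dif x/(x(\log x)^{2})<\infty$), giving the desired $t$-independent bound.

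There is no real obstacle here; the only point worth noting is that the $\log\langle x\rangle$ factor in $w$ is essential---the $L^{2}$ bounds on $u$ and $\partial_{x}u$ alone would give only $\sum_{j}\langle j\rangle^{-1}$, which diverges, so without the logarithm the sup-to-sum comparison would fail. The logarithmic correction is precisely the right strengthening of the weight $\p_{1/2}$ to absorb this $\sum\langle j\rangle^{-1}$ borderline divergence while remaining within any space $\mathcal{X}_{m}$ for $m\in[1/2,1)$.
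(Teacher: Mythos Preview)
Your proof is correct and follows essentially the same approach as the paper's: both use the sup-to-sum trick via \cref{eq:goodweight}, bound $\mathbb{E}\|u(t,\cdot)-a\|_{H^{1}([j,j+1])}^{2}$ uniformly using \cref{lem:L2bound} and \cref{lem:gradientbound}, and then invoke the summability of $w(j)^{-2}=\langle j\rangle^{-1}(\log\langle j\rangle)^{-2}$. Your version is simply more explicit about the reduction to $a=0$ and the role of the logarithmic factor.
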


\begin{proof}
  We have
  \begin{equation}\label{jul102}
  \begin{aligned} 
    \int_t^{t+1}\mathbb{E}\|u(s,\cdot)-a\|_{\mathcal{C}_{w}(\mathbb{R})}^{2}\,\dif s & \le C\sum_{j\in\mathbb{Z}}\frac{\int_t^{t+1}\mathbb{E}\|u(s,\cdot)-a\|_{\mathcal{C}_{\mathrm{b}}([j,j+1])}^{2}\,\dif s}{w(j)^{2}}\\&\le C\sum_{j\in\mathbb{Z}}\frac{\int_t^{t+1}\mathbb{E}\|u(s,\cdot)-a\|_{H^{1}([j,j+1])}^{2}\,\dif s}{w(j)^{2}} \\
                                                                 & \le C\sum_{j\in\mathbb{Z}}w(j)^{-2}\le C.
  \end{aligned}
  \end{equation}
The first two inequalities in \cref{jul102} are by the first and 
second inequality in \cref{eq:goodweight}, respectively, and the last is by \cref{lem:L2bound,lem:gradientbound}.
\end{proof}

\begin{proof}[Proof of \cref{prop:boundedtightness}.]
We need to extend the above estimates to the case of bounded rather than constant initial data. %
Since in this case the solutions are not space-stationary, an analogue of \cref{prop:gamma}
is less clear. However, as the initial condition is bounded, using the comparison
principle \cref{prop:comparisonprinciple}, we can sandwich the solution
between two solutions starting with constant initial conditions to
achieve a bound on the size of the solution, and then use the parabolic
regularity in \cref{lem:mildregularitypoly} to control the derivatives.

Let us now describe the details. It suffices to consider
  the case $N=1$, so let $u$ be the sole component
  of $\mathbf{u}$, and let $u_{\pm}$ solve \cref{eq:uPDE-1} with initial
  condition $u_{\pm}(0,\cdot)=\pm\|u(0,\cdot)\|_{L^\infty(\mathbb{R})}$,
  so that 
  \begin{equation}\label{jul104}
  u_-(t,x)\le u(t,x)\le u_+(t,x)~~\text{ for all $t\ge 0$ and $x\in\R$.}
  \end{equation}
  \cref{prop:comparisonprinciple} and \cref{lem:L2bound} then imply \cref{eq:supu2bdd}.
  
  To prove tightness, 
  consider the weight $\tilde w(x)=\langle x\rangle^{1/2}\log\langle x\rangle$ as in \cref{prop:H1bound}.
  By \cref{jul104} and \cref{prop:H1bound}, we have,
  \begin{align}
    \int_t^{t+1}\mathbb{E}\|u(s,\cdot)\|_{\mathcal{C}_{\tilde{w}}(\mathbb{R})}^{2}\,\dif s & \le\int_t^{t+1}\mathbb{E}\|u_{+}(s,\cdot)\|_{\mathcal{C}_{\tilde{w}}(\mathbb{R})}^{2}\,\dif s+\int_t^{t+1}\mathbb{E}\|u_{-}(s,\cdot)\|_{\mathcal{C}_{\tilde{w}}(\mathbb{R})}^{2}\,\dif s\le C.\label{eq:Cplbd}
  \end{align}
  Now let $\eps>0$.
  By \cref{prop:thetawellposed} and the discussion following it, we see that there is an $M<\infty$ such that, for any $t\ge 2$ and any $s\in [t-2,t-1]$, we have
  \begin{equation}\label{eq:applythm-bdd}
    \mathbb{P}(\|u(t-1,\cdot)\|_{\mathcal{C}_{\p_{3/4}}}\le M\mid\mathcal{F}_{s})\ge 1-\eps/3\qquad\text{a.s. on the event }\|u(s,\cdot)\|_{\mathcal{C}_{\tilde w}(\mathbb{R})}\le (3C/\eps)^{1/2}.
  \end{equation}
  Also by \cref{prop:thetawellposed} and the discussion following it, we see that for every $\eps>0$, there is a compact set $\mathcal{K}\subset\mathcal{X}_{1/2}$ such that for any $t\ge 2$, we have
  \begin{equation}\label{eq:applythm-cpct}
    \mathbb{P}(u(t,\cdot)\in\mathcal{K}\mid\mathcal{F}_{t-1})\ge 1-\eps/3\qquad\text{a.s. on the event }\|u(t-1,\cdot)\|_{\mathcal{C}_{\p_{3/4}}(\mathbb{R})}\le M.
  \end{equation}
Moreover, by \cref{eq:Cplbd}, for any $t\ge 2$ we have some $s\in [t-2,t-1]$ such that $\mathbb{E}\|u(s,\cdot)\|_{\mathcal C_{\tilde w}(\mathbb R)}^2\le C$, which means by Markov's inequality that
\begin{equation}\mathbb{P}(\|u(s,\cdot)\|_{\mathcal{C}_{\tilde{w}}(\mathbb{R})}\ge (3C/\eps)^{1/2})\le \eps/3.\label{eq:apply-Markov}\end{equation}
Combining \cref{eq:applythm-bdd,eq:applythm-cpct,eq:apply-Markov}, we see that for any $t\ge 2$ we have%
\[
\mathbb{P}\left(u(t,\cdot)\in \mathcal K \right) \ge 1-\eps,
\]
which completes the proof of the tightness.
\end{proof}

\subsection{Extremality of the limits\label{subsec:extremality}}

In this section we show that if the initial condition is periodic, any subsequential limit of the tight
family of laws considered in \cref{prop:boundedtightness} is extremal.

We will
need a couple of preparatory lemmas. The first is a simple calculation
similar to \cref{lem:L2bound}. 
\begin{lem}\label{lem:L2bound-periodic}
Let $G=L\Zm$ or $G=\R$, $X\sim\Uniform(\Lambda_G)$ be independent of everything else, and fix~$T > 0$.
  Suppose that $v \sim \nu \in \overline{\mathscr{P}}_G(\mathcal{X}_{1/2})$ 
  is independent of the noise $V$ and that $\E v(X)^2<\infty$.
  Let $h$ be a solution to~\cref{eq:KPZ} with initial condition such that $\partial_{x}h(0, \cdot)=v$.
  Then we have
  \begin{equation}
    \mathbb{E}v(X)^{2}=\|\rho\|_{L^{2}(\mathbb{R})}^{2}
    -\frac{2}{T}\mathbb{E}{[}h(T,X)-h(0,X)].\label{eq:EuST}
  \end{equation}
\end{lem}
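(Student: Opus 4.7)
The plan is to adapt the argument of \cref{lem:L2bound} to the stationary initial condition case. First, I would apply \cref{prop:KPZsolntheory} to construct a strong solution $h$ of the KPZ equation \cref{eq:KPZ} satisfying $\partial_x h(t,\cdot) = u(t,\cdot)$ for all $t\ge 0$, where $u$ is the Burgers solution to \cref{eq:uPDE-1} with $u(0,\cdot) = v$. In particular this choice gives $\partial_x h(0,\cdot) = v$ as required by the hypothesis.

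Next, integrate \cref{eq:KPZ} in time from $0$ to $T$ at each fixed $x$, using $\partial_x^2 h = \partial_x u$ and $V(0,\cdot) \equiv 0$, to obtain
\[
h(T,x) - h(0,x) = \frac{1}{2}\int_0^T \partial_x u(t,x)\,\dif t - \frac{1}{2}\int_0^T u(t,x)^2\,\dif t + \frac{T}{2}\|\rho\|_{L^2(\R)}^2 + V(T,x).
\]
Now evaluate at $x = X \sim \Uniform(\Lambda_G)$ independent of $V$ (taking $X = 0$ when $G = \R$, with the paper's convention $\E f(0) = \E\int_0^1 f(x)\,\dif x$) and take expectations. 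Three pieces need to be computed: (a) $\E V(T,X) = 0$, since $V(T,\cdot)$ is a mean-zero Gaussian process and $X$ is independent of $V$; (b) $\E u(t,X)^2 = \E v(X)^2$ for each $t \in [0,T]$, by the time-stationarity $\Law(u(t,\cdot)) = \nu$; and (c) $\E \int_0^T \partial_x u(t,X)\,\dif t = 0$, which follows from the spatial invariance of $\Law(u(t,\cdot))$. For the case $G = L\Z$, Fubini reduces (c) to $\frac{1}{L}\E\int_0^T [u(t,L) - u(t,0)]\,\dif t$, and $L\Z$-invariance yields that $u(t,L)$ and $u(t,0)$ have the same law as $\R$-valued random variables, so \cref{lem:symmetrylemma} forces the expectation to vanish. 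The case $G = \R$ is handled analogously via translation invariance. Collecting the pieces gives
\[
\E[h(T,X) - h(0,X)] = \frac{T}{2}\big[\|\rho\|_{L^2(\R)}^2 - \E v(X)^2\big],
\]
which rearranges to \cref{eq:EuST}.

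The principal obstacle is verifying the integrability required to apply \cref{lem:symmetrylemma} and to exchange expectation with the time integrals. The hypothesis $\E v(X)^2 < \infty$ combined with time-stationarity bounds the nonlinear term via $\E\int_0^T u(t,X)^2\,\dif t = T\,\E v(X)^2$; Gaussian tail estimates handle $V(T,X)$; and the left-hand side $\E|h(T,X) - h(0,X)|$ can be controlled through the Duhamel formula \cref{jul140} together with the pathwise bounds on $\psi$ from \cref{lem:suppsi}. With these estimates in hand, \cref{lem:symmetrylemma} applies to $\int_0^T[u(t,L) - u(t,0)]\,\dif t$ and closes the argument.
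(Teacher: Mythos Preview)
Your proposal is correct and follows essentially the same route as the paper's proof: integrate the KPZ equation in time, average over $\Lambda_G$ (which is what evaluating at $X\sim\Uniform(\Lambda_G)$ and taking expectation does), and use the time-stationarity of $\nu$ to replace $\E u(t,X)^2$ by $\E v(X)^2$. One small over-complication: you invoke \cref{lem:symmetrylemma} for the $\partial_x u$ term, but the paper notes explicitly that this is unnecessary here, since $\E u(t,x)^2<\infty$ by stationarity gives $\E|u(t,L)|,\E|u(t,0)|<\infty$ directly, and then $G$-invariance in law immediately yields $\E[u(t,L)-u(t,0)]=0$.
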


\begin{proof} It is again sufficient to consider the case $L>0$. 
  Let $u$ solve \cref{eq:uPDE-1} with initial condition $v$.
  Integrating~\cref{eq:KPZ} in time from $0$ to $T$ and in space over $\Lambda_G$, taking expectations, and using \cref{eq:colehopf}, we have
  \begin{align}
    \mathbb{E}\int_{\Lambda_G}[h(T,x)-h(0,x)]\,\dif \lambda_G(x)=\frac{1}{2}\E\int_{0}^{T}\int_{\Lambda_G}[\partial_{x}u(t,x)-u(t,x)^{2}+\|\rho\|_{L^{2}(\mathbb{R})}^{2}]\,\dif\lambda_G(x)\,\dif t.\label{eq:integrateKPZ}
  \end{align}
Note that 
\begin{equation}\label{jul120}
\E \int_{\Lambda_G} \partial_xu(t,x)\,\dif \lambda_G(x)=\E[u((t,L)-u(t,0)]=0
\end{equation}
simply by the $G$-invariance in law of $u$ and since $\E(u(t,x))^2)<+\infty$. (We do not have to use \cref{lem:symmetrylemma} here.)

As $\nu$ is time-stationary,~\cref{eq:integrateKPZ} becomes
  \[
    \mathbb{E}{[}h(T,X)-h(0,X)]=-\frac{T}{2}\mathbb{E}v(X)^{2}+\frac{T}{2}\|\rho\|_{L^{2}(\mathbb{R})}^{2},
  \]
  which implies \cref{eq:EuST}.
\end{proof}

The next lemma compares the expectation of a KPZ evolution started with random initial 
condition to that of the KPZ equation started at deterministic bounded initial data.
\begin{lem}\label{lem:KPZexpsubadd}
  Fix $m \in (0, 1)$ and $G=L\Zm$ or $G=\R$, and
  let $X \sim \mathrm{Uniform}(\Lambda_G)$ be independent of all else.
  Let $v \in L^\infty(\R)$ be $G$-invariant and let $\tilde{v} \sim \tilde{\nu} \in \overline{\mathscr{P}}_{G}(\mathcal{X}_{m})$ satisfy $\E \tilde{v}(X)^2 < \infty$.
  Let $u$ and $\tilde{u}$ solve \cref{eq:uPDE-1} with initial data $v$ and $\tilde{v}$, respectively.
  Consider solutions $h$ and $\tilde{h}$ to \cref{eq:KPZ} obtained 
  from $u$ and $\tilde{u}$, respectively, as in \cref{prop:KPZsolntheory}.
  Then for all $t \geq 0$ and $x \in \R$ we have
  \begin{equation}\label{eq:expectationboundofh}
    \mathbb{E}[\tilde{h}(t,x)-h(t,x)]\le\sup_{x\in\R}[\E \tilde{h}(0, x) - h(0, x)].
  \end{equation}
\end{lem}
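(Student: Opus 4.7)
The plan is to invoke the Cole--Hopf transform and apply Jensen's inequality to a Feynman--Kac representation, in the spirit of the proof of \cref{prop:gamma}. Setting $\phi = \exp(-h)$ and $\tilde\phi = \exp(-\tilde h)$, both solve the multiplicative SHE \cref{eq:SHE} driven by the same noise $V$. The Feynman--Kac representation, valid here because $\rho$ is spatially smooth, gives, for each realization of the noise and each $(t,x)$,
\[
\phi(t,x) = \mathrm{E}_{X^{t,x}}\!\left[\exp\!\left\{-h(0,X(0)) - \int_0^t \dif V(s, X(s)) - \tfrac{t}{2}\|\rho\|_{L^2(\R)}^2\right\}\right],
\]
and similarly for $\tilde\phi$. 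Let $\pi(y)\,\dif y$ denote the (random) distribution of the endpoint $X(0)$ under the polymer measure obtained from $X^{t,x}$ by re-weighting with this stochastic exponential and normalizing by $\phi(t,x)$, in the spirit of \cref{eq:polymer-measure}. Then
\[
\frac{\tilde\phi(t,x)}{\phi(t,x)} = \int_\R \pi(y)\, \exp(-w(0,y))\,\dif y, \qquad w(0,y) := \tilde h(0,y) - h(0,y),
\]
and crucially $\pi$ depends only on $V|_{[0,t]}$ and on $h(0,\cdot)$ --- and $h(0,\cdot)$ is \emph{deterministic} because $v$ is. Since $\tilde v$ is independent of $V$, the random density $\pi$ and the random function $\tilde h(0,\cdot)$ are independent.

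Applying Jensen's inequality to the convex function $-\log$ against the probability density $\pi$ yields the pathwise bound
\[
\tilde h(t,x) - h(t,x) = -\log \int_\R \pi(y)\, \exp(-w(0,y))\, \dif y \;\le\; \int_\R \pi(y)\, w(0,y)\, \dif y.
\]
Take expectations. We may assume $c := \sup_{y \in \R}[\mathbb{E}\tilde h(0, y) - h(0, y)] < \infty$, since otherwise \cref{eq:expectationboundofh} is vacuous. Splitting $w = w^+ - w^-$, applying Tonelli's theorem to each nonnegative part, and using the independence of $\pi$ and $w(0,\cdot)$,
\[
\mathbb{E}\!\int_\R \pi(y)\, w(0,y)\, \dif y \;=\; \int_\R \mathbb{E}\pi(y)\, \mathbb{E} w(0,y)\, \dif y \;\le\; c \int_\R \mathbb{E}\pi(y)\, \dif y \;=\; c,
\]
where the final equality uses $\int_\R \pi(y)\, \dif y = 1$ almost surely. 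Combined with the Jensen bound, this proves \cref{eq:expectationboundofh}.

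The main technical hurdle will be justifying the Feynman--Kac representation and the applications of Tonelli's theorem in the growing setting. Since $v \in L^\infty(\R)$ the function $h(0,\cdot)$ grows at most linearly, and since $\tilde v \in \mathcal{X}_m$ the random function $\tilde h(0,\cdot)$ has at most polynomial growth; these growth rates are controlled by the Gaussian tails of the Brownian bridge, so the stochastic exponentials and weights remain integrable. The $G$-stationarity of $\tilde v$ together with $\mathbb{E}\tilde v(X)^2 < \infty$ implies that $\mathbb{E}|\tilde h(0,y)|$ grows at most linearly in $|y|$, while $\mathbb{E}\pi(y)$ inherits Gaussian decay from the bridge, so the product $\mathbb{E}\pi(y)\, \mathbb{E}|w(0,y)|$ is integrable on $\R$ and Tonelli applies.
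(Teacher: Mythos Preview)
Your approach is correct in outline and takes a genuinely different route from the paper. Both arguments exploit the same key structural fact---that $v$, hence $h(0,\cdot)$, is \emph{deterministic}---so that one can average over the random initial data $\tilde v$ while holding the noise fixed. The paper does this at the PDE level: writing $\mathrm{E}$ for expectation over $\tilde v$ only, Jensen on the squared-gradient nonlinearity shows that $\mathrm{E}\tilde h$ is a subsolution of \cref{eq:KPZ}, so $z=\mathrm{E}\tilde h - h$ satisfies a linear transport-diffusion inequality $\partial_t z \le \tfrac12\partial_x^2 z - b\,\partial_x z$. The bulk of the paper's proof is then a maximum-principle argument for this inequality in the growing setting, which requires propagating moments of $\mathrm{E}\tilde u$ and $\mathrm{E}\tilde u^2$ (using the time-stationarity of $\tilde\nu$ and \cref{lem:grad-bound-stationary}), mollifying in space, and constructing an explicit supersolution. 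Your argument replaces that maximum-principle machinery by the Feynman--Kac representation of the same semigroup: the polymer measure $\pi$ is built from the deterministic $h(0,\cdot)$ and the noise only, so it is independent of $w(0,\cdot)$, and Jensen on $-\log$ gives the comparison directly. This is conceptually cleaner and closer in spirit to the polymer methods of \cite{BL19} that the paper's introduction contrasts with its own PDE approach.

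The one place your sketch is imprecise is the Tonelli step. The claim that ``$\E\pi(y)$ inherits Gaussian decay from the bridge'' is not obvious, because $\pi$ is normalized by the random partition function $\phi(t,x)$, and controlling $\E\pi(y)$ pointwise would require negative moments of $\phi(t,x)$. Fortunately you do not need this: if you condition on the noise first (which your independence argument is implicitly doing), you only need that for almost every realization of $V$ the polymer endpoint has finite first moment, i.e.\ $\int|y|\,\pi(y)\,\dif y<\infty$ a.s. Since $h(0,\cdot)$ grows at most linearly and, for fixed smooth $V$, the stochastic weight grows sub-quadratically against the Gaussian density of the bridge, the polymer measure has sub-Gaussian tails pathwise and this is fine. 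Making this fully rigorous---in particular, justifying the Feynman--Kac formula when $\tilde h(0,\cdot)$ may grow like $|y|^{1+m}$ with $m<1$---is the probabilistic analogue of the regularity work the paper carries out on the PDE side, and deserves a line or two more than you have given it.
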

This is a version of the comparison principle for the KPZ equation. 
The difficulty comes from the fact that $\tilde h(0,x)$ is not necessarily uniformly
bounded, so the standard pathwise comparison principle would be vacuous, as the right
side of \cref{eq:expectationboundofh} would be infinite without taking the expectation. 
Note that,  as we use solutions coming from \cref{prop:KPZsolntheory}, the expectations $\E\tilde h(t,x)$ and~$\E h(t,x)$ are finite
under the assumptions of \cref{lem:KPZexpsubadd}.
We postpone the proof of this lemma until \cref{sec:5.3} and first explain how it is used
to show the extremality of the limiting invariant measures.
The first step is to show that a limiting solution started from deterministic bounded initial 
condition has minimal variance among all spacetime-stationary solutions.
\begin{prop}\label{prop:minvariance}
Fix $m\in(0,1)$ and let $G=L\Z$ or $G=\R$. Fix
a deterministic, $G$-invariant function $v\in L^{\infty}(\mathbb{R})$
and let $\delta_{v}\in {\mathscr{P}}_{G}(\mathcal{X}_{m})$ 
be the delta measure on $v$. Suppose that~$T_{k}\to+\infty$, and 
  \begin{equation}\label{jul118}
  \frac{1}{T_k}\int_{0}^{T_{k}}P_{t}^{*}\delta_{v}\,\dif t\to \nu\in\mathscr{P}(\mathcal{X}_{m}),
  \end{equation}
weakly. 
Let $w\sim\nu$, $\tilde{w}\sim\tilde{\nu}\in\overline{\mathscr{P}}_{G}(\mathcal{X}_{m})$, and $X\sim\Uniform(\Lambda_G)$
  be independent of everything else. Then we have
  \[
    \Var [w(X)] \le\Var[\tilde{w}(X)].
  \]
\end{prop}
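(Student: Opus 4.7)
The plan is to reduce the variance inequality to one between second moments and then exploit a cancellation between a time-averaged KPZ identity for the nonstationary solution $u^v$ and the stationary identity \cref{lem:L2bound-periodic} applied to $\tilde v$. As preliminaries, \cref{prop:krylovbogoliubov} implies $\nu \in \overline{\mathscr{P}}_G(\mathcal{X}_m)$, while the uniform second-moment bound in \cref{prop:boundedtightness} gives $\E w(X)^2 < \infty$. Taking expectations in the Burgers equation and using the $G$-invariance in law of $u^v(t,\cdot)$ yields $\E u^v(t, X) = \bar v \coloneqq \frac{1}{L}\int_0^L v(x)\,\dif x$ (or $\bar v = v$ if $G = \R$) for every $t \geq 0$, so $\E w(X) = \bar v$ as well. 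By shear invariance (\cref{prop:shearinvariance}), replacing $\tilde v$ by $\tilde v - c$ for appropriate $c \in \R$ produces another measure in $\overline{\mathscr{P}}_G(\mathcal{X}_m)$ with the same variance but mean $\bar v$, so I may assume $\E \tilde w(X) = \bar v$, reducing the problem to showing $\E w(X)^2 \leq \E \tilde w(X)^2$.

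Let $h^v$ and $\tilde h$ denote the KPZ solutions built via \cref{prop:KPZsolntheory} from $u^v$ and $\tilde u$, respectively. Integrating \cref{eq:KPZ} in space against $\lambda_G$ over $\Lambda_G$ and in time over $[0,T]$, and taking expectations exactly as in the proof of \cref{lem:L2bound-periodic} (the $\partial_x u$ term again vanishes by the $G$-invariance of $u^v(t,\cdot)$ in law), gives the nonstationary identity
\begin{equation*}
  \frac{1}{T}\int_0^T \E u^v(t, X)^2 \, \dif t = \|\rho\|_{L^2(\R)}^2 - \frac{2}{T}\E[h^v(T, X) - h^v(0, X)].
\end{equation*}
Combining with the stationary identity $\E \tilde w(X)^2 = \|\rho\|_{L^2(\R)}^2 - \frac{2}{T}\E[\tilde h(T, X) - \tilde h(0, X)]$ from \cref{lem:L2bound-periodic} and subtracting produces
\begin{equation*}
  \E \tilde w(X)^2 - \frac{1}{T}\int_0^T \E u^v(t, X)^2 \, \dif t = \frac{2}{T}\Big\{\E[h^v(T, X) - \tilde h(T, X)] - \E[h^v(0, X) - \tilde h(0, X)]\Big\}.
\end{equation*}

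The inequality $\E[\tilde h(T,x) - h^v(T,x)] \leq \sup_y \E[\tilde h(0,y) - h^v(0,y)]$ of \cref{lem:KPZexpsubadd}, after negation, gives the lower bound $\E[h^v(T,X) - \tilde h(T,X)] \geq \inf_y \E[h^v(0,y) - \tilde h(0,y)]$. A direct computation from \cref{jul140} shows $\E[h^v(0, x) - \tilde h(0, x)]$ is bounded in $x$: writing $v = \bar v + v_0$ with $v_0$ of mean zero, the antiderivative $V_0(x) = \int_0^x v_0(z)\,\dif z$ is bounded and $L$-periodic, and matching means cancels the linear-in-$x$ parts of $h^v(0,x)$ and $\E\tilde h(0,x)$, leaving $\E[h^v(0, \cdot) - \tilde h(0, \cdot)] = V_0 + \text{const}$ (for $G = L\mathbb{Z}$), or identically zero (for $G = \R$, where $v$ is constant). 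Consequently the right-hand side of the previous display is bounded below by $-C/T$ for a constant $C$ independent of $T$.

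Finally, setting $T = T_k \to \infty$ sends this lower bound to zero. For the left-hand side, the Portmanteau theorem applied to the bounded continuous function $u \mapsto \E[u(X)^2 \wedge K]$ on $\mathcal{X}_m$, followed by monotone convergence as $K \to \infty$, yields $\liminf_{k \to \infty} \frac{1}{T_k}\int_0^{T_k}\E u^v(t, X)^2\,\dif t \geq \E w(X)^2$. Combining the two gives $\E \tilde w(X)^2 \geq \E w(X)^2$, as desired. The main subtlety is controlling the boundedness of $\E[h^v(0, \cdot) - \tilde h(0, \cdot)]$ on all of $\R$, which is possible only because the deterministic initial condition $v$ is bounded and $G$-invariant: an unbounded or non-periodic $v$ would cause this difference to grow in $x$, rendering \cref{lem:KPZexpsubadd} too weak to close the argument.
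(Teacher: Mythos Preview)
Your proof is correct and follows essentially the same approach as the paper: both use the time-averaged KPZ identity for the solution started at $v$, the stationary identity of \cref{lem:L2bound-periodic} for $\tilde w$, and \cref{lem:KPZexpsubadd} to compare the two, combined with a Fatou/Portmanteau argument to pass to the limit along $T_k$. The only organizational difference is that the paper first shears $v$ to mean zero and then writes a chain of inequalities, whereas you keep $\bar v$ general, shear only $\tilde w$, and subtract the two identities directly; these are equivalent. Two minor points: you should note at the outset that if $\Var\tilde w(X)=\infty$ there is nothing to prove (otherwise \cref{lem:L2bound-periodic} and \cref{lem:KPZexpsubadd} do not apply), and your formula $\E[h^v(0,\cdot)-\tilde h(0,\cdot)]=V_0+\text{const}$ omits the bounded periodic antiderivative of $\E\tilde w-\bar v$, though this does not affect the boundedness conclusion you need.
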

\begin{proof}
If $\Var \tilde{w}(X) = \infty$ then there is nothing to show, so we may 
assume that $\Var \tilde{w}(X) <\infty$.
  By \cref{prop:shearinvariance}, it suffices to consider the case 
  \begin{equation}\label{jul110}
  \E v(X) =\int_{\Lambda_G}v(x)d\lambda_G(x)=0.
  \end{equation}
  Let $u\in\mathcal{Z}_{m}$ be a solution to \cref{eq:uPDE-1} with initial
  condition $u(0,\cdot)=v$. By \cref{prop:L1omegaconservation} and the uniform integrability coming
  from \cref{lem:L2bound}, we have that
  \begin{equation}
    \mathbb{E}w(X)=\mathbb{E}\left[\int_{\Lambda_G}w(x)\,\dif \lambda_G(x)\right]=\lim_{k\to\infty}\int_{\Lambda_G}\mathbb{E}u(S_{T_{k}},x)\,\dif \lambda_G(x)=0,\label{eq:Eintwiszero}
  \end{equation}
  where~$S_{T}\sim\Uniform([0,T])$
  is independent of everything else. 
  Let $h$ be a solution to \cref{eq:KPZ} such that
  \begin{equation}\label{jul128}
  \partial_{x}h(0, \cdot)=v(0, \cdot),~~\hbox{ and }
  \int_{\Lambda_G}h(0,x)\,\dif \lambda_G(x)=0,
  \end{equation}
  so that $\partial_{x}h(t,\cdot) = u(t, \cdot)$.
  We see from \cref{jul110}   and the second condition in \cref{jul128} that 
  \begin{equation}\label{jul112}
  \|h(0,\cdot)\|_{L^\infty(\R)}\le\|v\|_{L^\infty(\R)}.
  \end{equation}
 Hence, the comparison principle implies that 
 \[
 |h(t,x)-h_0(t,x)|\le \|v\|_{L^\infty(\R)}.
 \]
 Here, $h_0(t,x)$ is the solution  
 to \cref{eq:KPZ} with the initial condition $h_0(0,x)\equiv 0$. As a consequence,
 we know that $\E h(t,x)$ is finite for all $t\ge 0$. 
  
  By %
  Fatou's lemma and \cref{jul118},
  we have
  \begin{equation}
    \Var w(X)=\mathbb{E}\left[\int_{\Lambda_G}w(x)^{2}\,\dif \lambda_G(x)\right] \le\liminf_{k\to\infty}\mathbb{E}\left[\int_{\Lambda_G}u(S_{T_{k}},x)^{2}\,\dif \lambda_G(x)\right]. 
  \end{equation}
The expectation in the right side is finite by \cref{prop:boundedtightness}.
Recalling \cref{eq:integrateKPZ},
noting that \cref{jul120} relies only on the spatial stationarity of $u$, and using 
\cref{jul112}, 
we see that
\begin{equation}
\begin{aligned}
\Var w(X)\le\liminf_{k\to+\infty}
\mathbb{E}\left[\int_{\Lambda_G}u(S_{T_{k}},x)^{2}\,\dif \lambda_G(x)\right]
     =\|\rho\|_{L^{2}(\mathbb{R})}^{2}-\limsup_{k\to\infty}\frac{2}{T_{k}}\mathbb{E}\left[\int_{\Lambda_G}h(T_{k},x)\,\dif \lambda_G(x)\right].\label{eq:VarwXL}
\end{aligned}
\end{equation}
Next, by \cref{prop:shearinvariance}, we can also assume without loss of generality that
\begin{equation}\label{jul122}
 \int_{\Lambda_G} \E \tilde{w}(y)\,\dif\lambda_G( y) = 0,
 \end{equation}
 since we can subtract off the appropriate mean without changing the variance.
  Let $\tilde{u}$ solve \cref{eq:uPDE-1} with initial condition $\tilde{w}$, and construct a solution $\tilde{h}$ of \cref{eq:KPZ} from $\tilde{u}$ using \cref{prop:KPZsolntheory}, so that, 
in particular,~$\partial_x \tilde{h} = \tilde{u}$. As  $\tilde h(0,x)$ satisfies \cref{jul124}, there exists a random point $x_0\in\supp\zeta$ such that~$\tilde h(0,x_0)=0$. 
As $\E\tilde{w}$ is $G$-invariant and satisfies \cref{jul122}, we may   write 
\begin{equation}\label{jul130}
|\E \tilde h(0,x)|=\Big|\E\int_{x_0}^x  \tilde w(y)\,\dif y\Big|\le \E\Big|\int_{x_0}^x\tilde w(y)\,\dif y\Big|\le \E\int_0^L|\tilde w(y)|\,\dif y
<L^{1/2}(\E(\tilde w(X)^2))^{1/2}
<\infty.
\end{equation}
In addition, directly from \cref{jul140} we get
\[
\E |\tilde{h}(t, x)| < \infty \hbox{ for all $t>0$, $x\in\R$.}
\]
As $\mathbb{E}\tilde{u}(0,X)= \E \tilde{w}(X) =0$, \cref{lem:L2bound-periodic} implies
that for all $t > 0$  we have
  \begin{equation}
    \Var\tilde{w}(X)=\mathbb{E}\tilde{u}(t,X)^{2}=\|\rho\|_{L^{2}(\mathbb{R})}^{2}-\frac{2}{t}\mathbb{E}[\tilde{h}(t,X)-\tilde{h}(0,X)].\label{eq:Varwtilde}
  \end{equation}
  Also, it follows from \cref{lem:KPZexpsubadd}
  that
  \begin{align}
    \mathbb{E}\tilde{h}(t,X) & \le\mathbb{E}h(t,X)+\|\E\tilde h(0,\cdot)\|_{\mathcal{C}_{\mathrm{b}}(\R)} + \|h(0,\cdot)\|_{\mathcal{C}_{\mathrm{b}}(\mathbb{R})}.
                               \label{eq:comparehhtilde-2}
  \end{align}
  Therefore, we have
  \begin{align*}
    \Var w(X) & \le\|\rho\|_{L^{2}(\mathbb{R})}^{2}-\limsup_{k\to\infty}\frac{2}{T_{k}}\mathbb{E}h(T_{k},X)                                         \\
              & \le\|\rho\|_{L^{2}(\mathbb{R})}^{2}-\limsup_{k\to\infty}\frac{2}{T_{k}}\left(\mathbb{E}\tilde{h}(T_{k},X)
		-\|\E\tilde h(0,\cdot)\|_{\mathcal{C}_{\mathrm{b}}(\R)}-\|h(0,\cdot)\|_{\mathcal{C}_{\mathrm{b}}(\mathbb{R})}\right)\nonumber                         \\
              & =\limsup_{n\to\infty}\left(\|\rho\|_{L^{2}(\mathbb{R})}^{2}-\frac{2}{T_{k}}[\mathbb{E}\tilde{h}(T_{k},X)-\E\tilde{h}(0,X)]\right)=\Var\tilde{w}(X),                                                                                                                        
  \end{align*}
  where the first inequality is \cref{eq:VarwXL}; the second inequality
  is by \cref{eq:comparehhtilde-2}; the first equality is by {\cref{jul112}}; and the second equality is 
  by \cref{eq:Varwtilde}.
\end{proof}
\begin{prop}
  \label{prop:limitsareergodic}
  Suppose that $G=L\Zm$ with $L>0$ or $G=\Rm$, and that a deterministic 
  function~$v\in L^{\infty}(\mathbb{R})$ is~$G$-invariant. Let $\delta_{v}$ be the measure on $L_{\p_{1/2+}}^{\infty}(\mathbb{R})$
  with a single atom at $v$. If~$s\ge0$,~$T_{k}\uparrow\infty$,
  and $\nu\in\mathscr{P}(\mathcal{X}_{1/2})$ are such that
  \[
    \nu = \lim_{k\to\infty}\frac{1}{T_{k}}\int_{s}^{s+T_{k}}P_{t}^{*}\delta_{\mathbf{v}}\,\dif t
  \]
  in the sense of weak convergence of probability measures on $\mathcal{X}_{1/2}$,
  then $\nu\in\overline{\mathscr{P}}_{G}^{\mathrm{e}}(\mathcal{X}_{1/2})$.
\end{prop}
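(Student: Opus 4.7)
The plan is to prove $(i)$ that $\nu \in \overline{\mathscr{P}}_G(\mathcal{X}_{1/2})$ and $(ii)$ that no nontrivial convex decomposition of $\nu$ inside $\overline{\mathscr{P}}_G(\mathcal{X}_{1/2})$ exists. Part $(i)$ is immediate: $\delta_v$ is $G$-invariant because $v$ is deterministic and $G$-invariant, and $P_t^*$ commutes with $G$-translations since the noise $\dif V$ has $G$-stationary law, so each $P_t^*\delta_v$ is $G$-invariant, as is the Ces\`aro average $\nu$; combining with \cref{prop:krylovbogoliubov} yields $P_t^*$-invariance.

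For $(ii)$, suppose $\nu = (1-q)\mu_0 + q\mu_1$ with $\mu_0, \mu_1 \in \overline{\mathscr{P}}_G(\mathcal{X}_{1/2})$ and $q \in (0,1)$. By \cref{prop:boundedtightness} and Fatou's lemma, $\mathbb{E}_\nu w(X)^2 < \infty$, and the mixture decomposition propagates this finiteness to both $\mu_i$. Applying \cref{prop:minvariance} with $\tilde\nu = \mu_i$ yields $\Var_\nu w(X) \le \Var_{\mu_i} w(X)$ for $i \in \{0,1\}$. The law of total variance for the mixture,
\begin{equation*}
\Var_\nu w(X) = (1-q)\Var_{\mu_0} w(X) + q\Var_{\mu_1} w(X) + q(1-q)(a_0 - a_1)^2,
\end{equation*}
with $a_i = \mathbb{E}_{\mu_i} w(X)$, combined with the above inequalities, forces $\Var_{\mu_0} = \Var_{\mu_1} = \Var_\nu$ and $a_0 = a_1 =: a$.

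Use \cref{prop:couple} to produce a coupling $\bar\nu \in \overline{\mathscr{P}}_G(\mathcal{X}_{1/2}^2)$ with marginals $(\mu_0, \mu_1)$; for $(w_1, w_2) \sim \bar\nu$ the hypothesis (H1) of \cref{thm:ordering} is satisfied, so $\sigma := \sgn(w_1 - w_2)$ is almost surely a constant in $x$, valued in $\{-1, 0, 1\}$. Write $p_s = \bar\nu(\sigma = s)$ and, whenever $p_s > 0$, define $\bar\nu_s = \bar\nu(\,\cdot \mid \sigma = s)$; its marginals $\mu_i^s$ inherit second-moment finiteness via $\mathbb{E}_{\mu_i}[w(X)^2] = \sum_s p_s \mathbb{E}_{\mu_i^s}[w(X)^2]$. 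Each event $\{\sigma = s\}$ is $G$-invariant by direct inspection and forward-invariant under $P_t$: the case $s = 0$ reduces to uniqueness of solutions, while $s = \pm 1$ uses the strong parabolic maximum principle applied to \cref{eq:etaPDE} to upgrade \cref{prop:comparisonprinciple} to strict comparison for $t > 0$. Because the three events are disjoint and forward-invariant, $P_t^*\bar\nu = \bar\nu$ implies $P_t^*\bar\nu_s = \bar\nu_s$ separately for each $s$, so $\bar\nu_s \in \overline{\mathscr{P}}_G(\mathcal{X}_{1/2}^2)$ and $\mu_i^s \in \overline{\mathscr{P}}_G(\mathcal{X}_{1/2})$.

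Rerunning the variance argument on the decomposition $\mu_i = \sum_s p_s \mu_i^s$ forces $\mathbb{E}_{\mu_i^s} w(X) = a$ for every $s$ with $p_s > 0$. If $p_1 > 0$, then under $\bar\nu_1$ we have $w_1(x) > w_2(x)$ strictly at every $x$, so $(w_1 - w_2)(X) > 0$ a.s.\ and $\mathbb{E}_{\bar\nu_1}[(w_1 - w_2)(X)] > 0$; yet the equal marginal means give $\mathbb{E}_{\bar\nu_1}[(w_1 - w_2)(X)] = a - a = 0$, a contradiction. Symmetrically $p_{-1} = 0$, so $\sigma \equiv 0$, meaning $w_1 = w_2$ a.s., and hence $\mu_0 = \mu_1 = \nu$. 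The principal obstacle I foresee is justifying the forward-invariance of $\{\sigma = \pm 1\}$ rigorously via the strong parabolic maximum principle in the unbounded-coefficient setting of \cref{eq:etaPDE}, essentially replicating the Hopf-type argument used in \cref{lem:onlytangentialintersections}; the remainder is second-moment bookkeeping driven by \cref{prop:minvariance}.
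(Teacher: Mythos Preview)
Your proposal is correct and follows essentially the same approach as the paper: both use \cref{prop:krylovbogoliubov} for invariance, \cref{prop:couple} to build a joint invariant measure, \cref{thm:ordering} to get a well-defined sign, forward-invariance of the sign under the dynamics, and \cref{prop:minvariance} combined with the law of total variance to force the sign to vanish. The only organizational difference is that the paper introduces a Bernoulli variable $I$ so that $v_I\sim\nu$ and runs a single total-variance decomposition conditioning on $(I,\chi)$ jointly, whereas you run the variance argument twice (first on the mixture $\nu=(1-q)\mu_0+q\mu_1$, then on each $\mu_i=\sum_s p_s\mu_i^s$); your first pass is in fact subsumed by the second, but this redundancy is harmless. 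Your flagged ``principal obstacle'' about forward-invariance of $\{\sigma=\pm1\}$ is exactly the strong-maximum-principle step the paper carries out in the proof of \cref{prop:ordering-morespecific}, so no new work is required.
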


\begin{proof}
  The fact that $\nu\in\overline{\mathscr{P}}_{G}(\mathcal{X}_{1/2})$
  is \cref{prop:krylovbogoliubov}, so we only need to show that $\nu$
  is extremal. Suppose that we can decompose $\nu$ as
  \[
  \nu=(1-q)\mu_{0}+q\mu_{1},
  \]
  for some $q\in(0,1)$ and $\mu_{0},\mu_{1}\in\overline{\mathscr{P}}_{G}(\mathcal{X}_{1/2})$.
  By \cref{prop:couple}, there exists a coupling $\mu\in\overline{\mathscr{P}}_{G}(\mathcal{X}_{1/2}^{2})$
  of $\mu_{0}$ and $\mu_{1}$. Let $(v_{0},v_{1})\sim\mu$ and consider $v_I$,
 where 
  $I\sim\Bernoulli(q)$ is a random variable, independent of everything
  else.   Then, $v_I$ is distributed according to $\nu$: 
  \[
  \Law(v_{I})=(1-q)\mu_{0}+q\mu_{1}=\nu.
  \]
  By \cref{thm:ordering},  the sign 
  $\chi\coloneqq\sgn(v_{0}(x)-v_{1}(x))$ {$\mu$}-almost surely
  does not depend on $x$, and
  by the comparison principle in \cref{prop:comparisonprinciple} we know that~$\chi$ is invariant under the dynamics \cref{eq:uPDE-many}. Hence, the restrictions   
  of $\mu$ onto the sets $\{\chi=b\}\subset\cX_{1/2}^2$ are invariant, for each $b\in\{-1,0,1\}$,
  as are
  \begin{equation}
    \nu_{i,b}\coloneqq\Law(v_{I}\mid I=i,\chi=b)\in\overline{\mathscr{P}}_{G}(\mathcal{X}_{m})\label{eq:conditionalinvariance}
  \end{equation}
  for all $i\in\{0,1\}$ and $b\in\{-1,0,1\}$ such that $\mathbb{P}[\chi = b] > 0$.

  Let now $X\sim\Uniform(\Lambda_G)$ be independent of everything else.
  By the law of total variance, we have
  \begin{equation}\label{eq:lawoftotalvariance}
 \Var(v_{I}(X))=\mathbb{E}\Var(v_{I}(X)\mid I,\chi)+\Var(\mathbb{E}[v_{I}(X)\mid I,\chi]).
  \end{equation}
 As $\Law(v_{I})=\nu$, and $\nu_{i,b}$ are invariant, 
 by \cref{prop:minvariance} and \cref{eq:conditionalinvariance},
  we have
  \[
  \Var(v_I(X)\mid I,\chi)
 \ge \Var(v_I(X)),~~\hbox{a.s.}
  \]
 In light of \cref{eq:lawoftotalvariance},
  this means that 
  \[
  \Var(\mathbb{E}[v_{I}(X)\mid I,\chi])=0,
  \]
  so that $\mathbb{E}[v_{I}(X)\mid I,\chi]$ 
  is constant almost surely. On
  the other hand, if $\chi=1$, then $v_{0}(x)>v_{1}(x)$ almost surely, thus
  \[
  \mathbb{E}[v_{0}(X)\mid\chi=1]>\mathbb{E}[v_{1}(X)\mid\chi=1].
  \]
  Similarly, if $\chi=-1$, then $v_{0}(x)<v_{1}(x)$ almost surely, and
  \[
  \mathbb{E}[v_{0}(X)\mid\chi=-1]<\mathbb{E}[v_{1}(X)\mid\chi=-1].
  \]
  The only way these facts can be consistent with the almost-surely-constant
  nature of $\mathbb{E}[v_{I}(X)\mid I,\chi]$ is that $\chi=0$ almost
  surely. Therefore, $v_{0}=v_{1}$ almost surely, which means that
  $\mu_{0}=\mu_{1}$. This implies that $\nu$ is extremal.
\end{proof}

\subsection{Proof of \texorpdfstring{\cref{lem:KPZexpsubadd}}{Lemma~\ref{lem:KPZexpsubadd}}}
\label{sec:5.3}

\begin{proof} Let us recall again that the claim of \cref{lem:KPZexpsubadd}
would be just the comparison principle for the KPZ equation, and would hold
without taking the expectation, if only
it were true that the initial conditions $\tilde h(0,x)$ and $h(0,x)$ are bounded.
Indeed, the difference $H(t,x)=h(t,x)-\tilde h(t,x)$ satisfies
\[
\partial_tH+\farc{1}{2}(\partial_xh+\partial_x\tilde h)\partial_xH=\farc{1}{2}\partial_x^2H,
\]
and the maximum principle would imply
\[
H(t,x)\le \sup_{x}H(0,x)\hbox{ for all $t>0$ and $x\in\Rm$,}
\]
if the supremum in the right side were finite.
  
As usual, we assume without loss of generality that $G = L \mathbb{Z}$ for some $L > 0$,
and let $X$ be uniformly distributed on $[0,L]$.
Note that, as spatial integrals of a periodic functions, $h(0,\cdot)$ and~$\tilde h(0,\cdot)$ 
satisfy
\[
h(0, x) \sim [\E v(X)] x,~~\tilde{h}(0, x) \sim [\E \tilde{v}(X)] x,~\hbox{ as $|x| \to \infty$.}
\]
Thus we can assume $\E v(X) = \E \tilde{v}(X)$, since otherwise the supremum on the right side 
of \cref{eq:expectationboundofh} is infinite and the statement is
vacuous.
By \cref{prop:shearinvariance}, we may assume that~$\E v(X) = \E \tilde{v}(X) = 0$,
so that $\E\tilde{h}(0,x)$ is continuous and $L$-periodic in $x$.
  
Let $\mathrm{E}$ denote expectation in which only the initial condition $\tilde{h}(0, \cdot)$ 
and $X$ are treated as random and the noise is considered to be deterministic.
  That is, $\EE[Y]=\E[Y\mid\mathcal{F}_\infty]$, where $\mathcal{F}_\infty$ is the~$\sigma$-algebra generated by the noise. Taking formally the $\EE$ expectation of \cref{eq:KPZ} gives 
  \begin{equation}\label{jul212}
    \dif (\EE \tilde{h}) \le \frac{1}{2}[\partial^2_x (\EE \tilde h) -  (\partial_x (\EE \tilde h))^2 + \|\rho\|_{L^2(\R)}^2]\dif t+\dif V,
  \end{equation}
Subtracting another copy of \cref{eq:KPZ} for $h$, we find 
  \begin{equation}\label{jul202}
    \partial_t (\EE \tilde h - h) \le \frac{1}{2}[\partial_x^2(\EE\tilde{h}-h)-\partial_x(\EE\tilde h-h)\cdot\partial_x(\EE\tilde h +h)]. 
  \end{equation}  
Once again, if the comparison principle could be applied to \cref{jul202}, we would be essentially
done. However, we need to justify that $\EE \tilde h$ exists and is sufficiently regular, so that the formal manipulations
are justified. %
This propagation of moments does not follow from the regularity results we have obtained so far,
and we will use stationarity of $\tilde\nu$ instead.

We denote $\tilde u=\partial_x\tilde h$.  
The time-invariance and \cref{lem:grad-bound-stationary} imply that
  \begin{equation}\label{jul208}
    \E \|\EE \tilde{u}(t, \cdot)\|_{H^1(\Lambda_G)}^2 \leq \E \|\tilde{u}(t, \cdot)\|_{H^1(\Lambda_G)}^2 = \E \|\tilde{v}\|_{H^1(\Lambda_G)}^2 = L\E \tilde v(X)^2 + L \|\partial_x \rho\|_{L^2(\R)}^2 < \infty
  \end{equation}
  for any $t \in [0, T]$. Here,  $\Lambda_G$ is  a fundamental domain for $G$, and as usual
  we use the notation $\E$ for expectation on the largest probability space involving all objects.
  Now fix $\ell \in \big(m \vee {1}/{2}, 1\big)$.
  Morrey's inequality implies
  \begin{equation*}
    \|\EE \tilde{u}(t, \cdot)\|_{\mathcal{C}_{\p_\ell}^{1/2}(\R)} \leq C \sup_{j \in \mathbb{Z}} \frac{\|\EE \tilde{u}(t, \cdot)\|_{\mathcal{C}^{1/2}(\Lambda_G + Lj)}}{{\langle Lj \rangle}^{\ell}} \leq C \sup_{j \in \mathbb{Z}} \frac{\|\EE \tilde{u}(t, \cdot)\|_{H^1(\Lambda_G + jL))}}{{\langle j \rangle }^{\ell}}
  \end{equation*}
with  some constant $C$ depending on $L$.
  Arguing as in the proof of \cref{prop:H1bound}, and using \cref{jul208}, we obtain 
  \[
  \E \|\EE \tilde{u}(t, \cdot)\|_{\mathcal{C}_{\p_\ell}^{1/2}(\R)}^2 < C, %
  \] 
  uniformly in $t$.
  Integrating in $t$, we find that for any $T\ge 0$, we have
  \begin{equation}
    \label{eq:exp-u}
    \EE \tilde{u} \in L^2([0, T]; \mathcal{C}_{\p_\ell}^{1/2}(\R))
  \end{equation}
  almost surely in the noise. Next, recall that 
  \[
  \EE(\partial_x \tilde{h})^2 = \EE \tilde{u}^2,
  \]
 and
 \[
 \E \tilde{u}(t, X)^2 = \E \tilde{v}(X)^2 < \infty.
 \]
  Also, \cref{lem:grad-bound-stationary} implies
  \begin{equation*}
    \E |\partial_x(\tilde{u}^2)(t, X)| \leq 2 \E[ |\tilde{u}(t, X)| |\partial_x \tilde{u}(t, X)|] \leq \E \|\tilde{v}\|_{H^1(\Lambda_G)}^2  < \infty.
  \end{equation*}
It follows that 
\[
\E \|\EE \tilde{u}^2(t, \cdot)\|_{W^{1,1}(\Lambda_G)} < C, %
\]
uniformly in $t$.
  Since $W^{1,1}(\Lambda_G)$ embeds continuously into $\mathcal{C}_{\mathrm{b}}(\Lambda_G)$, we have
  \begin{equation*}
    \E \|\EE \tilde{u}^2(t, \cdot)\|_{\mathcal{C}_{\p_{\ell + 1/2}}(\R)} \leq C \sum_{j \in \mathbb{Z}} \frac{\E \|\EE\tilde{u}^2(t, \cdot))\|_{W^{1,1}(\Lambda_G + Lj)}}{{\langle Lj \rangle}^{\ell + 1/2}} \leq C \sum_{j \in \mathbb{Z}} {\langle j \rangle}^{-(\ell + 1/2)} < \infty.
  \end{equation*}
  Thus,
  \begin{equation}
    \label{eq:exp-u-square}
    \EE \tilde{u}^2 = \EE (\partial_x \tilde{h})^2 \in L^1([0, T]; \mathcal{C}_{\p_{\ell + 1/2}}(\R))
  \end{equation}
  almost surely in $V$.
Hence, it makes sense to take the expectation $\EE$ of the
  mild formulation
  \begin{equation}
    \label{eq:KPZ-mild}
    \begin{split}
      \tilde{h}(t, x) &= G_t \ast \tilde{h}(0, x) - \frac{1}{2}\int_0^t[G_{t - s} \ast (\partial_x \tilde{h}(s, \cdot))^2](x) \, \dif s + \frac{t}{2} \|\rho\|_{L^2(\R)}^2\\
      & \qquad+ \int_0^t \int_\R (G_{t - s} \ast \rho)(x - y) \, \dif W(s, y),
    \end{split}
  \end{equation}
of \cref{eq:KPZ}. %
  As in the proof of \cref{prop:KPZsolntheory}, define
  \begin{equation*}
    \omega(t, x) = \int_0^t \! \int_\R (G_{t - s} \ast \rho)(x - y) \, \dif W(s, y).
  \end{equation*}
  Then $\tilde{g} = \EE\tilde{h} - \omega$ satisfies the mild equation
  \begin{equation}
    \label{eq:KPZ-PDE-mild}
    \tilde{g}(t, \cdot) = G_t \ast \EE \tilde{h}(0, \cdot) - \frac{1}{2} \int_0^t G_{t - s} \ast [\EE \tilde{u}(s, \cdot)^2] \, \dif s + \frac{t}{2} \|\rho\|_{L^2(\R)}^2.
  \end{equation}
  By the fact that $\EE \tilde{h}(0, \cdot) \in \mathcal{C}(\R/ L \mathbb{Z})$ and \cref{eq:exp-u-square}, we have
  \begin{equation*}
    \tilde{g} \in L^\infty([0, T]; L_{\p_{\ell + 1/2}}^\infty(\R))
  \end{equation*}
  and $\tilde{g}$ is continuous in time. Standard Gaussian process estimates show that 
  $ \omega(t,\cdot)$ is a continuous process in $\mathcal{C}_{\p_{\ell'}}(\R)$ for any $\ell'>0$,
  so
  \begin{equation}
    \label{eq:exp-h}
    \EE \tilde{h} \in L^\infty([0, T]; L_{\p_{\ell + 1/2}}^\infty(\R)).
  \end{equation}
  As a spatial integral, $\EE \tilde{h}$ is differentiable in space.
  Since $\tilde{g}$ is continuous in time, $\EE \tilde{h}$ is in fact spacetime-continuous.

We may now apply Jensen's inequality to \cref{eq:KPZ-mild} to obtain \cref{jul212} and
  \cref{jul202}
in the mild sense.
The next natural step is to apply the maximum principle to \cref{jul202}
to conclude that $\EE \tilde h - h$ never exceeds the maximum of its initial data.
However, \cref{jul202} need only hold in the mild sense, and, in addition,
the drift coefficient $\partial_x(\E \tilde{h} + h)$ in \cref{jul202}
is potentially irregular in time and grows polynomially in space.
  We therefore directly verify that \cref{jul202} obeys the maximum principle.
Let
  \begin{equation*}
    z(t,x) = \EE \tilde{h}(t,x) - h(t,x) - \sup_{y\in\R}[\EE \tilde{h}(0, y) - h(0, y)]
  \end{equation*}
  and $b(t,x) = \partial_x (\EE \tilde{h} + h)(t,x)$, so that \cref{jul202} can be written as 
  \begin{equation}
    \label{eq:mild-ineq}
    \partial_t z \leq \frac 1 2 \partial_x^2 z - b \partial_x z,
  \end{equation}
but this only holds  in the mild sense. In addition, we have  $z(0, x) \leq 0$ for all $x\in\R$. 
 As for regularity of $z$ and $b$,  by \cref{eq:exp-u} and \cref{eq:exp-h}, we know that
  $z \in L^\infty([0, T]; L_{\p_{\ell + 1/2}}^\infty(\R))$ is continuous in time and differentiable in space.
  Moreover, by \cref{eq:exp-u}, both $\partial_x z$ and $b$ lie in~$L^2([0, T]; \mathcal{C}_{\p_\ell}^{1/2}(\R))$.
     
  To obtain an honest differential inequality, we mollify \cref{eq:mild-ineq} in space.
  Let $\zeta \in \mathcal{C}_c^\infty(\R)$ be non-negative with $\|\zeta\|_{L^1(\R)} = 1$.
  Define $\zeta_\delta(x) = \delta^{-1} \zeta(\delta^{-1} x)$ for $\delta > 0$, and write $z_\delta = \zeta_\delta \ast z$,
so
  \begin{equation}
    \label{eq:moll-ineq}
    \partial_t z_\delta \leq \frac{1}{2} \partial_x^2 z_\delta - b \partial_x z_\delta + [b (\zeta_\delta \ast \partial_x z) - \zeta_\delta \ast (b \partial_x z)].
  \end{equation}
This inequality is now valid pointwise.
To make use of this  inequality, we must show that the bracketed commutator 
   \begin{equation*}
    E \coloneqq b (\zeta_\delta \ast \partial_x z) - \zeta_\delta \ast (b \partial_x z)
  \end{equation*}
in~\cref{eq:moll-ineq} is sufficiently small.
Note that
  \begin{equation}\label{jul214}
  \begin{aligned}
    |E(t, x)| &\leq \int_\R |b(t, x) - b(t, y)| |\partial_x z(t, y)| \zeta_\delta(x - y) \, \dif y
    \leq C F_1(t) \delta^{1/ 2} {\langle x \rangle}^\ell \int_\R {\langle y \rangle}^\ell \zeta_\delta(x - y) \, \dif y \\
    &\leq C_E \delta^{1/ 2} F_1(t) {\langle x \rangle}^{2\ell},
\end{aligned}
  \end{equation}
where
  \begin{equation*}
    F_1(t) = \|b(t, \cdot)\|_{\mathcal{C}_{\p_\ell}^{1/2}(\R)}  \|\partial_x z(t, \cdot)\|_{\mathcal{C}_{\p_\ell}^{1/2}(\R)} \in L^1([0, T]).
  \end{equation*}
Hence,  the commutator can be made arbitrarily small in $L^1([0, T]; L_{\p_{2\ell}}^\infty(\R))$.

We now construct a super-solution to \cref{eq:moll-ineq}. The function 
  \begin{equation*}
    H(t, x) = \sqrt{\frac{3T}{3T - t}} \exp\left[\frac{x^2}{4(3T - t)}\right].
  \end{equation*}
 satisfies 
 \[
 \partial_t H = \partial_{x}^2 H,
 \] 
 and there exists $C_H \in [1, \infty)$ such that
  \begin{equation*}
    \partial_x^2H \geq C_H^{-1} \langle x \rangle^2H \quad \textrm{and} \quad |\partial_x H| \leq C_H \langle x \rangle H
  \end{equation*}
  on $\left[0, 2T\right] \times \R$.
  Define
  \begin{equation*}
    F_2(t) = \|b(t, \cdot)\|_{\mathcal{C}_{\p_\ell}(\R)} \in L^2([0, T]) \subset L^1([0, T])
  \end{equation*}
  and let
  \begin{equation*}
    \eta(t) = t + c \int_0^t [F_1(s) + F_2(s)]\, \dif s
  \end{equation*}
  with $c > 0$ chosen so that $\eta(T) \leq 2T$.
  We use $\eta$ as a time-change in $H$:
  \begin{equation*}
    \begin{split}
      \partial_t [H(\eta(t), x)] &= \frac{\dot\eta}{2} \partial_{x}^2 H + \frac{\dot\eta}{2} \partial_{x}^2 H
      \geq \frac 1 2 \partial_{x}^2 H - b \partial_x H + \left[\frac{c}{2C_H} \langle x \rangle^2 - C_H \langle x \rangle^{1 + \ell}\right] F_2(t) H.
    \end{split}
  \end{equation*}
  Since $\ell < 1$, the term in brackets is positive when $x$ is large.
  To handle small $x$, we multiply by a time-dependent factor.
 There exists $\kappa > 0$ such that
  \begin{equation*}
    \partial_t [\e^{\kappa \eta(t)} H(\eta(t), \cdot)] > \frac 1 2 \partial_{x}^2 (\e^{\kappa \eta}  H) - b \partial_x (\e^{\kappa \eta} H) + c \kappa F_1(t) \e^{\kappa \eta} H.
  \end{equation*}
  Now let
  \begin{equation*}
    \eps = \frac{C_E \delta^{ 1 /2}}{c\kappa} \sup_{(t, x) \in [0, T] \times \R} \frac{\langle x \rangle^{2\ell}}{H(t, x)},
  \end{equation*}
  so that $\eps c \kappa F_1 H \geq |E|$ by \cref{jul214}.
  Then
  \begin{equation*}
    \tilde{z}(t, x) = z_\delta(t, x) - \eps \e^{\kappa \eta(t)} H(\eta(t), x)
  \end{equation*}
  satisfies $\tilde{z}(0, \cdot) \leq -\eps$ and
  \begin{equation}
    \label{eq:heat-ineq}
    \partial_t \tilde{z} < \frac 1 2 \partial_x^2 \tilde{z} - b \partial_x \tilde{z}.
  \end{equation}
 In addition, by \cref{eq:exp-h}, $H$ grows much faster than $z_\delta$ in space. The standard maximum principle 
 then implies that $\tilde z(t,x)< 0$ for all $t>0$ and $x\in\Rm$.
  Thus, we have
  \begin{equation}
    \label{eq:delta-bound}
    z_\delta(t, x) \leq \eps \e^{\kappa \eta(t)} H(\eta(t), x) \leq C \delta^{1/ 2} \exp\left(\frac{x^2}{4T}\right)
  \end{equation}
  for some $C$ depending on $T, m,$ and $L$, but not on $\delta$.
  
  We now take $\delta \to 0$.
  Since $z$ is continuous, $z_\delta \to z$ pointwise.
  Therefore \cref{eq:delta-bound} implies $z \leq 0$, as desired.
  It follows that
  \[
    \sup_{t\ge 0,x\in\R}(\EE\tilde{h}-h)(t,x)\le \sup_{x\in\R}(\EE\tilde{h}-h)(0,x).
  \]
  Taking expectations with respect to the noise, we obtain \cref{eq:expectationboundofh}.
\end{proof}

\section{Classification of extremal invariant measures\label{sec:classification}}

In this section we identify all elements of $\overline{\mathscr{P}}_{G}^{\mathrm{e}}(\mathcal{X}_{m})$, the extremal invariant measures,
proving \cref{thm:maintheorem-classification}. Here is the key step.
\begin{prop}
  \label{prop:onlycouplingisthesame}
 Let $m\in(0,1)$, $G=L\Zm$ with $L>0$ or $G=\R$, and $\Lambda_G$ be a fundamental
  domain for~$G$. 
 Suppose that $\nu\in\overline{\mathscr{P}}_{G}(\mathcal{X}_{m}^{2N})$, and 
let
  \[
    \mathbf{v}=(\mathbf{v}_{1},\mathbf{v}_{2})=((v_{1,1},\ldots,v_{1,N}),(v_{2,1},\ldots,v_{2,N}))\sim\nu,
  \]
  and $\nu_{i}=\Law(\mathbf{v}_{i})$, so that
$\nu_{1},\nu_{2}\in\overline{\mathscr{P}}_{G}(\mathcal{X}_{m}^{N})$,    and let $\nu_{i,j}$ be the marginal law of~$\nu_{i}$ on the $j$th
  component.
  Suppose that  for each $i=1,2$,  $j=1,\dots,N$, we have~$\nu_{i,j}\in\overline{\mathscr{P}}_{G}^{\mathrm{e}}(\mathcal{X}_{m})$,
  and 
\begin{equation}\label{jul604}
      \E\left[ \int_{\Lambda_G} v_{i, j}(x)^2 \, \dif \lambda_G(x) \right] < \infty.
    \end{equation}
If, in addition, we have
    \begin{equation}\label{jul602}
      \mathbb{E}\left[\int_{\Lambda_G}v_{1,j}(x)\,\dif\lambda_{G}(x)\right]=\mathbb{E}\left[\int_{\Lambda_G}v_{2,j}(x)\,\dif\lambda_{G}(x)\right],~~ 
      \hbox{for each $j=1,\dots,N$, }
 \end{equation}
then $\mathbf{v}_{1}=\mathbf{v}_{2}$ almost surely.
\end{prop}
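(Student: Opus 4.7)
The plan is to show $v_{1,j} = v_{2,j}$ almost surely for each $j \in \{1, \ldots, N\}$, from which the conclusion follows componentwise. Fix $j$ and observe that the pushforward of $\nu$ onto the pair $(v_{1,j}, v_{2,j})$ belongs to $\overline{\mathscr{P}}_G(\mathcal{X}_m^2)$: $G$-invariance descends from $\nu$, and $P_t$-invariance holds because the components in \cref{eq:uPDE-many} are decoupled. Combined with \cref{jul604}, this verifies hypothesis \ref{enu:hyp-stat} of \cref{thm:ordering}, so $\chi_j \coloneqq \sgn(v_{1,j}(\cdot) - v_{2,j}(\cdot))$ is almost surely a $\{-1, 0, +1\}$-valued constant, independent of $x$. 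Write $A_b = \{\chi_j = b\}$.

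I next argue that $A_{\pm 1}$ are invariant under both $\tau_x$ (for $x \in G$) and under $P_t$. Translation invariance is immediate from the $x$-independence of $\chi_j$. For $P_t$-invariance, let $\mathbf{u}$ solve \cref{eq:uPDE-many} with $\mathbf{u}(0, \cdot) = \mathbf{v} \sim \nu$, and define the $x$-independent sign $\chi_j(t)$ at time $t$ by applying \cref{thm:ordering} to $\Law(\mathbf{u}(t, \cdot)) = \nu$. The comparison principle \cref{prop:comparisonprinciple} gives the implication $\chi_j(0) \le 0 \Rightarrow \chi_j(t) \le 0$, whence $\{\chi_j(t) = +1\} \subseteq \{\chi_j(0) = +1\}$; these two events have equal probability by stationarity of $\nu$, so they coincide up to null sets. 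The case $b = -1$ is analogous. Consequently, whenever $\mathbb{P}(A_b) > 0$, the conditional measure $\mu_b \coloneqq \Law(\mathbf{v} \mid A_b)$ lies in $\overline{\mathscr{P}}_G(\mathcal{X}_m^{2N})$.

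Now suppose for contradiction that $\mathbb{P}(A_{+1}) > 0$. Denote by $\alpha_b$ and $\beta_b$ the $(1,j)$- and $(2,j)$-marginals of $\mu_b$; each is a $G$- and $P_t$-invariant measure on $\mathcal{X}_m$ when $\mathbb{P}(A_b) > 0$. The extremality of $\nu_{1,j}$ in $\overline{\mathscr{P}}_G^{\mathrm{e}}(\mathcal{X}_m)$, applied to the convex decomposition $\nu_{1,j} = \sum_b \mathbb{P}(A_b)\,\alpha_b$, forces $\alpha_{+1} = \nu_{1,j}$; symmetrically $\beta_{+1} = \nu_{2,j}$. Since \cref{jul604} makes the integrals below absolutely convergent, we compute
\begin{equation*}
  \E\!\left[\int_{\Lambda_G}(v_{1,j} - v_{2,j})\,\dif\lambda_G \,\Big|\, A_{+1}\right] = \E\!\int_{\Lambda_G} v_{1,j}\,\dif\lambda_G - \E\!\int_{\Lambda_G} v_{2,j}\,\dif\lambda_G = 0,
\end{equation*}
where the first equality uses $\alpha_{+1} = \nu_{1,j}$ and $\beta_{+1} = \nu_{2,j}$ to replace conditional marginals by unconditional ones, and the second is \cref{jul602}. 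On the event $A_{+1}$, however, $v_{1,j}(x) > v_{2,j}(x)$ for every $x$, so the integrand in the conditional expectation is strictly positive almost surely on $A_{+1}$, contradicting its vanishing. Therefore $\mathbb{P}(A_{+1}) = 0$, and by symmetry $\mathbb{P}(A_{-1}) = 0$, so $v_{1,j} = v_{2,j}$ almost surely. I expect the main technical point to be the $P_t$-invariance of the sign sets $A_{\pm 1}$: because the Burgers dynamics is irreversible, one cannot propagate the sign backward through $P_t$, and one must instead combine the comparison principle with the equality of probabilities from stationarity of $\nu$ to identify the sign events at distinct times.
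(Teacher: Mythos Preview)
Your proof is correct and follows essentially the same approach as the paper: both condition on the sign event $\{\chi_j=b\}$, use extremality of the one-component marginals $\nu_{i,j}$ to show that the conditional marginals $\alpha_b,\beta_b$ coincide with the unconditional ones, and then derive a contradiction from \cref{jul602} when $b=\pm1$. The only cosmetic difference is that the paper works directly with the conditional one-component laws $\nu_{i,j,b}$ rather than first conditioning the full $2N$-component law and then marginalizing, and it dispatches the $P_t$-invariance of $\nu_{i,j,b}$ in one line (``the Burgers evolution preserves ordering''), whereas you spell out the irreversibility issue and resolve it via the inclusion $\{\chi_j(t)=+1\}\subset\{\chi_j(0)=+1\}$ together with equality of probabilities from stationarity.
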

\begin{proof}
  By \cref{thm:ordering} with hypothesis \cref{enu:hyp-stat},
  the random variables $\chi_{j}\coloneqq\sgn(v_{1,j}(x)-v_{2,j}(x))$
  do not depend on $x$. For given $i,j,$ and $b \in \{-1, 0, 1\}$, 
(i) if  $\Pm\{\chi_{j}=b\}\neq 0$, we
  let $\nu_{i,j,b}$ be the conditional marginal distribution of $v_{i,j}$ on the event $\chi_j =b$,
  and (ii) if   $\mathbb{P}(\chi_{j}=b)=0$, we set $\nu_{i,j,b}=\Law(v_{i,j})$. By \cref{prop:comparisonprinciple}, the
  Burgers evolution preserves ordering, so $\nu_{i,j,b}$ is invariant
  under the evolution. Moreover, $\nu_{i,j,b}$ is $G$-invariant since
  it is the distribution of a $G$-invariant measure conditional on
  a~$G$-invariant event. Therefore, we have $\nu_{i,j,b}\in\overline{\mathscr{P}}_{G}(\mathcal{X}_{m})$.
As we can decompose  $\Law(v_{i,j})$ as
  \[
    \Law(v_{i,j})=\sum_{b\in\{-1,0,1\}}\mathbb{P}(\chi_{j}=b)\nu_{i,j,b},
  \]
  and $\Law(v_{i,j})\in\overline{\mathscr{P}}_{G}^{\mathrm{e}}(\mathcal{X}_{m})$, it follows that
  $\nu_{i,j,b}=\Law(v_{i,j})$ for all $b$.
  Thus if $X \sim \mathrm{Uniform}(\Lambda)$ is independent of all else, we have, by \cref{jul602}, that
  \begin{equation}\label{eq:expectationsagree}
  \begin{aligned}
    \mathbb{E}[v_{1,j}(X)-v_{2,j}(X)\mid\chi_{j}=b] &= \mathbb{E}[v_{1,j}(X)\mid\chi_{j}=b]-\mathbb{E}[v_{2,j}(X)\mid\chi_{j}=b]
                                                    \\
                                                    &=\mathbb{E}v_{1,j}(X)-\mathbb{E}v_{2,j}(X)=0. 
  \end{aligned}
  \end{equation}
On the other hand, if $b\in\{\pm1\}$ and $\mathbb{P}(\chi_{j}=b)>0$,
  then %
  \[
  \mathbb{E}[v_{1,j}(X)-v_{2,j}(X)\mid\chi_{j}=b]\ne 0,
  \]
  contradicting \cref{eq:expectationsagree}. Therefore, $\mathbb{P}(\chi_{j}=0)=1$, and thus
  $\mathbf{v}_{1}=\mathbf{v}_{2}$ almost surely.
\end{proof}
We have the following corollary.
\begin{cor}
  \label{cor:uniqueness}
  Let $m\in(0,1)$, $G=L\Zm$ with $L>0$ or $G=\R$. %
  Suppose that $\nu_{1},\nu_{2}\in\overline{\mathscr{P}}_{G}(\mathcal{X}_{m}^{N})$, with
  some $N\in\Nm$, 
  and let $\nu_{i,j}$ be the marginal law of~$\nu_{i}$ on the~$j$th
  component, and  $v_{i,j}\sim\nu_{i,j}$.
  Suppose that  for each $i=1,2$,  $j=1,\dots,N$, we have~$\nu_{i,j}\in\overline{\mathscr{P}}_{G}^{\mathrm{e}}(\mathcal{X}_{m})$,
  and both \cref{jul604} and \cref{jul602} hold.
 Then $\nu_{1}=\nu_{2}$.
\end{cor}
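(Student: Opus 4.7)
The plan is to reduce \cref{cor:uniqueness} directly to \cref{prop:onlycouplingisthesame} by producing a joint invariant coupling of $\nu_1$ and $\nu_2$. Specifically, I would first invoke \cref{prop:couple} with $N_1 = N_2 = N$ to obtain a measure $\nu \in \overline{\mathscr{P}}_{G}(\mathcal{X}_m^{2N})$ whose two $\mathcal{X}_m^N$-marginals are $\nu_1$ and $\nu_2$. If $\mathbf{v} = (\mathbf{v}_1, \mathbf{v}_2) \sim \nu$, then the individual-component marginals of $\nu$ are exactly the $\nu_{i,j}$ appearing in the statement.

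Next I would check that $\nu$ satisfies the hypotheses of \cref{prop:onlycouplingisthesame}. Extremality of each $\nu_{i,j}$ and the second-moment bound \cref{jul604} are given by hypothesis. The first-moment equality \cref{jul602} is also given by hypothesis, so every hypothesis of \cref{prop:onlycouplingisthesame} is met.

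Applying \cref{prop:onlycouplingisthesame} then yields $\mathbf{v}_1 = \mathbf{v}_2$ almost surely under $\nu$. Since $\Law(\mathbf{v}_1) = \nu_1$ and $\Law(\mathbf{v}_2) = \nu_2$, this forces $\nu_1 = \nu_2$, completing the proof. There is essentially no obstacle here: all the real work is in \cref{prop:couple} (which provides an invariant coupling via the Krylov--Bogoliubov construction) and in \cref{prop:onlycouplingisthesame} (which uses the ordering theorem plus the matching-mean hypothesis to collapse the coupling onto the diagonal). The corollary is a clean repackaging of these two results.
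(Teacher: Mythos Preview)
Your proposal is correct and follows essentially the same approach as the paper: invoke \cref{prop:couple} to obtain a coupling $\nu\in\overline{\mathscr{P}}_{G}(\mathcal{X}_{m}^{2N})$ of $\nu_1$ and $\nu_2$, then apply \cref{prop:onlycouplingisthesame} to conclude $\mathbf{v}_1=\mathbf{v}_2$ almost surely and hence $\nu_1=\nu_2$.
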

\begin{proof} %
  By \cref{prop:couple}, we have a coupling $\nu$ $\in\overline{\mathscr{P}}_{G}(\mathcal{X}_{m}^{2N})$
  of $\nu_{1}$ and $\nu_{2}$. Then \cref{prop:onlycouplingisthesame}
  applies, so $\nu_{1}=\Law(\mathbf{v}_{1})=\Law(\mathbf{v}_{2})=\nu_{2}$.
\end{proof}
Using \cref{cor:uniqueness}, we prove \cref{thm:maintheorem-classification}.
\begin{proof}[Proof of \cref{thm:maintheorem-classification}.] We first prove the existence claim. 
  Fix $m \in [1/2, 1)$, $N \in \mathbb{N}$, and $\mathbf{a} \in \R^N$.
  By \cref{prop:boundedtightness} applied with~$N=1$ and $u(0,\cdot)\equiv 0$, 
  Prokhorov's theorem, and \cref{prop:limitsareergodic}, there exists $\nu_{0}\in\overline{\mathscr{P}}_{\mathbb{R}}^{\mathrm{e}}(\mathcal{X}_{m})$
  satisfying the moment condition \cref{enu:hyp-stat}.
  \cref{prop:limitsareergodic} implies also that~$\nu_{0}\in\overline{\mathscr{P}}_{G}^{\mathrm{e}}(\mathcal{X}_{m})$ for $G=L\Zm$ for any~$L>0$.
    Moreover, if $v_{0}\sim\nu_{0}$, then $\mathbb{E}v_{0}(x)=0$ for all~$x \in \R$.

  Next, take some $a \in \R$.
  By the shear-invariance proved in \cref{prop:shearinvariance}, if $v_{0}\sim\nu_{0}$,
  then 
  \[
  \nu_{a}\coloneqq\Law(v_{0}+a)\in\overline{\mathscr{P}}_{G}^{\mathrm{e}}(\mathcal{X}_{m})
  \]
  as well.
  By \cref{prop:couple} (applied inductively), for
  any $N\in\mathbb{N}$ and $\mathbf{a}=a_{1},\ldots,a_{N}\in\mathbb{R}$,
  there is a coupling $\nu_{\mathbf{a}}\in\overline{\mathscr{P}}_{{G}}(\mathcal{X}_{m}^{N})$
  of $\nu_{a_{1}},\ldots,\nu_{a_{N}}$,  both 
  for $G=\Rm$ and~$G=L\Zm$ for all $L>0$. 
 We claim that $\nu_{\mathbf{a}}$ are   in~$\overline{\mathscr{P}}_{G}^{\mathrm{e}}(\mathcal{X}_{m}^{N})$.   
  Suppose that
  \[
  \nu_{\mathbf{a}}=(1-q)\mu_{0} + q\mu_{1},
  \] 
  for some $q\in(0,1)$ and $\mu_{0},\mu_{1}\in\overline{\mathscr{P}}_{G}(\mathcal{X}_{m}^{N})$.
  For each $j=1,\ldots,N$, the extremality of $\nu_{a_{j}}$ means that the $j$th marginals of $\mu_{0}$ and $\mu_{1}$ must both be $\nu_{a_{j}}$.
  Then \cref{cor:uniqueness} implies that $\mu_0 = \mu_1$.
  So indeed $\nu_{\mathbf{a}}\in \overline{\mathscr{P}}_{G}^{\mathrm{e}}(\mathcal{X}_{m}^{N})$.
We have thus shown existence in \cref{thm:maintheorem-classification}.
  
  For uniqueness, fix $G=L\Zm$ or $G=\R$ and a fundamental domain $\Lambda_G$ for $G$.
  Let $X \sim \mathrm{Uniform}(\Lambda_G)$ be independent of all else.
  Suppose that $\nu \in \overline{\mathscr{P}}_{G}^{\mathrm{e}}(\mathcal{X}_{m}^{N})$ satisfies $\E \mathbf{v}(X) = \mathbf{a}$ and $\E |\mathbf{v}(X)|^2 < \infty$ for~$\mathbf{v} \sim \nu$.
  We will show that $\nu=\nu_{\mathbf{a}}$. 
  Using \cref{prop:couple}, let $\tilde{\nu} \in \overline{\mathscr{P}}_{G}(\mathcal{X}_{m}^{N + 1})$ couple $\nu$ and $\nu_{a_1}$, and
  take $\tilde{v} \sim \tilde{\nu}$.
  By \cref{thm:ordering} with hypothesis \cref{enu:hyp-stat}, $\chi = \mathrm{sgn}({\tilde v_{N+1}-\tilde{v}_{1}})$ is independent of $x$ almost surely.
  Suppose for the sake of contradiction that $\mathbb{P}[\chi \neq 0] > 0$.
  Since 
  \[
  \E \tilde{v}_{N+1}(X) = a_1 = \E \tilde{v}_1(X),
  \]
  this would imply that 
  \[
  q: = \mathbb{P}[\chi \geq 0] \in (0, 1).
  \]
Hence, we may define the laws $\tilde{\mu}_1$ and $\tilde{\mu}_2$ as $\tilde{\nu}$ conditioned on the $G$-invariant 
events $\{\chi = -1\}$ and~$\{\chi \geq 0\}$, respectively, so that 
\[
\tilde{\nu} = (1 - q) \tilde{\mu}_1 + q\tilde{\mu}_2,
\]
and if $(w_{i,1}, \ldots,w_{i,N},w_{i,N+1}) \sim  {\tilde\mu_i}$, then $w_{1,1}\ge w_{1,N+1}$ and $w_{2,1}<w_{2,N+1}$ almost surely.
In addition, as~$\nu_{a_1}$ is extremal, we know that both $w_{1,N+1}$ and $w_{2,N+1}$ have the law $\nu_{a_1}$. It follows that
\begin{equation}\label{jul614}
\E w_{1,1}(X) \ge \E w_{1,N+1}=\E w_{2,N+1}> \E w_{2,1}(X).
\end{equation}
 By \cref{prop:comparisonprinciple}, $\tilde{\mu}_i \in \overline{\mathscr{P}}_G(\mathcal{X}_m^{N + 1})$ for  $i =1, 2$.
  Now let $\mu_i$ denote the marginal of $\tilde{\mu}_i$ on the first $N$ components.
  Then $\mu_i \in \overline{\mathscr{P}}_G(\mathcal{X}_m^N)$ and $\nu = (1 - q) \mu_0 + q \mu_1$.
  Since $\nu$ is extremal, we must have $\mu_0 = \mu_1$.
  However, \cref{jul614} shows that the first marginals of $\mu_0$ and $\mu_1$ have different means, a contradiction.
  We conclude that $\chi = 0$ almost surely, so the marginal of $\nu$ on its first component is~$\nu_{a_1}$.
  By an identical argument, all of the one-component marginals of $\nu$ and $\nu_{\mathbf{a}}$ agree.
  Using \cref{cor:uniqueness}, we conclude that $\nu = \nu_{\mathbf{a}}$.
  Thus $\nu_{\mathbf{a}}$ is the unique element of $\overline{\mathscr{P}}_{G}^{\mathrm{e}}(\mathcal{X}_{m}^{N})$ with mean $\mathbf{a}$ and finite variance.

  We now establish properties \cref{enu:ordered}--\cref{enu:decomposesamemean} for $\nu_{\mathbf{a}}$.
The first four properties have been essentially proved already:   \cref{enu:ordered}
  follows from the ordering proved in \cref{thm:ordering},  \cref{enu:shearinvariant} follows from
  the shear-invariance proved in  \cref{prop:shearinvariance}
  and the uniqueness claim, \cref{enu:alsoinvariant} follows from the uniqueness
  statement, and \cref{enu:smoothtempereddistn} follows from \cref{lem:mildisclassical}, \cref{lem:mildregularitypoly}, and
  the smoothness and sub-polynomial growth at infinity of~$\psi$.

  To prove \cref{enu:decomposesamemean},
  suppose that there exists a $q\in(0,1)$ and $\mu_{0},\mu_{1}\in\mathscr{P}_{G}(\mathcal{X}_{m}^{N})$
  so that 
  \begin{equation}\label{jul610}
  \nu_{\mathbf{a}}=(1-q)\mu_{0}+q\mu_{1}.
  \end{equation}
  We claim that for
  $i=0,1$, the family $\{P_{t}^{*}\mu_{i}\}_{t\ge0}$ is tight
  with respect to the topology of $\mathcal{X}_{m}^{N}$. Indeed,
 note that 
 \[
 \{P_{t}^{*}((1-q)\mu_{0}+q\mu_{1})\}_{t\ge0}=\{P_{t}^{*}\nu_{{\mathbf a}}\}_{t\ge0}=\{\nu_{{\mathbf a}}\}
 \]
  is tight with respect to the topology of $\mathcal{X}_{m}^{N}$.
Hence, for each $\eps>0$ there is a compact set $\mathcal{K}\subset\mathcal{X}_{m}^N$
  so that for all $t\ge0$, we have
  \begin{align*}
    (1-q)\eps&>P_{t}^{*}((1-q)\mu_{0}+q\mu_{1})(\mathcal{X}_{m}^N\setminus\mathcal{K})  =(1-q)P_{t}^{*}\mu_{0}(\mathcal{X}_{m}^N\setminus\mathcal{K})+qP_{t}^{*}\mu_{1}(\mathcal{X}_{m}^N\setminus\mathcal{K}) \\
                                                                                        & >(1-q)P_{t}^{*}\mu_{0}(\mathcal{X}_{m}^N\setminus\mathcal{K}),
  \end{align*}
  which means that $P_{t}^{*}\mu_{0}(\mathcal{X}_{m}^N\setminus\mathcal{K})<\eps$
  for all $t\ge0$, so $\{P_{t}^{*}\mu_{0}\}_{t\ge0}$ is tight.
  By an identical argument, so is $\{P_{t}^{*}\mu_{1}\}_{t\ge0}$. Therefore, a subsequential limit
  \[
    \overline{\mu}_{i}\coloneqq\lim_{T_k\to\infty}\frac{1}{T_k}\int_{0}^{T_k}P_{t}^{*}\mu_{i}\,\dif t
  \]
  exists (in the sense of weak convergence of measures on $\mathcal{X}_{m}^N$)
  for each $i$, and by \cref{prop:krylovbogoliubov}, we know that~$\overline{\mu}_{i}\in\overline{\mathscr{P}}_{G}(\mathcal{X}_{m}^N)$.
  In addition, it follows from \cref{jul610} that
  \[
  (1-q)\overline{\mu}_{0}+q\overline{\mu}_{1}=\nu_{\mathbf{a}}.
  \]
As  $\nu_{\mathbf{a}}\in\overline{\mathscr{P}}_{G}^{\mathrm{e}}(\mathcal{X}_{1/2}^{N})$, we deduce that
$\overline{\mu}_{0}=\overline{\mu}_{1}=\nu_{\mathbf{a}}$.
Recall that by   \cref{prop:L1omegaconservation} the Burgers dynamics \cref{eq:uPDE-many} preserves the expectation
  of a space-stationary solution. We conclude that if~$\mathbf{v}_{i}\sim\mu_{i}$
  and $\overline{\mathbf{v}}_{i}\sim\overline{\mu}_{i}=\nu_{\mathbf{a}}$,
  then $\mathbb{E}\mathbf{v}_{i}(X)=\mathbb{E}\overline{\mathbf{v}}_{i}(X)=\mathbf{a}$.
\end{proof}
 
\section{Stability of the extremal invariant measures \label{sec:stability}}

In this section we prove \cref{thm:maintheorem-stability}, establishing
convergence to the {extremal} invariant measures~{$\nu_{\mathbf a}$} constructed in \cref{thm:maintheorem-classification}.
As a first step, we upgrade the convergence along subsequences of time-averaged laws of solutions starting at deterministic initial data (implied by \cref{prop:boundedtightness}) to true convergence by identifying the limits using \cref{prop:limitsareergodic,thm:maintheorem-classification}.
\begin{prop}
  \label{prop:convergencerandomstarttime}
  Let $G=L\Zm$, with $L>0$ or $G=\Rm$,   and $\mathbf{v}\in L^{\infty}(\mathbb{R})^{N}$ be
  a deterministic~$G$-invariant function. Let $\Lambda_G$ be a fundamental
  domain for $G$ and let
  \begin{equation}
    \mathbf{a}=\int_{\Lambda_G}\mathbf{v}(x)\,\dif\lambda_{G}(x).\label{eq:adef-1}
  \end{equation}
  Let $\delta_{\mathbf{v}}$ be the measure on $\mathcal{X}_{1/2}^N$
  with a single atom at $\mathbf{v}$. Then
  \begin{equation}
    \lim_{T\to\infty}\frac{1}{T}\int_{1}^{T+1}P_{t}^{*}\delta_{\mathbf{v}}\,\dif t=\nu_{\mathbf{a}}\label{eq:identifylimits}
  \end{equation}
  in the sense of weak convergence of probability measures on $\mathcal{X}_{1/2}^N$.
\end{prop}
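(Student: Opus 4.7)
The plan is to establish tightness of the family of time averages and then identify every subsequential weak limit as $\nu_{\mathbf{a}}$. Setting $\mu_T \coloneqq \frac{1}{T} \int_1^{T+1} P_t^* \delta_{\mathbf{v}}\,\dif t$, \cref{prop:boundedtightness} applied componentwise gives tightness of $\{P_t^* \delta_{\mathbf{v}}\}_{t \geq 1}$ in $\mathcal{X}_{1/2}^N$, and hence tightness of $\{\mu_T\}_{T > 0}$. By Prokhorov's theorem it suffices to show that any subsequential limit $\nu = \lim_{k \to \infty} \mu_{T_k}$, with $T_k \to \infty$, equals $\nu_{\mathbf{a}}$.

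Fix such a limit $\nu$. The Krylov--Bogoliubov argument (\cref{prop:krylovbogoliubov}) places $\nu \in \overline{\mathscr{P}}_G(\mathcal{X}_{1/2}^N)$. Since \cref{eq:uPDE-many} decouples into $N$ scalar equations sharing the same noise, the pushforward of $P_t^* \delta_{\mathbf{v}}$ onto its $j$-th factor is the scalar semigroup applied to $\delta_{v_j}$. Consequently, the $j$-th marginal $\nu^{(j)}$ of $\nu$ is a subsequential limit of scalar time averages, and \cref{prop:limitsareergodic} yields $\nu^{(j)} \in \overline{\mathscr{P}}_G^{\mathrm{e}}(\mathcal{X}_{1/2})$. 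With $X \sim \Uniform(\Lambda_G)$ independent of all else, the moment bound \cref{eq:supu2bdd} combined with a Fatou-type argument applied to the bounded continuous truncation $v \mapsto \int_{\Lambda_G} v^{(j)}(x)^2 \wedge M\,\dif \lambda_G(x)$, followed by $M \to \infty$, yields $\E v^{(j)}(X)^2 < \infty$ for $v^{(j)} \sim \nu^{(j)}$. Next, integrating \cref{eq:uPDE-many} in space over $\Lambda_G$, taking expectations, and invoking \cref{lem:symmetrylemma} to discard the zero-mean flux terms at the endpoints (which is valid by stationarity in law), we obtain $\E \int_{\Lambda_G} u_j(t, x)\,\dif \lambda_G(x) = a_j$ for every $t \geq 0$. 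The uniform $L^2$-bound \cref{eq:supu2bdd} then provides the uniform integrability needed to transfer this identity through the weak convergence $\mu_{T_k}^{(j)} \rightharpoonup \nu^{(j)}$, giving $\E v^{(j)}(X) = a_j$. The uniqueness part of \cref{thm:maintheorem-classification} then identifies $\nu^{(j)} = \nu_{a_j}$.

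Finally, I would invoke \cref{cor:uniqueness} with $\nu_1 = \nu$ and $\nu_2 = \nu_{\mathbf{a}}$: both lie in $\overline{\mathscr{P}}_G(\mathcal{X}_{1/2}^N)$ and all their one-component marginals equal the extremal measures $\nu_{a_j}$, which share matching means and finite second moments. Therefore $\nu = \nu_{\mathbf{a}}$, completing the identification. The main obstacle I anticipate is the mean-identification step of $\nu^{(j)}$: the linear functional $v \mapsto \int_{\Lambda_G} v(x)\,\dif \lambda_G(x)$ is not bounded and continuous on $\mathcal{X}_{1/2}$, so weak convergence alone does not preserve its expectation; the second-moment bound \cref{eq:supu2bdd} is essential for the truncation-and-tail-control argument that bridges this gap.
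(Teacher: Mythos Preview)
Your proposal is correct and follows essentially the same approach as the paper's own proof, which compresses your steps into a terse ``\cref{prop:limitsareergodic} and \cref{thm:maintheorem-classification} imply $\nu=\nu_{\mathbf{a}}$'' after establishing the mean via \cref{prop:L1omegaconservation} and \cref{eq:supu2bdd}. Your explicit componentwise application of \cref{prop:limitsareergodic} to obtain extremality of each marginal, followed by \cref{cor:uniqueness} to identify the joint law, is exactly the unpacking that the paper's one-line citation intends; the only cosmetic difference is that the paper invokes \cref{prop:L1omegaconservation} for mean preservation whereas you re-derive it directly from the equation and \cref{lem:symmetrylemma}.
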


\begin{proof}
  By \cref{prop:boundedtightness,prop:krylovbogoliubov}, for any sequence $T_{k}\uparrow\infty$
there is a subsequence $T_{k_{n}}$ and~$\nu\in\overline{\mathscr{P}}_{G}(\mathcal{X}_{1/2}^N)$
  so that
  \[
    \lim_{k\to\infty}\frac{1}{T_{k}}\int_{1}^{T_{k}+1}P_{t}^{*}\delta_{\mathbf{v}}\,\dif t=\nu,
  \]
  in the sense of weak convergence of probability measures on $\mathcal{X}_{1/2}^N$.
  By \cref{prop:L1omegaconservation}, \cref{eq:supu2bdd} and~\cref{eq:adef-1}, if $\mathbf{w}\sim\nu$
  we have
  \[
    \mathbb{E}\left[\int_{\Lambda}\mathbf{w}(x)\,\dif\lambda_{G}(x)\right]=\mathbf{a}.
  \]
  Then, \cref{prop:limitsareergodic} and \cref{thm:maintheorem-classification}
  imply that $\nu=\nu_{\mathbf{a}}$. Since the topology of weak convergence of probability measures on $\mathcal{X}^N_{1/2}$ is metrizable, the uniqueness of 
  the subsequential limit implies~\cref{eq:identifylimits}.
\end{proof}

The next step is to use the $L^{1}$-contraction in \cref{prop:L1omega} to eliminate the time averaging in the statement of \cref{prop:convergencerandomstarttime}.
Let is first introduce some definitions. 
For $G=L\Zm$ or $G=\R$, 
recall the definition of the measure $\lambda_{G}$ %
from the beginning of \cref{subsec:L1contraction}, and
set $\tilde{\lambda}_{G}=\mathbf{1}_{\Lambda_{G}}\lambda_{G}$.
We also define the weight
\[
  \p_{G}(x)=\frac{1}{(\frac{1}{\p_{2}}* \tilde\lambda_{G})(x)},
\]
and the Banach space $\mathcal{Y}_{G}^{N}=L_{\p_{G}}^{1}(\mathbb{R})^{N}$,
equipped with the norm
\[
  \|(f_{1},\ldots,f_{N})\|_{\mathcal{Y}_{G}^{N}}=\sum_{i=1}^{N}\|f_{i}\|_{L_{\p_{G}}^{1}(\mathbb{R})}.
\]
We note that $\mathcal{X}_{1/2}^{N}$ is continuously included in
$\mathcal{Y}_{G}^{N}$ for any $G$, since (as it is sufficient to
check $N=1$), if we fix $\ell\in(1/2, 1)$, then
\[
  \|f\|_{\mathcal{Y}_{G}^{1}}=\int_{\mathbb{R}}|f(x)|\left(\frac{1}{\p_{2}}*\lambda_{G}\right)(x)\,\dif x\le\|f\|_{\mathcal{C}_{\p_{\ell}}(\mathbb{R})}\int_{\mathbb{R}}\p_{\ell}(x)\left(\frac{1}{\p_{2}}*\tilde{\lambda}_{G}\right)(x)\,\dif x
\]
and the integral is finite, so $\mathcal{C}_{\p_{\ell}}(\mathbb{R})$
embeds continuously into $\mathcal{Y}_{G}^{1}$ and of course $\mathcal{X}_{1/2}$
embeds continuously into $\mathcal{C}_{\p_{\ell}}(\mathbb{R})$ for
any $\ell>1/2$.

Let $d_{G,N}$ denote the Wasserstein-$1$ (or Kantorovich--Rubinstein)
metric between measures on~$\mathcal{Y}_{G}^{N}$.
That is, if $\nu_{1},\nu_{2}$ are probability measures on $\mathcal{Y}_{G}^{N}$,
then
\begin{equation}
  d_{G,N}(\nu_{1},\nu_{2})=\inf_{\mu}\int_{\mathcal{Y}_{G}^{2N}}\|f_{1}-f_{2}\|_{\mathcal{Y}_{G}^{N}}\,\dif\mu(f_{1},f_{2})=\sup_{\|f\|_{\Lip(\mathcal{Y}_{G}^{N})\le1}}\left(\int f\dif\nu_1-\int f\dif\nu_2\right),\label{eq:Wassersteindef}
\end{equation}
with the supremum taken over all couplings $\mu \in \mathscr{P}(\mathcal{Y}_{G}^{2N})$ of $\nu_1$ and $\nu_2$, and $\Lip(\mathcal{Y}_{G}^{N})$ is the Lipschitz seminorm.
The second equality in \cref{eq:Wassersteindef} is the  
 Kantorovich--Rubinstein duality, see e.g.~\cite[(6.3)]{Vil09}.
We will use the fact that for probability measures $\{\mu_{t}\}_{t\in[0,T]}$
and $\nu$, we have
\begin{align}
  d_{G,N}\left(\frac{1}{T}\int_{0}^{T}\mu_{t}\,\dif t,\nu\right) & =\sup_{\|f\|_{\Lip(\mathcal{Y}_{G}^{N})=1}}\left|\frac{1}{T}\int_{0}^{T}\int f\,\dif\mu_{t}\,\dif t-\int f\,\dif\nu\right|\nonumber                                                        \\
                                                                 & \le\sup_{\|f\|_{\Lip(\mathcal{Y}_{G}^{N})=1}}\frac{1}{T}\int_{0}^{T}\left|\int f\,\dif\mu_{t}-\int f\,\dif\nu\right|\,\dif t\le\frac{1}{T}\int_{0}^{T}d_{G,N}(\mu_{t},\nu)\,\dif t.\label{eq:W1mixture}
\end{align}
Indeed, the metric $d_{G,N}$ comes from a norm on the space of all
finite signed measures \cite{KR58}, from which \cref{eq:W1mixture} follows immediately.

It is well-known (see e.g. \cite[Theorem~4.1]{Vil09})  that if $d_{G,N}(\nu_{1},\nu_{2})$ is finite, then
the infimum in the second expression in \cref{eq:Wassersteindef} is
actually achieved by some coupling $\mu$. In the case of translation-invariant
measures, we will need the stronger statement that the coupling can
be chosen to also be translation-invariant. We record this in the
following lemma, which is an application of the results in \cite{Moa16}.
\begin{lem}[\cite{Moa16}]
  \label{lem:translationinvariantcouplingachieved}Suppose $m\in(0,1)$,
  $G=L\Zm$ with some $L>0$ or $G=\R$, and $\mu_{1},\mu_{2}\in\mathscr{P}_{G}(\mathcal{X}_{m}^{N})$.
  Then there is a coupling $\mu\in\mathscr{P}_G(\mathcal{X}_{m}^{2N})$
  of $\mu_{1}$ and $\mu_{2}$ so that
  \begin{equation}
    \int_{\mathcal{Y}_{G}^{2N}}\|f_{1}-f_{2}\|_{\mathcal{Y}_{G}^{N}}\,\dif\mu(f_{1},f_{2})=d_{G,N}(\mu_{1},\mu_{2}).\label{eq:couplingachieved}
  \end{equation}
\end{lem}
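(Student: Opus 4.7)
The plan is to reduce the statement to the invariance results of \cite{Moa16} on the Monge--Kantorovich transport problem with group-invariant marginals. Once the setup is transcribed into Moameni's framework, the lemma is essentially a direct application, so the work consists of verifying the hypotheses and isolating where non-invariance of the cost is handled.

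First I would dispense with the trivial case: if $d_{G,N}(\mu_{1},\mu_{2})=\infty$, then the product coupling $\mu=\mu_{1}\otimes\mu_{2}$ lies in $\mathscr{P}_{G}(\mathcal{X}_{m}^{2N})$ (the product of $G$-invariant measures is $G$-invariant under the diagonal action of $G$) and trivially attains the value in \cref{eq:couplingachieved}. Hence I may assume $d_{G,N}(\mu_{1},\mu_{2})<\infty$. I would then set up the Kantorovich problem on the Polish space $\mathcal{X}_{m}^{N}$, on which the amenable group $G$ (either $L\mathbb{Z}$ or $\mathbb{R}$) acts continuously by the diagonal translation $\tau_x$ defined in \cref{eq:translationoperator}. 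The marginals $\mu_{1},\mu_{2}$ are $G$-invariant by assumption, and the cost $c(f_{1},f_{2})=\|f_{1}-f_{2}\|_{\mathcal{Y}_{G}^{N}}$ is continuous (a fortiori lower semicontinuous) on $\mathcal{X}_{m}^{N}\times\mathcal{X}_{m}^{N}$, because the embedding $\mathcal{X}_{m}^{N}\hookrightarrow\mathcal{Y}_{G}^{N}$ is continuous as observed immediately before the statement of the lemma. Standard Kantorovich existence theory then supplies an optimal coupling $\tilde{\mu}$, not necessarily $G$-invariant, with $\int c\,\dif\tilde{\mu}=d_{G,N}(\mu_{1},\mu_{2})$.

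The main obstacle is that the cost $c$ itself is not $G$-invariant. The weight $\p_{G}$ is built from $\tilde{\lambda}_{G}=\mathbf{1}_{\Lambda_{G}}\lambda_{G}$, which is supported on a single fundamental domain rather than being spread $G$-invariantly across $\mathbb{R}$. Consequently $\tau_{y}$ acting on both coordinates alters the integral in $c$, and the classical symmetrization proof — averaging an optimal coupling over the group action — fails: a Cesàro/F\o lner average of $(\tau_{y},\tau_{y})_{\ast}\tilde{\mu}$ across translates $y\in G$ would redistribute the mass of the weight $1/\p_{G}(\,\cdot+y)$ out to infinity and drive the averaged cost toward zero, while the averaged measure degenerates and fails to converge in $\mathscr{P}(\mathcal{X}_{m}^{2N})$. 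Moameni's results are tailored to exactly this difficulty: working from invariance of the marginals (rather than of the cost) on a Polish space with an amenable group action, he establishes that the infimum of the transport cost restricted to $G$-invariant couplings equals the unrestricted infimum and that this common value is attained by an invariant coupling.

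To close the argument I would check the remaining technical conditions of Moameni's theorem in our setting: tightness of $\mu_{1}$ and $\mu_{2}$ (automatic on the Polish space $\mathcal{X}_{m}^{N}$), joint continuity of the action $(x,f)\mapsto\tau_{x}f$ from $G\times\mathcal{X}_{m}^{N}$ to $\mathcal{X}_{m}^{N}$ (which follows from the continuity of translation on each $L^{\infty}_{\p_{\ell}}(\mathbb{R})$ for $\ell>m$), and measurability of the cost. With these in hand, Moameni's theorem produces a coupling $\mu\in\mathscr{P}_{G}(\mathcal{X}_{m}^{2N})$ with $\int c\,\dif\mu=d_{G,N}(\mu_{1},\mu_{2})$, which is precisely \cref{eq:couplingachieved}.
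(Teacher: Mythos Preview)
Your approach is essentially the paper's: reduce to the results of \cite{Moa16} after noting that the cost is lower semicontinuous on the Polish space $\mathcal{X}_m^N$ and that the marginals are $G$-invariant. Your observation that the cost $c(f_1,f_2)=\|f_1-f_2\|_{\mathcal{Y}_G^N}$ is \emph{not} $G$-invariant (because the weight $\p_G$ is anchored to a single fundamental domain) is correct and explains why the naive F\o lner-averaging of an optimal plan fails; the paper does not spell this out, but it is exactly why one needs Moameni's framework rather than a direct symmetrization.

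There is, however, a point the paper handles and you do not. The paper treats the two cases separately: for $G=L\mathbb{Z}$ it invokes \cite[Corollary~1]{Moa16} directly, while for $G=\mathbb{R}$ it first applies \cite[Theorem~1.6]{Moa16} to obtain a coupling invariant under translations by $\mathbb{Q}$, and then extends to $\mathbb{R}$-invariance by continuity (following the proof of \cite[Corollary~1]{Moa16}). The reason is that Moameni's cited results are stated for \emph{countable} amenable groups, so the uncountable group $\mathbb{R}$ is not covered directly. Your proposal applies ``Moameni's theorem'' uniformly to both cases and lists joint continuity of the $\mathbb{R}$-action among the hypotheses to be checked, implicitly assuming a version for topological groups; unless you can point to such a statement in \cite{Moa16}, this is a gap. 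The fix is exactly the paper's two-step argument: get $\mathbb{Q}$-invariance from the countable-group result, then use continuity of $x\mapsto(\tau_x)_*\mu$ on $\mathscr{P}(\mathcal{X}_m^{2N})$ (which does hold, since translation acts continuously on $\mathcal{X}_m$) to pass from $\mathbb{Q}$- to $\mathbb{R}$-invariance.
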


\begin{proof}
  If $G=L\mathbb{Z}$ for some $L>0$, then this is immediate from \cite[Corollary 1]{Moa16}.
  If $G=\mathbb{R}$, then we note that by \cite[Theorem 1.6]{Moa16},
  there exists a measure $\mu$ satisfying \cref{eq:couplingachieved}
  that is invariant under spatial translations by elements of $\mathbb{Q}$.
  We can extend this to elements of $\mathbb{R}$ by continuity as in
  the proof of \cite[Corollary 1]{Moa16}.
\end{proof}
The next observation is that the Burgers
dynamics \cref{eq:uPDE-many} is a contraction in the $d_{G,N}$
metric, which is a simple consequence of the $L^{1}$-contraction
in the probability space  in \cref{prop:L1omega}. %
\begin{prop}
  \label{prop:Ptdecreasesnorm}If $m\in(0,1)$, $N\in\mathbb{N}$, $G=L\Zm$ with $L>0$ or $G=\R$, 
  and $\mu_{1},\mu_{2}\in\mathscr{P}_G(\mathcal{X}_{m}^{N})$,
  then for all $t\ge0$,
  \[
    d_{G,N}(P_{t}^{*}\mu_{1},P_{t}^{*}\mu_{2})\le d_{G,N}(\mu_{1},\mu_{2}).
  \]
\end{prop}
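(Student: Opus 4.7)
The plan is to pair the translation-invariant optimal coupling of \cref{lem:translationinvariantcouplingachieved} with the $L^{1}$-contraction in the probability space of \cref{prop:L1omega}, exploiting the convolution structure of the weight $1/\p_{G} = (1/\p_{2}) * \tilde{\lambda}_{G}$ to convert the spatially-weighted $L^{1}$-norm into something to which \cref{prop:L1omega} speaks.

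First I would reduce to the case $d_{G,N}(\mu_{1},\mu_{2}) < \infty$, as otherwise the inequality is trivial. By \cref{lem:translationinvariantcouplingachieved}, select a $G$-invariant coupling $\mu^{*} \in \mathscr{P}_{G}(\mathcal{X}_{m}^{2N})$ of $\mu_{1}$ and $\mu_{2}$ achieving the infimum in \cref{eq:Wassersteindef}. Let $(\mathbf{v}_{1},\mathbf{v}_{2}) \sim \mu^{*}$ be independent of the noise, and let $\mathbf{u}(t,\cdot) = (\mathbf{u}_{1}(t,\cdot),\mathbf{u}_{2}(t,\cdot))$ solve the coupled $2N$-component decoupled system \cref{eq:uPDE-many} with this initial datum. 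Because $V$ is $\R$-stationary in law, $\Law(\mathbf{u}(t,\cdot)) \in \mathscr{P}_{G}(\mathcal{X}_{m}^{2N})$, and its marginals are $P_{t}^{*}\mu_{1}$ and $P_{t}^{*}\mu_{2}$. By Kantorovich--Rubinstein duality,
\[
d_{G,N}(P_{t}^{*}\mu_{1},P_{t}^{*}\mu_{2}) \le \sum_{i=1}^{N} \E\|\eta_{i}(t,\cdot)\|_{L^{1}_{\p_{G}}(\R)}, \qquad \eta_{i} := u_{1,i} - u_{2,i}.
\]

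The central computation exchanges the spatial and $\tilde{\lambda}_{G}$-integrals. Writing $1/\p_{G}(x) = \int 1/\p_{2}(x-y)\,\dif\tilde{\lambda}_{G}(y)$ and using Fubini and the substitution $z = x - y$,
\[
\E\|\eta_{i}(t,\cdot)\|_{L^{1}_{\p_{G}}(\R)} = \int_{\R}\frac{1}{\p_{2}(z)}\left[\int_{\R}\E|\eta_{i}(t,z+y)|\,\dif\tilde{\lambda}_{G}(y)\right]\dif z.
\]
Since $\Law(\eta_{i}(t,\cdot))$ is $G$-invariant, the function $x \mapsto \E|\eta_{i}(t,x)|$ is $G$-invariant as well. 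In the case $G = \R$ this is a constant (and $\tilde{\lambda}_{G} = \delta_{0}$), while in the case $G = L\Z$ this function is $L$-periodic (and $\tilde{\lambda}_{G} = L^{-1}\mathbf{1}_{[0,L)}\,\dif x$). In both cases, the bracketed inner integral equals a constant $R_{i}(t)$ independent of $z$, namely $R_{i}(t) = \E|\eta_{i}(t,0)|$ if $G = \R$ and $R_{i}(t) = \frac{1}{L}\E\int_{0}^{L}|\eta_{i}(t,v)|\,\dif v$ if $G = L\Z$. Therefore
\[
\E\|\eta_{i}(t,\cdot)\|_{L^{1}_{\p_{G}}(\R)} = R_{i}(t)\cdot \|1/\p_{2}\|_{L^{1}(\R)}.
\]
Applying \cref{prop:L1omega} with $F(x) = |x|$ (so $F'' = 2\delta_{0}$) to the coupled pair $(u_{1,i},u_{2,i})$---in the $G = L\Z$ case via \cref{eq:L1omega} after dropping the nonnegative sum-over-zeros term, and in the $G = \R$ case via \cref{jun2516}---yields $R_{i}(t) \le R_{i}(0)$. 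Summing over $i$ gives
\[
d_{G,N}(P_{t}^{*}\mu_{1},P_{t}^{*}\mu_{2}) \le \sum_{i=1}^{N}R_{i}(0)\cdot \|1/\p_{2}\|_{L^{1}(\R)} = \E\|\mathbf{v}_{1}-\mathbf{v}_{2}\|_{\mathcal{Y}_{G}^{N}} = d_{G,N}(\mu_{1},\mu_{2}).
\]

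The main obstacle is the second-moment hypothesis \cref{eq:L1omegamoments} required by \cref{prop:L1omega}, which is not automatic for general $\mu_{1},\mu_{2} \in \mathscr{P}_{G}(\mathcal{X}_{m}^{N})$: having $d_{G,N}(\mu_{1},\mu_{2}) < \infty$ only controls a first moment of the difference. I would circumvent this by an approximation: replace each $\mu_{j}$ by laws obtained from starting the dynamics with bounded $G$-invariant initial data $\mathbf{v}_{j}^{(n)}$ converging to $\mathbf{v}_{j}$ in $\mathcal{Y}_{G}^{N}$ (so in $d_{G,N}$), for which \cref{prop:boundedtightness} supplies the uniform second-moment bound. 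Establishing the contraction for these approximants and using the triangle inequality for $d_{G,N}$ together with the continuity of the solution map from \cref{thm:existence} would then extend the contraction to the general case.
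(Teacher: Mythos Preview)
Your core argument is identical to the paper's: take the optimal $G$-invariant coupling from \cref{lem:translationinvariantcouplingachieved}, exploit the convolution identity $1/\p_{G}=(1/\p_{2})*\tilde\lambda_{G}$ to rewrite $\|\cdot\|_{\mathcal{Y}_{G}}$ (your $R_{i}(t)$ repackaging is equivalent to the paper's formula \cref{eq:Y_G-formula}), and then apply \cref{prop:L1omega} componentwise.

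You are right to flag the second-moment hypothesis \cref{eq:L1omegamoments}: the paper's own proof simply invokes \cref{prop:L1omega} without verifying it, so this is a gap in the paper's argument exactly as stated. In practice it is immaterial, because every time the paper applies \cref{prop:Ptdecreasesnorm} (in \cref{prop:fixedtimediffs} and \cref{prop:periodicconverge}) the initial data is deterministic and bounded, so \cref{eq:supu2bdd} from \cref{prop:boundedtightness} supplies the required moment bound. Your truncation-and-limit idea to close the gap in full generality is reasonable; the one point that needs care is the passage to the limit in $d_{G,N}$. The Feller property in \cref{thm:existence} gives a.s.\ continuity in $\mathcal{X}_{m}$ (hence in $\mathcal{Y}_{G}^{N}$ by the continuous embedding), but converting this to convergence of Wasserstein distances requires a uniform-integrability argument for $\|\mathbf{u}^{(n)}(t,\cdot)\|_{\mathcal{Y}_{G}^{N}}$, which you have not supplied. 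A cleaner route: truncate the optimal coupling $\mu^{*}$ itself componentwise at level $n$; this preserves $G$-invariance, the truncated marginals have the second-moment bound via the comparison principle and \cref{lem:L2bound}, and $1$-Lipschitzness of truncation gives $d_{G,N}(\mu_{1}^{(n)},\mu_{2}^{(n)})\le d_{G,N}(\mu_{1},\mu_{2})$. Then lower-semicontinuity of $d_{G,N}$ under weak convergence plus tightness (which does follow from the uniform bounds in \cref{prop:thetawellposed}) closes the argument.
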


\begin{proof}
  Let $\mu\in\mathscr{P}_G(\mathcal{X}_{m}^{2N})$ be a coupling of
  $\mu_{1}$ and $\mu_{2}$ so that
  \[
    \int_{\mathcal{Y}_G^{2N}}\|\mathbf{v}_{1}-\mathbf{v}_{2}\|_{\mathcal{Y}_G^N}\,\dif\mu(\mathbf{v}_{1},\mathbf{v}_{2})=d_{G,N}(\mu_{1},\mu_{2}),
  \]
that exists by \cref{lem:translationinvariantcouplingachieved}.
  \cref{prop:L1omega} %
  then implies that if
  \[
    \mathbf{u}=(\mathbf{u}_{1},\mathbf{u}_{2})=((u_{1,1},\ldots,u_{1,N}),(u_{2,1},\ldots,u_{2,N}))
  \]
  solves \cref{eq:uPDE-many} with initial condition $\mathbf{u}(0,\cdot)\sim\mu$
  (independent of the noise), then for any $t \ge 0$, $y \in \R$, and $j=1,\ldots,N$, we have
  \begin{equation}
    \mathbb{E}\left(\int_{y+\Lambda_{G}}|u_{1,j}(t,x)-u_{2,j}(t,x)|\,\dif\lambda_{G}(x-y)\right)\le\mathbb{E}\left(\int_{y+\Lambda_{G}}|u_{1,j}(0,x)-u_{2,j}(0,x)|\,\dif\lambda_{G}(x-y)\right).\label{eq:L1contraction-apply}
  \end{equation}
  We note that for any $f:\mathbb{R}\to\mathbb{R}$, we have
  \begin{equation}
    \label{eq:Y_G-formula}
    \|f\|_{\mathcal{Y}_{G}}  =\int_{\mathbb{R}}\frac{|f(y)|}{\p_{G}(y)}\,\dif y  =\int_{\mathbb{R}}\left(\frac{1}{\p_{2}}*\tilde{\lambda}_{G}\right)(y)|f(y)|\,\dif y  =\int_{\mathbb{R}}\frac{1}{\p_{2}(y)}\int_{y+\Lambda_{G}}|f(x)|\,\dif\lambda_{G}(x-y)\,\dif y.
  \end{equation}
  Therefore, integrating \cref{eq:L1contraction-apply} in $y$ against
  $1/\p_{2}$ yields

  \[
    \mathbb{E}\|u_{1,j}(t,\cdot)-u_{2,j}(t,\cdot)\|_{\mathcal{Y}_{G}}\le\mathbb{E}\|u_{1,j}(0,\cdot)-u_{2,j}(0,\cdot)\|_{\mathcal{Y}_{G}},
  \]
  and adding up the components we obtain
  \[
    d_{G,N}(P_{t}^{*}\mu_{1},P_{t}^{*}\mu_{2})\le\mathbb{E}\|\mathbf{u}_{1}(t,\cdot)-\mathbf{u}_{2}(t,\cdot)\|_{\mathcal{Y}_{G}^{N}}\le\mathbb{E}\|\mathbf{u}_{1}(0,\cdot)-\mathbf{u}_{2}(0,\cdot)\|_{\mathcal{Y}_{G}^{N}}=d_{G,N}(\mu_{1},\mu_{2}),
  \]
  as claimed.
\end{proof}

We now show that if two solutions with the same periodic initial conditions are
started at different times, then 
their laws become close in the Wasserstein distance as they evolve.
\begin{prop}
  \label{prop:fixedtimediffs}
  Let $N\in\mathbb{N}$ and $G=L\Zm$ with $L>0$ or $G=\Rm$.     
  Given a deterministic $G$-invariant function~$\mathbf{v}\in L^{\infty}(\mathbb{R})^{N}$,
  let $\delta_{\mathbf{v}}$ be a delta measure on $\mathbf{v}$. Then
  for any fixed $s\ge0$, we have
  \begin{equation}
    \lim_{t\to\infty}d_{G,N}(P_{t}^{*}\delta_{\mathbf{v}},P_{t+s}^{*}\delta_{\mathbf{v}})=0.\label{eq:constantdifferencetozero}
  \end{equation}
\end{prop}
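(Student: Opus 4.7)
The plan is to couple two Burgers evolutions driven by the same noise but with time-shifted initial distributions, and then use \cref{prop:onlycouplingisthesame} together with the $L^{1}$-contraction to force the resulting joint law, after time-averaging, to concentrate on the diagonal. Concretely, I would let $\tilde{\mathbf{v}}\sim P_{s}^{*}\delta_{\mathbf{v}}$ be independent of the noise $V$, and solve \cref{eq:uPDE-many} driven by $V$ with initial conditions $\mathbf{v}$ and $\tilde{\mathbf{v}}$ to obtain $\mathbf{u}_{1}$ and $\mathbf{u}_{2}$ respectively. Then $\mathbf{u}_{1}(t,\cdot)\sim P_{t}^{*}\delta_{\mathbf{v}}$ and $\mathbf{u}_{2}(t,\cdot)\sim P_{t+s}^{*}\delta_{\mathbf{v}}$, so the joint law $\rho_{t}:=\Law(\mathbf{u}_{1}(t,\cdot),\mathbf{u}_{2}(t,\cdot))$ couples them and
\[
d_{G,N}(P_{t}^{*}\delta_{\mathbf{v}},P_{t+s}^{*}\delta_{\mathbf{v}})\le\psi(t):=\mathbb{E}\|\mathbf{u}_{1}(t,\cdot)-\mathbf{u}_{2}(t,\cdot)\|_{\mathcal{Y}_{G}^{N}}.
\]
Applying \cref{prop:L1omega} componentwise with $F(x)=|x|$ and integrating against $1/\p_{2}(y)$ via \cref{eq:Y_G-formula} shows that $\psi$ is non-increasing in $t$.

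Next, I would form the time average $\bar{\rho}_{T}:=T^{-1}\int_{0}^{T}\rho_{t}\,\dif t$. The marginals of $\rho_{t}$ are tight in $\mathcal{X}_{1/2}^{N}$ by \cref{prop:boundedtightness}, so $\{\bar{\rho}_{T}\}$ is tight in $\mathscr{P}(\mathcal{X}_{1/2}^{2N})$. Extract a subsequence $T_{n}\to\infty$ along which $\bar{\rho}_{T_{n}}\to\bar{\rho}$ weakly. Applying the argument of \cref{prop:krylovbogoliubov} to the joint flow (which is Markov and Feller by \cref{thm:existence} for the $2N$-component decoupled system \cref{eq:uPDE-many}) yields $\bar{\rho}\in\overline{\mathscr{P}}_{G}(\mathcal{X}_{1/2}^{2N})$. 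Continuity of the projections onto the two $N$-component factors and \cref{prop:convergencerandomstarttime} imply that both marginals of $\bar{\rho}$ equal $\nu_{\mathbf{a}}$; by \cref{thm:maintheorem-classification} the $j$th one-dimensional marginals are the extremal invariant measures $\nu_{a_{j}}$ with common mean $a_{j}$ and finite variance. The hypotheses of \cref{prop:onlycouplingisthesame} are therefore met, and $\bar{\rho}$ concentrates on the diagonal: if $(\mathbf{w}_{1},\mathbf{w}_{2})\sim\bar{\rho}$ then $\mathbf{w}_{1}=\mathbf{w}_{2}$ almost surely, so $\mathbb{E}_{\bar{\rho}}\|\mathbf{w}_{1}-\mathbf{w}_{2}\|_{\mathcal{Y}_{G}^{N}}=0$.

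To convert this into a statement about $\psi$, I would observe that $\|\mathbf{u}_{1}(t,\cdot)-\mathbf{u}_{2}(t,\cdot)\|_{\mathcal{Y}_{G}^{N}}$ is uniformly $L^{2}$-bounded in $t$ --- this follows from \cref{eq:supu2bdd}, Cauchy--Schwarz, and the integrability of $1/\p_{G}$ --- which gives uniform integrability under the family $\{\bar{\rho}_{T}\}$. Passing to the limit along $T_{n}$ then yields
\[
\frac{1}{T_{n}}\int_{0}^{T_{n}}\psi(t)\,\dif t=\mathbb{E}_{\bar{\rho}_{T_{n}}}\|\mathbf{w}_{1}-\mathbf{w}_{2}\|_{\mathcal{Y}_{G}^{N}}\longrightarrow0.
\]
Since $\psi$ is non-increasing, $\psi(T_{n})\le T_{n}^{-1}\int_{0}^{T_{n}}\psi(t)\,\dif t\to0$, and monotonicity upgrades this to $\psi(t)\to0$ as $t\to\infty$. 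The bound $d_{G,N}(P_{t}^{*}\delta_{\mathbf{v}},P_{t+s}^{*}\delta_{\mathbf{v}})\le\psi(t)$ then completes the proof.

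The main obstacle is the verification that $\bar{\rho}$ satisfies the full hypotheses of \cref{prop:onlycouplingisthesame}, namely dynamical invariance under the joint (common-noise) Burgers flow on $\mathcal{X}_{1/2}^{2N}$ and the identification of all $2N$ one-dimensional marginals as the extremal measures $\nu_{a_{j}}$. The dynamical invariance requires a careful joint Krylov--Bogoliubov argument leveraging the Feller property of the coupled system, while the marginal identification requires combining \cref{prop:convergencerandomstarttime} with the fact that \cref{thm:maintheorem-classification} singles out $\nu_{\mathbf{a}}$ as the unique extremal coupling of the $\nu_{a_{j}}$ with matching means and finite second moments.
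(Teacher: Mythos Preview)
Your proposal is correct and follows essentially the same route as the paper: couple the two evolutions via the product initial law $(P_s^*\delta_{\mathbf{v}})\otimes\delta_{\mathbf{v}}$, pass to a time-averaged subsequential limit, identify it as an element of $\overline{\mathscr{P}}_G(\mathcal{X}_{1/2}^{2N})$ with all one-component marginals equal to the extremal $\nu_{a_j}$, invoke \cref{prop:onlycouplingisthesame} to force diagonal concentration, use uniform integrability to pass the $\mathcal{Y}_G^N$-norm through the limit, and finish by monotonicity. The only cosmetic difference is that you track the monotonicity of the explicit coupling cost $\psi(t)$ via \cref{prop:L1omega}, whereas the paper tracks the monotonicity of $d_{G,N}(P_t^*\delta_{\mathbf{v}},P_{t+s}^*\delta_{\mathbf{v}})$ via \cref{prop:Ptdecreasesnorm}; your variant is slightly more direct since it avoids the optimal-coupling step, but the logic is otherwise the same.
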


\begin{proof}
  Let $\mathbf{u}=(\mathbf{u}_{1},\mathbf{u}_{2})$ solve \cref{eq:uPDE-many}
  with initial condition
  \[
    \mathbf{u}(0,\cdot)\sim(P_{s}^{*}\delta_{\mathbf{v}})\otimes\delta_{\mathbf{v}}.
  \]
  For each $T\in(0,\infty)$, let $S_{T}\sim\Uniform([0,T])$ be independent
  of everything else. As in the proof of \cref{prop:boundedtightness},
  there is a sequence $T_{k}\uparrow\infty$ and a measure $\omega_{*}\in\overline{\mathscr{P}}_{G}(\mathcal{X}_{1/2}^{2N})$
  so that
  \[
  \Law(\mathbf{u}(S_{T},\cdot))\to\omega_{*}
  \]
  weakly. By \cref{prop:convergencerandomstarttime},
  each of the $2N$ marginals of $\omega_{*}$ is extremal, i.e. an
  element of $\overline{\mathscr{P}}_{G}^{\mathrm{e}}(\mathcal{X}_{1/2})$.
  By \cref{prop:onlycouplingisthesame}, this means that if $\mathbf{v}=(\mathbf{v}_{1},\mathbf{v}_{2})\sim\omega_{*}$
  then $\mathbf{v}_{1}=\mathbf{v}_{2}$ almost surely since
  \[
    \mathbb{E}\left[\int_{\Lambda_{G}}\mathbf{v}_{1}(x)\,\dif\lambda_{G}(x)\right]=\mathbb{E}\left[\int_{\Lambda_{G}}\mathbf{v}_{2}(x)\,\dif\lambda_{G}(x)\right] = \mathbf{a},
  \]
  as %
  the expectation is preserved in the limit). In particular,
 the difference $\mathbf{u}_{1}(S_{T_{k}},\cdot)-\mathbf{u}_{2}(S_{T_{k}},\cdot)$
  converges to zero in distribution with respect to the topology of $\mathcal{X}_{1/2}^{N}$,
  hence with respect to the topology of $L_{\p_{G}}^{1}(\mathbb{R})$) as $k\to\infty$.
  Since the limit is constant, it also converges in probability.
  Furthermore, \cref{eq:supu2bdd} provides uniform integrability, so in fact $\mathbf{u}_{1}(S_{T_{k}},\cdot)-\mathbf{u}_{2}(S_{T_{k}},\cdot) \to 0$ in mean.
  Therefore, we have
  \begin{align*}
    0 & =\lim_{k\to\infty}\mathbb{E}\|\mathbf{u}_{1}(S_{T_{k}},\cdot)-\mathbf{u}_{2}(S_{T_{k}},\cdot)\|_{L_{\p_{1,G}}^{1}(\mathbb{R})^{N}}=\lim_{k\to\infty}\frac{1}{T_{k}}\int_{0}^{T_{k}}\mathbb{E}\|\mathbf{u}_{1}(t,\cdot)-\mathbf{u}_{2}(t,\cdot)\|_{L_{\p_{1,G}}^{1}(\mathbb{R})^{N}}\,\dif t \\
      & \ge\lim_{k\to\infty}\frac{1}{T_{k}}\int_{0}^{T_{k}}d_{G,N}(\Law(\mathbf{u}_{1}(t,\cdot)),\Law(\mathbf{u}_{2}(t,\cdot)))\,\dif t=\lim_{k\to\infty}\frac{1}{T_{k}}\int_{0}^{T_{k}}d_{G,N}(P_{s+t}^{*}\delta_{\mathbf{v}},P_{t}^{*}\delta_{\mathbf{v}})\,\dif t                                \\
      & \ge\lim_{k\to\infty}d_{G,N}(P_{T_{k}+s}^{*}\delta_{\mathbf{v}},P_{T_{k}}^{*}\delta_{\mathbf{v}}),
  \end{align*}
  with the last inequality by \cref{prop:Ptdecreasesnorm}. Using \cref{prop:Ptdecreasesnorm}
  again proves \cref{eq:constantdifferencetozero}, since if a subsequence
  of a nonnegative decreasing sequence converges to zero, then so does
  the whole sequence.
\end{proof}
The following proposition establishes \cref{thm:maintheorem-stability} for periodic
initial conditions, or, in the notation of \cref{prop:stability-criterion},
when $\mathbf{v}_\mathrm{int} =\mathbf{v}_\mathrm{z}\equiv 0$,
\begin{prop}
  \label{prop:periodicconverge}Let $N\in\mathbb{N}$ and $G=L\Zm$ with $L>0$ or $G=\R$. 
  Given a deterministic~$G$-invariant~$\mathbf{v}\in L^{\infty}(\mathbb{R})^{N}$, let $\delta_{\mathbf{v}}$
  be the delta measure on $\mathbf{v}$, and set
  \[
  \mathbf{a}=\int_{\Lambda_{G}}\mathbf{v}(x)\,\dif\lambda_{G}(x).
  \]
  With $\nu_{\mathbf{a}}$ defined as in \cref{thm:maintheorem-classification},
  we have
  \begin{equation}
    \lim_{t\to\infty}d_{G,N}(P_{t}^{*}\delta_{\mathbf{v}},\nu_{\mathbf{a}})=0\label{eq:convergencefromconstantinitialconditions}
  \end{equation}
  and
  \begin{equation}
    \lim_{t\to\infty}P_{t}^{*}\delta_{\mathbf{v}}=\nu_{\mathbf{a}},\label{eq:weakconvergenceperiodic}
  \end{equation}
  weakly with respect to the topology of $\mathcal{X}_{1/2}^{N}$.
\end{prop}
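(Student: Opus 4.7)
The plan is to combine the time-averaged convergence of \cref{prop:convergencerandomstarttime} with the $d_{G,N}$-contractivity of the dynamics (\cref{prop:Ptdecreasesnorm}) to deduce genuine non-averaged convergence, via a Cesaro--convexity argument. The first step is to upgrade the weak convergence in $\mathcal{X}_{1/2}^N$ provided by \cref{prop:convergencerandomstarttime} to convergence in $d_{G,N}$. The continuous embedding $\mathcal{X}_{1/2}^N\hookrightarrow\mathcal{Y}_G^N$ makes weak convergence in $\mathcal{X}_{1/2}^N$ imply weak convergence in $\mathcal{Y}_G^N$, so it suffices to establish uniform integrability of $\|\cdot\|_{\mathcal{Y}_G^N}$ along the time averages. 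A direct Fubini computation using the definition of $\p_G$ gives $\int_\R \p_G(y)^{-1}\,\dif y=\|\p_2^{-1}\|_{L^1(\R)}<\infty$, and then Cauchy--Schwarz combined with the bound $M\coloneqq\sup_{t,x}\E|\mathbf{u}(t,x)|^2<\infty$ from \cref{eq:supu2bdd} yields
\[
\E\|\mathbf{u}(t,\cdot)\|_{\mathcal{Y}_G^N}^2\le \|\p_2^{-1}\|_{L^1(\R)}\int_\R\frac{\E|\mathbf{u}(t,y)|^2}{\p_G(y)}\,\dif y\le M\|\p_2^{-1}\|_{L^1(\R)}^2,
\]
uniformly in $t\ge 0$. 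Averaging in $t$ preserves this $L^2$ bound, and by Fatou's lemma the same bound also holds under $\nu_{\mathbf{a}}$. The standard upgrade of weak to Wasserstein-$1$ convergence (weak convergence plus uniform integrability of the norm) then gives
\[
\lim_{T\to\infty}d_{G,N}\Big(\tfrac{1}{T}\int_1^{T+1}P_t^*\delta_{\mathbf{v}}\,\dif t,\nu_{\mathbf{a}}\Big)=0.
\]

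Next, I would apply \cref{prop:Ptdecreasesnorm} with $\mu_1=P_s^*\delta_{\mathbf{v}}$ and $\mu_2=\nu_{\mathbf{a}}$, using the invariance $P_t^*\nu_{\mathbf{a}}=\nu_{\mathbf{a}}$, to conclude that the function $\phi(t)\coloneqq d_{G,N}(P_t^*\delta_{\mathbf{v}},\nu_{\mathbf{a}})$ is non-increasing in $t\ge 0$. It is finite since both $\delta_{\mathbf{v}}$ and $\nu_{\mathbf{a}}$ have finite $\mathcal{Y}_G^N$-moment by the previous step (and since $\mathbf{v}$ is bounded and $G$-periodic), so $\phi$ has a limit $L\ge 0$. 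By the convexity inequality \cref{eq:W1mixture},
\[
d_{G,N}\Big(\tfrac{1}{T}\int_1^{T+1}P_t^*\delta_{\mathbf{v}}\,\dif t,\nu_{\mathbf{a}}\Big)\le \tfrac{1}{T}\int_1^{T+1}\phi(t)\,\dif t,
\]
and the right side tends to $L$ as $T\to\infty$, being the Cesaro average of a non-increasing function with limit $L$. The left side tends to $0$ by Step~1, forcing $L=0$, which is \cref{eq:convergencefromconstantinitialconditions}.

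For the weak convergence \cref{eq:weakconvergenceperiodic} in $\mathcal{X}_{1/2}^N$, I would combine the $d_{G,N}$-convergence just established with the tightness of $\{P_t^*\delta_{\mathbf{v}}\}_{t\ge 1}$ in $\mathcal{X}_{1/2}^N$ furnished by \cref{prop:boundedtightness}. Given any sequence $t_n\to\infty$, tightness produces a subsequence along which $P_{t_{n_k}}^*\delta_{\mathbf{v}}$ converges weakly in $\mathcal{X}_{1/2}^N$ to some $\nu'$, hence also weakly in $\mathcal{Y}_G^N$. Since $d_{G,N}$-convergence to $\nu_{\mathbf{a}}$ implies weak convergence in $\mathcal{Y}_G^N$, the uniqueness of weak limits on $\mathcal{Y}_G^N$ forces $\nu'=\nu_{\mathbf{a}}$. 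Every subsequential weak limit in $\mathcal{X}_{1/2}^N$ therefore equals $\nu_{\mathbf{a}}$, which yields \cref{eq:weakconvergenceperiodic}.

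The main obstacle is likely to be Step~1---carefully exploiting the structure of the weight $\p_G$ and the second-moment bound \cref{eq:supu2bdd} to control the $\mathcal{Y}_G^N$-norm both uniformly along the evolution and in the limit $\nu_{\mathbf{a}}$, so that weak convergence on $\mathcal{X}_{1/2}^N$ can be promoted to Wasserstein-$1$ convergence on the larger space $\mathcal{Y}_G^N$. Once this is in place, the contractivity and Cesaro averaging close the argument without further analytic input.
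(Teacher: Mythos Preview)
Your argument has a genuine gap in the second step. You correctly establish that $\phi(t)=d_{G,N}(P_t^*\delta_{\mathbf{v}},\nu_{\mathbf a})$ is non-increasing and hence has a limit $L\ge 0$, and that the time-averaged laws converge in $d_{G,N}$. But the convexity inequality \cref{eq:W1mixture} goes the wrong way for your purpose: it gives
\[
d_{G,N}\Big(\tfrac{1}{T}\int_1^{T+1}P_t^*\delta_{\mathbf{v}}\,\dif t,\nu_{\mathbf{a}}\Big)\le \tfrac{1}{T}\int_1^{T+1}\phi(t)\,\dif t,
\]
so passing to the limit yields only the trivial inequality $0\le L$, not $L=0$. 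Monotonicity of $\phi$ together with Ces\`aro convergence of the measures does not force $\phi\to 0$; think of $x_t=e_{\lfloor t\rfloor}$ in $\ell^2$, where $\|x_t\|\equiv 1$ while the time averages tend to $0$ in norm.

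The paper closes this gap with an additional ingredient that you did not use: \cref{prop:fixedtimediffs}, which shows $d_{G,N}(P_t^*\delta_{\mathbf v},P_{t+s}^*\delta_{\mathbf v})\to 0$ for each fixed $s$. This is proved by coupling the time-shifted solutions, passing to a subsequential limit, and invoking \cref{prop:onlycouplingisthesame} to see that the two components coincide in the limit (since they have the same mean and extremal marginals). With this in hand, one bounds $d_{G,N}(P_t^*\delta_{\mathbf v},\mu_{t,T})$ by $\tfrac1T\int_0^T d_{G,N}(P_t^*\delta_{\mathbf v},P_{t+s}^*\delta_{\mathbf v})\,\dif s$, which tends to $0$ by dominated convergence, and combines this with the fact that $d_{G,N}(\mu_{t,T},\nu_{\mathbf a})$ is small uniformly in $t$ (by contractivity applied to $\mu_{0,T}$). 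Your Step~1 and your final paragraph on weak convergence are fine and match the paper's argument.
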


\begin{proof}
  Let
  \[
    \mu_{t,T}=\frac{1}{T}\int_{t}^{t+T}P_{s}^{*}\delta_{\mathbf{v}}\,\dif s.
  \]
  By \cref{prop:convergencerandomstarttime}, $\mu_{0,T}$ converges to
  $\nu_{\mathbf{a}}$ weakly with respect to the topology of $\mathcal{X}_{1/2}^{N}$, hence with respect to the topology of 
  $\mathcal{Y}^N_G$, as $T \to \infty$. By \cref{lem:L2bound}, if $\mathbf{v}\sim\mu_{0,T}$,
  then $\E\|\mathbf{v}\|_{\mathcal{Y}^N_G}^2$ is bounded uniformly
  in $T$, so in fact we have
  \begin{equation}
    \lim_{T\to\infty}d_{G,N}(\mu_{0,T},\nu_{\mathbf{a}})=0.\label{eq:mu0TconvindN}
  \end{equation}
  Now $P_{t}^{*}\mu_{0,T}=\mu_{t,T}$ and $P_{t}^{*}\nu_{\mathbf{a}}=\nu_{\mathbf{a}}$, so \cref{prop:Ptdecreasesnorm} implies that $d_{G,N}(\mu_{t,T},\nu_{\mathbf{a}})$ is decreasing in $t$.
  Hence we can upgrade \cref{eq:mu0TconvindN} to
  \begin{equation}
    \adjustlimits\lim_{T\to\infty}\sup_{t\ge0}d_{G,N}(\mu_{t,T},\nu_{\mathbf{a}})=0.\label{eq:liminT}
  \end{equation}
  Now fix $T > 0$ and consider
  \begin{equation*}
    \lim_{t\to\infty}\int_{t}^{t+T}d_{G,N}(P_{t}^{*}\delta_{\mathbf{v}},P_{s}^{*}\delta_{\mathbf{v}})\,\dif s=\lim_{t\to\infty}\int_{0}^{T} d_{G,N}(P_{t}^{*}\delta_{\mathbf{v}},P_{t+s}^{*}\delta_{\mathbf{v}})\,\dif s.
  \end{equation*}
  By \cref{prop:Ptdecreasesnorm}, the integrand is at most $d_{G,N}(\delta_{\mathbf{v}},P_{s}^{*}\delta_{\mathbf{v}})$, which is locally integrable in $s$ by \cref{prop:comparisonprinciple} and \cref{prop:H1bound}.
  Furthermore, \cref{prop:fixedtimediffs} shows that the integrand converges to $0$ pointwise in $s$.
  Thus by dominated convergence, we have
  \begin{equation}
    \label{eq:fixedTaveragelimit}
    \lim_{t\to\infty}\int_{t}^{t+T}d_{G,N}(P_{t}^{*}\delta_{\mathbf{v}},P_{s}^{*}\delta_{\mathbf{v}})\,\dif s = 0.
  \end{equation}
  Now the triangle inequality and \cref{eq:W1mixture} yield
  \begin{align*}
    d_{G,N}(P_{t}^{*}\delta_{\mathbf{v}},\nu_{\mathbf{a}}) & \le d_{G,N}(P_{t}^{*}\delta_{\mathbf{v}},\mu_{t,T})+d_{G,N}(\mu_{t,T},\nu_{\mathbf{a}})                                                     \\
                                                           & \le\frac{1}{T}\int_{t}^{t+T}d_{G,N}(P_{t}^{*}\delta_{\mathbf{v}},P_{s}^{*}\delta_{\mathbf{v}})\,\dif s+d_{G,N}(\mu_{t,T},\nu_{\mathbf{a}}).
  \end{align*}
  Now for any $\eps>0$, \cref{eq:liminT} let us choose $T$ so large
  that the second term is less than $\eps/2$ for any $t$, and then
  \cref{eq:fixedTaveragelimit} lets us choose $t$ so large that the
  first term is less than $\eps/2$. This proves \cref{eq:convergencefromconstantinitialconditions}.

  To prove \cref{eq:weakconvergenceperiodic}, note that by \cref{prop:boundedtightness},
  for every sequence $t_{k}\to\infty$ we have  a subsequence~$t_{k_{n}}\to\infty$
  and a $\nu\in\mathscr{P}(\mathcal{X}_{1/2}^{N})$ so that $P_{t_{k_{n}}}^{*}\delta_{\mathbf{v}}$
  converges weakly in $\mathscr{P}(\mathcal{X}_{1/2}^{N})$ to $\nu$
  as $n\to\infty$. Since the topology of weak convergence is metrizable,
  to complete the proof it suffices to show that~$\nu=\nu_{\mathbf{a}}$.
  Since $P_{t_{k_{n}}}^{*}\delta_{\mathbf{v}}$ converges weakly to
  $\nu$ in $\mathscr{P}(\mathcal{X}_{1/2}^{N})$, it also converges
  weakly to $\nu$ with respect to the topology of $L_{\p_{G}}^{1}(\mathbb{R})$
  due to the continuous inclusion $\mathcal{X}_{1/2}^{N}\subset L_{\p_{G}}^{1}(\mathbb{R})$.
  On the other hand, \cref{eq:convergencefromconstantinitialconditions}
  implies that $P_{t_{k_{n}}}^{*}\delta_{\mathbf{v}}$ converges to
  $\nu_{\mathbf{a}}$ weakly with respect to the topology of $L_{\p_{G}}^{1}(\mathbb{R})$.
  This means that $\nu=\nu_{\mathbf{a}}$, so the proof is complete.
\end{proof}

We now consider initial conditions satisfying \cref{def:basinsofattraction}
to prove \cref{thm:maintheorem-stability}. Initial conditions satisfying \cref{def:basinsofattraction}
can be sandwiched between periodic initial conditions whose means are very close to each other. Solutions started with such periodic initial conditions converge to the stationary solutions with the corresponding means. Then the solutions started from the aperiodic initial conditions are stuck in the middle by the comparison principle, and must therefore converge to the stationary solution with the appropriate mean.
\begin{proof}[Proof of \cref{thm:maintheorem-stability}.]
\cref{prop:shearinvariance} implies that may assume  
  that~$\mathbf{a}=(0,\ldots,0)$.
  Consider the solution~$\underline{\mathbf{u}}=(\mathbf{u}_{0},\mathbf{u}) \in \mathcal{Z}_{1/2}^{2N}$
 to \cref{eq:uPDE-many} with initial condition
  \[
    \underline{\mathbf{u}}(0,\cdot)=((0,\ldots,0),\mathbf{v}).
  \]
  By \cref{prop:boundedtightness}, for every sequence $t_{k}\uparrow\infty$,
  there is a subsequence $t_{k_{n}}\uparrow\infty$ and a limit 
  measure~$\nu\in\mathscr{P}(\mathcal{X}_{1/2}^{2N})$
  so that
  \begin{equation}
    \Law(\underline{\mathbf{u}}(t_{k_{n}},\cdot))\to\nu\label{eq:convinlaw}
  \end{equation}
  weakly in $\mathscr{P}(\mathcal{X}_{1/2}^{2N})$ as $k\to\infty$.
  Since the topology of weak convergence is metrizable, 
  it suffices to prove that the law of the marginal
  of $\nu$ on the last $N$ components is $\nu_{\mathbf{0}}$.

  Let $\eps>0$ and pick $L,\mathbf{v}_{-},\mathbf{v}_{+}$ as in \cref{def:basinsofattraction}.
  Let $\overline{\mathbf{u}}=(\mathbf{u}_{-},\mathbf{u}_{+},\mathbf{u}_{0},\mathbf{u})\in \mathcal{Z}_{1/2}^{4N}$
  solve \cref{eq:uPDE-many} with initial condition $\overline{\mathbf{u}}(0,\cdot)=(\mathbf{v}_{-},\mathbf{v}_{+},\mathbf{0},\mathbf{v})$.
  By the comparison principle \cref{prop:comparisonprinciple}, we have
  \begin{align}
    \mathbf{u}_{-}(t,\cdot) & \preceq\mathbf{u}_{0}(t,\cdot)\preceq\mathbf{u}_{+}(t,\cdot)&&\text{and}&
      \mathbf{u}_{-}(t,\cdot) & \preceq\mathbf{u}(t,\cdot)\preceq\mathbf{u}_{+}(t,\cdot)
    \label{eq:sandwichperiodic}
  \end{align}
  almost surely for all $t\ge0$. Here, $\preceq$ refers to the partial
  order defined in \cref{eq:partialorder}. Define
  \[
    \mathbf{a}_{\pm}=(a_{\pm,1},\ldots,a_{\pm,N})=\frac{1}{L}\int_{0}^{L}\mathbf{v}_{\pm}(x)\,\dif x,
  \]
  so that
  \begin{equation}
    \sum_{j=1}^{N}(a_{+,j}-a_{-,j})<2N\eps\label{eq:asclose}
  \end{equation}
  by \cref{eq:basinofattractioncond}. By \cref{prop:boundedtightness},
  there is a further subsequence $t_{k_{n_{i}}}$ of $t_{k_{n}}$
  and a measure $\overline{\nu}\in\mathscr{P}(\mathcal{X}_{1/2}^{4N})$,
  so that
  \begin{equation}
    \lim_{i\to\infty}\Law(\overline{\mathbf{u}}(t_{k_{n_{i}}},\cdot))=\overline{\nu}\label{eq:ubarconvinlaw}
  \end{equation}
  weakly in the topology of $\mathscr{P}(\mathcal{X}_{1/2}^{4N})$.
  By \cref{eq:convinlaw}, the marginal of $\overline{\nu}$ on the last
  $2N$ coordinates is $\nu$. By \cref{prop:periodicconverge}, the marginal
  of $\overline{\nu}$ on the first $3N$ components is $\nu_{\mathbf{a}_{-},\mathbf{a}_{+},\mathbf{a}}$.
  On the other hand, we have by \cref{eq:sandwichperiodic} that for each
  $t\ge0$,
  \begin{equation}
    \|\mathbf{u}(t,\cdot)-\mathbf{u}_{0}(t,\cdot)\|_{\mathcal{Y}_{G}^{N}}\le\|\mathbf{u}_+(t,\cdot)-\mathbf{u}_-(t,\cdot)\|_{\mathcal{Y}^N_G}
    \label{eq:comparetosandwich}
  \end{equation}
  almost surely. By \cref{eq:supu2bdd}, the family $\{\|\mathbf{u}_+(t,\cdot)-\mathbf{u}_-(t,\cdot)\|_{\mathcal{Y}^N_G}\} _{t\ge0}$
  is uniformly integrable, so by the Skorokhod representation theorem
  and \cref{eq:ubarconvinlaw}, if
  \[
    (\mathbf{w}_{-},\mathbf{w}_{+})=((w_{-,1},\ldots,w_{-,N}),(w_{+,1},\ldots,w_{+,N}))\sim\nu_{\mathbf{a}_{-},\mathbf{a}_{+}},
  \]
  then \cref{enu:ordered} of \cref{thm:maintheorem-classification}, \cref{eq:Y_G-formula}, and \cref{eq:asclose} imply
  \begin{equation}
    \label{eq:limofexps}
    \begin{split}
      \lim_{t\to\infty}  \mathbb{E}\|\mathbf{u}_+(t,\cdot)-\mathbf{u}_-(t,\cdot)\|_{\mathcal{Y}^N_G} &=\mathbb{E}\|\mathbf{w}_+-\mathbf{w}_-\|_{\mathcal{Y}^N_G}\\
      &=\left(\int_{\mathbb{R}}\frac{\dif x}{\p_2(x)}\right)\sum_{j=1}^{N}(a_{+,j}-a_{-,j}) \le2N\eps\int_{\mathbb{R}}\frac{\dif x}{\p_2(x)}.
    \end{split}
  \end{equation}
  Using Fatou's lemma,
  \cref{eq:comparetosandwich}, and \cref{eq:limofexps}, we have that if
  $(\mathbf{w}_{0},\mathbf{w})\sim\nu$, then
  \begin{equation*}
    \mathbb{E}\|\mathbf{w}_{0}-\mathbf{w}\|_{\mathcal{Y}_{G}^{N}}\le\liminf_{t\to\infty}\mathbb{E}\|\mathbf{u}(t,\cdot)-\mathbf{u}_{0}(t,\cdot)\|_{\mathcal{Y}_{G}^{N}}  \le\lim_{t\to\infty}\mathbb{E}\|\mathbf{u}_{+}(t,\cdot)-\mathbf{u}_{-}(t,\cdot)\|_{\mathcal{Y}_{G}^{N}}\le2N\eps\int_{\mathbb{R}}\frac{\dif x}{\p_{G}(x)}.
  \end{equation*}
  But this holds for all $\eps>0$, so in fact $\mathbb{E}\|\mathbf{w}_{0}-\mathbf{w}\|_{\mathcal{Y}_{G}^{N}}=0$,
  so $\mathbf{w}_{0}=\mathbf{w}$ almost surely. This means that $\Law(\mathbf{w})=\Law(\mathbf{w}_{0})=\nu_{\mathbf{0}}$,
  which is what we needed to show.
\end{proof}

\appendix
\section{Proof of \texorpdfstring{\cref{prop:stability-criterion}}{Proposition~\ref{prop:stability-criterion}}}
\label{appendix:stability-criterion}

We now prove \cref{prop:stability-criterion}. 
The proof is elementary and independent of the rest of the paper.
  Let $\eps>0$. For each $j=1,\ldots,N$, define $w_{j}\in L^{\infty}(\mathbb{R})$
  by
  \begin{align*}
    w_{j}(x) & =\sup_{|y|\ge|x|}|v_{\mathrm{z},j}(y)|,
  \end{align*}
  so $|v_{\mathrm{z},j}(x)|\le w_{j}(x)$ for all $x\in\mathbb{R}$. Then, 
  $w_{j}(x)$ is decreasing in $|x|$, and
  \[
    \lim_{|x|\to\infty}w_{j}(x)=0.
  \]
  Therefore, we can find a $K\in\mathbb{N}$ so large that
  \begin{align*}
    \frac{1}{KL}\max_{j=1}^{N}\|v_{\mathrm{int},j}\|_{L^{1}(\mathbb{R})} & <\eps/2, & \frac{1}{KL}\max_{j=1}^{N}\int_{0}^{KL}w_{j}(x)\,\dif x & <\eps/2.
  \end{align*}
Let us define
  \begin{align}
    v_{\pm,j}(x) & =v_{\mathrm{per},j}(x)\pm w_{j}(x)\pm\sup_{m\in\mathbb{Z}}|v_{\mathrm{int},j}(x+mKL)|\label{eq:vminusplusdefs}
  \end{align}
and $\mathbf{v}_{-}=(v_{-,1},\ldots,v_{-,N})$ and $\mathbf{v}_{+}=(v_{+,1},\ldots,v_{+,N})$,
  so   $\mathbf{v}_{-},\mathbf{v}_{+}\in L^{\infty}(\mathbb{R})^{N}$
  and $\mathbf{v}_{-}\preceq\mathbf{v}\preceq\mathbf{v}_{+}$. 
Then we have
  \begin{align*}
    \frac{1}{KL}\int_{0}^{KL}\mathbf{v}_{-}(x)\,\dif x-\mathbf{a} & =\frac{1}{KL}\int_{0}^{KL}[\mathbf{v}_{-}(x)-\mathbf{v}_{\mathrm{per}}(x)]\,\dif x                                                            \\
                                                                  & =\frac{1}{KL}\int_{0}^{KL}\left[-w_{j}(x)+\inf_{i\in\mathbb{Z}}v_{\mathrm{int},j}(x+iKL)\right]\,\dif x                                       \\
                                                                  & \succeq-\frac{\eps}{2}(1,\ldots,1)-\frac{1}{KL}\max_{j=1}^{N}\int_{0}^{KL}\sum_{i\in\mathbb{Z}}\left|v_{\mathrm{int},j}(x+iKL)\right|\,\dif x \\
                                                                  & =-\frac{\eps}{2}(1,\ldots,1)-\frac{1}{KL}\max_{j=1}^{N}\|v_{\mathrm{int},j}\|_{L^{1}(\mathbb{R})}\succeq-(\eps,\ldots,\eps).
  \end{align*}
  A similar argument shows that 
  \[
  \frac{1}{KL}\int_{0}^{KL}\mathbf{v}_{+}(x)\,\dif x-\mathbf{a}\preceq(\eps,\ldots,\eps).
  \]
  Since this is possible for every $\eps>0$, we have $\mathbf{v}\in\mathscr{B}_{\mathbf{a}}$.
\hfill $\qed$

\section{Weighted spaces}\label{appendix:app-weight}

We now record several useful results on weighted spaces. We begin with some basic lemmas that are used throughout the paper. Then we will prove bounds on the heat kernel on weighted spaces.
\subsection{Basic properties of weighted spaces}\label{sec:append-weighted}

Our first lemma allows us to upgrade convergence from one weight to another.
\begin{prop}\label{prop:boosttheweight}
Let $\mathcal{T}$ be a metric space 
and $w_{1},w_{2},w_{3}$ be weights on $\R$ 
so that 
\[
\lim\limits _{|x|\to\infty}\frac{w_{1}(x)}{w_{2}(x)}=0.
\]
Suppose that $u_n\in\mathcal{C}_{\mathrm{b}}(\mathcal{T};\mathcal{C}_{w_{1}}(\mathbb{R}))$, and $u\in\mathcal{C}_{\mathrm{b}}(\mathcal{T};\mathcal{C}_{w_{3}}(\mathbb{R}))$ satisfy
\begin{equation*}
\lim\limits _{n\to\infty}
\|u_{n}-u\|_{\mathcal{C}_{\mathrm{b}}(\mathcal{T};\mathcal{C}_{w_{3}}(\mathbb{R}))}=0 \quad \textrm{and} \quad \sup\limits _{n\in\mathbb{N}}\|u_{n}\|_{\mathcal{C}_{\mathrm{b}}(\mathcal{T};\mathcal{C}_{w_{1}}(\mathbb{R}))}<\infty.
\end{equation*}
Then $u\in\mathcal{C}_{\mathrm{b}}(\mathcal{T};\mathcal{C}_{w_{2}}(\mathbb{R}))$, and
 $\lim\limits _{n\to\infty}\|u_{n}-u\|_{\mathcal{C}_{\mathrm{b}}(\mathcal{T};\mathcal{C}_{w_{2}}(\mathbb{R}))}=0$ as well.
\end{prop}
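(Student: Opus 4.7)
My plan is to split $\mathbb{R}$ into a bounded region, on which convergence in $\mathcal{C}_{w_3}$ upgrades to convergence in $\mathcal{C}_{w_2}$ via the local comparability of continuous positive weights, and an unbounded tail, on which the uniform $\mathcal{C}_{w_1}$-bound combined with the decay $w_1/w_2 \to 0$ at infinity forces smallness in $\mathcal{C}_{w_2}$.

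First I would verify that $u$ itself lies in $\mathcal{C}_{\mathrm{b}}(\mathcal{T}; \mathcal{C}_{w_1}(\mathbb{R}))$. Since $u_n \to u$ in $\mathcal{C}_{\mathrm{b}}(\mathcal{T}; \mathcal{C}_{w_3}(\mathbb{R}))$ and $w_3$ is strictly positive, the convergence is in particular pointwise on $\mathcal{T} \times \mathbb{R}$. Setting $M := \sup_n \|u_n\|_{\mathcal{C}_{\mathrm{b}}(\mathcal{T}; \mathcal{C}_{w_1})} < \infty$, the bound $|u_n(t,x)| \le M w_1(x)$ passes to the pointwise limit to give $|u(t,x)| \le M w_1(x)$; continuity of $u$ in $x$ is already guaranteed by the hypothesis, so $u \in \mathcal{C}_{\mathrm{b}}(\mathcal{T}; \mathcal{C}_{w_1}(\mathbb{R}))$, and in particular $u \in \mathcal{C}_{\mathrm{b}}(\mathcal{T}; \mathcal{C}_{w_2}(\mathbb{R}))$ follows from $w_1/w_2$ being bounded.

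Next, fix $\eps > 0$. By hypothesis, choose $R > 0$ so that $w_1(x)/w_2(x) < \eps/(4M)$ for $|x| > R$. On the tail we estimate
\[
\sup_{t \in \mathcal{T}} \sup_{|x| > R} \frac{|u_n(t,x) - u(t,x)|}{w_2(x)} \le \sup_{|x| > R} \frac{2M w_1(x)}{w_2(x)} < \frac{\eps}{2},
\]
uniformly in $n$. On the compact set $[-R, R]$, the continuous strictly positive weights $w_2$ and $w_3$ satisfy $w_3(x)/w_2(x) \le C_R$ for some finite $C_R$, so
\[
\sup_{t \in \mathcal{T}} \sup_{|x| \le R} \frac{|u_n(t,x) - u(t,x)|}{w_2(x)} \le C_R \|u_n - u\|_{\mathcal{C}_{\mathrm{b}}(\mathcal{T}; \mathcal{C}_{w_3}(\mathbb{R}))},
\]
which is $< \eps/2$ for all $n$ large. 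Adding the two estimates yields $\|u_n - u\|_{\mathcal{C}_{\mathrm{b}}(\mathcal{T}; \mathcal{C}_{w_2}(\mathbb{R}))} < \eps$ for such $n$.

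There is no real obstacle here: the only implicit ingredient is that the weights in question are continuous and strictly positive, so their quotients are bounded on compact sets. This is built into every weight used in the paper ($\p_\ell$, $g$, $\q_\lambda$, etc.), and the split-and-combine step is then routine.
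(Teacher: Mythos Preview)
Your proof is correct and follows essentially the same near/far splitting as the paper's proof: bound the tail via the uniform $\mathcal{C}_{w_1}$ control and $w_1/w_2 \to 0$, and the compact core via local comparability of $w_2$ and $w_3$. You are in fact slightly more careful than the paper, since you explicitly verify that $u$ inherits the $\mathcal{C}_{w_1}$ bound (needed for the tail estimate on $u_n - u$), which the paper leaves implicit.
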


\begin{proof} Fix $\eps>0$ and define
\begin{equation*}
K = \sup\limits _{n\in\mathbb{N}}
\|u_{n}\|_{\mathcal{C}_{\mathrm{b}}(\mathcal{T};\mathcal{C}_{w_{1}}(\mathbb{R}))}<\infty.
\end{equation*}
Then choose $M$ so that
\[
\left|\frac{w_{1}(x)}{w_{2}(x)}\right|\le \frac{\eps}{2K}~~~\hbox{ if $|x|\ge M$,}
\]
and  $N$ so large that if $n\ge N$ then
\begin{equation*}
\|u_{n}-u\|_{\mathcal{C}_{\mathrm{b}}(\mathcal{T};\mathcal{C}_{w_{3}}(\mathbb{R}))}
\le\eps\inf\limits _{|x|\le M}\frac{w_{2}(x)}{w_{3}(x)}.
\end{equation*}
Now, for any $n\ge N$, if $|y|\le M$, then for all $t\in\mathcal{T}$ we
  have
  \begin{equation*}
    |(u_{n}-u)(t,y)|\le\eps w_{3}(y)\inf\limits _{x\in[-M,M]}\frac{w_{2}(x)}{w_{3}(x)}\le\eps w_{2}(y)
  \end{equation*}
  while if $|y|\ge M$ we have
  \begin{equation*}
    |(u_{n}-u)(t,y)|\le\eps w_{3}(y)\inf\limits _{x\in[-M,M]}\frac{w_{2}(x)}{w_{3}(x)}\le\eps w_{2}(y).
  \end{equation*}
  Therefore, $\|u_{n}-u\|_{\mathcal{C}_{\mathrm{b}}(\mathcal{T};\mathcal{C}_{w_{2}}(\mathbb{R}))}<\eps$.
  This proves that $\lim\limits _{n\to\infty}\|u_{n}-u\|_{\mathcal{C}_{\mathrm{b}}(\mathcal{T};\mathcal{C}_{w_{2}}(\mathbb{R}))}=0$, as claimed.
\end{proof}

We next establish a form of the Arzel\`{a}--Ascoli theorem in weighted spaces.
\begin{prop}
  \label{prop:arzelaascoli}Suppose that $w_{1},w_{2},w_{3}$ are weights
  so that $\lim\limits _{|x|\to\infty}\frac{w_{1}(x)}{w_{2}(x)}=0$ and fix $\alpha > 0$.
  Then the embedding
  \[
    \mathcal{C}_{w_{1}}(\mathbb{R})\cap\mathcal{C}_{w_{3}}^{\alpha}(\mathbb{R})\hookrightarrow\mathcal{C}_{w_{2}}(\mathbb{R})
  \]
  is compact, where $\mathcal{C}_{w_{1}}(\mathbb{R})\cap\mathcal{C}_{w_{3}}^{\alpha}(\mathbb{R})$
  is equipped with the norm $\|u\|_{\mathcal{C}_{w_{1}}(\mathbb{R})\cap\mathcal{C}_{w_{3}}^{\alpha}(\mathbb{R})}=\|u\|_{\mathcal{C}_{w_{1}}(\mathbb{R})}+\|u\|_{\mathcal{C}_{\p_{w_{3}}}^{\alpha}(\mathbb{R})}$.
\end{prop}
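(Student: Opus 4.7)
The plan is to combine the standard Arzelà--Ascoli theorem on each compact interval with the decay of $w_1/w_2$ at infinity (essentially \cref{prop:boosttheweight}) to upgrade local uniform convergence to convergence in the weighted norm. Given a sequence $(u_n)$ bounded in $\mathcal{C}_{w_1}(\R) \cap \mathcal{C}_{w_3}^\alpha(\R)$ with bound $K$, the $\mathcal{C}_{w_3}^\alpha$ control yields uniform equicontinuity on each compact interval $[-N, N]$ (since $w_3$ is bounded there), while the $\mathcal{C}_{w_1}$ control yields uniform pointwise boundedness. Classical Arzelà--Ascoli on $[-N, N]$ combined with a diagonal extraction then produces a subsequence $(u_{n_k})$ that converges locally uniformly to some limit $u$, which is continuous as a local uniform limit of continuous functions.

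The second step is to promote this local uniform convergence to convergence in $\mathcal{C}_{w_2}(\R)$. Fix $\eps > 0$, and use the hypothesis $\lim_{|x| \to \infty} w_1(x)/w_2(x) = 0$ to choose $M$ so large that $w_1(x)/w_2(x) < \eps/(4K)$ whenever $|x| \geq M$. For $|x| \geq M$ and any $k, \ell$, one has
\[
\frac{|u_{n_k}(x) - u_{n_\ell}(x)|}{w_2(x)} \leq \frac{|u_{n_k}(x)| + |u_{n_\ell}(x)|}{w_2(x)} \leq 2K \cdot \frac{w_1(x)}{w_2(x)} < \frac{\eps}{2}.
\]
On $[-M, M]$, the function $w_2$ is bounded below by some $c > 0$, so local uniform convergence of $(u_{n_k})$ implies that $\|u_{n_k} - u_{n_\ell}\|_{\mathcal{C}_{w_2}([-M,M])} < \eps/2$ for $k, \ell$ large. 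Thus $(u_{n_k})$ is Cauchy in $\mathcal{C}_{w_2}(\R)$, hence converges there. (Alternatively, one may directly invoke \cref{prop:boosttheweight} with any intermediate weight $w_3' \geq w_3$ that exceeds $w_1$ and $w_2$ on compacta, but the direct argument is equally short.)

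The only mild subtlety is verifying that the limit $u$ belongs to $\mathcal{C}_{w_2}(\R)$, but this is immediate from the uniform bound $|u_{n_k}(x)| \leq K w_1(x)$ passed to the pointwise limit, together with $w_1 \leq C w_2$ on any compact set and $w_1/w_2 \to 0$ at infinity, which gives $u/w_2 \in \mathcal{C}_{\mathrm{b}}(\R)$. No serious obstacle is expected; the argument is the standard weighted Arzelà--Ascoli, and the regularity exponent $\alpha$ enters only to provide the equicontinuity on compact sets.
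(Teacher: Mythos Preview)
Your proof is correct and follows essentially the same approach as the paper's: extract a locally uniformly convergent subsequence via classical Arzelà--Ascoli and diagonalization, then upgrade to convergence in $\mathcal{C}_{w_2}(\R)$ using the decay of $w_1/w_2$ at infinity. The only cosmetic difference is that the paper packages the upgrading step as an application of \cref{prop:boosttheweight} (which you mention as an alternative), whereas you write out the direct Cauchy argument; these are the same idea.
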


\begin{proof}
  It suffices to show that the unit ball of $\mathcal{C}_{w_{1}}(\mathbb{R})\cap\mathcal{C}_{w_{3}}^{\alpha}(\mathbb{R})$
  is precompact in $\mathcal{C}_{w_{2}}(\mathbb{R})$. Fix a sequence
  $(v_{n})_{n}$ of elements of this unit ball. On any compact subset
  of $\mathbb{R}$, $(v_{n})$ is uniformly bounded and Hölder. Thus
  by Arzelà--Ascoli and diagonalization, there exists a subsequence
  $(v_{n_{k}})_{k}$ which converges locally uniformly to some $v\in\mathcal{C}(\mathbb{R})$.
  As noted in the proof of \cref{lem:weightedhkbound}, this is equivalent
  to convergence in \emph{some} weighted space. Since $(v_{n})$ is
  uniformly bounded in $\mathcal{C}_{w_{1}}(\mathbb{R})$, \cref{prop:boosttheweight}
  implies that $v\in\mathcal{C}_{w_{2}}(\mathbb{R})$ and $\lim\limits _{k\to\infty}v_{n_{k}}=v$
  in $\mathcal{C}_{w_{2}}(\mathbb{R})$.
\end{proof}

Finally, we record a compactness criterion in $\mathcal{X}_m$.
\begin{lem}
  \label{lem:Xmcompactness}If $K\subset\mathcal{X}_{m}$ is such that
  $K$ is compact in the topology of $\mathcal{C}_{\p_{\ell}}(\mathbb{R})$
  for each $\ell>m$, then~$K$ is compact in the topology of $\mathcal{X}_{m}$
  as well.
\end{lem}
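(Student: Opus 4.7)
The plan is to reduce the claim to sequential compactness via the metrizability of $\mathcal{X}_m$, and then to produce a convergent subsequence by a standard diagonal extraction along a sequence of exponents $\ell_k \downarrow m$. Since $\mathcal{X}_m$ is a Fréchet space (and in particular metrizable, as recorded right after the definition of $L^\infty_{\p_{m+}}(\R)$), it suffices to show that every sequence $(v_n) \subset K$ admits a subsequence converging in $\mathcal{X}_m$ to an element of $K$.

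Fix any strictly decreasing sequence $\ell_k \downarrow m$ with $\ell_k > m$. The key simple observation is that if $\ell' > \ell > m$, then $\p_{\ell'} \ge \p_{\ell}$ pointwise, hence $\|\cdot\|_{\mathcal{C}_{\p_{\ell'}}(\R)} \le \|\cdot\|_{\mathcal{C}_{\p_\ell}(\R)}$, so convergence in $\mathcal{C}_{\p_\ell}(\R)$ implies convergence in $\mathcal{C}_{\p_{\ell'}}(\R)$ to the same limit (both topologies being Hausdorff). I would then perform the usual diagonal argument: since $K$ is compact in $\mathcal{C}_{\p_{\ell_1}}(\R)$, extract a subsequence $(v_n^{(1)})$ converging in $\mathcal{C}_{\p_{\ell_1}}(\R)$ to some $v^{(1)} \in K$; by compactness in $\mathcal{C}_{\p_{\ell_2}}(\R)$, extract a further subsequence $(v_n^{(2)})$ converging in $\mathcal{C}_{\p_{\ell_2}}(\R)$ to some $v^{(2)} \in K$. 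By the previous observation applied with $\ell = \ell_2 < \ell_1 = \ell'$, $(v_n^{(2)})$ also converges in $\mathcal{C}_{\p_{\ell_1}}(\R)$, so $v^{(2)} = v^{(1)} =: v \in K$. Iterating yields nested subsequences $(v_n^{(k)})$ converging to the same $v$ in $\mathcal{C}_{\p_{\ell_k}}(\R)$ for each $k$.

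Taking the diagonal $w_n := v_n^{(n)}$, for every fixed $k$ the tail $(w_n)_{n \ge k}$ is a subsequence of $(v_n^{(k)})$, so $w_n \to v$ in $\mathcal{C}_{\p_{\ell_k}}(\R)$. Now, for an arbitrary $\ell > m$, pick $k$ large enough that $\ell_k < \ell$; then convergence in $\mathcal{C}_{\p_{\ell_k}}(\R)$ implies convergence in $\mathcal{C}_{\p_\ell}(\R)$ by the norm comparison above. Hence $w_n \to v$ in $\mathcal{C}_{\p_\ell}(\R)$ for every $\ell > m$, which is precisely convergence in the topology of $\mathcal{X}_m$ described in \cref{subsec:functionspaces}. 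Since $v \in K$, this establishes sequential compactness of $K$ in $\mathcal{X}_m$ and finishes the proof.

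There is essentially no hard step here: the proof is a routine diagonal argument, and the only thing to be careful about is the direction of the inclusions $\mathcal{C}_{\p_{\ell_k}}(\R) \hookrightarrow \mathcal{C}_{\p_{\ell_{k-1}}}(\R)$ for the decreasing sequence $\ell_k \downarrow m$, which ensures the inductively produced limits all agree.
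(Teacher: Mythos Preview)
Your proposal is correct and follows essentially the same approach as the paper: the paper's proof is a one-line invocation of ``a diagonal argument'' to extract a subsequence converging in every $\mathcal{C}_{\p_\ell}(\R)$, and you have simply unpacked that diagonal argument carefully, including the norm comparison that ensures the successive limits agree and that convergence along $\ell_k\downarrow m$ suffices.
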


\begin{proof}
  Let $(v_{n})_{n}$ be a sequence of elements in $K$.
  By a diagonal argument, there is a subsequence $(v_{n_k})_k$ which converges in the topology of $\mathcal{C}_{\p_{\ell}}(\mathbb{R})$ for all $\ell>m$, and hence in the topology of $\mathcal{X}_{m}$.
\end{proof}

\subsection{Heat kernel bounds in weighted spaces\label{subappendix:hkbounds}}

Here, we prove some weighted estimates for the heat kernel. 
\begin{lem}
  \label{lem:weightedhkbound}Fix a weight $w\in\{(\log\langle\cdot\rangle)^{3/4}\}\cup\{\p_{\ell}\st\ell\in\mathbb{R}\}$,
  $\beta\ge\alpha\ge0$, and $T<\infty$. There is a constant $C=C(w,\alpha,\beta,T)<\infty$
  so that for all $t\in(0,T]$ and $f\in\mathcal{C}_{w}^{\alpha}(\mathbb{R})$
  we have
  \begin{equation}
    \|G_{t}*f\|_{\mathcal{C}_{w}^{\beta}(\mathbb{R})}\le Ct^{-\frac{\beta-\alpha}{2}}\|f\|_{\mathcal{C}_{w}^{\alpha}(\mathbb{R})}\label{eq:Gtf}
  \end{equation}
  In particular,
  \begin{equation}
    \|\partial_{x}G_{t}*f\|_{\mathcal{C}_{w}^{\beta}(\mathbb{R})}\le Ct^{-\frac{\beta-\alpha+1}{2}}\|f\|_{\mathcal{C}_{w}^{\alpha}(\mathbb{R})}.\label{eq:dxGf}
  \end{equation}
  In the case $\alpha=0$, it is only necessary to assume that $f\in L_{w}^{\infty}(\mathbb{R})$
  and the norm $\|f\|_{\mathcal{C}_{w}^{\alpha}(\mathbb{R})}$ can be
  replaced by $\|f\|_{L_{w}^{\infty}(\mathbb{R})}$ on the right-hand
  sides of \cref{eq:Gtf} and \cref{eq:dxGf}.
\end{lem}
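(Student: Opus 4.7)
The strategy centers on a single weighted kernel bound:
\begin{equation*}
(G_t * w)(x) \le C_{w,T}\, w(x) \qquad \text{for all } t \in (0, T],\ x \in \mathbb{R}. \tag{$\star$}
\end{equation*}
Given $(\star)$, the base case $\alpha = \beta = 0$ of \cref{eq:Gtf} is immediate by putting absolute values inside the convolution: $|G_t * f(x)| \le \|f\|_{L^\infty_w}(G_t * w)(x) \le C w(x)\|f\|_{L^\infty_w}$. I would prove $(\star)$ case by case. For $w = \p_\ell$ with $\ell \ge 0$, the elementary bound $\langle y \rangle^\ell \le C_\ell(\langle x \rangle^\ell + |x-y|^\ell)$ reduces the integral to $w(x)$ plus a $t$-uniform Gaussian moment of order $\ell$ (bounded by $C T^{\ell/2}$, which is absorbed into $w(x)$ since $w \ge 1$). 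For $\ell < 0$, the reverse inequality $\langle y \rangle^\ell \le C_\ell \langle x \rangle^\ell \langle x - y \rangle^{|\ell|}$ does the same. For $w = (\log \langle \cdot \rangle)^{3/4}$, I use $\log \langle y \rangle \le \log\langle x \rangle + \log\langle x-y\rangle + O(1)$ together with the subadditivity $(a+b)^{3/4} \le a^{3/4} + b^{3/4}$ and again reduce to a $t$-uniform Gaussian moment.

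From $(\star)$, the higher-regularity bounds follow by standard parabolic scaling. The pointwise derivative bound $|\partial_x^k G_t(z)| \le C_k t^{-k/2} G_{2t}(z)$, a direct consequence of the Hermite-polynomial form of $\partial_x^k G_t$, combined with $(\star)$ applied to $G_{2t}$ (valid since $2t \in (0, 2T]$), gives $\|\partial_x^k (G_t * f)\|_{L^\infty_w} \le C t^{-k/2} \|f\|_{L^\infty_w}$ for every $k \in \mathbb{N}$. Non-integer contributions $\beta - \lfloor\beta\rfloor \in (0,1)$ to the Hölder seminorm are handled by estimating the kernel difference $|\partial_x^{\lfloor \beta\rfloor} G_t(x-z) - \partial_x^{\lfloor \beta\rfloor} G_t(y-z)|$ via the mean value theorem and splitting at the scale $|x-y| \sim \sqrt t$, producing the sharp $t^{-(\beta - \lfloor\beta\rfloor)/2}$ loss. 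This yields \cref{eq:Gtf} with $\alpha = 0$ and, after writing $\partial_x G_t = G_{t/2} * \partial_x G_{t/2}$, the companion estimate \cref{eq:dxGf}.

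To incorporate $\alpha > 0$ and sharpen the time factor to $t^{-(\beta-\alpha)/2}$, the key mechanism is moment cancellation: $\int \partial_x^k G_t(z)\, z^j\, dz = 0$ for all $0 \le j < k$. This lets me subtract from $f(y)$ its order-$(\lceil\alpha\rceil - 1)$ Taylor polynomial at $x$ inside the convolution and then estimate the remainder using the weighted Hölder norm $\|f\|_{\mathcal{C}_w^\alpha}$, via $|f(y) - P_x(y)| \le C|x-y|^\alpha w(x)\|f\|_{\mathcal{C}_w^\alpha}$ for $|x-y| \le 1$ with a direct weighted $L^\infty$ bound for the tail. The resulting integral $\int |\partial_x^k G_t(x-y)|\cdot |x-y|^\alpha w(y)\, dy$ is controlled by a rescaled variant of $(\star)$ that absorbs an extra $|x-y|^\alpha$ factor into an enlarged Gaussian moment, yielding the gain $t^{(\alpha - k)/2}$ in $L^\infty_w$; combining this with the Hölder-seminorm analysis of the previous paragraph delivers the overall $t^{-(\beta-\alpha)/2}$. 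The main obstacle is purely bookkeeping — tracking how $T$-dependent Gaussian moments and weight-prefactors assemble into a single constant $C(w, \alpha, \beta, T)$ — rather than any deeper analytic difficulty.
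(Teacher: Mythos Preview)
Your proposal is correct and matches the approach the paper invokes: the paper does not actually prove the lemma but defers to \cite[Lemma~2.8]{HL15}, summarizing the essence as ``the only singularity in the heat kernel is at $t=0$, $x=0$, so the part of the heat kernel that is exposed to the growth of $f$ at infinity is smooth, and moreover decays quickly enough.'' Your bound $(\star)$ is precisely this observation made quantitative, and the subsequent steps (Hermite-type derivative estimate $|\partial_x^k G_t|\le C_k t^{-k/2}G_{2t}$, the $|x-y|\lessgtr\sqrt t$ split for fractional H\"older seminorms, and moment cancellation plus Taylor subtraction to recover the $t^{-(\beta-\alpha)/2}$ gain) are exactly the standard ingredients of the referenced argument, so you have in effect reconstructed the proof the paper cites.
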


The proof of this lemma is word-for-word the same as that of \cite[Lemma 2.8]{HL15}.
There, only exponential weights (since a uniformity statement in the
weight is needed) and continuous functions are considered, but there
is no difference in the treatment given the Gaussian decay of the
heat kernel. The essence of the argument is that only singularity in the heat kernel is at $t=0$, $x=0$, so the part of the heat kernel that is exposed to the growth of $f$ at infinity is smooth, and moreover decays quickly enough not to pose any difficulty for these estimates.
\begin{lem}
  \label{lem:weightedhkcts}
  Fix $m \in \mathbb{R}$.
  If $p \in [1, \infty)$ and $f \in L_{\p_m}^p(\mathbb{R})$, then $G_t \ast f \to f$ in $L_{\p_m}^p(\mathbb{R})$ as $t \downarrow 0$.
  If $f \in L_{\p_m}^\infty(\mathbb{R})$, then $G_t \ast f \overset{\mathrm{w}^*}{\longrightarrow} f$ in $L_{\p_m}^\infty(\mathbb{R})$ as $t \downarrow 0$.
  Finally, if $f \in \mathcal{C}_{\p_{m}}(\mathbb{R})$, then $G_t \ast f \to f$ in $\mathcal{X}_m$ as $t \downarrow 0$.
\end{lem}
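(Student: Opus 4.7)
The three claims will be proved in order, with the later two building on the first.

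For part (1), I plan to use the standard density-and-uniform-boundedness template. The first step is to show that for each $p \in [1, \infty)$ and $m \in \R$, there is a constant $C = C(m, p)$ such that $\|G_t \ast f\|_{L^p_{\p_m}} \le C \|f\|_{L^p_{\p_m}}$ for all $t \in (0, 1]$. This follows from Jensen's inequality inside $|G_t \ast f|^p$ (using $\int G_t = 1$), Fubini's theorem, and a Peetre-type inequality $\p_m(x+z)^{-p} \le C_m \, \p_m(x)^{-p} \langle z \rangle^{|m|p}$, together with Gaussian decay of $G_t$ to ensure $\int G_t(z) \langle z \rangle^{|m|p} \, \dif z$ stays bounded. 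Next, $\mathcal{C}_c(\R)$ is dense in $L^p_{\p_m}(\R)$, so given $\eps > 0$ I pick $g \in \mathcal{C}_c(\R)$ with $\|f - g\|_{L^p_{\p_m}} < \eps$ and split
\[
\|G_t \ast f - f\|_{L^p_{\p_m}} \le \|G_t \ast (f - g)\|_{L^p_{\p_m}} + \|G_t \ast g - g\|_{L^p_{\p_m}} + \|g - f\|_{L^p_{\p_m}}.
\]
The operator bound handles the first term, the third is $< \eps$, and for the middle term one uses that $g$ is bounded with compact support in some $[-R, R]$, so $G_t \ast g \to g$ in $L^p([-R - 1, R + 1])$ by the classical approximation of identity, while on $|x| > R + 1$ the contribution is governed by the Gaussian tail of $G_t$ against the polynomial weight and vanishes as $t \downarrow 0$.

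For part (2), I would use the isometry $f \mapsto f/\p_m$ from $L^\infty_{\p_m}$ to $L^\infty(\R)$ to identify the predual with the space of $\phi$ satisfying $\int |\phi| \p_m \, \dif x < \infty$. Weak-$*$ convergence then reduces to showing
\[
\int \phi(x) [G_t \ast f - f](x) \, \dif x \to 0
\]
for every such $\phi$. By Fubini and the symmetry of $G_t$, this integral equals $\int f(y) [G_t \ast \phi - \phi](y) \, \dif y$, whose absolute value is bounded by $\|f\|_{L^\infty_{\p_m}} \int |G_t \ast \phi - \phi|\, \p_m \, \dif y$. Thus part (2) follows from part (1) applied with $p = 1$ and the weight $\p_{-m}$ (i.e., the observation that $\int |\phi| \p_m \, \dif x = \|\phi\|_{L^1_{\p_{-m}}}$).

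For part (3), given $f \in \mathcal{C}_{\p_m}(\R)$ and $\ell > m$, it suffices to show $\|G_t \ast f - f\|_{L^\infty_{\p_\ell}} \to 0$ as $t \downarrow 0$; convergence in $\mathcal{X}_m$ is then the definition. I would split the weighted supremum at a radius $R$. For $|x| > R$, \cref{lem:weightedhkbound} with $\alpha = \beta = 0$ gives $\|G_t \ast f\|_{\mathcal{C}_{\p_m}} \le C\|f\|_{\mathcal{C}_{\p_m}}$ uniformly in $t \in (0, 1]$, so
\[
\frac{|G_t \ast f(x) - f(x)|}{\p_\ell(x)} \le C \|f\|_{\mathcal{C}_{\p_m}} \, \p_{m - \ell}(x),
\]
which can be made arbitrarily small by taking $R$ large, uniformly in $t$. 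For $|x| \le R$, $f$ is uniformly continuous on $[-R - 1, R + 1]$, and the standard splitting of $G_t \ast f(x) - f(x) = \int G_t(y)[f(x - y) - f(x)] \, \dif y$ into $|y| \le \delta$ and $|y| > \delta$ shows that this quantity vanishes uniformly in $x \in [-R, R]$ as $t \downarrow 0$, the near part via the modulus of continuity and the far part via the Gaussian tail against polynomial growth.

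No step is deep: the main bookkeeping issue is tracking the weight across convolution via the Peetre-type inequality to establish the uniform operator bound in part (1), from which the rest follows by standard approximation-of-identity arguments.
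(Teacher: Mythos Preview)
Your proposal is correct and follows essentially the same route as the paper: both use the Peetre-type inequality $\p_{-m}(a+b)\le C\p_{-m}(a)\p_{|m|}(b)$ and density of $\mathcal{C}_c$ for part~(1), duality via the symmetry of $G_t$ for part~(2), and local uniform convergence combined with the uniform $\mathcal{C}_{\p_m}$ bound from \cref{lem:weightedhkbound} for part~(3). The only differences are cosmetic: for part~(1) the paper organizes the estimate via $\|\tau_{\sqrt{t}y}f-f\|_{L^p_{\p_m}}$ and dominated convergence in $y$ rather than your operator-bound-plus-$\eps/3$ splitting, and for part~(3) the paper packages your $R$-splitting into an appeal to \cref{prop:boosttheweight}.
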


\begin{proof}
  First fix $p \in [1, \infty)$ and $f \in L_{\p_m}^p(\mathbb{R})$.
  We provide a simple variant of a standard ``approximation of the identity'' argument to deal with the weighted spaces.
  Using the scaling symmetry of $G$, we can write
  \begin{equation*}
    G_{t}*f(x)-f(x)=\int_{\mathbb{R}}[f(x-\sqrt{t}y)-f(x)]G_{1}(y)\,\dif y,
  \end{equation*}
  so by the triangle inequality,
  \begin{equation}
    \label{eq:Gtphiphi}
    \|G_{t}*f-f\|_{L_{\p_m}^{p}(\mathbb{R})}\le\int_{\mathbb{R}}\|\tau_{\sqrt{t}y}f-f\|_{L_{\p_m}^{p}(\mathbb{R})}G_{1}(y)\,\dif y.
  \end{equation}

  We will use the dominated convergence theorem, so we first establish an integrable majorant.
  Assume $t\in(0,1]$.
  We can easily verify that there exists $C = C(m) <\infty$ such that
  \begin{equation}
    \label{eq:binomial}
    \p_{-m}(a + b) \le C \p_{-m}(a) \p_{|m|}(b)
  \end{equation}
  for all $a, b \in \R$.
  In the sequel, we permit $C$ to change from line to line.
  Then
  \begin{equation*}
    \|\tau_{\sqrt{t}y}f\|_{L_{\p_m}^{p}(\mathbb{R})} = \left(\int_{\mathbb{R}} |f(x)|^p \p_{m}(x+\sqrt{t}y)^{-p}\,\dif x\right)^{\frac 1 p} \le C\p_{|m|}(y)\|f\|_{L_{\p_{m}}^{p}(\mathbb{R})}.
  \end{equation*}
  Since $\p_{|m|}$ is integrable against the Gaussian $G_{1}$, this is a suitable majorant.

  By the dominated convergence theorem, it now suffices to prove pointwise (in $y$) convergence to $0$ as $t\downarrow0$ in \cref{eq:Gtphiphi}.
  We can therefore fix $y \in \R$ and consider $t > 0$ such that $\sqrt{t} y \leq 1$.
  For fixed $\eps>0$, we can find a compactly-supported continuous function $\zeta$ on $\R$ so that $\|\zeta-f\|_{L_{\p_{m}}^{p}(\mathbb{R})} < \eps$.
  Then we have
  \[
    \|\tau_{\sqrt{t}y}f-f\|_{L_{\p_{m}}^{p}(\mathbb{R})}\le\|\tau_{\sqrt{t}y}f-\tau_{\sqrt{t}y}\zeta\|_{L_{\p_{m}}^{p}(\mathbb{R})}+\|\tau_{\sqrt{t}y}\zeta-\zeta\|_{L_{\p_{m}}^{p}(\mathbb{R})}+\|\zeta-f\|_{L_{\p_{m}}^{p}(\mathbb{R})}.
  \]
  By \cref{eq:binomial} and $\sqrt{t} y \leq 1$, the first and third terms are each less than a constant times $\eps$
  and the second term goes to $0$ as $t \to 0$.
  Therefore $G_t \ast f \to f$ in $L_{\p_m}^p(\mathbb{R})$ as $t \downarrow 0$.

  Now suppose $f \in L_{\p_m}^\infty(\mathbb{R})$, and fix $\phi$ in the dual space $L_{\p_{-m}}^{1}(\mathbb{R})$.
  We must show that
  \begin{equation}
    \label{eq:weak-limit}
    \langle\phi,G_{t}*f\rangle\to\langle\phi,f\rangle \quad \textrm{as } t \downarrow 0.
  \end{equation}
  Since $G_{t}$ is symmetric, we have $\langle\phi,G_{t}*f\rangle=\langle G_{t}*\phi,f\rangle$.
  But we have just shown that $G_{t}*\phi \to \phi$ in $L_{\p_{-m}}^1(\mathbb{R})$, so \cref{eq:weak-limit} follows.

  Finally, suppose that $f \in \mathcal{C}_{\p_{m}}(\mathbb{R})$.
  Fix $\eps > 0$ and $x \in \R$.
  We write
  \[
    G_t \ast f(x) - f(x) = \int_{\mathbb{R}}[f(y)-f(x)]G_t(x-y)\,\dif y.
  \]
  Now $|f(y)|\le\|f\|_{\mathcal{C}_{\p_{m}}(\mathbb{R})}\p_{m}(y)$
  and $\p_{m} G_t(x - \cdot)\in L^{1}(\mathbb{R})$. When $t \in (0, 1]$, $G_t$
  is decreasing in $t$ outside a compact set.
  Thus there exists a compact set $K\subset\mathbb{R}$
  containing $x$ so that
  \[
    \int_{\mathbb{R}\setminus K}|f(y)-f(x)|G_t(x-y)\,\dif y<\eps
  \]
  for all $n\ge0$. On $K$, $f$ is uniformly continuous and bounded.
  Thus there exists a $\delta>0$ such that $|f(y)-f(x)|<\eps$ when
  $|y-x|<\delta$. Since $G_t$ has unit mass, this implies
  \[
    \int_{B_{\delta}(x)}|f(y)-f(x)|G_t(x-y)\,\dif y<\eps
  \]
  for all $t \in (0, 1]$. Finally, $G_t(x-y)\to0$ uniformly on $K\setminus B_{\delta}(x)$ as $t \downarrow 0$,
  so there exists $\delta > 0$ such that
  \[
    \int_{K\setminus B_{\delta}(x)}|f(y)-f(x)|G_t(x-y)\,\dif y<\eps
  \]
  for all $t < \delta$. Together, these bounds show that $|G_t \ast f(x) - f(x)|\to0$
  as $n\to0$.

  In fact, the convergence is \emph{locally uniform} in
  $x$. But locally uniform convergence is equivalent to the existence
  of a weight $w$ such that
  \[
    \lim_{n\to\infty}\|G_t \ast f - f\|_{\mathcal{C}_{w}(\mathbb{R})}=0.
  \]
  Combining this with the uniform bound \cref{eq:Gtf} with $\alpha = \beta = 0$ and $w = \p_m$, \cref{prop:boosttheweight}
  implies
  \[
    \lim_{n\to\infty}\|G_t \ast f - f\|_{\mathcal{C}_{\p_{\ell}}(\mathbb{R})}=0
  \]
  for any $\ell>m$.
  That is, $G_t \ast f \to f$ in $\mathcal{X}_m$.
\end{proof}
Next, we show an estimate with super-exponential weights. The restriction
$\beta<2$ is needed in the following lemma simply because the heat
equation is not well-posed for initial conditions growing like $\exp(cx^{2})$
with $c>0$.
\begin{lem}
  \label{lem:hk-superexp}Fix $\beta\in[3/2,2)$ and define, for $\lambda\ge0$,
  $\q_{\lambda}(x)=\e^{\lambda\langle x\rangle^{\beta}}$. For any $\Lambda>0$
  and $T>0$, there exists a $C<\infty$ so that for all $\lambda\in[0,\Lambda]$,
  $t\in(0,T]$, $f\in\mathcal{C}_{q_{\lambda}}(\mathbb{R})$, and $x\in\mathbb{R}$,
  we have
  \begin{equation}
    |\partial_{x}G_{t}*f(x)|\le Ct^{-\frac{1}{2}}\e^{Ct\langle x\rangle^{2(\beta-1)}}q_{\lambda}(x)\|f\|_{\mathcal{C}_{\q_{\lambda}}(\mathbb{R})}.\label{eq:crazyweightbound}
  \end{equation}
\end{lem}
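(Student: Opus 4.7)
The plan is to estimate $\partial_{x}G_{t}*f(x)$ directly from its integral representation, using the pointwise bound $|f(y)|\le\|f\|_{\mathcal{C}_{\q_{\lambda}}(\mathbb{R})}\q_{\lambda}(y)$ and controlling the integrand through a careful completion-of-squares argument. Specifically, I will write
\[
|\partial_{x}G_{t}*f(x)|\le \|f\|_{\mathcal{C}_{\q_{\lambda}}(\mathbb{R})}\int_{\mathbb{R}}\frac{|x-y|}{t\sqrt{2\pi t}}\exp\!\left(-\frac{(x-y)^{2}}{2t}+\lambda\langle y\rangle^{\beta}\right)\dif y,
\]
and the task reduces to estimating the exponent inside the integral.

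The key step is the inequality
\[
\langle y\rangle^{\beta}\le \langle x\rangle^{\beta}+C\!\left(\langle x\rangle^{\beta-1}|y-x|+|y-x|^{\beta}\right),
\]
which I would obtain from the mean value theorem applied to $s\mapsto s^{\beta}$ together with $\langle y\rangle\le\langle x\rangle+|y-x|$; this is valid because $\beta\in[3/2,2)\subset[1,2]$. Substituting this bound into the exponent gives
\[
-\frac{(x-y)^{2}}{2t}+\lambda\langle y\rangle^{\beta}\le \lambda\langle x\rangle^{\beta}+C\lambda\langle x\rangle^{\beta-1}|y-x|+C\lambda|y-x|^{\beta}-\frac{(x-y)^{2}}{2t}.
\]
The cross term $C\lambda\langle x\rangle^{\beta-1}|y-x|$ is handled by completing the square, producing $-\tfrac{(x-y)^{2}}{4t}+C'\lambda^{2}t\langle x\rangle^{2(\beta-1)}$, which supplies the desired factor $\e^{Ct\langle x\rangle^{2(\beta-1)}}$ after absorbing $\lambda\le\Lambda$ into constants. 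The superlinear term $C\lambda|y-x|^{\beta}$ is dealt with by Young's inequality with exponents $(2/\beta,2/(2-\beta))$: since $\beta<2$, one obtains $C\lambda|y-x|^{\beta}\le\tfrac{(x-y)^{2}}{8t}+C''$, where $C''=C''(\Lambda,T,\beta)$ comes from the remainder of Young's inequality with a $t$-dependent scaling and is bounded uniformly for $t\in(0,T]$ and $\lambda\in[0,\Lambda]$.

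Combining these bounds yields
\[
\exp\!\left(-\frac{(x-y)^{2}}{2t}+\lambda\langle y\rangle^{\beta}\right)\le C\exp\!\left(-\frac{(x-y)^{2}}{8t}\right)\q_{\lambda}(x)\e^{Ct\langle x\rangle^{2(\beta-1)}}.
\]
Plugging this back into the integral representation and using the explicit computation
\[
\int_{\mathbb{R}}\frac{|x-y|}{t\sqrt{2\pi t}}\exp\!\left(-\frac{(x-y)^{2}}{8t}\right)\dif y = Ct^{-1/2}
\]
gives \cref{eq:crazyweightbound}.

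The only mild subtlety, and the main obstacle, lies in the treatment of $C\lambda|y-x|^{\beta}$: one must verify that Young's inequality can be applied with the right $t$-scaling so that the excess term is bounded uniformly in $t\in(0,T]$, and does not contribute an extra factor blowing up as $t\downarrow 0$. The restriction $\beta<2$ is essential here; the restriction $\beta\ge 3/2$ is not sharp for this lemma but ensures, via $\beta-1\ge 1/2$, that the rate function $\langle x\rangle^{2(\beta-1)}$ appearing in the exponent is well-matched to the applications in \cref{prop:vCauchy}.
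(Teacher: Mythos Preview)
Your approach is correct and genuinely different from the paper's. The paper first uses $|y|\le\e^{y^{2}/4}$ to obtain $|\partial_{x}G_{t}|\le\sqrt{2}\,t^{-1/2}G_{2t}$, reducing the problem to bounding $G_{2t}*\q_{\lambda}$; it then constructs an explicit supersolution $v(s,x)=\exp\bigl(As\langle x\rangle^{2(\beta-1)}+Bs^{\beta/(2-\beta)}\bigr)\q_{\lambda}(x)$ to the heat equation and invokes the comparison principle. Your route is a direct pointwise estimate of the convolution integral: the subadditivity bound $\langle y\rangle^{\beta}\le\langle x\rangle^{\beta}+\beta\langle x\rangle^{\beta-1}|y-x|+\beta|y-x|^{\beta}$, completion of the square for the linear-in-$|y-x|$ term, and Young's inequality for the $|y-x|^{\beta}$ term. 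Your approach is more elementary, avoiding the PDE comparison step altogether; the paper's supersolution method is perhaps more transparent about where the exponent $2(\beta-1)$ and the extra $t^{\beta/(2-\beta)}$ contribution (your constant $C''$) come from, and would adapt readily to higher-order heat kernel estimates. Both arguments produce the same structure, and in particular both show that the restriction $\beta\ge 3/2$ is not essential for this lemma (the paper remarks explicitly that $\beta\in(1,3/2)$ also works).
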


\begin{rem}
  This also holds for $\beta\in(1,3/2)$. The argument is similar but
  not identical, and is not needed in this paper, so we omit it.
\end{rem}

\begin{proof}
  Throughout the proof, $C$ denotes a positive constant that depends
  only on $\Lambda$ and $T$. It may change from line to line. We may
  assume without loss of generality that $\|f\|_{\mathcal{C}_{\q_{\lambda}}(\mathbb{R})}=1$.
  We begin by noting that $|y|\le\exp (y^{2}/4)$ for all $y\in\mathbb{R}$,
  so
  \[
    |\partial_{x}G_{t}|\le\sqrt{2}t^{-\frac{1}{2}}G_{2t}.
  \]
  Therefore, we have
  \begin{equation}
    |\partial_{x}G_{t}*f(x)|\le\sqrt{2}t^{-\frac{1}{2}}G_{2t}*\q_{\lambda}(x),\label{eq:dxGbd}
  \end{equation}
  so it remains to bound $|G_{2t}*\q_{\lambda}(x)|$. The function $w(s,\cdot) = G_{s}*q_{\lambda}$
  solves the heat equation with initial condition $\q_{\lambda}$, so
  we can bound it from above by constructing a supersolution $v$ with
  the same initial condition. Set
  \begin{equation}
    v(s,x)=\exp\left(As\langle x\rangle^{2(\beta-1)}+Bs^{\frac{\beta}{2-\beta}}\right) q_{\lambda}(x)\label{eq:vdef}
  \end{equation}
  for constants $A,B>0$ to be determined. Then we have
  \[
    \partial_{s}v\ge\left(A\langle x\rangle^{2(\beta-1)}+Bs^{\frac{2(\beta-1)}{2-\beta}}\right)v
  \]
  and
  \[
    \partial_{xx}v\le8\left(\lambda^{2}+\lambda+A(\lambda+1)s\langle x\rangle^{\beta-2}+A^{2}s^{2}\langle x\rangle^{2(\beta-2)}\right)\langle x\rangle^{2(\beta-1)}v.
  \]
  Comparing these, in order for $v$ to be a supersolution for the heat
  equation, i.e. to satisfy
  \begin{equation}
    \partial_{s}v\ge\frac{1}{2}\partial_{xx}v,\label{eq:vsupersoln}
  \end{equation}
  it suffices to choose $A$ and $B$ so that
  \begin{equation}
    4\left(\lambda^{2}+\lambda+A(\lambda+1)s\langle x\rangle^{\beta-2}+A^{2}s^{2}\langle x\rangle^{2(\beta-2)}\right)\le A+Bs^{\frac{2(\beta-1)}{2-\beta}}\langle x\rangle^{-2(\beta-1)}.\label{eq:whatyouneedaboutAandB}
  \end{equation}
  To accomplish this, let $A=4(\lambda^{2}+\lambda)+2$ and $\xi=s\langle x\rangle^{\beta-2}$.
  Then for \cref{eq:whatyouneedaboutAandB} to hold, it suffices to choose
  $B$ so that
  \begin{equation}
    4(\lambda+1)A\xi+4A^{2}\xi^{2}\le2+B\xi^{\frac{2(\beta-1)}{2-\beta}}\label{eq:whatyouneedaboutB}
  \end{equation}
  for all $\xi\ge0$. When $\xi\le1/(4A(\lambda+1))$, the left side
  of \cref{eq:whatyouneedaboutB} is bounded by $2$, and the inequality
  holds regardless of $B$. Moreover, $2(\beta-1)/(2-\beta)\ge2$
  since $\beta\ge3/2$, so the right side of \cref{eq:whatyouneedaboutB}
  grows at least as fast as the left as $\xi\to+\infty$. Thus, there
  exists $B=B(\Lambda)$ sufficiently large that \cref{eq:whatyouneedaboutB}
  holds also when $\xi\ge1/(4A(\lambda+1))$. With these values of $A$
  and $B$, $v$ satisfies \cref{eq:vsupersoln} and $v(0,\cdot)\equiv\q_{\lambda}\equiv w(0,\cdot)$.
  Thus $w(s,x)\le v(s,x)$ for all $s\ge0$ and $x\in\mathbb{R}$ by
  the comparison principle for the heat equation. The bound \cref{eq:crazyweightbound}
  then follows from \cref{eq:dxGbd} and \cref{eq:vdef}.
\end{proof}

\section{Classical solutions are mild\label{appendix:classicalismild}}

In this appendix we prove a converse to \cref{lem:mildisclassical}.
\begin{lem}[Classical solutions are mild]\label{lem:classicalismild}
Suppose that $m\in(0,1)$, $L\in(0,\infty]$,
  $T>0$ and $\theta^{[L]}\in\tilde{\mathcal{Z}}_{m,T}$ is a classical
  solution to \cref{eq:thetaLproblem}. Then $\theta^{[L]}$ satisfies
  \cref{eq:mildformulation}.
\end{lem}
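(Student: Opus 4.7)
The plan is to invoke Duhamel's principle directly, working pathwise in the noise. Fix $(t, x) \in (0, T] \times \R$, and consider the function
\[
F(s) = G_{t-s} * \theta^{[L]}(s, \cdot)(x) \quad \textrm{for } s \in (0, t).
\]
The strategy is to show that $F$ is differentiable on $(0, t)$ with $F'(s) = -\tfrac{1}{2}\, \partial_x G_{t-s} * (\theta^{[L]} + \psi^{[L]})^2(s, \cdot)(x)$, then integrate from $s = 0$ to $s = t$ and identify the endpoint limits $F(t) = \theta^{[L]}(t, x)$ and $F(0) = G_t * v^{[L]}(x)$. This would directly yield the mild formulation \cref{eq:mildformulation}.

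To compute $F'(s)$, I would differentiate under the integral sign using the product rule:
\[
F'(s) = -\tfrac{1}{2} \int_\R \partial_x^2 G_{t-s}(x-y)\, \theta^{[L]}(s, y)\, \dif y + \int_\R G_{t-s}(x-y)\, \partial_s \theta^{[L]}(s, y)\, \dif y,
\]
using $\partial_s G_{t-s} = -\tfrac{1}{2}\partial_x^2 G_{t-s}$. The first integral converges since $\theta^{[L]}(s, \cdot) \in \mathcal{X}_m$ with $m < 1$ and the Gaussian kernel decays faster than any polynomial; two integrations by parts (boundary terms vanish in the periodic case trivially and in the whole-space case by the Gaussian tail versus polynomial growth) move both derivatives onto $\theta^{[L]}$. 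The second integral requires the bound \cref{eq:time-deriv-control}: given $s \in (0, t)$, choose $\eps < 1/(2(t-s))$ and a compact neighborhood $I \ni s$ within $(0, t)$, so that $|\partial_s \theta^{[L]}(s, y)| \le C e^{\eps y^2}$ is dominated by $G_{t-s}(x - y)$ in $y$. Substituting $\partial_s \theta^{[L]} = \tfrac{1}{2}\partial_x^2 \theta^{[L]} - \tfrac{1}{2}\partial_x(\theta^{[L]} + \psi^{[L]})^2$, the two Laplacian contributions cancel by integration by parts, leaving
\[
F'(s) = -\tfrac{1}{2}\, G_{t-s} * \partial_x(\theta^{[L]} + \psi^{[L]})^2(s, \cdot)(x) = -\tfrac{1}{2}\, \partial_x G_{t-s} * (\theta^{[L]} + \psi^{[L]})^2(s, \cdot)(x),
\]
where the last step moves the $\partial_x$ onto the heat kernel.

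For the endpoints, as $s \uparrow t$ the continuity $\theta^{[L]} \in \mathcal{C}_{\mathrm{b}}((0, T]; \mathcal{X}_m)$ together with \cref{lem:weightedhkcts} gives $F(s) \to \theta^{[L]}(t, x)$. As $s \downarrow 0$, the weak-$*$ convergence $\theta^{[L]}(s, \cdot) \to v^{[L]}$ in $L_{\p_\ell}^\infty(\R)$ (any $\ell > m$) pairs with $G_{t-s}(x - \cdot) \to G_t(x - \cdot)$ in $L_{\p_{-\ell}}^1(\R)$ to give $F(s) \to G_t * v^{[L]}(x)$. Finally, integrating $F'$ on $(\delta, t - \delta)$, then sending $\delta \downarrow 0$, produces the mild equation provided the right-hand side integrand is integrable in $s$; this follows from \cref{lem:weightedhkbound} applied to $\partial_x G_{t-s}$, using that $(\theta^{[L]} + \psi^{[L]})^2 \in \mathcal{C}_{\mathrm{b}}((0, T]; \mathcal{C}_{\p_{2\ell}}(\R))$, which provides the integrable singularity $(t-s)^{-1/2}$.

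The main obstacle is justifying the differentiation step and the integration by parts simultaneously; this is where \cref{eq:time-deriv-control} is indispensable, since $\partial_t \theta^{[L]}$ is not controlled in the same weighted $\mathcal{X}_m$ sense as $\theta^{[L]}$ itself, yet one needs to integrate it against the heat kernel before any cancellation can be used. The sub-Gaussian bound on $\partial_t \theta^{[L]}$ precisely matches the super-Gaussian decay of $G_{t-s}$ for $\eps$ small enough relative to $t - s$, which is exactly the reason this bound was imposed in the definition of $\tilde{\mathcal{Z}}_{m, T}$.
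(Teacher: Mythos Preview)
Your proposal is correct and follows essentially the same approach as the paper: both define the auxiliary function $s \mapsto G_{t-s}*\theta^{[L]}(s,\cdot)(x)$ (the paper uses the reparametrization $s \mapsto G_s*\theta^{[L]}(t-s,\cdot)(x)$, which is cosmetic), differentiate in $s$ using \cref{eq:time-deriv-control} to justify the interchange, cancel the Laplacian terms, and identify the endpoint limits via the weak-$*$ continuity at $s=0$ and strong continuity at $s=t$. The only minor stylistic difference is that the paper invokes the tempered-distribution identity $\partial_{xx}G_s*\theta^{[L]}=G_s*\partial_{xx}\theta^{[L]}$ directly, whereas you phrase the same step as integration by parts.
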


\begin{proof}
  Let $f=-\partial_{x}(\theta^{[L]}+\psi^{[L]})^{2}$ denote the nonlinear
  forcing in \cref{eq:thetaLproblem}. Fix $t\in(0,T]$ and define, for
  $s\in[0,t]$,
  \begin{align*}
    \Theta(s,x) & =G_{s}*\theta^{[L]}(t-s,x), & F(s,x) & =G_{s}*f(t-s,x).
  \end{align*}
  It is clear that $\Theta$ is twice-differentiable in space. We claim
  that it is also continuous in $s$, pointwise in $x$.
  Fix $\ell>m$, $(s,x)\in [0,t]\times\mathbb{R}$, and a sequence $(s_{n})_{n \in \mathbb{N}}\subset(0,t)$ converging to $s$.
  If $s > 0$, we use
  \begin{equation}
    |\Theta(s_{n},x)-\Theta(s,x)|\le|(G_{s_{n}}-G_{s})*\theta^{[L]}(t-s_{n},\cdot)(x)|+|G_{s}*[\theta^{[L]}(t-s_{n},\cdot)-\theta^{[L]}(t-s,\cdot)](x)|.\label{eq:ThessnTheta}
  \end{equation}
  Then $\|G_{s_{n}}-G_{s}\|_{L_{\p_{-\ell}}^{1}(\mathbb{R})}\to0$.
  Since $\theta^{[L]}$ is uniformly bounded in $L_{\p_{\ell}}^{\infty}(\mathbb{R})$,
  we have
  \[
    |(G_{s_{n}}-G_{S})*\theta^{[L]}(t-s_{n},\cdot)(x)|\le\|G_{s_{n}}-G_{s}\|_{L_{\p_{-\ell}}^{1}(\mathbb{R})}\|\theta^{[L]}(t-s_{n},\cdot)\|_{L_{\p_{\ell}}^{\infty}(\mathbb{R})}\to0.
  \]
  On the other hand,
  \[
    G_{s}*[\theta^{[L]}(t-s_{n},\cdot)-\theta^{[L]}(t-s,\cdot)](x)=\int_{\mathbb{R}}G_{s}(x-y)[\theta^{[L]}(t-s_{n},y)-\theta^{[L]}(t-s,y)]\,\dif y\to0
  \]
  by the weak-$*$ continuity of $\theta^{[L]}$, since $G_{s}\in L_{\p_{-\ell}}^{1}(\mathbb{R})$.
  By \cref{eq:ThessnTheta}, $|\Theta(s_{n},x)-\Theta(s,x)|\to0$ as $n\to\infty$.

  Now suppose $s=0$.
  Then $G_{0}=\delta_{0}$ is singular,
  so we must argue differently. In this case, we are considering $\theta^{[L]}$
  near a time $t>0$, where it is continuous. Fix $\eps>0$, and consider
  the opposite decomposition
  \begin{equation}
    |\Theta(s_{n},x)-\Theta(s,x)|\le|G_{s_{n}}*[\theta^{[L]}(t-s_{n},\cdot)-\theta^{[L]}(t,\cdot)](x)|+|(G_{s_{n}}-\delta_{0})*\theta^{[L]}(t,\cdot)(x)|.\label{eq:oppositedecomp}
  \end{equation}
  Since $\theta^{[L]}\in\mathcal{C}_{\mathrm{b}}((0,S];\mathcal{C}_{\p_{\ell}}(\mathbb{R}))$,
  there exists a $\delta>0$ such that
  \[
    \|\theta^{[L]}(t-s_{n},\cdot)-\theta^{[L]}(t,\cdot)\|_{\mathcal{C}_{\p_{\ell}}(\mathbb{R})}<\eps
  \]
  when $s_{n}<\delta$. Now $G_{s_{n}}*\p_{\ell}\le C_\ell \p_{\ell}$, so
  \[
    |G_{s_{n}}*[\theta^{[L]}(t-s_{n},\cdot)-\theta^{[L]}(t,\cdot)](x)|\le C_\ell \eps\p_{\ell}(x).
  \]
  For the second term of \cref{eq:oppositedecomp}, we use the fact that
  $\theta^{[L]}(t,\cdot)\in\mathcal{C}_{\p_{\ell}}(\mathbb{R})$, so
  by \cref{lem:weightedhkcts}, we have pointwise convergence and $|(G_{s_{n}}-\delta_{0})*\theta^{[L]}(t,\cdot)(x)|\to0$.
  Thus by \cref{eq:oppositedecomp}, $|\Theta(s_{n},x)-\Theta(0,x)|\to0$
  as $n\to\infty$, as desired.

  Next, \cref{eq:time-deriv-control} implies that $\Theta$ is differentiable in $s$ on $(0, t)$ and that
  \begin{align*}
    \partial_s \Theta(s, x) &= (\partial_s G_s) \ast \theta^{[L]}(t - s, x) - G_s \ast \partial_t \theta^{[L]}(t - s, x)\\
                            &= \frac 1 2 \partial_{xx} G_s \ast \theta^{[L]}(t - s, x) - \frac 1 2 G_s \ast \partial_{xx}\theta^{[L]}(t - s, x) - F(s, x).
  \end{align*}
  Now $\theta^{[L]}(t - s, \cdot)$ is a tempered distribution, so we may exchange differentiation and convolution to find
  \begin{equation*}
    \partial_s \Theta(s, x) = -F.
  \end{equation*}
  The continuity of $\Theta$ in time ensures that
  \[
    \Theta(t,x)-\Theta(0,x)=\int_{0}^{t}\partial_{s}\Theta(s,x)\,\dif s = - \int_0^t F(s, x) \,\dif s
  \]
  for all $x\in\mathbb{R}$.
  After a change of variables in the integral, this is simply \cref{eq:mildformulation}.
\end{proof}

\section{Elementary probabilistic and analytic lemmas\label{appendix:symmetrylemma}}

In this appendix we prove some elementary technical lemmas that were deferred until this point to avoid disrupting the flow of the paper.

Several symmetry arguments in the paper relied on the following lemma.
\begin{lem}
  \label{lem:symmetrylemma}
  Let $X_{1}$ and $X_{2}$ be random variables
  such that $X_{1}\overset{\mathrm{law}}{=}X_{2}$ and $\mathbb{E}(X_{2}-X_{1})^{-}>-\infty$.
  Then $\mathbb{E}|X_{2}-X_{1}|<\infty$ and $\mathbb{E}(X_{2}-X_{1})=0$.
\end{lem}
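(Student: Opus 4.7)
The plan is to reduce to the bounded setting by truncating each $X_i$, using a monotone Lipschitz truncation so that the sign of the difference is preserved. For $N \in \mathbb{N}$, I will set $\tau_N(x) = (-N) \vee x \wedge N$ and consider the bounded, equidistributed pair $\tau_N(X_1)$ and $\tau_N(X_2)$. Since $\tau_N$ is monotone nondecreasing and $1$-Lipschitz, $\tau_N(X_2) - \tau_N(X_1)$ carries the same sign as $X_2 - X_1$ and satisfies $|\tau_N(X_2) - \tau_N(X_1)| \le |X_2 - X_1|$; consequently
\[
  [\tau_N(X_2) - \tau_N(X_1)]^{\pm} \le [X_2 - X_1]^{\pm}.
\]

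Next I would exploit the fact that $\tau_N(X_1)$ and $\tau_N(X_2)$ are bounded, hence integrable with identical expectations, so $\mathbb{E}[\tau_N(X_2) - \tau_N(X_1)] = 0$ for every $N$. This identity splits as $\mathbb{E}[\tau_N(X_2) - \tau_N(X_1)]^+ = \mathbb{E}[\tau_N(X_2) - \tau_N(X_1)]^-$, and the latter is bounded above by $\mathbb{E}(X_2-X_1)^- < \infty$ by hypothesis. Hence the positive parts are uniformly integrable in the sense of having a uniform $L^1$ bound.

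Passing to the limit requires Fatou's lemma on the positive part: since $[\tau_N(X_2) - \tau_N(X_1)]^+ \to (X_2 - X_1)^+$ pointwise,
\[
  \mathbb{E}(X_2 - X_1)^+ \le \liminf_N \mathbb{E}[\tau_N(X_2) - \tau_N(X_1)]^+ \le \mathbb{E}(X_2 - X_1)^- < \infty.
\]
This establishes $\mathbb{E}|X_2 - X_1| < \infty$. Finally, with $|X_2 - X_1|$ now known to be integrable and dominating the truncated differences, the dominated convergence theorem gives
\[
  \mathbb{E}(X_2 - X_1) = \lim_{N \to \infty} \mathbb{E}[\tau_N(X_2) - \tau_N(X_1)] = 0.
\]

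The only subtle step is point (iii): noticing that the monotonicity and Lipschitz character of $\tau_N$ force the sign-preserving domination $[\tau_N(X_2) - \tau_N(X_1)]^- \le (X_2 - X_1)^-$. Without this, one cannot control the negative part of the truncated difference by the hypothesis, and the Fatou bootstrap on the positive part fails. Everything else is routine truncation and passage to the limit.
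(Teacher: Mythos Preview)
Your proof is correct. Both your approach and the paper's use the same monotone $1$-Lipschitz truncation (your $\tau_N$ is the paper's $f_M$), and both rely on $\mathbb{E}[\tau_N(X_2)-\tau_N(X_1)]=0$ for the bounded truncations. The paper, however, introduces an auxiliary cutoff parameter $R$ on the values of $X_2-X_1$, works with $g_M=f_M-\mathrm{id}$, and first proves $\mathbb{E}\big((X_2-X_1)\mathbf{1}\{X_2-X_1\le R\}\big)\le 0$ for each finite $R$ before sending $R\to\infty$ by monotone convergence; only then does it repeat the dominated-convergence step with $R=\infty$. Your argument bypasses this detour by directly exploiting the sign-preserving domination $[\tau_N(X_2)-\tau_N(X_1)]^{\pm}\le (X_2-X_1)^{\pm}$, splitting the identity $\mathbb{E}[\tau_N(X_2)-\tau_N(X_1)]^+=\mathbb{E}[\tau_N(X_2)-\tau_N(X_1)]^-$, and applying Fatou to the positive part. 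It is the same underlying idea with a cleaner execution that avoids the second truncation parameter.
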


\begin{proof}
  Of course the claim is obvious if $\mathbb{E}|X_{i}|<\infty$, but
  for our applications in the paper it will be convenient to not have
  to assume this. Define $f_{M}(x)=\max\{\min\{x,M\},-M\}$ and note
  that $f_{M}$ is bounded and $1$-Lipschitz. The function $g_{M}(x)=f_{M}(x)-x$
  is also $1$-Lipschitz. Fix $R\in(0,\infty]$ and compute
  \begin{align}
    \mathbb{E}(|g_{M}(X_{2})-g_{M}(X_{1})| & \cdot\mathbf{1}\{X_{2}-X_{1}\le R\})\nonumber                                                                             \\
                                           & =\mathbb{E}(|g_{M}(X_{2})-g_{M}(X_{1})|\mathbf{1}\{\max\{|X_{1}|,|X_{2}|\}\ge M\}\mathbf{1}\{X_{2}-X_{1}\le R\})\nonumber \\
                                           & \le\mathbb{E}(|X_{2}-X_{1}|\mathbf{1}\{\max\{|X_{1}|,|X_{2}|\}\ge M\}\mathbf{1}\{X_{2}-X_{1}\le R\}).\label{eq:EYX1X}
  \end{align}
  For each $0<R<\infty$ fixed, the last expression in \cref{eq:EYX1X}
  goes to $0$ as $M\to\infty$ by the dominated convergence theorem,
  since $\mathbb{E}(|X_{2}-X_{1}|\mathbf{1}\{X_{2}-X_{1}\le R\})$ is
  finite since $(X_{2}-X_{1})^{+}\mathbf{1}\{X_{2}-X_{1}\le R\}$ is
  bounded and $\mathbb{E}(X_{2}-X_{1})^{-}$ is finite by assumption.
  Therefore, we have
  \begin{align}
    0 & =\lim_{M\to\infty}\mathbb{E}(|g_{M}(X_{2})-g_{M}(X_{1})|\cdot\mathbf{1}\{X_{2}-X_{1}\le R\})\nonumber                                                                                \\
      & =\lim_{M\to\infty}\mathbb{E}(|f_{M}(X_{2})-f_{M}(X_{1})-(X_{2}-X_{1})|\cdot\mathbf{1}\{X_{2}-X_{1}\le R\})\nonumber                                                                  \\
      & \ge\limsup_{M\to\infty}\left|\mathbb{E}((f_{M}(X_{2})-f_{M}(X_{1}))\mathbf{1}\{X_{2}-X_{1}\le R\})-\mathbb{E}((X_{2}-X_{1})\mathbf{1}\{X_{2}-X_{1}\le R\})\right|.\label{eq:limis0}
  \end{align}
  Since $X_{1}\overset{\mathrm{law}}{=}X_{2}$, we have $\mathbb{E}[f_{M}(X_{2})-f_{M}(X_{1})]=0$.
  Also, $f_{M}(x)$ is monotone in $x$, so
  \[
    \mathbb{E}[(f_{M}(X_{2})-f_{M}(X_{1}))\mathbf{1}\{X_{2}-X_{1}\le R\}]=-\mathbb{E}[(f_{M}(X_{2})-f_{M}(X_{1}))\mathbf{1}\{X_{2}-X_{1}>R\}]\le0
  \]
  for all $M,R\in(0,\infty)$. Using this in \cref{eq:limis0} gives
  \[
    \mathbb{E}((X_{2}-X_{1})\mathbf{1}\{X_{2}-X_{1}\le R\})\le0
  \]
  for all $R\in(0,\infty)$. Taking $R\to+\infty$ and using the monotone
  convergence theorem and the assumption $\mathbb{E}(X_{2}-X_{1})^{-}>-\infty$
  yields $\mathbb{E}(X_{1}-X_{2})\le0$, so in particular $\mathbb{E}|X_{1}-X_{2}|<\infty$.
  Thus we can take $R=+\infty$ in \cref{eq:EYX1X} and again use the
  dominated convergence theorem to obtain \cref{eq:limis0} with $R=+\infty$,
  namely
  \[
    0\ge\limsup_{M\to\infty}\left|\mathbb{E}[f_{M}(X_{2})-f_{M}(X_{1})]-\mathbb{E}(X_{2}-X_{1})\right|=|\mathbb{E}(X_{2}-X_{1})|,
  \]
  and so $\mathbb{E}(X_{2}-X_{1})=0$ as claimed.
\end{proof}

The following lemma will be used in the proof of \cref{lem:gcts} below.
\begin{lem}
  \label{lem:howmanyderivs}Let $\eta\in\mathcal{C}^{3}(I)$ for some  closed interval $I \subset \R$ and define
  $A=\|\eta\|_{\mathcal{C}^{3}(I)}$. If $x_{1}\ne x_{2}\in I$
  satisfy $\eta(x_{1})=\eta(x_{2})$ and $\eta'(x_{1}),\eta'(x_{2})>\eps$
  or $\eta'(x_{1}),\eta'(x_{2})<-\eps$, then $|x_{1}-x_{2}|\ge\sqrt{2\eps/A}.$
\end{lem}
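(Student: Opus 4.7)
The plan is to deduce the bound by combining Rolle's theorem with a double application of the mean value theorem, exploiting the control on $\eta'''$ provided by $\|\eta\|_{\mathcal{C}^{3}(I)} \le A$. Without loss of generality, I would assume that $\eta'(x_1),\eta'(x_2) > \varepsilon$ (the other case follows by replacing $\eta$ with $-\eta$) and that $x_1 < x_2$. Since $\eta(x_1) = \eta(x_2)$, Rolle's theorem supplies a point $c \in (x_1,x_2)$ with $\eta'(c) = 0$.

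Because $\eta'$ drops from $\eta'(x_1) > \varepsilon$ to $\eta'(c) = 0$ on $[x_1,c]$, the mean value theorem applied to $\eta'$ yields some $y_1 \in (x_1,c)$ with
\[
  \eta''(y_1) = \frac{\eta'(c) - \eta'(x_1)}{c - x_1} < -\frac{\varepsilon}{c - x_1}.
\]
By the same reasoning on $[c,x_2]$, there is some $y_2 \in (c,x_2)$ with
\[
  \eta''(y_2) = \frac{\eta'(x_2) - \eta'(c)}{x_2 - c} > \frac{\varepsilon}{x_2 - c}.
\]

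Now I would apply the mean value theorem to $\eta''$ on $[y_1,y_2]$. Using $\|\eta'''\|_{L^{\infty}(I)} \le A$ and $y_2 - y_1 \le x_2 - x_1$, this gives
\[
  \frac{\varepsilon}{c - x_1} + \frac{\varepsilon}{x_2 - c} < \eta''(y_2) - \eta''(y_1) \le A(x_2 - x_1).
\]
Writing $\alpha = c - x_1$ and $\beta = x_2 - c$, so that $\alpha + \beta = x_2 - x_1$, the inequality becomes $\varepsilon(\alpha + \beta)/(\alpha\beta) \le A(\alpha + \beta)$, hence $\alpha\beta \ge \varepsilon/A$. Combined with the AM--GM estimate $\alpha + \beta \ge 2\sqrt{\alpha\beta}$, we obtain $x_2 - x_1 \ge 2\sqrt{\varepsilon/A} \ge \sqrt{2\varepsilon/A}$, as desired.

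No serious obstacle is anticipated; the only mild subtlety is to ensure the direction of the inequalities in the two applications of the mean value theorem is tracked correctly so that the estimates for $\eta''(y_1)$ and $\eta''(y_2)$ combine additively rather than cancel.
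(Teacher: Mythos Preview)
Your proof is correct and follows essentially the same route as the paper's: Rolle's theorem to find a zero of $\eta'$, two applications of the mean value theorem to $\eta'$ to produce points where $\eta''$ has opposite signs, and a final mean value theorem on $\eta''$ bounded via $\|\eta'''\|_\infty\le A$. The only difference is that the paper immediately replaces $c-x_1$ and $x_2-c$ by $\delta=x_2-x_1$ to get $\eta'''(w)\ge 2\delta^{-2}\eps$ directly, whereas you track $\alpha$ and $\beta$ separately and close with AM--GM, which in fact yields the slightly sharper bound $|x_1-x_2|\ge 2\sqrt{\eps/A}$.
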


\begin{proof}
  Without loss of generality, we may assume that $x_2 > x_1$ and $\eta'(x_{1}),\eta'(x_{2})>\eps$.
  Let $\delta=x_{2}-x_{1}$. By Rolle's theorem, there
  is a $y\in(x_{1},x_{2})$ so that $\eta'(y)=0$. By the mean value
  theorem, there exist $z_{1}\in(x_{1},y)$ and $z_{2}\in(y,x_{2})$
  so that
  \begin{align*}
    \eta''(z_{1}) & \le-\delta^{-1}\eps, & \eta''(z_{2}) & \ge\delta^{-1}\eps.
  \end{align*}
  By another application of the mean value theorem, there exists a $w\in(z_{1},z_{2})$
  so that
  \[
    \eta'''(w)\ge2\delta^{-2}\eps.
  \]
  This means that $2\delta^{-2}\eps\le A$, so $\delta\ge\sqrt{2\eps/A}$,
  as claimed.
\end{proof}

The following lemma was used in the proof of \cref{lem:Fdecreasing}.
\begin{lem}
  Suppose that $\zeta \geq 0$ is in the Schwartz class and $\eta$ is a smooth
  function all of whose derivatives have at most polynomial growth at
  infinity. Define
  \[
    g(\lambda)=\sum_{\substack{y\in\eta^{-1}(\lambda)\\
        \eta'(y)\ne0
      }
    }|\eta'(y)|\zeta(y).
  \]
  Then $g$ is a continuous function of $\lambda$.\label{lem:gcts}
\end{lem}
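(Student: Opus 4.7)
The plan is to show continuity of $g$ at an arbitrary $\lambda_0 \in \R$ via a three-scale decomposition of the sum. The controlling inputs are \cref{lem:howmanyderivs}, which quantitatively separates preimages of $\lambda$ at which $|\eta'|$ is bounded below, and the fact that $|\eta'|\zeta$ decays faster than any polynomial, since $\zeta$ is Schwartz while $\eta'$ grows polynomially.

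First, I would truncate in space. For $R > 0$, split $g(\lambda) = g_R^{\mathrm{in}}(\lambda) + g_R^{\mathrm{out}}(\lambda)$ according to whether the preimages lie in $[-R, R]$ or outside. To bound $g_R^{\mathrm{out}}$ uniformly in $\lambda$, partition the exterior into unit intervals $I_n = [n, n+1]$ with $|n| \geq R$, and within each $I_n$ stratify preimages of $\lambda$ by the dyadic scale $|\eta'(y)| \in (2^{-k-1} r_0, 2^{-k} r_0]$, $k \geq 0$. By \cref{lem:howmanyderivs}, the number of preimages at the $k$-th scale with a given sign of $\eta'$ is at most $C(1 + 2^{k/2}\sqrt{A_n / r_0})$, where $A_n = \|\eta\|_{\mathcal{C}^3(I_n)}$ is polynomially bounded in $n$. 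Each such preimage contributes at most $2^{-k} r_0 \cdot \sup_{I_n} \zeta$, so the geometric series in $k$ converges, and the sum in $n$ is controlled by the super-polynomial decay of $\sup_{I_n} \zeta$. Thus $g_R^{\mathrm{out}}(\lambda)$ can be made arbitrarily small by taking $R$ large, uniformly in $\lambda$.

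Second, within the compact interval $[-R, R]$ I handle the critical set $K = \{y \in [-R, R] : \eta'(y) = 0\}$. Since $\eta'$ is continuous and vanishes on $K$, for any $r > 0$ I can cover $K$ by a finite union of open intervals $U_1, \ldots, U_M \subset [-R, R]$ on which $\sup |\eta'| < r$. The same dyadic \cref{lem:howmanyderivs} argument applied on each $U_j$ bounds the contribution $\sum_{j} \sum_{y \in U_j \cap \eta^{-1}(\lambda),\ \eta'(y) \neq 0} |\eta'(y)| \zeta(y)$ by a quantity that tends to $0$ as $r \to 0$, uniformly in $\lambda$. On the complement $K' = [-R, R] \setminus \bigcup_j U_j$, we have $|\eta'| \geq r_*$ for some $r_* > 0$, so the preimages of $\lambda_0$ in $K'$ are isolated (implicit function theorem) and, being a closed subset of a compact set, finite in number, say $y_1, \ldots, y_N$. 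Each $y_i$ extends to a $\mathcal{C}^1$ local branch $\lambda \mapsto y_i(\lambda)$ with $\eta(y_i(\lambda)) = \lambda$, and a standard compactness argument shows that for $\lambda$ sufficiently close to $\lambda_0$ these $N$ branches exhaust the regular preimages of $\lambda$ in $K'$. The corresponding contribution $\sum_{i=1}^N |\eta'(y_i(\lambda))|\zeta(y_i(\lambda))$ is then a finite sum of continuous functions of $\lambda$.

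Combining the three pieces, given $\eps > 0$ I successively choose $R$ large (controlling the spatial tail), then $r$ small (controlling the near-critical contribution, and in particular fixing the cover $\{U_j\}$ and the lower bound $r_*$), and finally $\delta > 0$ from the continuity of the finite sum on $K'$, to conclude $|g(\lambda) - g(\lambda_0)| < \eps$ whenever $|\lambda - \lambda_0| < \delta$. The main obstacle is the bookkeeping in the dyadic $|\eta'|$-stratification, where we must simultaneously balance the preimage count $\sim \sqrt{A_n / r}$ from \cref{lem:howmanyderivs} against the per-summand size $r$, while exploiting the Schwartz decay of $\zeta$ to absorb the polynomial growth of $\eta$ and its derivatives; this is also what makes the control $\lambda$-uniform. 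Once this double bookkeeping is in place, the rest of the argument reduces to the implicit function theorem and elementary compactness.
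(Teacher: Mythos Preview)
Your approach is correct and takes a genuinely different route from the paper's. You isolate a regular region $K'$ and track the finitely many preimages there via the implicit function theorem, bounding the near-critical and spatial-tail contributions uniformly in $\lambda$ by the dyadic stratification and \cref{lem:howmanyderivs}. The paper instead writes $\zeta = \sum_{k \in \mathbb{Z}} \zeta_k$ as a smooth partition of unity with $\supp \zeta_k \subset [k-1, k+2]$ and shows each local piece $g_k$ is uniformly continuous by a direct \emph{pairing} argument: given nearby $\lambda_1 < \lambda_2$, each preimage $x$ of $\lambda_1$ with $|\eta'(x)|$ above a fixed threshold is matched, via the intermediate value theorem and the lower bound on $|\eta'|$, to a unique nearby preimage of $\lambda_2$, and the paired terms nearly cancel; the unpaired terms have small $|\eta'|$ and are handled exactly as in your near-critical estimate. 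The paper's route produces an explicit modulus and hence \emph{uniform} continuity of $g$, whereas your IFT argument, which fixes the branch structure at $\lambda_0$, only yields pointwise continuity. One spot in your outline deserves care: for $\lambda \neq \lambda_0$ the three pieces do not literally partition the sum, since a branch $y_i(\lambda)$ can drift from $K'$ into some $U_j$ (so the ``$K'$ contribution'' read as $\sum_i |\eta'(y_i(\lambda))|\zeta(y_i(\lambda))$ may overlap with the $U$ contribution). What your compactness argument actually delivers is that for $\lambda$ near $\lambda_0$ every preimage in $K'$ lies on one of the $N$ branches, whence $g_R^{\mathrm{in}}(\lambda) - \sum_i |\eta'(y_i(\lambda))|\zeta(y_i(\lambda))$ equals the contribution of non-branch preimages, all of which lie in $\bigcup_j U_j$ and are therefore dominated by your uniform bound on $g_U$; this is the inequality that closes the argument.
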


\begin{proof}
  Let $\zeta_{k} \geq 0$ be smooth such that $\supp\zeta_{k}\subset[k-1,k+2]$
  and $\sum\limits_{k\in\mathbb{Z}}\zeta_{k}=\zeta$. Let
  \begin{equation}
    g_{k}(\lambda)=\sum_{\substack{y\in\eta^{-1}(\lambda)\\
        \eta'(y)\ne0
      }
    }|\eta'(y)|\zeta_{k}(y).\label{eq:gkdef}
  \end{equation}
  We first show that $g_{k}$ is continuous. Let $A_{k}=\|\eta\|_{\mathcal{C}^{3}([k-1,k+2])}+\|\zeta_{k}\|_{\mathcal{C}^{0}(\R)}+\|\eta'\zeta_k\|_{\mathcal{C}^1(\R)}$ + 1.
  Define
  \begin{equation*}
    S_{k,\ell}(\lambda)=\{y\in(k-1,k+2) \mid \eta(y)=\lambda,|\eta'(y)|\in [2^{-\ell},2^{-\ell+1})\}.
  \end{equation*}
  \cref{lem:howmanyderivs} implies
  \begin{equation}
    \label{eq:S-size}
    |S_{k,\ell}(\lambda)|\le 2^3 A_{k}^{1/2}2^{\ell/2},
  \end{equation}
  so
  \begin{equation}
    \label{eq:smallterms}
    \sum_{y\in S_{k,\ell}(\lambda)}|\eta'(y)|\zeta_{k}(y)\le 2^{-\ell + 1} \|\zeta_k\|_{\mathcal{C}^0(\R)} |S_{k, \ell}(\lambda)| \leq 2^4 A_{k}^{3/2} 2^{-\ell/2}
  \end{equation}
  for all $\lambda$.

  Now fix $\eps>0$ and choose $\ell$ so large that $2^9 A_k^{3/2} 2^{-\ell/2} < \eps.$
  Define
  \begin{align*}
    T_{k, \ell}^{+}(\lambda) & =\{y\in (k - 1, k + 2) \mid \eta(y)=\lambda, \eta'(y)\ge2^{-\ell}\},  \\
    T_{k, \ell}^{-}(\lambda) & =\{y\in (k - 1, k + 2) \mid \eta(y)=\lambda, \eta'(y)\le-2^{-\ell}\}.
  \end{align*}
  Suppose that $\lambda_{1}<\lambda_{2}$ satisfy $\lambda_{2}-\lambda_{1} < 2^{-2\ell - 2}A_k^{-1}$.
  Take $x\in T_{k, \ell}^{+}(\lambda_{1})$.
  On the interval ${[x - 2^{-\ell-1} A_k^{-1}, x + 2^{-\ell-1} A_k^{-1}]}$, we must have
  \begin{equation*}
    \eta' \geq \eta'(x) - \|\eta''\|_{\mathcal{C}^0([k-1, k+2])} 2^{-\ell-1} A_k^{-1} \geq 2^{-\ell - 1}.
  \end{equation*}
  Thus
  \begin{equation*}
    \eta(x + 2^{-\ell-1} A_k^{-1}) \geq \lambda_{1} + 2^{-\ell - 1} \cdot 2^{-\ell-1} A_k^{-1} > \lambda_2.
  \end{equation*}
  Since $\eta$ is continuous, there exists $y \in T_{k, \ell+1}^{+}(\lambda_{2})\cap(x,x+ 2^{-\ell-1} A_k^{-1}]$.
  We say that $y$ is ``paired to $x$.''
  Notice that $y - x < 2^{-\ell-1} A_k^{-1} \geq 2^{-\ell - 1}$ while $\eta \leq \lambda_1$ on $[x - 2^{-\ell-1} A_k^{-1}, x]$, so $y$ is not paired to any other $x \in T_{k, \ell}^{+}(\lambda_{1})$.
  Thus to each $x \in T_{k, \ell}^{+}(\lambda_{1})$ we have paired a \emph{unique} $y \in T_{k, \ell+1}^{+}(\lambda_{2})$.
  Also,
  \begin{equation*}
    |\eta'(x)|\zeta_{k}(x)-|\eta'(y)|\zeta_{k}(y) \leq \|\eta' \zeta_k\|_{\mathcal{C}^1(\R)} 2^{-\ell-1} A_k^{-1} \leq 2^{-\ell - 1}.
  \end{equation*}

  Now consider the difference
  \begin{equation}
    \label{eq:T-ell-diff-prelim}
    \sum_{x\in T_{k, \ell}^{+}(\lambda_{1})}|\eta'(x)|\zeta_{k}(x)-\sum_{y\in T_{k, \ell}^{+}(\lambda_{2})}|\eta'(y)|\zeta_{k}(y).
  \end{equation}
  Each $x \in T_{k, \ell - 1}^+(\lambda_1)$ is paired to a unique $y \in T_{k, \ell}^{+}(\lambda_{2})$, and the corresponding terms' difference is at most $2^{-\ell - 1}$.
  On the other hand, if $x \in T_{k, \ell}^+(\lambda_1) \setminus T_{k, \ell - 1}^+(\lambda_1)$, we have
  \begin{equation*}
    |\eta'(x)|\zeta_{k}(x) \leq 2^{-\ell + 1} A_k.
  \end{equation*}
  Decomposing \cref{eq:T-ell-diff-prelim} into these two cases, we obtain
  \begin{equation*}
    \label{eq:T-ell-diff}
    \sum_{x\in T_{k, \ell}^{+}(\lambda_{1})}|\eta'(x)|\zeta_{k}(x)-\sum_{y\in T_{k, \ell}^{+}(\lambda_{2})}|\eta'(y)|\zeta_{k}(y) \leq \sum_{x \in T_{k, \ell}^+(\lambda_1)} (2^{-\ell - 1} + 2^{\ell + 1}A_k) \leq 2^2 A_k 2^{-\ell} |T_{k, \ell}^+(\lambda_1)|.
  \end{equation*}
  Now \cref{eq:S-size} implies
  \begin{equation*}
    |T_{k, \ell}^+(\lambda_1)| \leq \sum_{\ell' \leq \ell} |S_{k, \ell'}(\lambda_1)| \leq 2^3 A_k^{1/2} 2^{\ell/2} \sum_{m \geq 0} 2^{-m/2} \leq 2^5 A_k^{1/2} 2^{\ell/2}.
  \end{equation*}
  Therefore,
  \begin{equation*}
    \sum_{x\in T_{k, \ell}^{+}(\lambda_{1})}|\eta'(x)|\zeta_{k}(x)-\sum_{y\in T_{k, \ell}^{+}(\lambda_{2})}|\eta'(y)|\zeta_{k}(y)\le 2^2 A_k 2^{-\ell} |T_{k, \ell}^+(\lambda_1)| \leq 2^7 A_k^{3/2} 2^{-\ell/2}.
  \end{equation*}
  Symmetric arguments show that in fact
  \begin{equation}
    \label{eq:T-control}
    \Bigg|\sum_{x\in T_{k, \ell}^{\pm}(\lambda_{1})}|\eta'(x)|\zeta_{k}(x)-\sum_{y\in T_{k, \ell}^{\pm}(\lambda_{2})}|\eta'(y)|\zeta_{k}(y)\Bigg|\le 2^7 A_{k}^{3/2} 2^{-\ell/2}.
  \end{equation}
  Now consider $g_k(\lambda_1) - g_k(\lambda_2)$.
  We divide \cref{eq:gkdef} into terms in $T_{k, \ell}^\pm(\lambda_i)$ or $S_{k, \ell'}(\lambda_i)$ for $\ell' > \ell$ and $i \in \{1, 2\}$.
  We pair terms in $T_{k, \ell}^\pm(\lambda_1)$ with those in $T_{k, \ell}^\pm(\lambda_2)$ to take advantage of the cancellation in \cref{eq:T-control}.
  We treat the terms in $S_{k, \ell'}(\lambda_i)$ as error and use \cref{eq:smallterms}.
  Then
  \begin{equation*}
    |g_k(\lambda_1) - g_k(\lambda_2)| \leq 2^8 A_k^{3/2} 2^{-\ell/2} + 2^5 A_k^{3/2} \sum_{\ell' > \ell} 2^{-\ell'/2} \leq 2^9 A_k^{3/2} 2^{-\ell/2} < \eps.
  \end{equation*}
  It follows that $g_k$ is uniformly continuous.

  We now wish to sum over $k$ to conclude the same for $g$.
  To do so, we bound $g_k$.
  Let ${\ell_k = -\log_2 A_k + 1}$.
  Then $S_{k, \ell}(\lambda) = \emptyset$ for all $\lambda \in \R$ and $\ell < \ell_k$.
  Hence \cref{eq:S-size} implies
  \begin{equation*}
    g_k(\lambda) = \sum_{\ell \geq \ell_k} \sum_{y \in S_{k, \ell}(\lambda)} |\eta'(y)| \zeta_k(y) \leq 2\|\zeta_k\|_{\mathcal{C}^0(\R)} \sum_{\ell \geq \ell_k} 2^{-\ell} |S_{k, \ell}(\lambda)| \leq 2^4 \|\zeta_k\|_{\mathcal{C}^0(\R)} A_k^{1/2} 2^{-\ell_k/2} \sum_{m \geq 0} 2^{-m/2}.
  \end{equation*}
  Using the definition of $\ell_k$, this yields
  \begin{equation*}
    g_k(\lambda) \leq 2^5 \|\zeta_k\|_{\mathcal{C}^0(\R)} A_k.
  \end{equation*}
  By hypothesis, there exists $C \geq 1$ independent of $k$ such that $A_k \leq C \langle k \rangle^C$ for all $k \in \Z$.
  But $\zeta_k \leq \zeta$ decays super-polynomially as $|k| \to \infty$.
  It follows that $\|g_k\|_{\mathcal{C}^0(\R)}$ itself decays super-polynomially in $k$.
  Therefore $g=\sum\limits_{k\in\mathbb{Z}}g_{k}$ is an absolutely and uniformly convergent sum of uniformly continuous functions and hence is (uniformly) continuous.
\end{proof}

\bibliography{burgers}
\bibliographystyle{hplain-ajd}

\end{document}